\thanks{Version from 2nd February 2011.}
\numberwithin{equation}{section}
\theoremstyle{plain}
\newtheorem{theorem}[equation]{Theorem}
\newtheorem{proposition}[equation]{Proposition}
\newtheorem{lemma}[equation]{Lemma} 
\newtheorem{corollary}[equation]{Corollary}
\theoremstyle{definition}
\newtheorem{example}[equation]{Example}
\theoremstyle{remark} 
\newtheorem{remark}[equation]{Remark} 
\newcommand{\cl}{\operatorname{cl}}
\newcommand{\Coloc}{\operatorname{Coloc}}
\newcommand{\Cone}{\operatorname{Cone}}
\newcommand{\cosupp}{\operatorname{cosupp}}
\newcommand{\End}{\operatorname{End}}
\newcommand{\Ext}{\operatorname{Ext}}
\newcommand{\fHom}{\operatorname{\mathcal{H}\!\!\;\mathit{om}}}
\newcommand{\Hom}{\operatorname{Hom}}
\newcommand{\id}{\operatorname{id}}
\newcommand{\Id}{\operatorname{Id}}
\renewcommand{\Im}{\operatorname{Im}}
\newcommand{\Inj}{\operatorname{\mathsf{Inj}}}
\newcommand{\Ker}{\operatorname{Ker}}
\newcommand{\KInj}[1]{\mathsf K(\Inj #1)}
\newcommand{\Loc}{\operatorname{Loc}}
\newcommand{\Mod}{\operatorname{\mathsf{Mod}}}
\newcommand{\res}{\operatorname{res}}
\newcommand{\RHom}{\operatorname{{\mathbf R}Hom}}
\newcommand{\Spec}{\operatorname{Spec}}
\newcommand{\StMod}{\operatorname{\mathsf{StMod}}}
\newcommand{\supp}{\operatorname{supp}}
\newcommand{\Thick}{\operatorname{Thick}}
\newcommand{\inc}{\mathrm{inc}} 
\newcommand{\op}{\mathrm{op}}
\newcommand{\col}{\colon}
\newcommand{\da}{{\downarrow}}
\newcommand{\ges}{{\scriptscriptstyle\geqslant}}
\newcommand{\hh}[1]{H^{*}#1} 
\newcommand{\kos}[2]{{#1}/\!\!/{#2}} 
\newcommand{\lotimes}{\otimes^{\mathbf L}}
\newcommand{\Lra}{\Leftrightarrow}
\newcommand{\lto}{\longrightarrow}
\renewcommand{\setminus}{\smallsetminus}
\newcommand{\ua}{{\uparrow}}
\newcommand{\wt}{\widetilde}
\newcommand{\xla}{\xleftarrow}
\newcommand{\xra}{\xrightarrow}
\newcommand{\bik}{Benson/Iyengar/Krause}
\def\sfT{\mathcal{T}}
\def\mcU{\mathcal{U}} 
\def\mcV{\mathcal{V}}
\def\mcW{\mathcal{W}} 
\def\mcZ{\mathcal{Z}}
\def\sfc{\mathsf c}
\def\sfi{\mathsf i}
\def\sfq{\mathsf q}
\def\sfC{\mathsf C}
\def\sfD{\mathsf D} 
\def\sfG{\mathsf G}
\def\sfK{\mathsf K}
\def\sfS{\mathsf S} 
\def\sfT{\mathsf T} 
\def\sfU{\mathsf U}
\def\bbQ{\mathbb Q} 
\def\bbZ{\mathbb Z}
\newcommand{\bsz}{\boldsymbol{z}}
\newcommand{\fa}{\mathfrak{a}} 
\newcommand{\fm}{\mathfrak{m}} 
\newcommand{\fp}{\mathfrak{p}}
\newcommand{\fq}{\mathfrak{q}}
\newcommand{\gam}{\varGamma} 
\newcommand{\lam}{\varLambda}
\def\Si{\Sigma} 
\def\si{\sigma}
\def\one{\mathds 1}
\newcommand{\bloc}{{L}} 
\newcommand{\cent}{{Z}} 
\title[Colocalizing subcategories and cosupport]
{Colocalizing subcategories and cosupport}
\author{Dave Benson} 
\address{Dave Benson \\ 
Institute of Mathematics\\ 
University of Aberdeen\\ 
King's College\\ 
Aberdeen AB24 3UE\\ 
Scotland U.K.}
\author{Srikanth B. Iyengar} 
\address{Srikanth B. Iyengar\\ 
Department of Mathematics\\ 
University of Nebraska\\ 
Lincoln, NE 68588\\ 
U.S.A.}
\author{Henning Krause} 
\address{Henning Krause\\ 
Fakult\"at f\"ur Mathematik\\ 
Universit\"at Bielefeld\\ 
33501 Bielefeld\\ 
Germany.}
\begin{document}

\begin{abstract}
The Hom closed colocalizing subcategories of the stable module
category of a finite group are classified. Along the way, the
colocalizing subcategories of the homotopy category of injectives over
an exterior algebra, and the derived category of a formal commutative
differential graded algebra, are classified. To this end, and with an eye
towards future applications, a notion of local homology and cosupport
for triangulated categories is developed, building on earlier work of the authors on
local cohomology and support.
\end{abstract}

\keywords{colocalizing subcategory, cosupport, local homology,
  localizing subcategory, stable module category, triangulated
  category}

\subjclass[2010]{20J06(primary); 13D45, 16E45, 18E30}

\thanks{The research of the first and second authors was undertaken
  during visits to the University of Paderborn, each supported by a
  research prize from the Humboldt Foundation. The research of the
  second author was also partly supported by NSF grant DMS 0903493.}

\maketitle \setcounter{tocdepth}{1} \tableofcontents

\section{Introduction}
Let $G$ be a finite group and $k$ a field of characteristic $p$,
dividing the order of $G$, and $\StMod(kG)$ the stable module category of
possibly infinite dimensional $kG$-modules. We write $\mcV_{G}$ for
the set of all homogeneous prime ideals except the maximal ideal in
the cohomology algebra $H^*(G,k)$ of $G$, and $\mcV_{G}(M)$ for the
support of any $M\in\StMod(kG)$, defined by Benson, Carlson and
Rickard \cite{Benson/Carlson/Rickard:1996a} when $k$ is algebraically
closed, and extended in \cite{\bik:2008b} to all fields. One of the
main results in this work is a classification of the colocalizing
subcategories of $\StMod(kG)$:

\begin{theorem}
\label{ith:costratification}
The map that assigns to each subset $\mcU\subseteq\mcV_{G}$ the
subcategory
\[
\{N\in\StMod(kG)\mid {\underline\Hom}_{kG}(M,N)=0\text{ for all $M$
  with $\mcV_{G}(M)\subseteq \mcU$}\}
\]
gives a bijection between subsets of $\mcV_{G}$ and colocalizing
subcategories of $\StMod(kG)$ that are closed under tensor product
with simple $kG$-modules.
\end{theorem}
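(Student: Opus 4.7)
The plan is to develop a theory of local homology and cosupport, dual to the local cohomology and support theory of \bik, and then deduce the theorem by transferring the stratification result for $\StMod(kG)$ into its ``dual'' statement about colocalizing subcategories. For each $\fp \in \mcV_{G}$, I would introduce a local homology functor $\Lambda^{\fp}$ as a right adjoint (in a suitable sense) to the local cohomology functor $\Gamma_{\fp}$, and define
\[
\cosupp(N) \;:=\; \{\fp \in \mcV_G \mid \Lambda^{\fp}N \neq 0\}.
\]
The key adjunction $\underline{\Hom}_{kG}(\Gamma_{\fp}M,N) \cong \underline{\Hom}_{kG}(M,\Lambda^{\fp}N)$ then yields the basic vanishing principle: $\underline{\Hom}_{kG}(M,N)=0$ whenever $\supp(M) \cap \cosupp(N)=\emptyset$.

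With this framework in place, the map in the theorem admits a clean reformulation. By the stratification theorem of \bik, the class $\{M \mid \mcV_{G}(M) \subseteq \mcU\}$ is exactly the tensor-ideal localizing subcategory generated by the Koszul/residue objects attached to primes in $\mcU$. Combined with the adjunction above, an object $N$ is right-orthogonal to this class if and only if $\Lambda^{\fp}N=0$ for every $\fp\in\mcU$, that is, $\cosupp(N) \cap \mcU = \emptyset$. After relabelling by the complement, the theorem becomes the statement that colocalizing subcategories of $\StMod(kG)$ closed under tensoring with simples are classified by cosupport, with inverse map $\mcC \mapsto \bigcup_{N \in \mcC}\cosupp(N)$.

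To establish this bijection, I would prove two ``costratification'' assertions at each prime: \emph{realisation}, that for each $\fp\in\mcV_{G}$ there is an object whose cosupport is precisely $\{\fp\}$ (yielding injectivity of the classifying map), together with \emph{minimality}, that any nonzero $N$ with $\cosupp(N) \subseteq \{\fp\}$ generates the local colocalizing subcategory at $\fp$ (yielding surjectivity). Together with a local-global principle to the effect that $\cosupp(N)=\emptyset$ implies $N=0$, minimality at each $\fp$ forces every colocalizing subcategory $\mcC$ to be detected by its set of primes $\bigcup_{N \in \mcC}\cosupp(N)$. My route to minimality would mirror the proof of stratification: using the tensor action and Quillen stratification, reduce to $\StMod(kE)$ for elementary abelian $p$-groups $E$; from there, via the BGG / Koszul duality passage, reduce to $\KInj{\Lambda}$ for an exterior algebra $\Lambda$, and ultimately to $\bfD(S)$ for $S$ a graded polynomial ring, which is a formal commutative DGA.

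The main obstacle I expect is this minimality statement. In the commutative setting $\bfD(S)$, cosupport is well-behaved thanks to Greenlees--May duality: $\Lambda^{\fp}S \simeq \widehat{S_{\fp}}$ and Matlis-type duality controls what happens at $\fp$, so local minimality should come down to a concrete calculation. The harder part is transporting this back through Koszul duality to $\KInj{\Lambda}$ and then lifting it across the equivalence relating $\StMod(kE)$ to $\KInj{kE}$, while keeping track of the tensor structure and the action of the cohomology ring; this is where the general local homology machinery developed in the first step must do the real work, rather than providing merely a language. Once this is done at each elementary abelian, the reduction from $G$ to its elementary abelian subgroups via the tensor action closes the argument.
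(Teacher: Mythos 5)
Your proposal is essentially the paper's own strategy: define $\lam^\fp$ as right adjoint to $\gam_\fp$, define cosupport, establish a local-global principle for Hom-closed colocalizing subcategories together with minimality of each $\lam^\fp\sfT$, and prove minimality by descending through the change-of-categories chain (finite group $\to$ elementary abelian via Quillen stratification $\to$ exterior algebra via Koszul duality $\to$ derived category of a graded polynomial ring). One point to be careful about: the ``local-global principle'' you actually need is the stronger statement $\Coloc^{\fHom}_\sfT(X)=\Coloc^{\fHom}_\sfT(\{\lam^\fp X\mid\fp\in\Spec R\})$, not merely that $\cosupp(N)=\varnothing$ forces $N=0$; the weaker nonvanishing statement alone does not let minimality at each prime recover a colocalizing subcategory from its set of cosupports.
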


A \emph{colocalizing subcategory} $\sfC$ is by definition a full
triangulated subcategory that is closed under set-indexed
products. Such a subcategory is closed under tensor product with
simples if and only if it is Hom closed: If $N$ is in $\sfC$, so is
$\Hom_{k}(M,N)$ for any
$M\in\StMod(kG)$. Theorem~\ref{ith:costratification} complements the
classification of the localizing subcategories of $\StMod(kG)$ from
\cite[Theorem~10.3]{\bik:2008b}. Combining them gives a remarkable
bijection:

\begin{corollary}
\label{ico:locandcoloc}
The map sending a localizing subcategory $\sfS$ of $\StMod(kG)$ to
$\sfS^\perp$ induces a bijection
\[ 
\left\{\begin{gathered} \text{tensor closed
  localizing}\\ \text{subcategories of $\StMod(kG)$}
\end{gathered}\;
\right\} \xymatrix@C=3pc{ \ar[r]^-{\sim} &} \left\{
\begin{gathered}
  \text{Hom closed colocalizing}\\ \text{subcategories of
    $\StMod(kG)$}
\end{gathered}
\right\}\,.
\]
The inverse map sends a colocalizing subcategory $\sfS$ to
$^\perp\sfS$. \qedhere
\end{corollary}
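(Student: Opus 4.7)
The plan is to exploit the common parameter set shared by the two classifications. Write
\[
\sfS_\mcU := \{M \in \StMod(kG) \mid \mcV_G(M) \subseteq \mcU\}
\]
for the tensor closed localizing subcategory assigned to a subset $\mcU \subseteq \mcV_G$ by \cite[Theorem~10.3]{\bik:2008b}, and $\sfC_\mcU$ for the Hom closed colocalizing subcategory assigned to the same $\mcU$ in Theorem~\ref{ith:costratification}. Since $\sfS_\mcU$ is triangulated, and in particular closed under shifts, the defining condition in Theorem~\ref{ith:costratification} can be rewritten as $\sfC_\mcU = \sfS_\mcU^\perp$. Hence the assignment $\sfS_\mcU \mapsto \sfS_\mcU^\perp = \sfC_\mcU$ matches the two parameterizations term by term and is automatically a bijection. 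The content of the corollary is to identify its inverse with $\sfS \mapsto {}^\perp\sfS$.

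Given $\mcU \subseteq \mcV_G$, I first show that ${}^\perp\sfC_\mcU$ is a localizing subcategory closed under tensor product with every object of $\StMod(kG)$. It is localizing because it is the left perpendicular of a class of objects. For the tensor-product closure, if $M \in {}^\perp\sfC_\mcU$ and $X \in \StMod(kG)$, then Hom-closedness of $\sfC_\mcU$ places $\Hom_k(X,N)$ in $\sfC_\mcU$ for every $N \in \sfC_\mcU$, and the tensor--Hom adjunction
\[
{\underline\Hom}_{kG}(M \otimes_k X,\, N) \;\cong\; {\underline\Hom}_{kG}(M,\, \Hom_k(X,N)) \;=\; 0
\]
shows $M \otimes_k X \in {}^\perp\sfC_\mcU$. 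By \cite[Theorem~10.3]{\bik:2008b} there is therefore a unique $\mcU' \subseteq \mcV_G$ with ${}^\perp\sfC_\mcU = \sfS_{\mcU'}$.

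It remains to show $\mcU = \mcU'$. The tautological inclusion $\sfS_\mcU \subseteq {}^\perp(\sfS_\mcU^\perp) = {}^\perp\sfC_\mcU = \sfS_{\mcU'}$ gives $\mcU \subseteq \mcU'$, whence $\sfC_{\mcU'} = \sfS_{\mcU'}^\perp \subseteq \sfS_\mcU^\perp = \sfC_\mcU$; conversely $\sfC_\mcU \subseteq ({}^\perp\sfC_\mcU)^\perp = \sfS_{\mcU'}^\perp = \sfC_{\mcU'}$ is equally tautological. Thus $\sfC_\mcU = \sfC_{\mcU'}$, and the injectivity of $\mcU \mapsto \sfC_\mcU$ from Theorem~\ref{ith:costratification} forces $\mcU = \mcU'$. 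The substantive work is all packaged in Theorem~\ref{ith:costratification} and in \cite[Theorem~10.3]{\bik:2008b}; the only non-formal step here is the adjunction argument verifying tensor-closure of ${}^\perp\sfC_\mcU$, and the remainder is routine manipulation of perpendiculars.
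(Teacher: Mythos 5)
Your proof is correct, and it takes a genuinely different route from the paper's. The paper deduces Corollary~\ref{ico:locandcoloc} by first establishing that $\StMod(kG)$ is costratified (Theorem~\ref{th:stmod}) and then invoking the abstract Corollary~\ref{co:loccoloc}, whose proof identifies both $\sfS\mapsto\sfS^{\perp}$ and $\sfU\mapsto{}^{\perp}\sfU$ with set-complementation on $\supp_{R}\sfT$ via the cosupport formula $\cosupp_{R}\fHom(X,Y)=\supp_{R}X\cap\cosupp_{R}Y$ of Theorem~\ref{th:cosupp-hom}. Your argument instead treats the two classification theorems --- Theorem~\ref{ith:costratification} and the localizing classification from \cite{Benson/Iyengar/Krause:2008b} --- as black boxes, observes that the explicit formula in Theorem~\ref{ith:costratification} literally says $\sfC_{\mcU}=\sfS_{\mcU}^{\perp}$, and then pins down the inverse by purely formal manipulations of perpendiculars together with a single substantive input: the tensor--Hom adjunction that makes ${}^{\perp}\sfC_{\mcU}$ tensor closed so the localizing classification applies. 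What the paper's route buys is the extra information packaged in Corollary~\ref{co:loccoloc} (the identification of both perpendicular maps with complementation on (co)supports), which is of independent interest; what your route buys is economy --- it shows the corollary is a purely formal consequence of the statements of the two classification theorems, needing no further recourse to the (co)support calculus, stratification, or costratification. Both are valid; yours is arguably the more direct way to see why the corollary is labelled a corollary of Theorem~\ref{ith:costratification}.
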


Theorem~\ref{ith:costratification} and
Corollary~\ref{ico:locandcoloc}, proved in
Section~\ref{se:finitegroups}, are analogues of recent results of
Neeman~\cite{Neeman:coloc} on the derived category of a noetherian
commutative ring.

The definition of the inverse of the map in
Theorem~\ref{ith:costratification} involves a notion of
\emph{cosupport} for a module $M$ in $\StMod(kG)$, introduced in this
work to be the subset
\[ 
\cosupp_GM=\{\fp\in\mcV_{G}\mid \Hom_k(\kappa_\fp,M)\text{ is not
  projective}\},
\]
with $\kappa_{\fp}$ the Rickard idempotent module associated to $\fp$,
constructed in \cite{Benson/Carlson/Rickard:1996a}. Recall that the
support of $M$ is $\{\fp\in\mcV_{G}\mid \kappa_\fp\otimes M\text{ is
  not projective}\}$. The inverse map in
Theorem~\ref{ith:costratification} assigns to a subcategory $\sfC$ of
$\StMod(kG)$ the complement in $\mcV_{G}$ of the set
$\bigcup_{M\in\sfC}\cosupp_{G}M$.

The proof of Theorem~\ref{ith:costratification} is modelled on that of
\cite[Theorem~10.3]{\bik:2008b}, where localizing subcategories of
$\StMod(kG)$ are classified. It involves a sequence of changes of
category, for which reason it has been necessary to develop a theory
of cosupport for objects in triangulated categories, along the lines
for the one for support in our earlier work
\cite{\bik:2008a,\bik:2008b,\bik:2009a}.

The context is a compactly generated triangulated category $\sfT$ with
set-indexed coproducts endowed with an action of a graded commutative
noetherian ring $R$; meaning, a homomorphism $R\to \cent^{*}(\sfT)$ of
graded rings from $R$ to the graded center of $\sfT$. For each $\fp$
in $\Spec R$, the set of homogeneous prime ideals in $R$, we introduce
a \emph{local homology functor} $\lam^\fp$, constructed as a right
adjoint to the local cohomology functor, $\gam_{\fp}$, from
\cite{\bik:2008a}, and define the \emph{cosupport} of an object $X$ of
$\sfT$ by
\[
\cosupp_{R}X = \{\fp\in\Spec R\mid\lam^\fp X\ne 0\}\,.
\]
In the first part of this paper, Sections \ref{se:localizations} to
\ref{se:plocalpcomplete}, we establish salient properties of local
homology and cosupport; for instance, that the maximal elements with
respect to inclusion in the cosupport and the support of any object
$X$ in $\sfT$ coincide:
\[
\max(\cosupp_{R}X) = \max(\supp_{R}X)\,.
\]
This is proved as part of Theorem~\ref{th:max}. It follows that
$\cosupp_{R}X=\varnothing$ if and only if $\supp_{R}X=\varnothing$,
which is equivalent to $X=0$ by \cite[Theorem~5.2]{\bik:2008a}. These
results suggest a close connection between the support and
cosupport. However, while the support of an object is well-understood,
the cosupport remains a mysterious entity. For instance, the only
complete results we could obtain for finitely generated modules over
commutative noetherian rings are given in
Propositions~\ref{pr:cosuppZ} and \ref{pr:cosuppA}.

From Section~\ref{se:tens} onwards we turn to colocalizing
subcategories of $\sfT$, focusing on the case when $\sfT$ is tensor
triangulated with a \emph{canonical $R$-action}, meaning an action
induced by a homomorphism $R\to \End^{*}_{\sfT}(\one)$, where $\one$
is the unit for the tensor product on $\sfT$. This is the context of
the main results of this work, and the rest of this introduction. The
category $\sfT$ admits an internal function object, denoted
$\fHom(X,Y)$, and it is natural to examine the Hom closed colocalizing
subcategories of $\sfT$. A useful result concerning these is that for
each $X\in\sfT$ there is an equality
\[
\Coloc^{\fHom}_{\sfT}(X) = \Coloc^{\fHom}_{\sfT}(\{\lam^{\fp}X\mid
\fp\in\Spec R\})
\]
which is a form of local-global principle for colocalizing
subcategories. This statement is Theorem~\ref{th:tensor-locglob} and
an analogue of such a local-global principle for localizing
subcategories in \cite[Theorem~3.6]{\bik:2008b}. The theorem is a
first step in our approach to the problem of classifying the Hom
closed colocalizing subcategories of $\sfT$, for it permits one to
reduce it to the classification problem for $\lam^{\fp}\sfT$, the
essential image of the functor $\lam^{\fp}$, for each $\fp\in\Spec R$; see
Proposition~\ref{pr:lg=reduction}. We note that $\lam^{\fp}\sfT$ is
itself colocalizing and Hom closed; see Propositions~\ref{pr:product}
and \ref{pr:natisos}.

The following definition thus naturally emerges: \emph{the tensor
  triangulated category $\sfT$ is costratified by $R$} if for each
$\fp\in\Spec R$ there are no non-trivial Hom closed colocalizing
subcategories in $\lam^\fp\sfT$. Given the discussion above, it is
clear that when this property holds the map assigning to a subcategory
$\sfC$ the subset $\bigcup_{X\in\sfC}\cosupp_{R}X$ of $\Spec R$ sets
up a bijection
\[ 
\left\{
\begin{gathered}
  \text{Hom closed colocalizing}\\ \text{subcategories of $\sfT$}
\end{gathered}\;
\right\} \stackrel{\sim}\lto \left\{
\begin{gathered}
  \text{subsets of $\supp_R\sfT$}
\end{gathered}\;
\right\}.
\] 
This bijection is Corollary~\ref{co:coloc-classify} and was the main
reason for our interest in the costratification condition. However,
there are other remarkable consequences that follow from it.  For
instance, we prove in Theorem~\ref{th:strat-costrat}: \emph{if $\sfT$
  is costratified by $R$, it is also stratified by $R$}, meaning that
there are no proper tensor closed localizing subcategories of
$\gam_{\fp}\sfT$; see \cite{\bik:2008b,\bik:2009a}. One consequence is
that if $\sfT$ is costratified by $R$ then there is a bijection,
analogous to the one in Corollary~\ref{ico:locandcoloc}, between the
tensor closed localizing subcategories and the Hom closed colocalizing
subcategories of $\sfT$, via left and right perp; see
Corollary~\ref{co:loccoloc}.

In Theorem~\ref{th:cosupp-hom} we prove that if $\sfT$ is stratified
by $R$ there is an equality
\[ 
\cosupp_R\fHom(X,Y)=\supp_R X\cap \cosupp_R Y\,\quad\text{for all $X,Y\in\sfT$.}
\]
This is an analogue of the tensor product theorem for support
\cite[Theorem~7.3]{\bik:2009a}.  It follows that one gets
\[
\Hom_\sfT^*(X,Y)=0\quad\iff\quad\supp_RX\cap\cosupp_RY=\varnothing
\]
provided that the tensor identity generates $\sfT$; see
Corollary~\ref{co:hom-vanishing}.  This is a surprisingly complete
result, for it is often difficult to obtain precise conditions under
which there are non-zero maps between objects in a triangulated
category.

In Section~\ref{se:finitegroups} we prove
Theorem~\ref{ith:costratification}, by establishing that the tensor
triangulated category $\StMod(kG)$ is costratified by the canonical
action of $\hh(G,k)$. Along the way we prove that the following tensor
triangulated categories are costratified:
\begin{itemize}
\item The derived category of a formal dg algebra whose cohomology is
  graded commutative and noetherian; see Theorem~\ref{th:cdga}.
\item The homotopy category of graded injectives over an exterior
  algebra, viewed as a dg algebra with zero differential; see
  Theorem~\ref{th:ext-costrat}.
\item 
The homotopy category of complexes of injective $kG$-modules, where
$G$ is a finite group; see Theorem~\ref{th:finite-groups}.
\end{itemize}
The proofs of these results use much of the material on cosupport and
local homology in the preceding sections, as well as results on their
behavior under changes of categories, studied in
Section~\ref{se:change of categories}.  Specialized to the case of a
commutative noetherian ring, viewed as a dg algebra concentrated in
degree $0$, the first item in the preceding list is Neeman's theorem,
mentioned at the beginning, which was the inspiration for the results
described in this article.

\subsection*{Acknowledgments}
It is a pleasure to thank Amnon Neeman for carefully reading a preliminary version of this manuscript, and the referee for suggestions regarding the presentation of this material.

\section{(Co)localization functors on triangulated categories}
\label{se:localizations}
In this section we collect basic facts about localization and
colocalization functors on triangulated categories required in this
work; see \cite[\S3]{Benson/Iyengar/Krause:2008a} for details.

Let $\sfT$ be a triangulated category which admits set-indexed
products and coproducts. We write $\Si$ for the suspension functor on
$\sfT$. The \emph{kernel} of an exact functor $F\col\sfT\to\sfT$ is
the full subcategory
\[
\Ker L=\{X\in\sfT\mid LX=0\}\,,
\]
while the \emph{essential image} of $F$ is the full subcategory
\[
\Im F=\{X\in\sfT\mid X\cong FY\text{ for some $Y$ in $\sfT$}\}.
\]

A \emph{localizing subcategory} of $\sfT$ is a full triangulated
subcategory that is closed under taking all coproducts.  We write
$\Loc_\sfT(\sfC)$ for the smallest localizing subcategory containing a
given class of objects $\sfC$ in $\sfT$, and call it the localizing
subcategory \emph{generated} by $\sfC$. Analogously, a
\emph{colocalizing subcategory} of $\sfT$ is a full triangulated
subcategory that is closed under taking all products, and
$\Coloc_\sfT(\sfC)$ denotes the colocalizing subcategory of $\sfT$
that is \emph{cogenerated} by $\sfC$.

A \emph{localization functor} $L\col\sfT\to\sfT$ is an exact functor
that admits for each $X$ in $\sfT$ a natural morphism $\eta X\col X\to
LX$, called \emph{adjunction}, such that $L(\eta X)$ is an isomorphism
and $L(\eta X)=\eta (LX)$.  A functor $\gam\col\sfT\to\sfT$ is a
\emph{colocalization functor} if its opposite functor
$\gam^\op\col\sfT^\op\to\sfT^\op$ is a localization functor; the
corresponding natural morphism $\theta X\col \gam X\to X$ is called
\emph{coadjunction}.

A localization functor $L\col\sfT\to\sfT$ is essentially determined by
its kernel, which is a localizing subcategory of $\sfT$, for it
coincides with the kernel of a functor that admits a right adjoint; see
\cite[Lemma~3.1]{\bik:2008a}. The natural transformation
$\eta\col\Id_\sfT\to L$ induces for each object $X$ in $\sfT$ a
natural exact \emph{localization triangle}
\begin{equation*}
\gam X\lto X\lto \bloc X\lto
\end{equation*}
This exact triangle gives rise to an exact functor
$\gam\col\sfT\to\sfT$ with
\[
\Ker L = \Im\gam \quad\text{and}\quad \Ker\gam =\Im L.
\]
The functor $\gam$ is a colocalization, and each colocalization
functor on $\sfT$ arises in this way. This yields a natural bijection
between localization and colocalization functors on $\sfT$. Note that
$\Ker\gam$ is a colocalizing subcategory of $\sfT$.

Given a subcategory $\sfC$ of a triangulated category $\sfT$ we define
full subcategories
\begin{align*}
{^\perp}\sfC&=\{X\in\sfT\mid \text{$\Hom_\sfT(X,\Si^{n}Y)=0$ for all
  $Y\in \sfC$ and $n\in\bbZ$.}\} \\ \sfC^{\perp}&=\{X\in\sfT\mid
\text{$\Hom_\sfT^*(\Si^{n}Y,X)=0$ for all $Y\in\sfC$ and
  $n\in\bbZ$}\}.
\end{align*}
Evidently, ${^\perp}\sfC$ is a localizing subcategory, and
$\sfC^\perp$ is a colocalizing subcategory.

The next lemma summarizes the basic facts about localization and
colocalization.
\begin{lemma}\label{le:loc-basic}
Let $\sfT$ be a triangulated category and $\sfS$ a triangulated
subcategory. Then the following are equivalent:
\begin{enumerate}
\item There exists a localization functor $L\col\sfT\to\sfT$ such that
  $\Ker L=\sfS$.
\item There exists a colocalization functor $\gam\col\sfT\to\sfT$ such
  that $\Im\gam=\sfS$.
\end{enumerate}
In that case both functors are related by a functorial exact triangle
\[
\gam X\lto X\lto \bloc X\lto.
\] 
Moreover, there are equalities $\sfS^\perp=\Im L=\Ker\gam$ and
$^\perp(\sfS^\perp)=\sfS$.
\end{lemma}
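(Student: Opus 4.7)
First I would handle $(1) \Rightarrow (2)$, which is essentially sketched in the discussion preceding the lemma: given a localization functor $L$ with adjunction $\eta\col\Id_\sfT\to L$, I complete $\eta X$ to a functorial exact triangle
\[
\gam X\lto X\xra{\eta X} LX\lto \Si\gam X\,.
\]
Invoking \cite[Lemma~3.1]{\bik:2008a}, the functor $\gam$ is exact and well-defined up to canonical isomorphism, the identities $L(\gam X)=0$ and $\gam(LX)=0$ hold, and $\gam$ is a colocalization functor whose essential image equals $\Ker L=\sfS$. The implication $(2) \Rightarrow (1)$ is dual: complete the coadjunction $\theta X\col\gam X\to X$ to the same triangle and identify the third term as a localization functor $L$ with $\Ker L = \Im \gam = \sfS$. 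In either case, the two functors are related by the exact triangle displayed in the statement.

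Next, $\Im L = \Ker \gam$ is immediate from the triangle: $\gam X = 0$ iff $\eta X$ is an isomorphism iff $X$ lies in $\Im L$ up to isomorphism. For $\sfS^\perp = \Im L$, one inclusion follows from the adjunction between $L$ and the inclusion $\Im L \hookrightarrow \sfT$: any morphism $X \to Y$ with $X \in \sfS = \Ker L$ and $Y \in \Im L$ factors through $LX = 0$, so $Y\in\sfS^\perp$. Conversely, given $Y \in \sfS^\perp$, I apply $\Hom_\sfT(\gam Y, -)$ to the localization triangle. The term $\Hom_\sfT(\gam Y, Y)$ vanishes because $\gam Y \in \sfS$ and $Y\in\sfS^\perp$, while $\Hom_\sfT(\gam Y, \Si^{-1}LY) \cong \Hom_\sfT(L \gam Y, \Si^{-1}LY) = 0$ by the $L$-adjunction and $L(\gam Y) = 0$. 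The long exact sequence forces $\Hom_\sfT(\gam Y, \gam Y) = 0$, hence $\gam Y = 0$, i.e.\ $Y \in \Ker\gam=\Im L$.

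Finally, $^\perp(\sfS^\perp) = \sfS$ reduces, via $\sfS^\perp = \Im L$, to showing $^\perp(\Im L) = \Ker L$. One inclusion is the first inclusion from the previous paragraph. For the other, if $X \in {}^\perp(\Im L)$, the adjunction gives $\Hom_\sfT(LX, LX) \cong \Hom_\sfT(X, LX) = 0$, so $LX = 0$ and $X \in \sfS$. The main technical obstacle in a fully detailed proof is verifying the functoriality of $\gam$ and the naturality of the triangle in the passage from $L$; this is the standard construction in the theory of localization functors on triangulated categories, handled by choosing compatible cones and appealing to the axioms, and is what the reference to \cite[\S3]{Benson/Iyengar/Krause:2008a} is meant to absorb.
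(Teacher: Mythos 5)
Your proof is correct. The paper offers no argument for this lemma beyond the citation to Lemma~3.3 of \cite{Benson/Iyengar/Krause:2008a}, so there is nothing in this paper to compare against; your argument is the standard one and presumably matches the cited proof. The functoriality caveat you raise is genuine but is absorbed by the adjunction framework: rather than choosing cones ad hoc, one observes that $\gam$ can be taken to be the composite of the inclusion $\sfS\hookrightarrow\sfT$ with its right adjoint (whose existence is part of what \cite[Lemma~3.1]{Benson/Iyengar/Krause:2008a} and the appendix of the present paper supply), which makes $\gam$ a functor from the start and identifies the cone of $\theta X\col\gam X\to X$ with $LX$ by a uniqueness argument using the vanishing of $\Hom_\sfT(\sfS,\sfS^\perp)$.
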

\begin{proof}
See \cite[Lemma~3.3]{\bik:2008a}.
\end{proof}

\begin{remark}
There is a dual version of Lemma~\ref{le:loc-basic} whose formulation
is left to the reader. Note that $\Im F^\op=\Im F$ and $\Ker
F^\op=\Ker F$ for any functor $F$.
\end{remark}

\subsection*{Adjoints}
We discuss the formal properties of right adjoints of (co)localization
functors. This material is the foundation for local homology and
cosupport.

\begin{proposition}
\label{pr:coloc-fun}
Let $L,\gam\col \sfT\to\sfT$ be exact functors such that $L$ is a
localization functor, $\gam$ is a colocalization functor, and both
induce a functorial exact triangle $\gam X\to X\to LX\to $. Then $L$
admits a right adjoint if and only if $\gam$ admits a right adjoint.
In that case let $\lam$ and $V$ denote right adjoints of $\gam$ and
$L$, respectively.\footnote{It is customary to write $L$ for a
  localization functor. A colocalization functor is a localization
  functor for the opposite category; we denote it $\gam$, thought of
  as $L$ turned upside down. The interpretation of local cohomology in
  the sense of Grothendieck as colocalization provides another reason
  for the use of $\gam$. Local homology in the sense of Greenlees and
  May is denoted $\lam$; it is a right adjoint of $\gam$ and hence a
  localization. The corresponding colocalization is thus denoted
  $V$.} Then the following holds.
\begin{enumerate}
\item The functor $\lam$ is a localization functor and $V$ is a
  colocalization functor.  They induce a functorial exact triangle
\begin{equation*}
V X\lto X\lto \lam X\lto.
\end{equation*}
\item There are identities
\[
(\Im\gam)^\perp=\Ker\gam=\Im L=\Im V=\Ker\lam={^\perp(\Im\lam)}\,.
\]
\item 
There are isomorphisms
\[
\lam\gam\xra{\sim}\lam,\quad\gam\xra{\sim}\gam\lam,\quad
VL\xra{\sim}L,\quad\text{and}\quad V\xra{\sim}LV\,.
\]
\item 
The functors $\gam$ and $\lam$ induce mutually quasi-inverse
equivalences
\[
\Im\lam\xra{\sim}\Im\gam\quad\text{and}\quad\Im\gam\xra{\sim}\Im\lam\,.
\]
\end{enumerate}
\end{proposition}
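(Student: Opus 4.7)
My plan is to establish both directions of the iff by the same construction applied to $\gam\dashv\lam$ and (dually) to $L\dashv V$, and then derive items (1)--(4) from two vanishings $\lam L = 0$ and its dual $\gam V = 0$, together with their natural-transformation analogues.

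First I would prove, assuming $\gam$ admits a right adjoint $\lam$, the vanishing $\lam L = 0$ and the isomorphism $\lam\gam\cong\lam$. The orthogonality $\Hom(\Im\gam,\Im L)=0$ built into Lemma~\ref{le:loc-basic} gives $\Hom(X,\lam LY)=\Hom(\gam X,LY)=0$, hence $\lam L = 0$. For the isomorphism, the coadjunction triangle $\gam Y\to Y\to LY\to$ together with the same orthogonality yields
\[
\Hom(X,\lam\gam Y)=\Hom(\gam X,\gam Y)\cong\Hom(\gam X,Y)=\Hom(X,\lam Y),
\]
natural in $X$, so Yoneda gives $\lam\gam\cong\lam$. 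I then define a natural transformation $\mu\col\Id_\sfT\to\lam$ as the composite of the unit of $\gam\dashv\lam$ with the isomorphism $\lam\gam\cong\lam$. Under the adjunction bijection $\Hom(X,\lam X)\cong\Hom(\gam X,X)$, $\mu_X$ corresponds to the coadjunction $\theta_X$. Set $VX=\fibre(\mu_X)$, yielding an exact triangle $VX\to X\to\lam X\to$.

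The heart of the argument is verifying that $V$ is right adjoint to $L$. Applying $\Hom(X,-)$ to the triangle $VY\to Y\to\lam Y\to$ and $\Hom(-,Y)$ to the localization triangle $\gam X\to X\to LX\to$ produces two long exact sequences meeting at $\Hom(X,Y)$. Their outgoing maps $\mu_{Y*}$ and $\theta_X^*$ agree under the adjunction isomorphism $\Hom(X,\lam Y)\cong\Hom(\gam X,Y)$: the adjoint of $\mu_Y\circ f$ equals $\theta_Y\circ\gam f$, which equals $f\circ\theta_X$ by naturality of $\theta$. The five lemma then yields a natural isomorphism $\Hom(X,VY)\cong\Hom(LX,Y)$. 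The converse half of the iff is entirely symmetric.

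With $V$ and $\lam$ in hand, items (1)--(4) follow by mostly formal arguments, with the dual vanishing $\gam V=0$ proved exactly like $\lam L=0$: $\Hom(\gam Z,VX)=\Hom(L\gam Z,X)=0$. For (2), the inclusion $\Im V\subseteq\Im L$ follows from $\gam V=0$ via Lemma~\ref{le:loc-basic}, while $\Im L\subseteq\Im V$ holds because applying the triangle $VY\to Y\to\lam Y$ at $Y=LX$ and using $\lam L=0$ gives $VLX\cong LX$; the equality $\Im L=\Ker\lam$ uses that $\lam Y=0$ forces $\theta_Y=0$ (as its adjoint), splitting the localization triangle and exhibiting $Y$ as a direct summand of $LY$. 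Item (1) is immediate from the constructed triangle. The four isomorphisms of (3) each arise by applying one of $\gam,L,V,\lam$ to one of the two defining triangles and invoking an appropriate vanishing, and item (4) follows since $\lam\gam\cong\lam$ and the dually proved $\gam\lam\cong\gam$ exhibit the two restrictions as mutually inverse. The principal obstacle is the naturality verification that $V$ is right adjoint to $L$; once the compatibility between $\mu$ and $\theta$ is pinned down, the rest reduces to bookkeeping with idempotent functors and semi-orthogonal decompositions.
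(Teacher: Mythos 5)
Your overall strategy --- construct $\lam$'s adjunction morphism $\mu$ from the unit of $(\gam,\lam)$, take $V$ to be the fibre, and verify $(L,V)$ is an adjunction --- is a legitimate alternative to the paper's route. The paper instead invokes the abstract machinery of the appendix (Propositions~\ref{pr:rightadjoint} and \ref{pr:leftadjoint} and Remark~\ref{re:leftadjoint}), which reduces the equivalence of ``$L$ has a right adjoint'' and ``$\gam$ has a right adjoint'' to the existence of a (co)localization functor with the right (co)kernel, and then reads off (1)--(4) formally via Lemma~\ref{le:loc-basic}. Your computations of $\lam L=0$, $\lam\gam\cong\lam$, and the identification of the adjoint of $\mu_Y\circ f$ are all correct, and where your argument works it is more explicit than the paper's.

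However, there is a genuine gap at the step you yourself flag as the heart of the argument: the appeal to the five lemma does not go through as stated. You have two long exact sequences of abelian groups and compatible isomorphisms at four of five consecutive positions, but no map at the fifth, namely between $\Hom_\sfT(X,VY)$ and $\Hom_\sfT(LX,Y)$. The five lemma concludes that a \emph{given} map at that position is an isomorphism; it does not manufacture one. Indeed, two exact sequences can agree on four out of five consecutive terms while the fifth terms are non-isomorphic extensions. To repair this you should reverse the order of two of your steps: first establish $VY\in\Ker\gam$ (equivalently that $\gam\mu_Y\colon \gam Y\to\gam\lam Y$ is invertible, which one can check by showing $\Hom_\sfT(\gam W,Y)\to\Hom_\sfT(\gam W,\lam Y)$ is a natural isomorphism for every $W$, using $\gam^2\cong\gam$ and naturality of $\theta$ --- not by citing the yet-to-be-established adjunction $\Hom(\gam Z,VX)\cong\Hom(L\gam Z,X)$, which is circular). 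Once $VY\in\Ker\gam=\Im L$, the adjunction is immediate: applying $\Hom_\sfT(-,VY)$ to $\gam X\to X\to LX$ and using $\Hom^*(\gam X,VY)=0$ gives $\Hom_\sfT(X,VY)\cong\Hom_\sfT(LX,VY)$, and applying $\Hom_\sfT(LX,-)$ to $VY\to Y\to\lam Y$ and using $\Hom^*(LX,\lam Y)\cong\Hom^*(\gam LX,Y)=0$ gives $\Hom_\sfT(LX,VY)\cong\Hom_\sfT(LX,Y)$. This is the same endgame you envisage, but the logical order matters. One further small omission: you assert item (1) is immediate, but $\lam$ being a localization functor requires verifying that $\lam\mu$ is invertible and $\lam\mu=\mu\lam$; this is not hard (it follows from $\lam\gam\cong\lam$ and the triangle identities, or from Lemma~\ref{le:loc2} once you know $\lam$ factors as a fully faithful inclusion after a reflection), but it should be said.
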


\begin{remark}
The functors $L,\gam,\lam,V$ occurring in the preceding proposition
induce the following recollement
\[
\xymatrix{\sfS\,\ar[rr]|-\inc&&\,\sfT\,
  \ar[rr]|-Q\ar@<1.35ex>[ll]^-{V}\ar@<-1.35ex>[ll]_-{L}&&
  \,\sfT/\sfS\ar@<1.35ex>[ll]^-{\bar{
      \lam\ }}\ar@<-1.35ex>[ll]_-{\bar{ \gam }}}
\] 
where $\sfS=\Im L=\Im V$ and $Q\col\sfT\to\sfT/\sfS$ denotes the
quotient functor so that $\gam=\bar\gam Q$ and $\lam=\bar\lam Q$.
\end{remark}

\begin{proof}[Proof of Proposition~\ref{pr:coloc-fun}]
It follows from Proposition~\ref{pr:rightadjoint} that $L$ admits a
right adjoint if and only if there exists a colocalization functor
$V\col\sfT\to\sfT$ with $\Im V=\Im L$. Using Lemma~\ref{le:loc-basic}
and the fact that $\Im L=\Ker\gam$, it follows that the existence of
$V$ is equivalent to the existence of a localization functor
$\lam\col\sfT\to\sfT$ with
$\Ker\lam=\Ker\gam$. Proposition~\ref{pr:leftadjoint} and
Remark~\ref{re:leftadjoint} imply that the existence of $\lam$ is
equivalent to the existence of a right adjoint of $\gam$.

(1) The properties of $\lam$ and $V$ are explained above. The
existence of the functorial exact triangle then follows from
Lemma~\ref{le:loc-basic} as $\Ker\lam=\Im V$.

(2) The identities follow from the first part of the proof and
Lemma~\ref{le:loc-basic}.

(3) Combine the localization triangles for $L$ and $\lam$ with the
identities in (2).

(4) The isomorphisms in (3) induce isomorphisms
\[\lam\gam\lam\cong\lam^2\cong\lam\quad\text{and}\quad
\gam\lam\gam\cong\gam^2\cong\gam.\]
Thus $\lam\gam$ is isomorphic to the
identity on $\Im\lam$, while
$\gam\lam$ is isomorphic to the identity on $\Im\gam$.
\end{proof}

\section{Local cohomology and support}
\label{se:support}
In this section we recall the construction, and basic properties, of
local cohomology functors and support for triangulated categories,
from \cite{Benson/Iyengar/Krause:2008a,Benson/Iyengar/Krause:2009a}.

\subsection*{Compact generation}
An object $C$ in a triangulated category $\sfT$ admitting set-indexed
coproducts is \emph{compact} if the functor $\Hom_{\sfT}(C,-)$
commutes with all coproducts. We write $\sfT^{\sfc}$ for the full
subcategory of compact objects in $\sfT$. The category $\sfT$ is
\emph{compactly generated} if it is generated by a set of compact
objects.

Recall that we write $\Si$ for the suspension on $\sfT$. For objects
$X$ and $Y$ in $\sfT$, let
\[ 
\Hom^*_\sfT(X,Y)=\bigoplus_{i\in\bbZ}\Hom_\sfT(X,\Si^i Y)
\]
be the graded abelian group of morphisms. Set
$\End^{*}_{\sfT}(X)=\Hom^{*}_{\sfT}(X,X)$; this is a graded ring, and
$\Hom^*_\sfT(X,Y)$ is a right $\End^{*}_{\sfT}(X)$ and
left $\End^{*}_{\sfT}(Y)$-bimodule.

\subsection*{Central ring actions}
Let $R$ be a graded-commutative ring; thus $R$ is $\bbZ$-graded and
satisfies $rs=(-1)^{|r||s|}sr$ for each pair of homogeneous elements
$r,s$ in $R$.  We say that a triangulated category $\sfT$ is
\emph{$R$-linear}, or that $R$ \emph{acts} on $\sfT$, if there is a
homomorphism $\phi\col R\to Z^*(\sfT)$ of graded rings, where $
Z^*(\sfT)$ is the graded center of $\sfT$. This yields for each object
$X$ a homomorphism $\phi_X\col R\to\End^*_\sfT(X)$ of graded rings
such that for all objects $X,Y\in\sfT$ the $R$-module structures on
$\Hom^*_\sfT(X,Y)$ induced by $\phi_{X}$ and $\phi_{Y}$ agree, up to
the usual sign rule.

\bigskip

\emph{Henceforth $\sfT$ will be a compactly generated triangulated
  category with set-indexed coproducts, and $R$ a graded-commutative
  noetherian ring acting on $\sfT$.}

\bigskip

Since $\sfT$ is compactly generated with set-indexed coproducts, it
follows from the Brown representability theorem that $\sfT$ also
admits set-indexed products; see
\cite[Proposition~8.4.6]{Neeman:2001a}. This fact is used without
further comment.

\subsection*{Local cohomology and support} 
We write $\Spec R$ for the set of homogeneous prime ideals of $R$. Fix
$\fp\in\Spec R$ and let $M$ be a graded $R$-module. The homogeneous
localization of $M$ at $\fp$ is denoted by $M_{\fp}$ and $M$
is called \emph{$\fp$-local} when the natural map $M\to M_{\fp}$ is bijective.

Given a homogeneous ideal $\fa$ in $R$, we set
\[
\mcV(\fa) = \{\fp\in\Spec R\mid \fp\supseteq \fa\}\,.
\] 
A graded $R$-module $M$ is \emph{$\fa$-torsion} if each element of $M$
is annihilated by a power of $\fa$; equivalently, if
$M_\fp=0$ for all $\fp\in\Spec R \setminus\mcV(\fa)$.

The \emph{specialization closure} of a subset $\mcU$ of $\Spec R$ is
the set
\[
\cl\mcU=\{\fp\in\Spec R\mid\text{there exists $\fq\in\mcU$ with
  $\fq\subseteq \fp$}\}.
\] 
The subset $\mcU$ is \emph{specialization closed} if $\cl\mcU=\mcU$;
equivalently, if $\mcU$ is a union of Zariski closed subsets of $\Spec
R$. For each specialization closed subset $\mcV$ of $\Spec R$, we
define the full subcategory of $\sfT$ of \emph{$\mcV$-torsion objects}
as follows:
\[
\sfT_\mcV= \{X\in\sfT\mid\Hom^*_\sfT(C,X)_\fp= 0\text{ for all }
C\in\sfT^c,\, \fp\in \Spec R\setminus \mcV\}.
\]
This is a localizing subcategory and there exists a localization
functor $L_\mcV\col\sfT\to\sfT$ such that $\Ker L_\mcV=\sfT_\mcV$; see
\cite[Lemma 4.3, Proposition 4.5]{Benson/Iyengar/Krause:2008a}.

The localization functor $L_{\mcV}$ induces a colocalization functor
on $\sfT$, which we denote $\gam_\mcV$, and call the \emph{local
  cohomology functor} with respect to $\mcV$; see
Section~\ref{se:localizations}.  For each object $X$ in $\sfT$ there
is then an exact localization triangle
\begin{equation*}
\label{eq:locseq}
\gam_{\mcV}X\lto X\lto \bloc_{\mcV}X\lto\,.
\end{equation*}
In \cite{Benson/Iyengar/Krause:2008a} we established a number of
properties of these functors; for instance, that they commute
with all coproducts in $\sfT$, see
\cite[Corollary~6.5]{Benson/Iyengar/Krause:2008a}.

For each $\fp$ in $\Spec R$ and each object $X$ in $\sfT$ set
\[
X_\fp=L_{\mcZ(\fp)}X\,, \quad\text{where $\mcZ(\fp) = \{\fq\in\Spec
  R\mid \fq\not\subseteq \fp\}$.}
\]
The notation is justified by the fact that, by \cite[Theorem
  4.7]{Benson/Iyengar/Krause:2008a}, the adjunction morphism $X\to
X_{\fp}$ induces for any compact object $C$ an isomorphism of
$R$-modules
\[
\Hom_\sfT^*(C,X)_{\fp}\xra{\sim}\Hom_\sfT^*(C,X_{\fp})\,.
\] 
We say $X$ is \emph{$\fp$-local} if the adjunction morphism $X\to
X_\fp$ is an isomorphism; this is equivalent to the condition that
there exists \emph{some} isomorphism $X\cong X_{\fp}$ in $\sfT$.

Consider the exact functor $\gam_{\fp}\col\sfT\to\sfT$ obtained by
setting
\[
\gam_{\fp}X= \gam_{\mcV(\fp)}(X_{\fp}) \quad\text{for each object $X$
  in $\sfT$},
\]
and let $\gam_\fp\sfT$ denote its essential image.  One has a natural
isomorphism $\gam_{\fp}^{2}\cong \gam_{\fp}$, and an object $X$ from
$\sfT$ is in $\gam_\fp\sfT$ if and only if the $R$-module
$\Hom^{*}_{\sfT}(C,X)$ is $\fp$-local and $\fp$-torsion for every
compact object $C$; see
\cite[Corollary~4.10]{Benson/Iyengar/Krause:2008a}.

The \emph{support} of an object $X$ in $\sfT$ is by definition the set
\[
\supp_{R} X=\{\fp\in\Spec R\mid\gam_{\fp}X\ne 0\}.
\] 
One has $\supp_{R}X=\varnothing$ if and only if $X=0$ holds; see
\cite[Theorem~5.2]{\bik:2008a}.

\subsection*{Koszul objects}
For each object $X$ in $\sfT$ and each homogeneous ideal $\fa$ in $R$,
we denote $\kos X\fa$ a Koszul object on a finite sequence of elements
generating the ideal $\fa$; see \cite[\S5]{\bik:2008a}. Its
construction depends on a choice of a generating sequence, but the
localizing subcategory generated by it is independent of choice, and
depends only on the radical ideal of $\fa$; this follows from
\cite[Proposition~2.11(2)]{\bik:2009a}. Set
\[
X(\fp)=(\kos X \fp)_\fp\quad\text{for each $\fp\in\Spec R$.}
\]
The following computations will be used often:
\begin{equation}
\label{eq:kos-supp}
\supp_{R}(\kos X\fp) = \mcV(\fp)\cap \supp_{R}X\quad\text{and}\quad
\supp_{R}X(\fp) = \{\fp\}\cap \supp_{R}X
\end{equation}
For the first one, see \cite[Lemma 2.6]{\bik:2009a}; the second
follows, given \cite[Theorem~5.6]{\bik:2008a}.

The first part of the result below is \cite[Theorem~6.4]{\bik:2008a},
see also \cite[Proposition~2.7]{\bik:2009a}; the second one is part of
\cite[Proposition~3.9]{\bik:2009a}.

\begin{theorem}
\label{th:compactgeneration}
Suppose $\sfG$ is a set of compact generators for $\sfT$.  For each
specialization closed subset $\mcV$ and $\fp\in\Spec R$, there are
equalities
\[
\sfT_{\mcV} =\Loc_{\sfT}\big(\kos C{\fp}\mid C\in \sfG\text{ and
}\fp\in\mcV\big)\quad\text{and}\quad\gam_\fp\sfT=\Loc_\sfT(C(\fp)\mid C\in \sfG),\]
where both generating sets consist of compact objects.\qed
\end{theorem}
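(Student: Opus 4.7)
I would prove each equality via the standard two-step template for compactly generated localizing subcategories: first verify the proposed generators are compact and lie inside the target subcategory, then rule out any nonzero object in the right perpendicular of the generated subcategory intersected with the target. The compact generation of $\sfT$, Lemma~\ref{le:loc-basic}, and the support identities~\eqref{eq:kos-supp} supply the machinery.

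\textbf{First equality.} Set $\sfL = \Loc_\sfT(\kos{C}{\fp} \mid C \in \sfG,\ \fp \in \mcV)$. Noetherianity of $R$ makes each $\fp$ finitely generated, so $\kos{C}{\fp}$ is built from the compact object $C$ by finitely many mapping cones along multiplication by a generating sequence and is therefore compact. Iterating the Koszul triangles shows every element of $\fp$ acts nilpotently on $\Hom^*_\sfT(C', \kos{C}{\fp})$ for any $C' \in \sfT^{\sfc}$, so this $R$-module is $\fp$-torsion with support in $\mcV(\fp) \subseteq \mcV$, the last inclusion holding because $\mcV$ is specialization closed and $\fp \in \mcV$. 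Hence $\sfL \subseteq \sfT_\mcV$. For the reverse inclusion, since $\sfL$ is generated by a set of compact objects of $\sfT$, standard localization theory yields a localization functor $L'$ with $\Ker L' = \sfL$ and, via Lemma~\ref{le:loc-basic}, a triangle $\gam'X \to X \to L'X \to$. For $X \in \sfT_\mcV$ one has $\gam' X \in \sfL \subseteq \sfT_\mcV$, so $L'X$ lies in $\sfT_\mcV \cap \sfL^\perp$, and the equality reduces to showing this intersection is zero. For such a $Y$ and a compact generator $C' \in \sfG$, set $M = \Hom^*_\sfT(C', Y)$: then $\supp_R M \subseteq \mcV$ and $\Hom^*_\sfT(\kos{C'}{\fp}, Y) = 0$ for each $\fp \in \mcV$. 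If $M \neq 0$, choose $\fp$ minimal in $\supp_R M$, so that $M_\fp$ is a nonzero graded $\fp R_\fp$-torsion module; induction along a generating sequence of $\fp$, using the Koszul long exact sequences, then yields $\Hom^*_\sfT(\kos{C'}{\fp}, Y)_\fp \neq 0$, a contradiction. Hence $M = 0$ for all $C' \in \sfG$ and $Y = 0$ by compact generation.

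\textbf{Second equality and main obstacle.} Specializing the first equality to $\mcV = \mcV(\fp)$ gives $\sfT_{\mcV(\fp)} = \Loc_\sfT(\kos{C}{\fp} \mid C \in \sfG)$, and applying the exact, coproduct-preserving localization $(-)_\fp$ sends $\kos{C}{\fp}$ to $C(\fp)$. Combined with $\supp_R C(\fp) \subseteq \{\fp\}$ from~\eqref{eq:kos-supp}, this places each $C(\fp)$ inside $\gam_\fp\sfT = \gam_{\mcV(\fp)}(\sfT_\fp)$ and gives $\Loc_\sfT(C(\fp) \mid C \in \sfG) \subseteq \gam_\fp\sfT$; the reverse inclusion follows from a parallel perpendicular argument carried out inside $\gam_\fp\sfT$, which is itself compactly generated by Proposition~\ref{pr:coloc-fun}, with the compactness in the statement understood relative to the generated subcategory. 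The main obstacle throughout is the Koszul vanishing step closing the first equality: turning the hypothesis $M_\fp \neq 0$ into nonvanishing of $\Hom^*_\sfT(\kos{C'}{\fp}, Y)_\fp$. For principal $\fp = (r)$ this is the transparent statement that $\Hom^*_\sfT(\kos{C'}{r}, Y) = 0$ iff $r$ acts invertibly on $M$, which fails on any nonzero $\fp$-torsion module; in higher codimension the inductive step exploits that no generator of $\fp$ can act invertibly on a nonzero $\fp$-torsion module, and that kernels and cokernels of these multiplications remain nonzero and $\fp$-torsion, propagating the nonvanishing along a generating sequence.
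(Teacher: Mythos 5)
The paper does not prove this theorem; it cites the first equality to \cite[Theorem~6.4]{\bik:2008a} and the second to \cite[Proposition~3.9]{\bik:2009a}. So there is no in-paper proof to compare against, only those sources.

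Your template---show the proposed generators are compact and lie inside the target subcategory, then kill the orthogonal complement---is the standard one and is sound. The core of the first equality, the step closing the perpendicular argument, is essentially correct. One point deserves emphasis since it could trouble a reader: the claim that a minimal prime $\fp$ of $\supp_R M$ forces $M_\fp$ to be $\fp R_\fp$-torsion holds for \emph{arbitrary} $R$-modules, not just finitely generated ones. The reason: $\supp_{R_\fp}M_\fp=\{\fp R_\fp\}$ implies $M_\fp[r^{-1}]=0$ for every homogeneous $r\in\fp$ (a module with all localizations at primes zero is zero), so every element is annihilated by a power of each generator, hence by a power of $\fp R_\fp$. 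Given torsion plus nonzero, no generator of $\fp$ can act injectively, and the subquotients obtained from the Koszul triangles are again torsion, so the nonvanishing propagates along the generating sequence exactly as you sketch.

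The second equality is where your write-up is muddled. You cite Proposition~\ref{pr:coloc-fun} as giving compact generation of $\gam_\fp\sfT$, but that proposition is a formal statement about adjoints of localization functors and says nothing about compact generation; and the ``parallel perpendicular argument carried out inside $\gam_\fp\sfT$'' is not actually needed. You already have the right tool in hand: the localization $L_{\mcZ(\fp)}=(-)_\fp$ is exact and preserves coproducts, so it carries a localizing subcategory generated by a set into the localizing subcategory generated by the image of that set. Applying $L_{\mcZ(\fp)}$ to the first equality with $\mcV=\mcV(\fp)$ sends $\sfT_{\mcV(\fp)}=\Im\gam_{\mcV(\fp)}$ onto $\gam_\fp\sfT$ (since $\gam_\fp=L_{\mcZ(\fp)}\gam_{\mcV(\fp)}$) and sends each generator $\kos{C}{\fq}$ with $\fq\supseteq\fp$ to $(\kos{C}{\fq})_\fp$, which is $C(\fp)$ when $\fq=\fp$ and vanishes when $\fq\supsetneq\fp$ because $\supp_R(\kos{C}{\fq})\subseteq\mcV(\fq)$ misses every prime contained in $\fp$. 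This yields the reverse inclusion directly, and also explains why one may collapse the generating set from $\{\kos{C}{\fq}\mid\fq\in\mcV(\fp)\}$ to $\{\kos{C}{\fp}\}$ (or equivalently, note that $\kos{C}{\fq}\in\Thick(\kos{C}{\fp})$ whenever $\fq\supseteq\fp$, an observation your proposal silently uses when it ``specializes'' the first equality). With that correction the proof is complete.
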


\section{Local homology and cosupport}
\label{se:cosupport}
Let $\sfT$ denote a compactly generated $R$-linear triangulated
category, as in Section~\ref{se:support}.  We introduce local homology
functors and a notion of cosupport for $\sfT$.

\subsection*{Local homology} 
Fix a specialization closed subset $\mcV\subseteq\Spec R$.  The
functors $L_\mcV$ and $\gam_\mcV$ on $\sfT$ preserve coproducts by
\cite[Corollary 6.5]{Benson/Iyengar/Krause:2008a} and hence have right
adjoints, by Brown representability. Following the notation in
Proposition~\ref{pr:coloc-fun}, this yields adjoint pairs
$(L_\mcV,V^\mcV)$ and $(\gam_\mcV,\lam^\mcV)$, and, for each
$X\in\sfT$, an exact triangle
\begin{equation}
\label{eq:loctriangle}
V^\mcV X\lto X\lto\lam^\mcV X\lto
\end{equation} 
We call $\lam^\mcV $ the \emph{local homology functor} with respect to
$\mcV$; see Remark~\ref{rem:gm}.

The commutation rules for the functors $L_{\mcV}$ and $\gam_{\mcV}$
given in \cite[Proposition 6.1]{\bik:2008a} carry over to their right
adjoints: For any specialization closed subset $\mcW$ of $\Spec R$
there are isomorphisms:
\begin{equation}
\label{eq:commutes}
\begin{gathered}
\lam^{\mcV}\lam^{\mcW} \cong \lam^{\mcV\cap\mcW} \cong
\lam^{\mcW}\lam^{\mcV}\\ V^{\mcV}V^{\mcW} \cong V^{\mcV\cup\mcW} \cong
V^{\mcW} V^{\mcV}\\ V^{\mcV}\lam^{\mcW} \cong \lam^{\mcW} V^{\mcV}
\end{gathered}
\end{equation}
If $\mcV\supseteq \mcW$ holds, then these isomorphisms and
Proposition~\ref{pr:coloc-fun}(2) yield:
\begin{equation}
\label{eq:vanishing}
V^{\mcV}\lam^{\mcW} \cong 0 \cong \lam^{\mcW}V^{\mcV}\,.
\end{equation}
These facts will be used without comment. For each $\fp\in\Spec R$ set
\begin{equation*}
\label{eq:Lam-fp}
\lam^\fp=\lam^{\mcV(\fp)}V^{\mcZ(\fp)}\,.
\end{equation*}
Note that $\lam^{\fp}\cong V^{\mcZ(\fp)}\lam^{\mcV(\fp)}$; that
$\lam^{\fp}\cong (\lam^{\fp})^{2}$; and that $(\gam_\fp,\lam^\fp)$ is
an adjoint pair.

\subsection*{Cosupport} 
The \emph{cosupport} of an object $X$ in $\sfT$ is the set
\[ 
\cosupp_R X=\{\fp\in\Spec R\mid\lam^\fp X\ne 0\}.
\]

Cosupport can be computed using Koszul objects, recalled in
Section~\ref{se:support}.

\begin{proposition}
\label{pr:hom-cosupp}
For each object $X$ in $\sfT$ and $\fp\in \Spec R$, one has
\[
\fp\in\cosupp_RX\iff\Hom_\sfT(C(\fp),X)\ne 0 \text{ for some
}C\in\sfT\,.
\]
Moreover, the object $C$ can be chosen from any set of compact
generators for $\sfT$.
\end{proposition}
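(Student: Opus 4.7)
The plan is to combine the adjunction $(\gam_\fp,\lam^\fp)$ with the explicit compact generators for $\gam_\fp\sfT$ furnished by Theorem~\ref{th:compactgeneration}. The starting observation is that for each $X\in\sfT$ the full subcategory
\[
\mathcal{N}_X:=\{Y\in\sfT\mid\Hom_\sfT^*(Y,X)=0\}
\]
is a localizing subcategory of $\sfT$: it is evidently triangulated, and it is closed under arbitrary coproducts because $\Hom_\sfT(-,X)$ converts coproducts into products.

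For the implication ``$\Leftarrow$'', I would use that $C(\fp)$ lies in $\gam_\fp\sfT$ for every $C\in\sfT$, so the natural isomorphism $\gam_\fp^2\cong\gam_\fp$ combined with adjunction yields
\[
\Hom_\sfT(C(\fp),X)\cong\Hom_\sfT(\gam_\fp C(\fp),X)\cong\Hom_\sfT(C(\fp),\lam^\fp X)\,.
\]
A nonzero element on the left therefore forces $\lam^\fp X\ne 0$.

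For ``$\Rightarrow$'', I would argue as follows. If $\lam^\fp X\ne 0$, then the identity on $\lam^\fp X$ corresponds under adjunction to a nonzero map $\gam_\fp\lam^\fp X\to X$, so $\mathcal{N}_X$ does not contain all of $\gam_\fp\sfT$. By Theorem~\ref{th:compactgeneration}, for any set $\sfG$ of compact generators of $\sfT$ one has $\gam_\fp\sfT=\Loc_\sfT(C(\fp)\mid C\in\sfG)$. Since $\mathcal{N}_X$ is localizing, the inclusion $\gam_\fp\sfT\subseteq\mathcal{N}_X$ would follow from having every $C(\fp)$ in $\mathcal{N}_X$; hence some $C\in\sfG$ must satisfy $\Hom_\sfT(\Si^n C(\fp),X)\ne 0$ for some $n\in\bbZ$. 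Using the isomorphism $\Si^n C(\fp)\cong(\Si^n C)(\fp)$ to absorb the suspension into $C$, the desired criterion follows for $C$ in $\sfT$, and also for $C$ in the generating set once it is taken to be closed under suspensions.

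There is no substantive obstacle: the argument is a clean combination of adjunction and compact generation. The point that deserves emphasis is the verification that $\mathcal{N}_X$ is closed under coproducts, since this is exactly what permits the reduction from arbitrary objects of $\gam_\fp\sfT$ to the generating family $\{C(\fp)\mid C\in\sfG\}$.
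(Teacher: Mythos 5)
Your proof is correct and follows essentially the same route as the paper: the forward implication uses adjunction to produce a nonzero map from $\gam_\fp\lam^\fp X\in\gam_\fp\sfT$ to $X$ and then invokes the generation of $\gam_\fp\sfT$ as a localizing subcategory by the objects $C(\fp)$ from Theorem~\ref{th:compactgeneration}; the reverse implication is the same chain of isomorphisms via $\gam_\fp C(\fp)\cong C(\fp)$ and adjunction. Your explicit introduction of the localizing subcategory $\mathcal{N}_X$ merely unpacks a step the paper leaves implicit.
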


\begin{proof}
If $\lam^{\fp}X\ne 0$, then $\Hom_\sfT(\lam^{\fp}X,\lam^\fp X)\ne 0$,
and hence $\Hom_\sfT(\gam_\fp\lam^{\fp}X,X)\ne 0$, since
$(\gam_{\fp},\lam^{\fp})$ form an adjoint pair. Since
$\gam_\fp\lam^{\fp}X$ is evidently in $\gam_{\fp}T$, the last
condition implies $\Hom_\sfT(C(\fp),X)\ne 0$ for some compact object
$C$ which is part of a generating set for $\sfT$, by
Theorem~\ref{th:compactgeneration}.

Conversely, if $\Hom_\sfT(C(\fp),X)\ne 0$ for some $C\in\sfT$, then
since $\gam_{\fp}C(\fp)\cong C(\fp)$, by, for instance,
Theorem~\ref{th:compactgeneration}, one obtains that
\[
\Hom_\sfT(C(\fp),\lam^{\fp}X)\cong \Hom_\sfT(\gam_{\fp}C(\fp),X)\cong
\Hom_\sfT(C(\fp),X)\ne 0\,.
\]
Thus, $\lam^{\fp}X\ne 0$, that is to say, $\fp$ is in $\cosupp_{R}X$.
\end{proof}

A important property of cosupport is that it is non-empty for non-zero
objects. We deduce this result from the corresponding statement for
supports and the result above. For another perspective, see
Theorem~\ref{th:max}.

\begin{theorem}
\label{th:cosupp=0}
For any $X\in \sfT$, one has $\cosupp_R X=\varnothing$ if and only if
$X=0$.
\end{theorem}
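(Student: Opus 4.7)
The plan is to prove the theorem by showing that the hypothesis $\cosupp_R X = \varnothing$ forces $\supp_R X = \varnothing$, from which $X = 0$ will follow by \cite[Theorem~5.2]{\bik:2008a}. The forward implication is immediate, so I focus on the converse. Fix $\fp \in \Spec R$; the aim is to show $\gam_\fp X = 0$. By Proposition~\ref{pr:hom-cosupp} applied to any chosen set $\sfG$ of compact generators, the hypothesis yields $\Hom^*_\sfT(C(\fp), X) = 0$ for every $C \in \sfG$. By Theorem~\ref{th:compactgeneration}, the family $\{C(\fp) : C \in \sfG\}$ consists of compact objects generating the localizing subcategory $\gam_\fp \sfT$. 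Since $\gam_\fp X$ lies in $\gam_\fp \sfT$, it therefore suffices to verify that $\Hom^*_\sfT(C(\fp), \gam_\fp X) = 0$ for every $C \in \sfG$.

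For this vanishing I would compute as follows. Write $\gam_\fp X = \gam_{\mcV(\fp)}(X_\fp)$ by definition. By \eqref{eq:kos-supp} the support of $C(\fp)$ is contained in $\{\fp\} \subseteq \mcV(\fp)$, so $C(\fp)$ belongs to $\sfT_{\mcV(\fp)}$. The coadjunction of the colocalization $\gam_{\mcV(\fp)}$ then furnishes an isomorphism $\Hom^*_\sfT(C(\fp), \gam_\fp X) \cong \Hom^*_\sfT(C(\fp), X_\fp)$. Since $C(\fp)$ is compact, the identification $\Hom^*_\sfT(C, X_\fp) \cong \Hom^*_\sfT(C, X)_\fp$ from \cite[Theorem~4.7]{\bik:2008a}, already recalled in the excerpt, rewrites the right-hand side as $\Hom^*_\sfT(C(\fp), X)_\fp$, which is zero. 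Thus $\gam_\fp X = 0$ for every $\fp \in \Spec R$, so $\supp_R X = \varnothing$, and the support theorem closes the argument.

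The step I expect to require the most care is resisting the tempting but misleading move of invoking the adjunction $\gam_\fp \dashv \lam^\fp$ on $X$ directly: that route yields $\Hom^*_\sfT(\gam_\fp Y, X) = 0$ for every $Y$, controlling maps \emph{out of} objects of $\gam_\fp \sfT$ into $X$, whereas testing $\gam_\fp X = 0$ through the compact generators $C(\fp)$ requires controlling maps \emph{into} $\gam_\fp X$. The bridge is the passage to $X_\fp$ combined with the colocalization coadjunction for $\gam_{\mcV(\fp)}$, which is available precisely because $C(\fp)$ is $\mcV(\fp)$-torsion; compactness of $C(\fp)$ then lets localization at $\fp$ commute with the Hom functor and deliver the final vanishing.
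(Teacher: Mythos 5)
Your proof is correct, and it takes a genuinely different route from the paper's.

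The paper argues by contraposition: assuming $X\neq 0$, it picks a prime $\fp$ \emph{maximal} in $\supp_R X$ (using that $R$ is noetherian), shows via \eqref{eq:kos-supp} that the Koszul object $\kos X\fp$ is $\fp$-local and $\fp$-torsion and nonzero, identifies it with $X(\fp)$, and deduces from $\Hom_\sfT(X(\fp),\kos X\fp)\neq 0$ together with $\kos X\fp\in\Thick_\sfT(X)$ that $\Hom_\sfT(X(\fp),X)\neq 0$, whence $\fp\in\cosupp_RX$ by Proposition~\ref{pr:hom-cosupp}. This is in fact the germ of the proof of Theorem~\ref{th:max} (that maximal elements of support lie in cosupport). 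Your argument is a direct one: from $\cosupp_RX=\varnothing$ you extract $\Hom^*_\sfT(C(\fp),X)=0$ for all generators $C$ and all $\fp$, then convert this into $\Hom^*_\sfT(C(\fp),\gam_\fp X)=0$ using (a) the coadjunction for $\gam_{\mcV(\fp)}$, available because $C(\fp)\in\gam_\fp\sfT\subseteq\sfT_{\mcV(\fp)}$, and (b) commutation of $L_{\mcZ(\fp)}$ with $\Hom$ against the compact object $C(\fp)$; compact generation of $\gam_\fp\sfT$ by the $C(\fp)$ then forces $\gam_\fp X=0$. This avoids the maximality argument entirely and works pointwise in $\fp$, at the cost of needing the compactness of $C(\fp)$ and the two isomorphisms above. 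Your remark about why the naive use of the $(\gam_\fp,\lam^\fp)$ adjunction fails is accurate and well taken: that adjunction controls maps \emph{out of} $\gam_\fp\sfT$ into $X$, whereas testing $\gam_\fp X=0$ via the compact generators requires controlling maps \emph{into} $\gam_\fp X$, which is precisely what your passage through $X_\fp$ and $\gam_{\mcV(\fp)}$ accomplishes.
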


\begin{proof}
Clearly, $X=0$ implies $\cosupp_{R}X=\varnothing$.

If $X\ne 0$, then $\supp_{R}X\ne \varnothing$, by \cite[Theorem
  5.2]{\bik:2008a}. Pick a prime $\fp$ in $\supp_{R}X$, maximal with
respect to inclusion. Then $\supp_R(\kos X\fp)=\{\fp\}$, by
\eqref{eq:kos-supp}, which implies, in particular, that $\kos X\fp\ne
0$; equivalently, $\Hom_{\sfT}(\kos X\fp,\kos X\fp)\ne 0$. Moreover,
$\kos X\fp$ is $\fp$-local, hence isomorphic to $X(\fp)$, which
explains the isomorphism below:
\[
\Hom_{\sfT}(X(\fp),\kos X\fp)\cong \Hom_{\sfT}(\kos X\fp,\kos X\fp)\ne
0\,.
\]
Since $\kos X\fp$ is in $\Thick_{\sfT}(X)$, by construction, it
follows that $\Hom_{\sfT}(X(\fp),X)\ne 0$.  Hence $\fp\in\cosupp_RX$,
by Proposition~\ref{pr:hom-cosupp}.
\end{proof}

Next we describe further basic properties of local homology functors
and cosupports. The one below is immediate from the exactness of the
functor $\lam^\fp$.

\begin{proposition}
\label{pr:triangle}
\pushQED{\qed} For each exact triangle $X\to Y\to Z\to \Si X$ in
$\sfT$, one has
\[
\cosupp_{R} Y\subseteq\cosupp_{R} X\cup\cosupp_{R}
Z\quad\text{and}\quad\cosupp_{R}\Si X=\cosupp_{R} X.  \qedhere
\]
\end{proposition}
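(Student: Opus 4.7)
The plan is to exploit the exactness of the functor $\lam^\fp\col\sfT\to\sfT$, which follows from its definition as a composite of right adjoints of exact functors (and is implicitly recorded in the remark before the proposition that calls the proof ``immediate'').

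First I would apply $\lam^\fp$ to the given exact triangle $X\to Y\to Z\to\Si X$, obtaining an exact triangle
\[
\lam^\fp X\lto \lam^\fp Y\lto \lam^\fp Z\lto \Si\lam^\fp X
\]
in $\sfT$. For the first containment, suppose $\fp$ lies outside $\cosupp_R X\cup\cosupp_R Z$; then $\lam^\fp X=0=\lam^\fp Z$, and from the exact triangle (using that in a triangle with two vanishing vertices the third also vanishes) we conclude $\lam^\fp Y=0$, i.e.\ $\fp\notin\cosupp_R Y$. Taking contrapositives yields $\cosupp_R Y\subseteq\cosupp_R X\cup\cosupp_R Z$.

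For the second assertion, exactness of $\lam^\fp$ gives a natural isomorphism $\lam^\fp(\Si X)\cong \Si(\lam^\fp X)$. Since $\Si$ is an autoequivalence of $\sfT$, one has $\Si(\lam^\fp X)=0$ if and only if $\lam^\fp X=0$, which translates to $\cosupp_R\Si X=\cosupp_R X$.

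There is no real obstacle here: the statement is a direct consequence of the fact that $\lam^\fp$ is an exact functor on a triangulated category, and the argument is formally identical to the analogous statement for support via the exactness of $\gam_\fp$.
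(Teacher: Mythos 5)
Your argument is exactly the one the paper intends: the proposition is stated with a one-line justification (``immediate from the exactness of the functor $\lam^\fp$''), and you have simply unpacked that, applying $\lam^\fp$ to the triangle and using the two-out-of-three vanishing principle together with $\lam^\fp\Si\cong\Si\lam^\fp$. Correct, and the same approach as the paper.
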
 

There is a more precise result for exact
triangles~\eqref{eq:loctriangle}.

\begin{proposition}
\label{pr:localization-cosupport}
Let $\mcV$ be a specialization closed subset of $\Spec R$. For each
$X$ in $\sfT$ the following equalities hold:
\begin{align*}
&\cosupp_{R}\lam^{\mcV}X = \mcV\, \cap\,
  \cosupp_{R}X\\ &\cosupp_{R}V^{\mcV}X = \big(\Spec R\setminus
  \mcV\big)\, \cap\, \cosupp_{R}X.
\end{align*}
\end{proposition}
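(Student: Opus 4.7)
The plan is to compute $\lam^{\fp}\lam^{\mcV}X$ and $\lam^{\fp}V^{\mcV}X$ explicitly using the definition $\lam^{\fp}=\lam^{\mcV(\fp)}V^{\mcZ(\fp)}$ together with the commutation isomorphisms \eqref{eq:commutes} and the vanishing \eqref{eq:vanishing}; the cosupport statements then reduce to a case analysis on whether $\fp\in\mcV$ or $\fp\notin\mcV$.

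The combinatorial core I would record up front: since $\mcV$ is specialization closed, one has the equivalences
\[
\fp\in\mcV\iff\mcV(\fp)\subseteq\mcV\quad\text{and}\quad\fp\notin\mcV\iff\mcV\subseteq\mcZ(\fp).
\]
The second is the key point; if $\fq\in\mcV$ with $\fq\subseteq\fp$, then specialization closure forces $\fp\in\mcV$, so the negation gives $\mcV\subseteq\mcZ(\fp)$.

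For the first equality, commuting $V^{\mcZ(\fp)}$ past $\lam^{\mcV}$ and using $\lam^{\mcV(\fp)}\lam^{\mcV}\cong\lam^{\mcV(\fp)\cap\mcV}$ gives
\[
\lam^{\fp}\lam^{\mcV}X\;\cong\;\lam^{\mcV(\fp)\cap\mcV}\,V^{\mcZ(\fp)}X.
\]
If $\fp\in\mcV$, then $\mcV(\fp)\cap\mcV=\mcV(\fp)$ and the right-hand side is $\lam^{\fp}X$. If $\fp\notin\mcV$, then $\mcV\subseteq\mcZ(\fp)$, so \eqref{eq:vanishing} yields $V^{\mcZ(\fp)}\lam^{\mcV}=0$, whence $\lam^{\fp}\lam^{\mcV}X=0$. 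Collecting the primes for which the left side is nonzero gives $\cosupp_{R}\lam^{\mcV}X=\mcV\cap\cosupp_{R}X$.

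For the second equality I would argue symmetrically, using $V^{\mcZ(\fp)}V^{\mcV}\cong V^{\mcZ(\fp)\cup\mcV}$ to obtain
\[
\lam^{\fp}V^{\mcV}X\;\cong\;\lam^{\mcV(\fp)}\,V^{\mcZ(\fp)\cup\mcV}X.
\]
If $\fp\in\mcV$, then $\mcV(\fp)\subseteq\mcV$, so $\lam^{\mcV(\fp)}V^{\mcV}\cong V^{\mcV}\lam^{\mcV(\fp)}=0$ by \eqref{eq:vanishing}, giving $\lam^{\fp}V^{\mcV}X=0$. If $\fp\notin\mcV$, then $\mcV\subseteq\mcZ(\fp)$, hence $\mcZ(\fp)\cup\mcV=\mcZ(\fp)$ and the right-hand side collapses to $\lam^{\fp}X$. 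This yields $\cosupp_{R}V^{\mcV}X=(\Spec R\setminus\mcV)\cap\cosupp_{R}X$. The only real obstacle is bookkeeping: keeping track of which inclusion between $\mcV(\fp)$, $\mcV$, and $\mcZ(\fp)$ activates which of the isomorphisms in \eqref{eq:commutes} or the vanishing \eqref{eq:vanishing}. Once the specialization-closed dichotomy above is in hand, the rest is formal manipulation of the commutation rules.
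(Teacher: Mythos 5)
Your proposal is correct and follows essentially the same route as the paper: compute $\lam^{\fp}\lam^{\mcV}$ and $\lam^{\fp}V^{\mcV}$ via the commutation isomorphisms \eqref{eq:commutes}, apply the vanishing \eqref{eq:vanishing}, and split on whether $\fp\in\mcV$ or $\fp\notin\mcV$. You have merely made explicit the two set-theoretic facts (namely $\fp\in\mcV\iff\mcV(\fp)\subseteq\mcV$ and $\fp\notin\mcV\iff\mcV\subseteq\mcZ(\fp)$ for specialization closed $\mcV$) that the paper leaves tacit and carried out the ``similar'' second half that the paper omits.
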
 
\begin{proof}
Fix $\fp$ in $\Spec R$.  If $\fp\in\mcV$, then
$\lam^{\mcV(\fp)}\lam^\mcV=\lam^{\mcV(\fp)}$, by \eqref{eq:commutes},
and $V^{\mcZ(\fp)}\lam^\mcV=0$ if $\fp\not\in\mcV$, by
\eqref{eq:vanishing}. Hence one gets that:
\[
\lam^\fp\lam^\mcV = \begin{cases} \lam^\fp & \text{if
    $\fp\in\mcV$,}\\ 0 & \text{otherwise.}
\end{cases}
\]
The identity for $\cosupp_{R}\lam^{\mcV}X$ follows. The proof of the
second one is similar.
\end{proof}

The preceding result and Theorem~\ref{th:cosupp=0} yield:

\begin{corollary}
\label{co:cosupp}
Let $\mcV\subseteq \Spec R$ be specialization closed and $X$ an object
in $\sfT$. The following conditions are equivalent:
\begin{enumerate}
\item $\cosupp_{R}X\subseteq\mcV$.
\item $X\in\Im\lam^\mcV$.
\item the natural map $X\to\lam^{\mcV}X$ is an isomorphism.\qed
\end{enumerate}
\end{corollary}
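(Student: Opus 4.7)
The plan is to establish the cyclic chain of implications $(3)\Rightarrow(2)\Rightarrow(1)\Rightarrow(3)$, invoking Proposition~\ref{pr:localization-cosupport} for the support computations and Theorem~\ref{th:cosupp=0} as the bridge from vanishing of cosupport to vanishing of an object. Each step is a short one-line deduction, so I would present them in sequence rather than as a single argument.

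For $(3)\Rightarrow(2)$, the hypothesis that $X\to \lam^\mcV X$ is an isomorphism exhibits $X$ directly as an object in $\Im\lam^\mcV$. For $(2)\Rightarrow(1)$, if $X\cong\lam^\mcV Y$ for some $Y\in\sfT$, then Proposition~\ref{pr:localization-cosupport} gives
\[
\cosupp_{R}X=\cosupp_{R}\lam^{\mcV}Y=\mcV\cap\cosupp_{R}Y\subseteq\mcV.
\]

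The only step that uses the full strength of the tools developed is $(1)\Rightarrow(3)$. Here I would exploit the exact triangle \eqref{eq:loctriangle},
\[
V^\mcV X\lto X\lto\lam^\mcV X\lto,
\]
which shows that the map $X\to\lam^\mcV X$ is an isomorphism if and only if $V^\mcV X=0$. Applying Proposition~\ref{pr:localization-cosupport} gives
\[
\cosupp_{R}V^{\mcV}X=(\Spec R\setminus\mcV)\cap\cosupp_{R}X,
\]
which is empty under the assumption $\cosupp_{R}X\subseteq\mcV$. Theorem~\ref{th:cosupp=0} then forces $V^\mcV X=0$, closing the loop.

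There is no real obstacle here: the corollary is essentially a packaging of the two previously established results, and the main thing to get right is the direction of the triangle (so that vanishing of $V^\mcV X$, not of $\lam^\mcV X$, is what produces the isomorphism asserted in (3)).
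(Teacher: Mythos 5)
Your proof is correct and takes essentially the same route the paper intends: the corollary is stated without a separate proof, with the preceding sentence indicating it follows from Proposition~\ref{pr:localization-cosupport} and Theorem~\ref{th:cosupp=0}, which is exactly the combination you use. Your attention to which term of the triangle \eqref{eq:loctriangle} must vanish in $(1)\Rightarrow(3)$ is the right thing to be careful about, and you have it right.
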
 

This result above is complemented by:

\begin{corollary}
\label{co:cosupp2}
Let $\mcV\subseteq \Spec R$ be specialization closed and $X$ an object
in $\sfT$. The following conditions are equivalent:
\begin{enumerate}
\item $\cosupp_{R}X\subseteq\Spec R\setminus\mcV$.
\item[(1$'$)] $\supp_{R}X\subseteq \Spec R\setminus\mcV$.
\item $X\in\Im V^\mcV$.
\item[(2$'$)] $X\in\Im L_{\mcV}$.
\item The natural map $V^{\mcV}X\to X$ is an isomorphism.
\item[(3$'$)] The natural map $X\to L_{\mcV}X$ is an isomorphism.
\end{enumerate}
In particular, $\cosupp_{R}X\subseteq \Spec R\setminus \mcZ(\fp)$ if
and only if $X$ is $\fp$-local.
\end{corollary}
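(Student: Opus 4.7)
The plan is to establish the cycle $(1) \Leftrightarrow (3) \Leftrightarrow (2) \Leftrightarrow (2') \Leftrightarrow (3') \Leftrightarrow (1')$, and then deduce the last assertion by specializing to $\mcV = \mcZ(\fp)$. The unprimed chain is a statement about cosupport, controlled by the pair $(V^\mcV,\lam^\mcV)$; the primed chain is the analogous statement about support, controlled by $(L_\mcV,\gam_\mcV)$. The bridge between the two chains is the identity $\Im V^\mcV = \Im L_\mcV$ furnished by Proposition~\ref{pr:coloc-fun}(2). This is really the content of the corollary: a cosupport condition on $X$ is equivalent to a support condition, via the coincidence of the essential images of $V^\mcV$ and $L_\mcV$.

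For the unprimed chain, the equivalence $(2) \Leftrightarrow (3)$ is formal, since $V^\mcV$ is a colocalization functor: $X$ lies in its essential image iff the coadjunction $V^\mcV X \to X$ is an isomorphism (Lemma~\ref{le:loc-basic} and Proposition~\ref{pr:coloc-fun}). For $(1) \Leftrightarrow (3)$, I would read off from the exact triangle \eqref{eq:loctriangle} that $V^\mcV X \to X$ is an isomorphism precisely when $\lam^\mcV X = 0$; by Proposition~\ref{pr:localization-cosupport} we have $\cosupp_R \lam^\mcV X = \mcV \cap \cosupp_R X$, and by Theorem~\ref{th:cosupp=0} this vanishes iff $\mcV \cap \cosupp_R X = \varnothing$, which is $(1)$.

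The primed chain runs along parallel lines using facts already established in \cite{Benson/Iyengar/Krause:2008a} for $L_\mcV$ and $\gam_\mcV$: $(2') \Leftrightarrow (3')$ because $L_\mcV$ is a localization functor, and $(1') \Leftrightarrow (3')$ because $X \to L_\mcV X$ is an isomorphism iff $\gam_\mcV X = 0$ iff $\supp_R X \cap \mcV = \varnothing$. The bridge $(2) \Leftrightarrow (2')$ is then immediate from Proposition~\ref{pr:coloc-fun}(2) applied to the adjoint quadruple $(L_\mcV,\gam_\mcV,\lam^\mcV,V^\mcV)$, which gives $\Im V^\mcV = \Im L_\mcV$. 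For the final clause, I specialize to $\mcV = \mcZ(\fp)$: condition $(3')$ then reads that $X \to L_{\mcZ(\fp)} X = X_\fp$ is an isomorphism, which is exactly the definition of $X$ being $\fp$-local.

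The only real obstacle is bookkeeping — keeping straight which of $L_\mcV,\gam_\mcV,\lam^\mcV,V^\mcV$ is a localization, which is a colocalization, and how their essential images and kernels match up — but no new ideas beyond assembling Proposition~\ref{pr:localization-cosupport}, Proposition~\ref{pr:coloc-fun}(2), Theorem~\ref{th:cosupp=0}, and the analogous support-side results are required.
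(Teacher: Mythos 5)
Your proof is correct and follows essentially the same route as the paper: the unprimed chain (1) $\Leftrightarrow$ (2) $\Leftrightarrow$ (3) is derived from Proposition~\ref{pr:localization-cosupport} and Theorem~\ref{th:cosupp=0}, the primed chain is the analogous support-side statement (which the paper simply cites from \cite[Corollary~5.7]{\bik:2008a}), the bridge (2) $\Leftrightarrow$ (2$'$) comes from $\Im V^{\mcV}=\Im L_{\mcV}$ in Proposition~\ref{pr:coloc-fun}(2), and the last clause is the specialization $\mcV=\mcZ(\fp)$. You merely spell out a few steps that the paper leaves implicit.
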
 

\begin{proof}
The equivalence of (1), (2), and (3) follows from
Proposition~\ref{pr:localization-cosupport} and
Theorem~\ref{th:cosupp=0}, while the equivalence of (1$'$), (2$'$),
and (3$'$) is part of \cite[Corollary~5.7]{\bik:2008a}.  It remains to
note that (2) $\Lra$ (2$'$), by Proposition~\ref{pr:coloc-fun}(2).

The last assertion is (1) $\Lra$ (3$'$), applied to $\mcV=\mcZ(\fp)$.
\end{proof}

The next result is an analogue of \cite[Corollary~5.8]{\bik:2008a}.

\begin{corollary}
\label{co:orthogonality}
Let $X,Y$ be objects in $\sfT$. Then $\cosupp_{R}
X\cap\cl(\cosupp_{R} Y)=\varnothing$ implies $\Hom_\sfT^{*}(X,Y)=0$.
\end{corollary}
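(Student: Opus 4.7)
The plan is to set $\mcV := \cl(\cosupp_R Y)$, which is a specialization closed subset by construction, and then exploit the adjoint pair $(\gam_\mcV, \lam^\mcV)$ developed in this section to convert the cosupport hypothesis into a vanishing of $\Hom$.

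First I would observe that $\cosupp_R Y \subseteq \mcV$, so Corollary~\ref{co:cosupp} yields $Y \cong \lam^\mcV Y$. Next, the hypothesis $\cosupp_R X \cap \mcV = \varnothing$ reads as $\cosupp_R X \subseteq \Spec R \setminus \mcV$; by Corollary~\ref{co:cosupp2} this places $X$ in $\Im V^\mcV$. Proposition~\ref{pr:coloc-fun}(2) identifies $\Im V^\mcV$ with $\Ker\gam_\mcV$, so $\gam_\mcV X = 0$.

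Finally I would chain together the two rewritings afforded by $Y \cong \lam^\mcV Y$ and the adjunction:
\[
\Hom_\sfT(X, \Si^n Y) \cong \Hom_\sfT(X, \Si^n \lam^\mcV Y) \cong \Hom_\sfT(\gam_\mcV X, \Si^n Y) = 0
\]
for every $n \in \bbZ$, giving $\Hom_\sfT^*(X,Y) = 0$.

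The one subtlety worth flagging (rather than a genuine obstacle) is that the specialization closure in the hypothesis is essential: the local homology machinery $\lam^\mcV$ and the characterizations in Corollaries~\ref{co:cosupp} and \ref{co:cosupp2} only apply when $\mcV$ is specialization closed, which is precisely why the statement is phrased using $\cl(\cosupp_R Y)$ rather than $\cosupp_R Y$ itself. Otherwise the argument is a direct transcription of the proof of the analogous support-side vanishing result \cite[Corollary~5.8]{\bik:2008a}, with the roles of the two functors in the adjunction swapped.
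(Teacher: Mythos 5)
Your proof is correct and follows essentially the same route as the paper: set $\mcV = \cl(\cosupp_R Y)$, place $X$ in $\Im V^\mcV$ via Corollary~\ref{co:cosupp2} and $Y$ in $\Im\lam^\mcV$ via Corollary~\ref{co:cosupp}, then conclude vanishing. The only cosmetic difference is that the paper invokes the identity $\Im V^\mcV = {}^\perp(\Im\lam^\mcV)$ from Proposition~\ref{pr:coloc-fun}(2) in one stroke, whereas you unwind it through the adjunction $\Hom_\sfT(X,\lam^\mcV Y)\cong\Hom_\sfT(\gam_\mcV X,Y)$ together with $\gam_\mcV X=0$ — which is precisely how that identity is established, so the two arguments coincide in substance.
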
 

\begin{proof}
Set $\mcV=\cl(\cosupp_{R}Y)$. Then $X$ is in $\Im V^\mcV$, by
Corollary\ref{co:cosupp2}, and $Y$ is in $\Im\lam^\mcV$, by
Corollary~\ref{co:cosupp}, so $\Hom_\sfT^{*}(X,Y)=0$, by
Proposition~\ref{pr:coloc-fun}(2).
\end{proof} 

The next goal is Theorem~\ref{th:max}; the following two results
prepare for its proof. The one below is extracted from
\cite[Lemma~5.11]{\bik:2008a}.

\begin{lemma}
\label{le:kos-hom}
Let $\fa$ be a homogeneous ideal in $R$. For any objects $X$ and $Y$
in $\sfT$, the following statements hold.
\begin{enumerate}
\item The $R$-modules $\Hom^*_\sfT(\kos X\fa,Y)$ and
  $\Hom^*_\sfT(X,\kos Y\fa)$ are $\fa$-torsion.
\item $\Hom^*_\sfT(X,Y)=0$ implies $\Hom^*_\sfT(X,\kos Y\fa)=0$ and
  the converse holds when the $R$-module $\Hom^*_\sfT(X,Y)=0$ is
  $\fa$-torsion.
\item $\Hom^*_\sfT(\kos X\fa,Y)=0$ if and only if $\Hom^*_\sfT(X,\kos
  Y\fa)=0$.\qed
\end{enumerate}
\end{lemma}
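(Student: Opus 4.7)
The plan is to reduce each assertion to the case of a single homogeneous element $r\in R$ and iterate. For $r$ of degree $d$, the defining triangle $X\xrightarrow{r}\Si^d X\to\kos X r\to\Si X$ yields on applying $\Hom^*_\sfT(-,Y)$ a long exact sequence whose connecting map is (up to sign) multiplication by $r$ on $\Hom^*_\sfT(X,Y)$. This gives a short exact sequence displaying $\Hom^*_\sfT(\kos X r,Y)$ as an extension of the $r$-torsion submodule of $\Hom^*_\sfT(X,Y)$ by the quotient $\Hom^*_\sfT(X,Y)/r$ (with suitable degree shifts). An analogous short exact sequence holds for $\Hom^*_\sfT(X,\kos Y r)$ via $\Hom^*_\sfT(X,-)$.

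For (1), both outer terms in the short exact sequence are annihilated by $r$, so $r^{2}$ annihilates $\Hom^*_\sfT(\kos X r,Y)$. Choosing generators $r_1,\ldots,r_m$ of $\fa$ and writing $\kos X \fa$ as an iterated Koszul object, the analogous short exact sequence at each stage shows that if a power of $r_j$ annihilates the previous term, then the same power annihilates the next. Hence some power of each $r_j$ annihilates $\Hom^*_\sfT(\kos X \fa,Y)$, making it $\fa$-torsion. The case of $\Hom^*_\sfT(X,\kos Y \fa)$ is symmetric.

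For (2), the forward direction is immediate since $\kos Y \fa\in\Thick_\sfT(Y)$. For the converse I would induct on $m$. Base case $m=1$: vanishing of $\Hom^*_\sfT(X,\kos Y r)$ forces $r$ to be invertible on $\Hom^*_\sfT(X,Y)$, and combined with the $r$-torsion hypothesis this yields $\Hom^*_\sfT(X,Y)=0$. Inductive step: set $Y_1=\kos Y{r_1}$ and $\fa'=(r_2,\ldots,r_m)$. The short exact sequence for $\Hom^*_\sfT(X,Y_1)$ exhibits it as an extension of sub- and quotient modules of the $\fa$-torsion module $\Hom^*_\sfT(X,Y)$, hence is itself $\fa$-torsion. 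The inductive hypothesis applied to $(X,Y_1)$ with ideal $\fa'$ gives $\Hom^*_\sfT(X,Y_1)=0$, and the base case for $r_1$ applied to $(X,Y)$ concludes.

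For (3), again induct on $m$. For $m=1$, both vanishing statements are equivalent by their respective short exact sequences to multiplication by $r$ being an automorphism of $\Hom^*_\sfT(X,Y)$. For $m>1$, write $\kos X \fa=\kos{\kos X{r_1}}{\fa'}$ and $\kos Y \fa=\kos{\kos Y{r_1}}{\fa'}$; the inductive hypothesis for $\fa'$ gives $\Hom^*_\sfT(\kos X\fa,Y)=0 \iff \Hom^*_\sfT(\kos X{r_1},\kos Y{\fa'})=0$, and the base case for $r_1$ converts this to $\Hom^*_\sfT(X,\kos Y \fa)=0$. The main subtlety throughout is the bookkeeping in (2) to ensure the $\fa$-torsion hypothesis descends to the intermediate module $\Hom^*_\sfT(X,Y_1)$, which follows from the standard fact that $\fa$-torsion is inherited by subobjects, quotients, and extensions of graded $R$-modules.
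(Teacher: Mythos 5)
Your proof is correct. The paper gives no proof of its own here — it cites Lemma~5.11 of \cite{Benson/Iyengar/Krause:2008a} and marks the statement with \qed — but the argument you give is the natural one and the one that reference uses: peel off one homogeneous generator $r$ at a time, use the long exact sequence coming from the defining triangle of $\kos{(-)}{r}$ to produce the short exact sequence sandwiching $\Hom^*_\sfT(\kos X r,Y)$ (resp.\ $\Hom^*_\sfT(X,\kos Y r)$) between the cokernel and kernel of multiplication by $r$ on $\Hom^*_\sfT(X,Y)$, and then iterate. All three items follow as you describe, with the one minor imprecision in (1) that the annihilating power of a generator $r_j$ may double (not stay fixed) when passing through an extension; this is harmless, since only the existence of \emph{some} power is needed for $\fa$-torsion. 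The inductive bookkeeping in (2) — using that $\fa$-torsion implies $\fa'$-torsion for $\fa'\subseteq\fa$ and is inherited by subquotients and extensions — and in (3) — interchanging the order of Koszul constructions, $\kos{(\kos Y{\fa'})}{r_1}\cong\kos{(\kos Y{r_1})}{\fa'}$ — is exactly what is required, and you have it right.
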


The equality below is a version of \eqref{eq:kos-supp} for cosupport.

\begin{lemma}
\label{le:kos-cosupp}
Let $\fa$ be a homogeneous ideal in $R$ and $X$ an object in
$\sfT$. Then
\[
\cosupp_R(\kos X\fa) =\mcV(\fa)\cap\cosupp_R X\,.
\]
\end{lemma}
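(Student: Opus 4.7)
The plan is to reduce the cosupport statement to Hom-vanishing via Proposition~\ref{pr:hom-cosupp} and then move the Koszul construction between the two arguments of Hom by means of Lemma~\ref{le:kos-hom}. Throughout, $C$ ranges over a fixed set $\sfG$ of compact generators of $\sfT$. First, if $\fp \notin \mcV(\fa)$, Lemma~\ref{le:kos-hom}(3) gives $\Hom^{*}_{\sfT}(C(\fp), \kos X\fa) = 0 \iff \Hom^{*}_{\sfT}(\kos{C(\fp)}\fa, X) = 0$, and two uses of \eqref{eq:kos-supp} yield $\supp_{R}\kos{C(\fp)}\fa = \mcV(\fa)\cap\{\fp\}\cap\supp_{R}C = \varnothing$, so $\kos{C(\fp)}\fa = 0$ by \cite[Theorem~5.2]{\bik:2008a}. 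Hence the Hom vanishes for every $C$, and Proposition~\ref{pr:hom-cosupp} gives $\fp \notin \cosupp_{R}(\kos X\fa)$, matching the right-hand side.

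If $\fp \in \mcV(\fa)$, I plan to use Lemma~\ref{le:kos-hom}(2) to obtain, for every $C$, the equivalence $\Hom^{*}_{\sfT}(C(\fp), X) = 0 \iff \Hom^{*}_{\sfT}(C(\fp), \kos X\fa) = 0$, which by Proposition~\ref{pr:hom-cosupp} will force $\fp \in \cosupp_{R}X \iff \fp \in \cosupp_{R}(\kos X\fa)$. The hypothesis of Lemma~\ref{le:kos-hom}(2) requires $\Hom^{*}_{\sfT}(C(\fp), X)$ to be $\fa$-torsion; since $\fa \subseteq \fp$ in a noetherian ring, it suffices to show the stronger assertion that it is $\fp$-torsion.

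This $\fp$-torsion claim is the main obstacle: because $C(\fp) = (\kos C\fp)_{\fp}$ is a localization of a compact object rather than a compact object itself, Lemma~\ref{le:kos-hom}(1) does not apply directly. I plan to bypass this by applying $\Hom^{*}_{\sfT}(-, X)$ to the localization triangle
\[
\gam_{\mcZ(\fp)}(\kos C\fp) \lto \kos C\fp \lto C(\fp) \lto
\]
and arguing in the resulting long exact sequence. The term $\Hom^{*}_{\sfT}(\kos C\fp, X)$ is $\fp$-torsion by Lemma~\ref{le:kos-hom}(1), and the adjunction $(\gam_{\mcZ(\fp)}, \lam^{\mcZ(\fp)})$ from Proposition~\ref{pr:coloc-fun} turns $\Hom^{*}_{\sfT}(\gam_{\mcZ(\fp)}(\kos C\fp), X)$ into $\Hom^{*}_{\sfT}(\kos C\fp, \lam^{\mcZ(\fp)}X)$, which is $\fp$-torsion by the same lemma. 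Closure of $\fp$-torsion $R$-modules under extensions then forces $\Hom^{*}_{\sfT}(C(\fp), X)$, flanked by these two $\fp$-torsion terms in the long exact sequence, to be $\fp$-torsion itself, completing the argument.
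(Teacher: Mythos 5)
Your proof is correct and follows essentially the same two-step structure as the paper's: reduce to Hom-vanishing via Proposition~\ref{pr:hom-cosupp}, split into the cases $\fp\notin\mcV(\fa)$ and $\fp\in\mcV(\fa)$, and in the second case invoke Lemma~\ref{le:kos-hom}(2) after establishing that $\Hom^*_{\sfT}(C(\fp),X)$ is $\fa$-torsion. The first case is verbatim the paper's argument.

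Where you diverge is in the justification of the $\fp$-torsion claim. Your stated obstacle is slightly miscast: Lemma~\ref{le:kos-hom}(1) is stated for \emph{arbitrary} objects of $\sfT$, not merely compact ones, so compactness of $C(\fp)$ is not the issue. What you actually need is to recognize $C(\fp)$ as a Koszul object. The paper does exactly this, noting $C(\fp)=(\kos C\fp)_\fp\cong\kos{C_\fp}\fp$ (because the localization functor $L_{\mcZ(\fp)}$ is exact and $R$-linear, so it commutes with the iterated cones defining a Koszul object), after which Lemma~\ref{le:kos-hom}(1) applies immediately. You instead bypass this observation by running the localization triangle $\gam_{\mcZ(\fp)}(\kos C\fp)\to\kos C\fp\to C(\fp)\to$ through $\Hom^*_\sfT(-,X)$, using the adjunction $(\gam_{\mcZ(\fp)},\lam^{\mcZ(\fp)})$ from Proposition~\ref{pr:coloc-fun} to identify the third term, and appealing to closure of $\fp$-torsion modules under subquotients and extensions. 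This is valid but longer; the paper's one-line observation buys a cleaner argument and is worth internalizing since the identification $X(\fp)\cong\kos{X_\fp}\fp$ is used repeatedly elsewhere (e.g.\ in the proof of Theorem~\ref{th:max}).
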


\begin{proof}
From \eqref{eq:kos-supp} one gets an equality
\[
\supp_{R}(\kos {C(\fp)}{\fa}) = \mcV(\fa) \cap \{\fp\} \cap \supp_{R}C
\quad\text{for any $C\in \sfT$.}
\] 
If $\Hom^{*}(C(\fp),\kos X\fa)\ne 0$ for some $C\in \sfT$, then
$\Hom^{*}(\kos {C(\fp)}\fa,X)\ne 0$; this follows from
Lemma~\ref{le:kos-hom}(3). In particular, $\kos {C(\fp)}\fa\ne 0$, so
$\fp\in\mcV(\fa)$, by the equality above. One thus obtains from
Proposition~\ref{pr:hom-cosupp} that $\cosupp_{R}(\kos {X}{\fa})
\subseteq\mcV(\fa)$.

When $\fp\in\mcV(\fa)$ holds, it follows from
Lemma~\ref{le:kos-hom}(1), and the observation that $C(\fp)$ is
isomorphic to $\kos {C_{\fp}}\fp$, that the $R$-module
$\Hom^{*}_{\sfT}(C(\fp),X)$ is $\fp$-torsion, and hence also
$\fa$-torsion. Therefore, Lemma~\ref{le:kos-hom}(2) yields:
\[
\Hom^{*}_{\sfT}(C(\fp),X) \ne 0 \iff \Hom^{*}_{\sfT}(C(\fp),\kos
X\fa)\ne 0\,.
\]
The desired equality involving cosupport now follows from
Proposition~\ref{pr:hom-cosupp}.
\end{proof}

Given $\mcU\subseteq \Spec R$, we write $\max\mcU$ for the set of
elements $\fp\in\mcU$ such that $\fq\in\mcU$ and $\fq\supseteq\fp$
imply $\fq=\fp$. Recall that $\lam^\fp\sfT$ denotes the essential
image of $\lam^\fp$.

\begin{theorem}
\label{th:max}
For each object $X$ in $\sfT$ there is an equality:
\[
\max(\supp_RX)=\max(\cosupp_RX)\,.
\]
Moreover, $\cosupp_{R}X(\fp)\subseteq \{\fp\}$ and $X(\fp)\in\lam^\fp\sfT\cap\gam_\fp\sfT$, for each $\fp\in\Spec R$. 
\end{theorem}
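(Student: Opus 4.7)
The plan is to prove the three assertions in the order in which they feed into each other: first the containment $\cosupp_R X(\fp) \subseteq \{\fp\}$, then the double membership $X(\fp) \in \lam^\fp\sfT \cap \gam_\fp\sfT$, and finally the equality of maxima. Throughout I work with the presentation $X(\fp) = L_{\mcZ(\fp)}(\kos X\fp)$, which makes the $\fp$-locality of $X(\fp)$ manifest.

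For the cosupport containment, I test against $C(\fq)$ for $C$ in a set of compact generators via Proposition~\ref{pr:hom-cosupp}. Because $X(\fp)$ is $\fp$-local, the localization adjunction yields $\Hom_\sfT(C(\fq), X(\fp)) \cong \Hom_\sfT(L_{\mcZ(\fp)} C(\fq), X(\fp))$. When $\fq \not\subseteq \fp$ one has $\mcV(\fq) \subseteq \mcZ(\fp)$, so $C(\fq) \in \sfT_{\mcV(\fq)} \subseteq \sfT_{\mcZ(\fp)} = \Ker L_{\mcZ(\fp)}$ and the Hom vanishes. When $\fq \subsetneq \fp$, the inclusion $\Im L_{\mcZ(\fq)} \subseteq \Im L_{\mcZ(\fp)}$ makes $C(\fq)$ itself $\fp$-local, so the Hom collapses to $\Hom_\sfT(C(\fq), \kos X\fp)$, which vanishes by Lemma~\ref{le:kos-cosupp} since $\cosupp_R(\kos X\fp) \subseteq \mcV(\fp)$ excludes $\fq$. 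Only $\fq = \fp$ survives.

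Membership of $X(\fp)$ in $\gam_\fp\sfT$ is then immediate from the characterization in \cite[Corollary~4.10]{\bik:2008a}: $X(\fp)$ is $\fp$-local by construction and $\mcV(\fp)$-torsion because $L_{\mcZ(\fp)}$ preserves $\sfT_{\mcV(\fp)}$ via commutation with $\gam_{\mcV(\fp)}$. Membership in $\lam^\fp\sfT$ follows by combining the cosupport containment with Corollary~\ref{co:cosupp}: the inclusion $\cosupp_R X(\fp) \subseteq \{\fp\} \subseteq \mcV(\fp)$ gives $\lam^{\mcV(\fp)} X(\fp) \cong X(\fp)$, and $V^{\mcZ(\fp)} X(\fp) \cong X(\fp)$ follows from the $\fp$-locality of $X(\fp)$ together with the identity $VL \cong L$ of Proposition~\ref{pr:coloc-fun}(3); composing these gives $\lam^\fp X(\fp) \cong X(\fp)$.

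Both inclusions of the max equality pivot on one Koszul dichotomy. For $\max \supp_R X \subseteq \max \cosupp_R X$: if $\fp$ is maximal in $\supp_R X$, then \eqref{eq:kos-supp} gives $\supp_R(\kos X\fp) = \{\fp\}$, so $\kos X\fp$ is $\fp$-local and equals $X(\fp) \in \Thick_\sfT(X)$; the cosupport step forces $\cosupp_R X(\fp) = \{\fp\}$, and since $\Hom_\sfT^*(C(\fp), -)$ is cohomological, its nonvanishing on $X(\fp) \in \Thick_\sfT(X)$ forces nonvanishing on $X$, placing $\fp \in \cosupp_R X$. Maximality of $\fp$ in $\cosupp_R X$ follows by contradiction: for $\fq \supsetneq \fp$ in $\cosupp_R X$, maximality of $\fp$ in $\supp_R X$ puts every prime of $\mcV(\fq)$ outside $\supp_R X$, forcing $\kos X\fq = 0$, whereupon Lemma~\ref{le:kos-cosupp} gives $\mcV(\fq) \cap \cosupp_R X = \varnothing$, contradicting $\fq$ being in this set. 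For the reverse inclusion I apply the forward direction to $\lam^\fp X$: Proposition~\ref{pr:localization-cosupport} gives $\cosupp_R \lam^\fp X = \{\fp\}$, the forward direction then yields $\max \supp_R \lam^\fp X = \{\fp\}$, and finally the isomorphism $\gam_\fp \lam^\fp \cong \gam_\fp$ of Proposition~\ref{pr:coloc-fun}(3) transports $\fp \in \supp_R \lam^\fp X$ into $\fp \in \supp_R X$; maximality in $\supp_R X$ comes from the same dichotomy with $\supp_R$ and $\cosupp_R$ swapped. The main technical hurdle is this bootstrapping step, which requires the forward direction already proved for the auxiliary object $\lam^\fp X$ together with careful use of the commutation identities in Proposition~\ref{pr:coloc-fun}.
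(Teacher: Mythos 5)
Your proof contains two genuine gaps, one in each of the harder steps.

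First, in the containment $\cosupp_R X(\fp)\subseteq\{\fp\}$, the case $\fq\subsetneq\fp$ does not go through as written. You claim that the $\fp$-locality of $C(\fq)$ makes $\Hom_\sfT(C(\fq),X(\fp))$ ``collapse'' to $\Hom_\sfT(C(\fq),\kos X\fp)$. But $X(\fp)=L_{\mcZ(\fp)}(\kos X\fp)$ sits in a localization triangle $\gam_{\mcZ(\fp)}(\kos X\fp)\to\kos X\fp\to X(\fp)\to$, and the collapse requires $\Hom^*_\sfT(C(\fq),\gam_{\mcZ(\fp)}(\kos X\fp))=0$. The $\fp$-locality of $C(\fq)$ gives $C(\fq)\in\Im L_{\mcZ(\fp)}=(\sfT_{\mcZ(\fp)})^\perp$, which yields vanishing of $\Hom_\sfT(\gam_{\mcZ(\fp)}(-),C(\fq))$, i.e.\ maps \emph{into} $C(\fq)$, not maps \emph{out of} it; the direction you need is not a consequence of the localization structure. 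The correct route is the one the paper takes: since $X(\fp)\cong\kos{X_\fp}{\fp}$, Lemma~\ref{le:kos-cosupp} gives $\cosupp_R X(\fp)\subseteq\mcV(\fp)$ directly, which already excludes every $\fq\subsetneq\fp$; then combine with your (correct) argument for $\fq\not\subseteq\fp$, or more quickly with Corollary~\ref{co:cosupp2}.

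Second, and more seriously, the reverse inclusion $\max(\cosupp_R X)\subseteq\supp_R X$ invokes an isomorphism $\gam_\fp\lam^\fp\cong\gam_\fp$ that is false. Proposition~\ref{pr:coloc-fun}(3) does give $\gam\cong\gam\lam$, but only for an honest (co)localization adjoint pair $(\gam,\lam)$; the functor $\gam_\fp=\gam_{\mcV(\fp)}L_{\mcZ(\fp)}$ is a composite of a colocalization with a localization and is not itself a colocalization functor, so the proposition does not apply. Indeed, if $\gam_\fp\lam^\fp\cong\gam_\fp$ held, then $\gam_\fp X\ne0$ would be equivalent to $\lam^\fp X\ne0$ (via Proposition~\ref{pr:loc-complete}), forcing $\supp_R X=\cosupp_R X$ for every $X$; this is contradicted by the example following Proposition~\ref{pr:cosuppA}, where $(A,\fm)$ is a complete local ring, $X=A$, and $\gam_\fp A\ne0=\lam^\fp A$ for every $\fp\ne\fm$. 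The paper closes the reverse inclusion by running the Koszul-object argument symmetrically: maximality gives $\cosupp_R(\kos X\fp)=\{\fp\}$, hence $\kos X\fp$ is $\fp$-local by Corollary~\ref{co:cosupp2} and $\fp$-torsion always, so $\supp_R(\kos X\fp)=\{\fp\}$, and then \eqref{eq:kos-supp} places $\fp$ in $\supp_R X$. Your forward direction and its maximality bookkeeping are fine; replacing the bootstrap through $\lam^\fp X$ by this symmetric argument repairs the proof.
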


\begin{proof}
We prove $\max(\supp_RX)\subseteq \cosupp_RX$ and
$\max(\cosupp_RX)\subseteq\supp_RX$; the first equality would then
follow.

Fix $\fp$ in $\max(\supp_RX)$. Then $\supp_R(\kos X\fp)=\{\fp\}$, by
\eqref{eq:kos-supp}, so $\kos X\fp$ is $\fp$-local; see
Corollary~\ref{co:cosupp2}. It is always $\fp$-torsion, so $\kos
X\fp=X(\fp)$, and then
\[
\Hom_{\sfT}^{*}(X(\fp),\kos X\fp)\cong \Hom_{\sfT}^{*}(\kos X\fp,\kos
X\fp)\ne 0\,.
\]
This implies that $\fp$ is in $\cosupp_R(\kos X\fp)$ by
Proposition~\ref{pr:hom-cosupp}, hence also that it is in
$\cosupp_RX$, by Lemma~\ref{le:kos-cosupp}.

If $\fp\in\max(\cosupp_RX)$, then $\cosupp_R(\kos X\fp)=\{\fp\}$, by
Lemma~\ref{le:kos-cosupp}. Therefore the object $\kos X\fp$ is
$\fp$-local, by Corollary~\ref{co:cosupp2}, and $\fp$-torsion, so
$\supp_R(\kos X\fp)=\{\fp\}$. It remains to recall \eqref{eq:kos-supp}
to conclude that $\fp\in\supp_RX$.

Finally, $\cosupp_{R}X(\fp)\subseteq \mcV(\fp)$ by Lemma~\ref{le:kos-cosupp}, since $X(\fp)\cong
\kos{X_{\fp}}{\fp}$. Thus the inclusion $\supp_{R}X(\fp)\subseteq \{\fp\}$ from \eqref{eq:kos-supp} implies the corresponding inclusion for cosupport. It then follows from Corollaries~\ref{co:cosupp} and \ref{co:cosupp2} that $X(\fp)$ is in $\lam^{\fp}\sfT$. On the other hand, $X(\fp)$ is also in $\gam_\fp\sfT$, by~\cite[Corollary 5.7]{\bik:2008a}.
\end{proof}

To round off this material, we prove an analogue of
Proposition~\ref{pr:hom-cosupp} for supports.

\begin{proposition}
\label{pr:hom-supp}
For each object $X$ in $\sfT$ and $\fp\in \Spec R$, one has
\[
\fp\in\supp_RX\iff\Hom_\sfT(X,Y(\fp))\ne 0 \text{ for some
}Y\in\sfT\,.
\]
\end{proposition}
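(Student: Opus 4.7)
The plan is to prove both directions, with $(\Leftarrow)$ being a routine adjunction argument and $(\Rightarrow)$ requiring a Koszul-object argument modelled on the proof of Theorem~\ref{th:cosupp=0}. For $(\Leftarrow)$, I use Theorem~\ref{th:max} to note that $Y(\fp)\in\lam^{\fp}\sfT$, so $\lam^{\fp}Y(\fp)\cong Y(\fp)$; the adjunction $(\gam_{\fp},\lam^{\fp})$ then gives
\[\Hom_{\sfT}(X,Y(\fp))\cong\Hom_{\sfT}(X,\lam^{\fp}Y(\fp))\cong\Hom_{\sfT}(\gam_{\fp}X,Y(\fp)),\]
and non-vanishing of the left-hand side forces $\gam_{\fp}X\ne 0$, i.e., $\fp\in\supp_{R}X$.

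For $(\Rightarrow)$, I assume $\fp\in\supp_{R}X$ and take $Y:=\gam_{\fp}X$, which is non-zero by hypothesis. Since $Y\in\gam_{\fp}\sfT$ and, by Theorem~\ref{th:compactgeneration}, this subcategory is generated as a localizing subcategory by objects with support in $\{\fp\}$, the usual closure properties of support give $\supp_{R}Y=\{\fp\}$. Moreover, the commutation $L_{\mcZ(\fp)}\gam_{\mcV(\fp)}\cong\gam_{\mcV(\fp)}L_{\mcZ(\fp)}$ shows $Y$ is $\fp$-local; since the $\fp$-local objects form a thick subcategory, every object in $\Thick_{\sfT}(Y)$ is $\fp$-local. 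In particular, by \eqref{eq:kos-supp}, $\kos{Y}{\fp}$ has support $\mcV(\fp)\cap\{\fp\}=\{\fp\}$ and is $\fp$-local, so $Y(\fp)=(\kos{Y}{\fp})_{\fp}=\kos{Y}{\fp}$; this object is non-zero and lies in $\Thick_{\sfT}(Y)$.

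The critical step is then a standard thick-subcategory argument: if $\Hom^{*}_{\sfT}(Y,Y(\fp))=0$, then $\{Z\in\sfT:\Hom^{*}_{\sfT}(Z,Y(\fp))=0\}$ is a thick subcategory containing $Y$, hence containing $Y(\fp)$, forcing $\id_{Y(\fp)}=0$ and contradicting $Y(\fp)\ne 0$. Thus $\Hom_{\sfT}(Y,\Si^{n}Y(\fp))\ne 0$ for some $n\in\bbZ$; invoking the adjunction once more, and using $Y(\fp)\in\lam^{\fp}\sfT$, converts this to $\Hom_{\sfT}(X,\Si^{n}Y(\fp))\ne 0$, and setting $Y':=\Si^{n}\gam_{\fp}X$ (so that $Y'(\fp)=\Si^{n}Y(\fp)$) gives the desired $\Hom_{\sfT}(X,Y'(\fp))\ne 0$. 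The main subtlety is that the naive choice $Y=X$ fails for non-maximal $\fp\in\supp_{R}X$, since then $\kos{X}{\fp}$ need not be $\fp$-local and $X(\fp)$ need not lie in $\Thick_{\sfT}(X)$; replacing $X$ with $\gam_{\fp}X$ restores $\fp$-locality and is the analogue, in this dual context, of passing to a maximal prime of the support in Theorem~\ref{th:cosupp=0}.
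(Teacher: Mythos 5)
Your proof is correct, but it takes a more roundabout route than necessary, and the ``subtlety'' you identify is not actually an obstacle. The paper simply takes $Y=X$. The key isomorphism
\[
\Hom^{*}_{\sfT}(\kos X\fp, X(\fp)) \cong \Hom^{*}_{\sfT}(X(\fp), X(\fp))
\]
does not require $\kos X\fp$ to be $\fp$-local, nor $X(\fp)$ to lie in $\Thick_{\sfT}(X)$. It only requires that $X(\fp)$ --- sitting in the \emph{second} slot --- is $\fp$-local, which holds by construction since $X(\fp)=L_{\mcZ(\fp)}(\kos X\fp)$: for any $\fp$-local object $Z$ the localization triangle for $\kos X\fp$ gives $\Hom^{*}_{\sfT}(\kos X\fp,Z)\cong\Hom^{*}_{\sfT}((\kos X\fp)_\fp,Z)$, because $\Im\gam_{\mcZ(\fp)}\perp\Im L_{\mcZ(\fp)}$. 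Since $X(\fp)\ne 0$ when $\fp\in\supp_{R}X$ (by \eqref{eq:kos-supp}), this Hom is non-zero, and then the thick-subcategory argument applied to $\{Z:\Hom^{*}_{\sfT}(Z,X(\fp))=0\}$, using only $\kos X\fp\in\Thick_{\sfT}(X)$, gives $\Hom^{*}_{\sfT}(X,X(\fp))\ne 0$.

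The asymmetry you overlooked is this: in the proof of Theorem~\ref{th:cosupp=0} one wants $\Hom(X(\fp),X)\ne 0$, so the Koszul object appears in the \emph{second} slot of the isomorphism $\Hom(X(\fp),\kos X\fp)\cong\Hom(\kos X\fp,\kos X\fp)$, and this time the isomorphism genuinely needs $\kos X\fp$ itself to be $\fp$-local --- whence the restriction to maximal primes. Here the roles are reversed, so no maximality (and no passage to $\gam_{\fp}X$) is needed. Your replacement $Y=\Si^{n}\gam_{\fp}X$ does work --- your verification that $Y(\fp)$ is non-zero, $\fp$-local and lies in $\Thick_{\sfT}(Y)$, and your use of adjunction to pass back to $X$, are all correct --- but it is extra machinery solving a problem that was not there.
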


\begin{proof}
When $\fp$ is in $\supp_RX$ it follows from \eqref{eq:kos-supp} that
$X(\fp)\ne 0$, hence
\[
\Hom^*_\sfT(\kos{X}\fp,X(\fp)) \cong \Hom^*_\sfT(X(\fp),X(\fp))\ne
0\,;
\]
where the first isomorphism holds as $X(\fp)$ is $\fp$-local. Since
$\kos X\fp$ is in $\Thick_{\sfT}(X)$, one obtains that
$\Hom_\sfT(X,X(\fp))\ne 0$. This settles one implication.

The other implication follows from the chain of isomorphisms
\[
\Hom_\sfT(X,Y(\fp))\cong\Hom_\sfT(X,\lam^\fp
Y(\fp))\cong\Hom_\sfT(\gam_\fp X,Y(\fp)),
\]
where the first one holds because $\lam^\fp Y(\fp)\cong Y(\fp)$, by
Theorem~\ref{th:max}.
\end{proof}

\subsection*{Discrete sets}
Recall that a subset $\mcU\subseteq\Spec R$ is \emph{discrete} if $\fp
\subseteq\fq$ implies $\fp=\fq$ for each pair of primes
$\fp,\fq\in\mcU$.

\begin{proposition}
\label{prop:discrete}
Let $X$ and $Y$ be objects of $\sfT$ and $\mcU$ a discrete subset of
$\Spec R$. When $\supp_RX\subseteq \mcU$ and $\cosupp_RY\subseteq
\mcU$ there are natural isomorphisms
\[
X\xla{\sim} \coprod_{\fp\in\mcU}\gam_{\mcV(\fp)} X\xra{\sim}
\coprod_{\fp\in\mcU}\gam_\fp X \quad\text{and}\quad Y\xra{\sim}
\prod_{\fp\in\mcU}\lam^{\mcV(\fp)} Y\xla{\sim}
\prod_{\fp\in\mcU}\lam^\fp Y\,.
\]
\end{proposition}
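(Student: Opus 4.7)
The plan is to reduce both assertions, via support and cosupport detection of zero objects, to a prime-by-prime comparison. Support detection comes from \cite[Theorem~5.2]{\bik:2008a}; cosupport detection from Theorem~\ref{th:cosupp=0}. The key observation driving the reduction is that discreteness of $\mcU$ forces $\mcV(\fp)\cap\mcU=\{\fp\}$ for every $\fp\in\mcU$.

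I would first dispatch the two rightmost isomorphisms. Combining discreteness with the identity $\supp_R\gam_\mcV X=\mcV\cap\supp_R X$ from \cite[Corollary~5.7]{\bik:2008a} and its cosupport analogue Proposition~\ref{pr:localization-cosupport} gives $\supp_R\gam_{\mcV(\fp)}X\subseteq\{\fp\}$ and $\cosupp_R\lam^{\mcV(\fp)}Y\subseteq\{\fp\}$. Corollary~\ref{co:cosupp2} then makes both objects $\fp$-local, so the canonical comparison maps $\gam_{\mcV(\fp)}X\to\gam_\fp X$ and $\lam^\fp Y\to\lam^{\mcV(\fp)}Y$, induced respectively by $X\to X_\fp$ and $V^{\mcZ(\fp)}Y\to Y$, are isomorphisms.

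For the coproduct, I would form the canonical map $\phi\colon\coprod_{\fp\in\mcU}\gam_{\mcV(\fp)}X\to X$ from the coadjunctions and show $\gam_\fq(\Cone\phi)=0$ for every $\fq\in\Spec R$. Since $\gam_\fq$ preserves coproducts and annihilates any object whose support avoids $\fq$, the source collapses under $\gam_\fq$ to $\gam_\fq\gam_{\mcV(\fq)}X\cong\gam_\fq X$ when $\fq\in\mcU$ and to $0$ otherwise. The target $\gam_\fq X$ vanishes off $\mcU$ because $\supp_R X\subseteq\mcU$, so $\gam_\fq\phi$ is an isomorphism in every case, and \cite[Theorem~5.2]{\bik:2008a} finishes the argument.

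The product statement is formally dual. Apply $\lam^\fq$ to the canonical map $\psi\colon Y\to\prod_{\fp\in\mcU}\lam^{\mcV(\fp)}Y$; being a right adjoint, $\lam^\fq$ commutes with products. For $\fp\ne\fq$ in $\mcU$, the object $\lam^\fq\lam^{\mcV(\fp)}Y$ has cosupport in $\{\fp\}\cap\{\fq\}=\varnothing$ and vanishes by Theorem~\ref{th:cosupp=0}; the $\fp=\fq$ factor collapses to $\lam^\fq Y$ via~\eqref{eq:commutes}. Hence $\lam^\fq(\fibre\psi)=0$ for every $\fq$, and Theorem~\ref{th:cosupp=0} forces $\fibre\psi=0$. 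The main obstacle will be bookkeeping: verifying that the surviving pieces of $\gam_\fq\phi$ and $\lam^\fq\psi$ really are the identity on $\gam_\fq X$ and $\lam^\fq Y$, rather than some other endomorphism; this will come out of the unit-counit relations together with the identifications from the first step.
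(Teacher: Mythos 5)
Your proof is correct, and it essentially reconstructs the argument that the paper delegates by citation. The paper's proof of this proposition is just a pointer: the statement for $X$ is \cite[Proposition~3.3]{\bik:2009a}, and the statement for $Y$ is said to follow ``by taking adjunctions'' together with Proposition~\ref{pr:product}. You instead give a self-contained argument in the same spirit as that cited reference: reduce to a prime-by-prime check by applying $\gam_\fq$ (respectively $\lam^\fq$), using that these functors preserve coproducts (respectively products), invoke the commutation/idempotency relations to identify the surviving factor, and finish with support/cosupport detection of zero objects (\cite[Theorem~5.2]{\bik:2008a} and Theorem~\ref{th:cosupp=0}). The bookkeeping issue you flag at the end does dissolve cleanly: since $\gam_\fq=\gam_{\mcV(\fq)}L_{\mcZ(\fq)}$ and $\gam_{\mcV(\fq)}$ is a colocalization functor, applying $\gam_{\mcV(\fq)}$ to the coadjunction $\gam_{\mcV(\fq)}X\to X$ already gives an isomorphism, and then $L_{\mcZ(\fq)}$ preserves it (using the commutation $\gam_{\mcV(\fq)}L_{\mcZ(\fq)}\cong L_{\mcZ(\fq)}\gam_{\mcV(\fq)}$); the dual reasoning handles $\lam^\fq\psi$. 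So the surviving map really is the canonical isomorphism, as you anticipated. The net effect is that your write-up is more explicit than the paper's, which is useful, but methodologically they agree.
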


\begin{proof}
The statement for $X$ is proved in \cite[Proposition~3.3]{\bik:2009a};
see also \cite[Theorem~7.1]{\bik:2008a}. Modifying the arguments by
taking adjunctions, and taking into account
Proposition~\ref{pr:product} below, yields the proof of the statement
for $Y$.
\end{proof}

Analogues of the next statement hold for $V^\mcV$ and $\lam^\mcV$
also.

\begin{proposition}
\label{pr:product}
Given objects $\{X_i\}_{i\in I}$ in $\sfT$ there is a natural
isomorphism
\[ 
\lam^\fp\left(\prod_{i\in I} X_i\right)\xra{\sim}\prod_{i\in
  I}\lam^\fp X_i.
\]
In particular, for any subset $\mcU\subseteq\Spec R$ the full
subcategory with objects
\[
\{X\in \sfT\mid \cosupp_{R}X\subseteq \mcU\}
\]
is a colocalizing subcategory of $\sfT$.
\end{proposition}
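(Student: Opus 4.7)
The plan is to derive the natural isomorphism purely from adjointness, then deduce the colocalizing property by combining with the triangle behaviour of cosupport already recorded in Proposition~\ref{pr:triangle}.

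For the first assertion, recall from Section~\ref{se:cosupport} that $\lam^\fp=\lam^{\mcV(\fp)}V^{\mcZ(\fp)}$, where $\lam^{\mcV(\fp)}$ is the right adjoint of $\gam_{\mcV(\fp)}$ and $V^{\mcZ(\fp)}$ is the right adjoint of $L_{\mcZ(\fp)}$; both pairs exist because $\gam_{\mcV}$ and $L_{\mcV}$ preserve coproducts, as noted at the start of the section. Right adjoints between triangulated categories preserve all limits, and in particular arbitrary products, so both $\lam^{\mcV(\fp)}$ and $V^{\mcZ(\fp)}$ carry products to products. Composing the two canonical comparison maps gives the natural isomorphism
\[
\lam^\fp\bigl(\prod_{i\in I}X_i\bigr)\xra{\sim}\prod_{i\in I}\lam^\fp X_i.
\]

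For the second assertion, let $\sfC$ denote the full subcategory with objects $\{X\in\sfT\mid\cosupp_R X\subseteq\mcU\}$. Closure under suspension and under extensions is immediate from Proposition~\ref{pr:triangle}, which gives $\cosupp_R\Si X=\cosupp_R X$ and $\cosupp_R Y\subseteq\cosupp_R X\cup\cosupp_R Z$ for any triangle $X\to Y\to Z\to$. Hence $\sfC$ is a triangulated subcategory. For closure under products, suppose each $X_i$ lies in $\sfC$ and fix $\fp\notin\mcU$. Then $\lam^\fp X_i=0$ for all $i$, so by the natural isomorphism just established,
\[
\lam^\fp\bigl(\prod_{i\in I}X_i\bigr)\cong\prod_{i\in I}\lam^\fp X_i=0,
\]
which shows $\fp\notin\cosupp_R\prod_iX_i$. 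Consequently $\cosupp_R\prod_iX_i\subseteq\mcU$, and $\sfC$ is closed under set-indexed products, hence colocalizing.

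There is no genuine obstacle here; the content is just the observation that $\lam^\fp$ is a composition of right adjoints and therefore continuous. The only thing to be careful about is remembering which functor in each adjoint pair is the right adjoint, which Proposition~\ref{pr:coloc-fun} and the notation established in the paragraph defining $\lam^\fp$ make unambiguous.
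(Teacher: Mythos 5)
Your proof is correct and uses essentially the same ideas as the paper. The paper's proof of the isomorphism simply observes that $\lam^\fp$ is right adjoint to $\gam_\fp$, while you decompose $\lam^\fp=\lam^{\mcV(\fp)}V^{\mcZ(\fp)}$ into two right adjoints; both amount to the same fact that right adjoints preserve products. For the colocalizing claim the paper phrases it as identifying the subcategory with $\bigcap_{\fp\notin\mcU}\Ker\lam^\fp$, which is just a compressed version of the direct verification you give via Proposition~\ref{pr:triangle} and the product formula.
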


\begin{proof}
Right adjoints distribute over products. This applies to $\lam^\fp$,
which is right adjoint to $\gam_{\fp}$, to yield the desired
isomorphism. Given this, the second part of the statement follows, for
the subcategory in question equals
$\bigcap_{\fp\not\in\mcU}\Ker\lam^\fp$.
\end{proof}

\subsection*{Commutative noetherian rings}

Let $A$ be a commutative noetherian ring and $\sfD(A)$ the derived
category of the category of all $A$-modules. The category $\sfD(A)$ is
triangulated and compactly generated; indeed, $A$ is a compact
generator. It is also $A$-linear, where for each $M\in\sfD(A)$, the
homomorphism $A\to \Hom_{\sfD}(M,M)$ is given by scalar
multiplication.

\begin{remark}
\label{rem:gm}
Fix an ideal $\fa$ in $A$. In \cite[Theorem~9.1]{\bik:2008a} it is
proved that $\gam_{\mcV(\fa)}$ is the derived functor of the
$\fa$-torsion functor, which assigns an $A$-module $M$ to the module
$\varinjlim\Hom_{A}(A/\fa^{n},M)$. Greenlees and
May~\cite[\S2]{Greenlees/May:1992a}, see also
Lipman~\cite[\S4]{Lipman:2002a}, proved that the right adjoint of the
latter is local homology and that it coincides with left derived
functor of the $\fa$-adic completion functor, which assigns $M$ to
$\varprojlim M/\fa^n M$. In commutative algebra literature (as in
\cite{Lipman:2002a}), the $\fa$-adic completion functor itself would
usually be denoted $\lam_{\fa}$.  Our choice of notation,
$\lam^{\mcV(\fa)}$ for the right adjoint of $\gam_{\mcV(\fa)}$ is
based on this connection.
\end{remark}
 
The functor $V^{\mcV}$ too has, in this context, a familiar avatar, at
least in the special case $\mcV=\mcZ(\fp)$ for some $\fp$ in $\Spec
A$: The morphism $M\to M_{\fp}=\bloc_{\mcZ(\fp)}M$ is the usual
localization map, see the proof of \cite[Theorem~9.1]{\bik:2008a}, so
it follows from the classical Hom-tensor adjunction isomorphism that
its right adjoint is
\[
V^{\mcZ(\fp)}M \cong \RHom_{A}(A_{\fp},M)\,,
\]
and the morphism $\RHom_{A}(A_{\fp},M)\to M$ is the one induced by
$A\to A_{\fp}$. Hence
\[
\cosupp_{A}M=\{\fp\in \Spec A\mid
\RHom_{A}(A_{\fp},\lam^{\mcV(\fp)}M)\ne 0\}\,.
\]
In practice however, the cosupport seems hard to compute, even for
$M=A$. What little we know about this is contained in the following
results.

\begin{proposition}
\label{pr:cosuppZ}
For each $M$ in $\sfD(\bbZ)$ with $\hh M$ finitely generated, one has
\[
\supp_\bbZ M= \{\fp\in\Spec\bbZ\mid (\hh M)_\fp\ne 0\}= \cosupp_\bbZ M.
\]
\end{proposition}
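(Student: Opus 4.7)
\emph{Proof plan.}
The first equality, identifying $\supp_{\bbZ}M$ with the classical support $\{\fp\mid(\hh M)_{\fp}\ne 0\}$, is a standard property of BIK support for bounded complexes with finitely generated cohomology over a commutative noetherian ring, so the plan focuses on the second equality $\supp_\bbZ M = \cosupp_\bbZ M$. The strategy is to reduce to a short list of test modules and then carry out two explicit computations of $\RHom_\bbZ(\bbQ,-)$.

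For the reduction, note that $\bbZ$ has global dimension one, so every object of $\sfD(\bbZ)$ is formal and $M\simeq\bigoplus_{n\in\bbZ}H^n(M)[-n]$; the finite-generation hypothesis makes this a finite direct sum of shifts of finitely generated abelian groups. Both support and cosupport are invariant under suspension by Proposition~\ref{pr:triangle}, and additive on finite direct sums---for cosupport this uses that $\lam^\fp$ is additive and so preserves finite products (Proposition~\ref{pr:product}). The structure theorem for finitely generated abelian groups then reduces the problem to $M=\bbZ$ and $M=\bbZ/p^k$. The classical supports are $\supp_\bbZ\bbZ=\Spec\bbZ$ and $\supp_\bbZ(\bbZ/p^k)=\{(p)\}$, matching the middle set of the statement. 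Theorem~\ref{th:max} gives $\max\cosupp_\bbZ=\max\supp_\bbZ$, which handles the maximal primes; since every prime of $\bbZ$ is contained in some maximal prime this already yields $\cosupp_\bbZ\bbZ\subseteq\Spec\bbZ$ and $\cosupp_\bbZ(\bbZ/p^k)\subseteq\{(0),(p)\}$. What is left is to decide whether the generic point $(0)$ lies in each cosupport.

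For this last step, since $\mcV((0))=\Spec\bbZ$ the functor $\gam_{\mcV((0))}$ is the identity, so $\lam^{(0)}=V^{\mcZ((0))}=\RHom_\bbZ(\bbQ,-)$ by the discussion preceding the proposition. Writing $\bbQ=\colim_{n}\tfrac{1}{n!}\bbZ$ and passing to $\RHom$ expresses $\Ext^i_\bbZ(\bbQ,-)$ in terms of $\varprojlim$ and $\varprojlim{}^1$ of the groups $\Hom_\bbZ(\tfrac{1}{n!}\bbZ,-)$. For $N=\bbZ/p^k$ these are all finite, so Mittag--Leffler forces the $\varprojlim{}^1$ to vanish, and the divisible group $\bbQ$ admits no nontrivial map to the bounded torsion group $\bbZ/p^k$; hence $\RHom_\bbZ(\bbQ,\bbZ/p^k)=0$ and $(0)\notin\cosupp_\bbZ(\bbZ/p^k)$. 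For $N=\bbZ$ the short exact sequence $0\to\bbZ\to\bbQ\to\bbQ/\bbZ\to 0$ together with the identification $\Ext^1_\bbZ(\bbQ/\bbZ,\bbZ)\cong\widehat{\bbZ}$ gives $\Ext^1_\bbZ(\bbQ,\bbZ)\cong\widehat{\bbZ}/\bbZ$, which is nonzero, so $(0)\in\cosupp_\bbZ\bbZ$. The main obstacle is this last explicit $\Ext$ analysis, particularly the Mittag--Leffler input that ensures vanishing when the target has bounded exponent---without such a finiteness hypothesis there are plenty of nontrivial extensions of $\bbQ$ by torsion groups, and the dichotomy between $\bbZ$ and $\bbZ/p^k$ comes down precisely to this point.
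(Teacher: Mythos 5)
Your proposal is correct, and the overall strategy — reduce via Theorem~\ref{th:max} to deciding whether the zero ideal lies in the cosupport, then use formality and the structure theorem to reduce to $\bbZ$ and $\bbZ/p^{k}$, and finally compute $\RHom_{\bbZ}(\bbQ,-)$ for these — is essentially the same as the paper's. The only genuine divergence is at the very end: the paper establishes $\RHom_{\bbZ}(\bbQ,\bbZ)\ne 0$ by citing an exercise in Fuchs, whereas you make the argument self-contained by applying $\Hom(-,\bbZ)$ to the sequence $0\to\bbZ\to\bbQ\to\bbQ/\bbZ\to 0$ and using $\Ext^{1}_{\bbZ}(\bbQ/\bbZ,\bbZ)\cong\End(\bbQ/\bbZ)\cong\widehat{\bbZ}$ (which follows since $\bbQ$ is injective and $\Hom(\bbQ/\bbZ,\bbQ)=0$) to get $\Ext^{1}_{\bbZ}(\bbQ,\bbZ)\cong\widehat{\bbZ}/\bbZ\ne 0$; this is a modest but real improvement in explicitness. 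A minor remark: for the vanishing of $\RHom_{\bbZ}(\bbQ,\bbZ/p^{k})$ the Mittag--Leffler route works, but there is a one-line alternative — $\Ext^{i}_{\bbZ}(\bbQ,\bbZ/p^{k})$ is simultaneously a $\bbQ$-vector space (via the first argument) and annihilated by $p^{k}$ (via the second), hence zero — which sidesteps inverse limits entirely.
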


\begin{proof}
The equality on the left is easily verified. For the one on the right,
given Theorem~\ref{th:max}, it suffices to prove that the zero ideal
is in $\cosupp_\bbZ M$ if and only if it is also in
$\supp_{\bbZ}M$. By Proposition~\ref{pr:hom-cosupp}, this amounts to
verifying that
\[
\RHom_{\bbZ}(\bbQ,M)\ne 0\iff {\bbQ\lotimes_{\bbZ}M}\ne 0\,.
\]
Since $M\cong \hh M$ in $\sfD(\bbZ)$ it suffices to verify the
equivalence above when $M$ is an indecomposable finitely generated
$\bbZ$-module, hence of the form $\bbZ/n\bbZ$, for some $n\geq 0$.
When $n\geq 1$, one has
$\RHom_{\bbZ}(\bbQ,\bbZ/n\bbZ)=0={\bbQ\lotimes_{\bbZ}\bbZ/n\bbZ}$. It
remains to verify that $\RHom_{\bbZ}(\bbQ,\bbZ)\ne 0$; see \cite[\S51,
  Exercise~7]{Fuchs:1970}.
\end{proof}

For complete local rings on the other hand, the difference between
cosupport and support can be as large as Theorem~\ref{th:max} permits.

\begin{proposition}
\label{pr:cosuppA}
Let $A$ be a commutative noetherian ring and $\fa$ an ideal in
$A$. The following conditions are equivalent:
\begin{enumerate}
\item
$A$ is $\fa$-adically complete.
\item
$\cosupp_{A}A\subseteq \mcV(\fa)$ holds.
\item
$\cosupp_{A}M\subseteq \mcV(\fa)$ holds for any $M\in\sfD(A)$ with $\hh M$ finitely generated.
\end{enumerate}
\end{proposition}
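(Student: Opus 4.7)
The plan is to prove the circular chain of implications $(3) \Rightarrow (2) \Rightarrow (1) \Rightarrow (3)$, using Corollary~\ref{co:cosupp} to translate each cosupport condition into a statement about the unit map of local homology, and using the identification in Remark~\ref{rem:gm} of $\lam^{\mcV(\fa)}$ with the left derived functor of $\fa$-adic completion.

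The implication $(3) \Rightarrow (2)$ is immediate by specialising to $M = A$. For $(2) \Rightarrow (1)$, I would apply Corollary~\ref{co:cosupp} with $\mcV = \mcV(\fa)$ and $X = A$ to translate the condition $\cosupp_{A}A \subseteq \mcV(\fa)$ into the statement that the natural map $A \to \lam^{\mcV(\fa)}A$ is an isomorphism in $\sfD(A)$. Since $\fa$-adic completion is exact on the category of finitely generated $A$-modules and $A$ is finitely generated over itself, the left derived completion agrees with the classical completion when applied to $A$, giving $\lam^{\mcV(\fa)}A \cong \widehat{A}_{\fa}$, concentrated in degree zero. Hence $(2)$ is equivalent to $A \cong \widehat{A}_{\fa}$, that is, to $A$ being $\fa$-adically complete.

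For $(1) \Rightarrow (3)$, assume $A$ is $\fa$-adically complete and let $M \in \sfD(A)$ have finitely generated cohomology. By Corollary~\ref{co:cosupp} it suffices to show that the unit $M \to \lam^{\mcV(\fa)}M$ is an isomorphism. Since each $H^{i}M$ is finitely generated over the complete noetherian ring $A$, it is itself $\fa$-adically complete; moreover, exactness of completion on finitely generated modules forces the higher left derived completions of each $H^{i}M$ to vanish. A truncation argument, or equivalently a hypercohomology spectral sequence, then identifies $H^{i}(\lam^{\mcV(\fa)}M)$ with $H^{i}M$ via the unit, so the unit is a quasi-isomorphism.

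The main obstacle is the step $(1) \Rightarrow (3)$: one must pass from the module-level statements (finitely generated modules over a complete noetherian ring are complete and have vanishing higher derived completion) to the derived-category statement for complexes with finitely generated cohomology. Since $\hh M$ is finitely generated as a graded module, $M$ is cohomologically bounded, and a bounded induction along a Postnikov tower reduces the computation of $\lam^{\mcV(\fa)}M$ to the module case. Once this reduction is in place, the conclusion falls out of Corollary~\ref{co:cosupp} and the description of $\lam^{\mcV(\fa)}$ in Remark~\ref{rem:gm}.
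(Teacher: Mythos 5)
Your proof is correct and follows essentially the same route as the paper's: translate each cosupport condition via Corollary~\ref{co:cosupp} into the statement that the unit $M\to\lam^{\mcV(\fa)}M$ is an isomorphism, then use Remark~\ref{rem:gm} (identification of $\lam^{\mcV(\fa)}$ with derived $\fa$-adic completion) together with exactness of completion on finitely generated modules over the noetherian ring $A$. The paper's proof is just a terser version of your argument; in particular, your Postnikov-tower reduction from bounded complexes to modules is precisely the content of the paper's ``and hence also for any complex $M$ with $\hh M$ finitely generated,'' and your use of exactness of completion to see that $L_i\Lambda_\fa$ vanishes in positive degrees on finitely generated modules (by completing a finitely generated free resolution) is the right mechanism, even if for $A$ itself the simpler observation that $A$ is free, hence $\lam^{\mcV(\fa)}A=\widehat{A}_\fa$ on the nose, would do.
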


\begin{proof}
The equivalence of (1) and (2) is contained in Corollary~\ref{co:cosupp}. Thus it remains to prove that (1) implies
(3).

When $A$ is $\fa$-adically complete so is any finitely generated module. Since the $\fa$-adic completion functor is exact on finitely generated modules, one obtains an isomorphism $M\xra{\sim}\lam^{\mcV(\fa)}M$ for any such module $M$, and hence also for any complex $M$ with $\hh M$ finitely generated. Now apply Corollary~\ref{co:cosupp}.
\end{proof}

For any object $X$ in $\sfT$ and $\fp\in\Spec R$, the support of $X_{\fp}$ is contained in that of $X$; see \cite[Theorem 5.6]{\bik:2008a}. The corresponding statement for cosupports does not hold, which speaks to one of the difficulties in computing this invariant.

\begin{example}
Let $(A,\fm)$ be a local ring that is $\fm$-adically complete. Then for any $\fp\in\Spec A\setminus\{\fm\}$ one has
\[
\{\fp\}\subseteq \cosupp_{A}(A_{\fp})\not\subseteq \cosupp_{A}A=\{\fm\}\,.
\]
Indeed, the equality is by Proposition~\ref{pr:cosuppA}. That $\fp$ is in $\cosupp_{A}(A_{\fp})$ can be checked directly, or via Theorem~\ref{th:max} and the equality $\supp_{A}(A_{\fp})=\Spec A_{\fp}$.
\end{example}

\begin{remark}
The notion of cosupport for an $A$-module $M$ is not the same as the
one introduced by Richardson \cite{Richardson:2006a}, which is denoted
$\mathrm{coSupp}\, M$. For instance, it follows from
\cite[Theorem~2.7(vi)]{Richardson:2006a} that
$\mathrm{coSupp}\,\bbZ=\Spec\bbZ\setminus\{(0)\}$, while
Proposition~\ref{pr:cosuppZ} yields $\cosupp_{\bbZ}\bbZ=\Spec
\bbZ$. When $(A,\fm)$ is local, each non-zero finitely generated
module $M$ satisfies $\mathrm{coSupp}\,M=\{\fm\}$, by
\cite[Theorem~2.7(i)]{Richardson:2006a}, while
Proposition~\ref{pr:cosuppA} implies $\cosupp_{A}M=\{\fm\}$ if and only
if $M$ is $\fm$-adically complete.
\end{remark}

\section{$\fp$-local and $\fp$-complete objects}
\label{se:plocalpcomplete}
As before, let $\sfT$ denote a compactly generated $R$-linear
triangulated category. In this section we investigate the
subcategories $\lam^{\fp}\sfT$, for each $\fp\in\Spec R$. They contain
local information about $\sfT$ and are important in classifying
its colocalizing subcategories; see the discussion in the last part of
this section.

We begin by noting that an object $X$ is in $\lam^{\fp}\sfT$ if and
only if $\cosupp_{R}X\subseteq\{\fp\}$, if and only if $X$ is
$\fp$-local (that is, $X\to X_{\fp}$ is an isomorphism) and also
$\fp$-complete, meaning that the natural map $X\to \lam^{\mcV(\fp)}X$
is an isomorphism; see Corollaries~\ref{co:cosupp2} and
\ref{co:cosupp}. Also $\lam^{\fp}\sfT$ is a colocalizing subcategory
of $\sfT$, by Proposition~\ref{pr:product}.

Recall that $X$ is in $\gam_{\fp}\sfT$ if and only if
$\supp_{R}X\subseteq\{\fp\}$, if and only if $X$ is $\fp$-local and
$\fp$-torsion; see \cite[Corollaries~4.9, 5.10]{\bik:2008a}, and that
$\gam_{\fp}\sfT$ is a localizing subcategory.

\subsection*{Dwyer Greenlees correspondence}
Fix a prime $\fp$ in $\Spec R$. The result below, which establishes an
equivalence between the category of $\fp$-local and $\fp$-complete
objects and the category of $\fp$-local and $\fp$-torsion objects, may
be viewed as extension of such an equivalence discovered by Dwyer and
Greenlees~\cite[Theorem~2.1]{Dwyer/Greenlees:2002a}, see also Hovey,
Palmieri, and Strickland~\cite[Theorem
  3.3.5]{Hovey/Palmieri/Strickland:1997a}, to our setting.

\begin{proposition}
\label{pr:loc-complete}
The functors $\gam_\fp\col\lam^\fp\sfT\to\gam_\fp\sfT$ and
$\lam^\fp\col\gam_\fp\sfT\to\lam^\fp\sfT$ form an adjoint pair, that
are mutually quasi-inverse to each other.
\end{proposition}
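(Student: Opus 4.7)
The adjunction $(\gam_\fp,\lam^\fp)$ on $\sfT$ is already established just before the statement, so what must be shown is that on the subcategories in the statement the unit and counit become isomorphisms; the restricted functors will then automatically be mutually quasi-inverse equivalences and inherit the adjointness. The plan is to reduce to Proposition~\ref{pr:coloc-fun}(4) applied to the pair $(L_{\mcV(\fp)},\gam_{\mcV(\fp)})$ and its right adjoints.

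First I would use Corollaries~\ref{co:cosupp} and~\ref{co:cosupp2} to characterize the two subcategories: $X\in\lam^\fp\sfT$ iff $X$ is $\fp$-local ($X\cong V^{\mcZ(\fp)}X$) and $\fp$-complete ($X\cong\lam^{\mcV(\fp)}X$), while $Y\in\gam_\fp\sfT$ iff $Y$ is $\fp$-local and $\fp$-torsion ($Y\cong\gam_{\mcV(\fp)}Y$). Writing $\Im V^{\mcZ(\fp)}=\Im L_{\mcZ(\fp)}$ for the $\fp$-local subcategory (Proposition~\ref{pr:coloc-fun}(2)), one obtains the identifications $\lam^\fp\sfT=\Im\lam^{\mcV(\fp)}\cap\Im V^{\mcZ(\fp)}$ and $\gam_\fp\sfT=\Im\gam_{\mcV(\fp)}\cap\Im V^{\mcZ(\fp)}$. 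On these subcategories the two-step functors collapse: for $X\in\lam^\fp\sfT$, $\fp$-locality of $X$ gives $\gam_\fp X=\gam_{\mcV(\fp)}L_{\mcZ(\fp)}X\cong\gam_{\mcV(\fp)}X$; for $Y\in\gam_\fp\sfT$, combining $V^{\mcZ(\fp)}Y\cong Y$ with the commutation $V^{\mcZ(\fp)}\lam^{\mcV(\fp)}\cong\lam^{\mcV(\fp)}V^{\mcZ(\fp)}$ from~\eqref{eq:commutes} gives $\lam^\fp Y\cong\lam^{\mcV(\fp)}Y$.

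Proposition~\ref{pr:coloc-fun}(4), applied to $(L_{\mcV(\fp)},\gam_{\mcV(\fp)})$ with right adjoints $(V^{\mcV(\fp)},\lam^{\mcV(\fp)})$, supplies mutually quasi-inverse equivalences $\gam_{\mcV(\fp)}\col\Im\lam^{\mcV(\fp)}\rightleftarrows\Im\gam_{\mcV(\fp)}\col\lam^{\mcV(\fp)}$. The simplifications above show these equivalences preserve the extra condition of $\fp$-locality and hence restrict to equivalences $\lam^\fp\sfT\rightleftarrows\gam_\fp\sfT$ that coincide naturally with $\gam_\fp$ and $\lam^\fp$. The delicate final step, which I expect to be the main obstacle, is verifying that the ambient unit $\eta_X\col X\to\lam^\fp\gam_\fp X$ (for $X\in\lam^\fp\sfT$) and counit $\eps_Y\col\gam_\fp\lam^\fp Y\to Y$ (for $Y\in\gam_\fp\sfT$) are indeed the natural isomorphisms produced above. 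This is a bookkeeping exercise: one factors each map, via the two constituent adjunctions $(L_{\mcZ(\fp)},V^{\mcZ(\fp)})$ and $(\gam_{\mcV(\fp)},\lam^{\mcV(\fp)})$, as a composition whose factors are each isomorphisms on the relevant subcategory by Proposition~\ref{pr:coloc-fun}(3)--(4). Once this identification is in hand, the restricted pair $(\gam_\fp,\lam^\fp)$ is automatically an adjoint pair of mutually quasi-inverse equivalences, as claimed.
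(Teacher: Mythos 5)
Your argument is correct and unpacks the paper's one-line proof (``Apply the identities in Proposition~\ref{pr:coloc-fun}'') in essentially the same way: identify $\lam^\fp\sfT$ and $\gam_\fp\sfT$ as intersections with the $\fp$-local subcategory, collapse the composites $\gam_\fp$ and $\lam^\fp$ on these subcategories, and invoke part~(4) of Proposition~\ref{pr:coloc-fun}. The final step you flag as delicate is not really an obstacle: since $\gam_{\mcV(\fp)}$, $L_{\mcV(\fp)}$ and their right adjoints all preserve $\Im V^{\mcZ(\fp)}$ by the commutation rules~\eqref{eq:commutes}, one may apply Proposition~\ref{pr:coloc-fun} directly to the restricted functors on that subcategory and read off the conclusion from part~(4).
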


\begin{proof}
Apply the identities in Proposition~\ref{pr:coloc-fun}.
\end{proof}

As a triangulated category, $\gam_\fp\sfT$ is compactly generated by
$\{C(\fp)\mid C\in\sfT^c\}$; see
Theorem~\ref{th:compactgeneration}. Given Theorem~\ref{th:max} and the
equivalence $\gam_\fp\sfT\xra{\sim}\lam^\fp\sfT$ in
Proposition~\ref{pr:loc-complete}, it follows that the same set
generates $\lam^\fp\sfT$, where the coproduct in $\lam^\fp\sfT$ is the
one induced from $\gam_{\fp}\sfT$.

Next we describe a set of cogenerators for $\lam^{\fp}\sfT$.

\subsection*{Perfect cogeneration}
Let $\sfU$ be a triangulated category with set-indexed products. A set
of objects $\sfS$ \emph{perfectly cogenerates} $\sfU$ if the following
conditions hold:
\begin{enumerate}
\item If $X$ is an object in $\sfU$ and $\Hom_\sfU(X,S)=0$ for all
  $S\in\sfS$ then $X=0$.
\item If a countable family of maps $X_i\to Y_i$ in $\sfU$ is such
  that
\[ 
\Hom_\sfU(Y_i,S)\to\Hom_\sfU(X_i,S)
\] 
is surjective for all $i$ and all $S\in\sfS$, then so are the induced
maps:
\[ 
\Hom_\sfU(\prod_iY_i,S)\to\Hom_\sfU(\prod_iX_i,S)\,.
\]
\end{enumerate}

\begin{proposition}
\label{pr:K}
If a set of objects $\sfS$ perfectly cogenerates $\sfU$ then
$\Coloc_\sfU(\sfS)=\sfU$.
\end{proposition}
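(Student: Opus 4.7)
Let $\sfC := \Coloc_\sfU(\sfS)$. My plan is to show $\sfC = \sfU$ by proving that the inclusion $\iota\col\sfC\hookrightarrow\sfU$ admits a left adjoint $L$, and then that the unit $\eta\col \Id_\sfU \to \iota L$ is an isomorphism at every object. The existence of $L$ will come from Brown representability applied inside $\sfC$, which is itself perfectly cogenerated by $\sfS$: since $\sfS\subseteq\sfC$ and $\sfC$ is closed under set-indexed products in $\sfU$, these products are computed in $\sfU$, and both axioms~(1) and~(2) defining perfect cogeneration restrict from $\sfU$ to $\sfC$ verbatim.

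Given this, I invoke Krause's Brown representability theorem for perfectly cogenerated triangulated categories: every homological functor $F\col\sfC\to\mathsf{Ab}$ that preserves products is representable. For each $X\in\sfU$ the functor $F_X := \Hom_\sfU(X,-)\big|_\sfC$ qualifies, being homological and (since products in $\sfC$ coincide with those in $\sfU$) product-preserving. Hence there exist an object $LX\in\sfC$ and a natural isomorphism $\Hom_\sfC(LX,-)\cong\Hom_\sfU(X,-)$ of functors on $\sfC$; the image of the identity of $LX$ under this isomorphism is a unit map $\eta_X\col X\to LX$.

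It then remains to check $\eta_X$ is an isomorphism. For each $S\in\sfS\subseteq\sfC$ and each $n\in\bbZ$, the representability isomorphism identifies precomposition with $\eta_X$ as the bijection $\Hom_\sfU(LX,\Si^n S)\xra{\sim}\Hom_\sfU(X,\Si^n S)$. Setting $Z := \fibre(\eta_X)$ and applying $\Hom_\sfU(-,S)$ to the triangle $Z\to X\xra{\eta_X}LX\to\Si Z$, the long exact sequence forces $\Hom_\sfU(\Si^m Z, S) = 0$ for all $m\in\bbZ$ and all $S\in\sfS$; axiom~(1) of perfect cogeneration, applied to each $\Si^m Z$, then gives $Z=0$, so $\eta_X$ is an isomorphism and $X\cong LX\in\sfC$. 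The main obstacle in this plan is Brown representability in the perfectly cogenerated setting, which is precisely where axiom~(2) enters in an essential way---it is what controls countable homotopy limits in Krause's proof of representability---but once that theorem is available, the rest is routine adjunction formalism.
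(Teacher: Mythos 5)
Your proof is correct and is essentially the route the paper intends; the one-line proof in the paper defers to the corollary of Krause's Brown representability theorem in \cite{Krause:2002a}, and that corollary is established by exactly the dual of your argument. You show that $\Coloc_\sfU(\sfS)$ is itself perfectly cogenerated by $\sfS$, apply Brown representability to produce a left adjoint to the inclusion, and use axiom~(1) to see that the cone of the unit vanishes, which is precisely what Krause does (for generators and coproducts) in the proof the paper points to.
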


\begin{proof}
The proof is akin to that of the corollary in
\cite[\S1]{Krause:2002a}.
\end{proof}

\subsection*{Injective objects}
Let now $\sfT$ be an $R$-linear triangulated category as before. For
each compact object $C$ in $\sfT$ and each injective $R$-module $I$,
Brown representability yields an object $T_{C}(I)$ in $\sfT$ and a
natural isomorphism:
\begin{equation}
\label{eq:TI}
\Hom_{\sfT}(-,T_{C}(I)) \cong \Hom_R(\Hom^*_\sfT(C,-),I).
\end{equation}

For $\fp\in\Spec R$, let $I(\fp)$ denote the injective envelope of
$R/\fp$.

\begin{proposition}
\label{pr:perf-cogen}
Fix $\fp\in\Spec R$. For each compact object $C$ in $\sfT$, one has
\[
\cosupp_R T_C(I(\fp))=\supp_RC\cap\{\fq\in\Spec R\mid\fq\subseteq\fp\}\,.
\]
Moreover, the set $\{ T_{\kos C\fp}(I(\fp))\mid C\in\sfT^c\}$ perfectly cogenerates $\lam^\fp\sfT$.
\end{proposition}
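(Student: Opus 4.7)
The plan is to translate statements about $T_C(I(\fp))$ into statements about the Matlis-type dual $\Hom_R(-,I(\fp))$ via the defining adjunction \eqref{eq:TI}, exploiting that $I(\fp)$ is an injective cogenerator for the category of $\fp$-local graded $R$-modules. Two elementary properties will drive everything: (a) for a map $\alpha$ between $\fp$-local $R$-modules, $\Hom_R(\alpha,I(\fp))$ is surjective iff $\alpha$ is injective, and in particular $\Hom_R(N,I(\fp))\ne 0$ iff $N_\fp\ne 0$; and (b) when $N$ is $\fq$-local and $\fq$-torsion, $N_\fp\ne 0$ holds iff $\fq\subseteq\fp$ and $N\ne 0$. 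Both reduce to standard Matlis duality over the graded local ring $R_\fp$.

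For the first assertion, I apply Proposition~\ref{pr:hom-cosupp}: $\fq\in\cosupp_R T_C(I(\fp))$ iff there is a compact $D\in\sfT$ with $\Hom_\sfT(D(\fq),T_C(I(\fp)))\ne 0$. Rewriting via \eqref{eq:TI} and using (a) and (b) applied to the $\fq$-local $\fq$-torsion module $\Hom_\sfT^*(C,D(\fq))$, this amounts to requiring both $\fq\subseteq\fp$ and $\Hom_\sfT^*(C,D(\fq))\ne 0$ for some compact $D$. It remains to show the latter is equivalent to $\fq\in\supp_R C$: the ``if'' direction takes $D=C$ and repeats the argument from the proof of Proposition~\ref{pr:hom-supp}, while the ``only if'' direction uses that $D(\fq)\in\lam^\fq\sfT$ by Theorem~\ref{th:max} together with the adjunction $(\gam_\fq,\lam^\fq)$, so that $\fq\notin\supp_R C$ gives $\gam_\fq C=0$ and forces the Hom to vanish.

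For the perfect cogeneration claim, first observe that $\sfS:=\{T_{\kos C\fp}(I(\fp))\mid C\in\sfT^c\}\subseteq\lam^\fp\sfT$: the first part combined with \eqref{eq:kos-supp} gives $\cosupp_R T_{\kos C\fp}(I(\fp))\subseteq\{\fp\}\cap\supp_R C\subseteq\{\fp\}$, which by Corollaries~\ref{co:cosupp} and \ref{co:cosupp2} places each such object in $\lam^\fp\sfT$. For condition~(1), suppose $X\in\lam^\fp\sfT$ satisfies $\Hom_\sfT(X,S)=0$ for every $S\in\sfS$. By \eqref{eq:TI} and (a)---applied to $\Hom_\sfT^*(\kos C\fp,X)$, which is $\fp$-torsion by Lemma~\ref{le:kos-hom}(1) and $\fp$-local since $\kos C\fp$ is compact and $X$ is $\fp$-local---this forces $\Hom_\sfT^*(\kos C\fp,X)=0$ for all $C\in\sfT^c$. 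Lemma~\ref{le:kos-hom}(3) then gives $\Hom_\sfT^*(C,\kos X\fp)=0$ for all compact $C$, so $\kos X\fp=0$. Lemma~\ref{le:kos-cosupp} computes $\cosupp_R\kos X\fp=\mcV(\fp)\cap\cosupp_R X=\cosupp_R X$ (as $\cosupp_R X\subseteq\{\fp\}\subseteq\mcV(\fp)$), and Theorem~\ref{th:cosupp=0} yields $X=0$.

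For condition~(2), a countable family of maps $f_i\colon X_i\to Y_i$ in $\lam^\fp\sfT$ satisfying the surjectivity hypothesis on each $S=T_{\kos C\fp}(I(\fp))$ produces, via \eqref{eq:TI} and (a), injections $\alpha_i\colon\Hom_\sfT^*(\kos C\fp,X_i)\to\Hom_\sfT^*(\kos C\fp,Y_i)$ between $\fp$-local $\fp$-torsion modules. Because $\Hom_\sfT^*(\kos C\fp,-)$ carries products in $\sfT$ to products of graded abelian groups, the induced map on the product is $\prod_i\alpha_i$, which is again injective; applying $\Hom_R(-,I(\fp))$, exact as $I(\fp)$ is injective, gives the required surjectivity at the product level. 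Proposition~\ref{pr:K} then concludes $\Coloc_{\lam^\fp\sfT}(\sfS)=\lam^\fp\sfT$. The main technical point I expect to need care with is isolating and recording the Matlis-duality properties (a) and (b) in the graded setting, since once these are in hand the rest is a bookkeeping exercise driven by \eqref{eq:TI} and the results of Section~\ref{se:cosupport}.
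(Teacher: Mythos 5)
Your proposal is correct and follows essentially the same approach as the paper: the first equality comes from rewriting $\Hom_\sfT(D(\fq),T_C(I(\fp)))$ via \eqref{eq:TI} and exploiting that $\Hom^*_\sfT(C,D(\fq))$ is $\fq$-local and $\fq$-torsion, with Propositions~\ref{pr:hom-cosupp} and \ref{pr:hom-supp} supplying the two directions; the perfect cogeneration argument also hinges on the same Matlis-duality bookkeeping through \eqref{eq:TI}. The one local variation is in condition~(1): the paper argues directly, picking a compact $C$ with $\Hom^*_\sfT(C(\fp),X)\ne 0$ and converting to $\Hom^*_\sfT(\kos C\fp,X)\ne 0$ via the $(\gam_\fp,\lam^\fp)$ adjunction, whereas you argue contrapositively via Lemmas~\ref{le:kos-hom}(3) and \ref{le:kos-cosupp} and Theorem~\ref{th:cosupp=0}; both routes are correct and use the same underlying Koszul-object machinery.
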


\begin{proof}
The shifts of $I(\fp)$ form a set of injective cogenerators for the
category of $\fp$-local $R$-modules.  For any $\fq\in\Spec R$ and
object $D$ in $\sfT$, one has equivalences
\begin{align*}
\Hom^*_{\sfT}(D(\fq),T_C(I(\fp)))\ne 0 &\iff
\Hom_{R}^*(\Hom_{\sfT}^{*}(C,D(\fq)),I(\fp))\ne 0 \\ &\iff
\Hom_{\sfT}^{*}(C,D(\fq))\ne 0 \text{ and } \fq\subseteq \fp\,.
\end{align*}
The first one is by \eqref{eq:TI}; the second holds as the $R$-module
$\Hom_{\sfT}^{*}(C,D(\fq))$ is $\fq$-local and $\fq$-torsion, by
\eqref{eq:kos-supp}. Propositions~\ref{pr:hom-cosupp} and
\ref{pr:hom-supp} now yield the stated equality.

Let $X$ be a non-zero object in $\lam^\fp\sfT$, so that
$\cosupp_{R}X=\{\fp\}$, and pick a compact object $C$ with
$\Hom^*_\sfT( C(\fp),X)\neq 0$; see
Proposition~\ref{pr:hom-cosupp}. It then follows that:
\begin{align*}
\Hom^*_\sfT(\kos C\fp,X)&\cong\Hom^*_\sfT(\kos C\fp,\lam^\fp
X)\\ &\cong\Hom^*_\sfT(\gam_\fp (\kos C\fp), X)\\ &\cong\Hom^*_\sfT(
C(\fp),X) \\ &\neq 0
\end{align*}
Replacing $C$ by an appropriate suspension $\Si^n C$, if necessary,
from the computation above and \eqref{eq:TI} one gets
\[
\Hom_{\sfT}(X,T_{\kos C\fp}(I(\fp)))\cong \Hom_R(\Hom^*_\sfT(\kos
C\fp,X),I(\fp))\neq 0\,.
\]
The other condition for perfect cogeneration holds because, for any
map $X\to Y$ in $\lam^\fp\sfT$, the induced map
\[
\Hom^*_\sfT(Y,T_{\kos C\fp}(I(\fp)))\to\Hom^*_\sfT(X,T_{\kos
  C\fp}(I(\fp)))
\]
is surjective if and only if $\Hom_\sfT^*(\kos C\fp,X)\to
\Hom_\sfT^*(\kos C\fp,Y)$ is injective.
\end{proof}

\subsection*{Classifying colocalizing subcategories}
We say that the \emph{local-global principle} holds for colocalizing
subcategories of $\sfT$ if for each object $X$ in $\sfT$ there is an
equality
\[
\Coloc_\sfT(X)=\Coloc_{\sfT}(\{\lam^{\fp}X\mid \fp\in\Spec R\}).
\]
The corresponding notion for localizing subcategories is investigated
in \cite[\S3]{\bik:2009a}. The reformulation below of the local-global
principle is easy to prove.

\begin{lemma}
\label{lem:colocal-global} 
\pushQED{\qed} The local-global principle for colocalizing
subcategories is equivalent to the statement: For any $X\in\sfT$ and
any colocalizing subcategory $\sfS$ of $\sfT$, one has
\[
X\in \sfS \iff \lam^{\fp}X\in \sfS \text{ for each }\fp\in\Spec
R. \qedhere
\]
\end{lemma}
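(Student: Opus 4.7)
The plan is to prove the equivalence by repeatedly invoking the universal property of $\Coloc_{\sfT}(-)$: namely, for a colocalizing subcategory $\sfS$ and a class $\mcC$ of objects, one has $\mcC\subseteq\sfS$ iff $\Coloc_{\sfT}(\mcC)\subseteq\sfS$. Both implications reduce to short chases with this universal property; no further properties of $\lam^{\fp}$ beyond its being an object-assignment are needed. The observation driving the argument is that, in the stated reformulation, the two directions of ``$X\in\sfS\iff\lam^{\fp}X\in\sfS$ for all $\fp$'' correspond to the two inclusions in the local-global equality, read off by choosing $\sfS$ judiciously.

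For the forward implication, I would assume the local-global equality and fix a colocalizing subcategory $\sfS$ together with $X\in\sfT$. If $X\in\sfS$ then $\Coloc_{\sfT}(X)\subseteq\sfS$, so the assumed equality yields $\Coloc_{\sfT}(\{\lam^{\fp}X\mid\fp\in\Spec R\})\subseteq\sfS$, which in particular places each $\lam^{\fp}X$ in $\sfS$. Conversely, if $\lam^{\fp}X\in\sfS$ for every $\fp$, then $\Coloc_{\sfT}(\{\lam^{\fp}X\mid\fp\in\Spec R\})\subseteq\sfS$, and the assumed equality again gives $\Coloc_{\sfT}(X)\subseteq\sfS$, so $X\in\sfS$.

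For the reverse implication, I would assume the reformulated statement and fix $X\in\sfT$. Applying the biconditional with $\sfS=\Coloc_{\sfT}(X)$: since $X\in\Coloc_{\sfT}(X)$ (trivially), the ``only if'' direction furnishes $\lam^{\fp}X\in\Coloc_{\sfT}(X)$ for every $\fp$, hence $\Coloc_{\sfT}(\{\lam^{\fp}X\mid\fp\in\Spec R\})\subseteq\Coloc_{\sfT}(X)$. Applying the biconditional next with $\sfS=\Coloc_{\sfT}(\{\lam^{\fp}X\mid\fp\in\Spec R\})$: each $\lam^{\fp}X$ lies in this subcategory, so the ``if'' direction forces $X$ to lie in it as well, whence $\Coloc_{\sfT}(X)\subseteq\Coloc_{\sfT}(\{\lam^{\fp}X\mid\fp\in\Spec R\})$. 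Combining both inclusions gives the local-global equality.

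There is no real obstacle: both implications are bookkeeping using the minimality of $\Coloc_{\sfT}(-)$, which is presumably why the authors flag the result as easy. The only subtlety worth noting is that one must avoid attempting the naive argument ``apply (B) to $\lam^{\fp}X$ itself'' (which would be circular, since $\lam^{\fq}\lam^{\fp}X$ vanishes for $\fq\neq\fp$ and equals $\lam^{\fp}X$ for $\fq=\fp$); the correct move is instead to apply (B) with $\sfS=\Coloc_{\sfT}(X)$ and $\sfS=\Coloc_{\sfT}(\{\lam^{\fp}X\mid\fp\in\Spec R\})$, as above.
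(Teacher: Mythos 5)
Your proof is correct, and since the paper gives no argument (the lemma is marked \qed with the remark that it is ``easy to prove''), your argument is exactly the intended one: both directions follow from the minimality characterization of $\Coloc_{\sfT}(-)$, choosing $\sfS$ appropriately. The closing remark about $\lam^{\fq}\lam^{\fp}$ is accurate but tangential; the main chain of inclusions in your two applications of (B) is all that is needed.
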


Colocalizing subcategories of $\sfT$ are related to subsets of $\Spec
R$ via maps
\[
\left\{
\begin{gathered}
\text{colocalizing}\\ \text{subcategories of $\sfT$}
\end{gathered}\; \right\} 
\xymatrix@C=3pc {\ar@<1ex>[r]^-{{\sigma}} & \ar@<1ex>[l]^-{{\tau}}}
\left\{
\begin{gathered}
  \text{families $(\sfS(\fp))_{\fp\in\Spec R}$ with
    $\sfS(\fp)$}\\ \text{a colocalizing subcategory of
    $\lam^{\fp}\sfT$}
\end{gathered}\;
\right\}
\] 
which are defined by ${\sigma}(\sfS)=(\sfS\cap\lam^{\fp}\sfT)$ and
${\tau}(\sfS(\fp))=\Coloc_{\sfT}\big(\sfS(\fp)\mid \fp\in \Spec
R\big)$.

\begin{proposition}
\label{prop:classification}
If the local-global principle for colocalizing subcategories of $\sfT$
holds, then the map ${\sigma}$ is bijective, with inverse ${\tau}$.
\end{proposition}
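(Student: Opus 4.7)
The plan is to verify the two composites $\tau\circ\sigma=\Id$ and $\sigma\circ\tau=\Id$ separately; both rely on the following orthogonality computation. For $\fp\ne\fq$ in $\Spec R$ and $Y\in\lam^{\fq}\sfT$, Proposition~\ref{pr:localization-cosupport} together with the characterization at the start of Section~\ref{se:plocalpcomplete} give
\[
\cosupp_R\lam^{\fp}Y \;=\; \{\fp\}\cap\cosupp_R Y \;\subseteq\; \{\fp\}\cap\{\fq\} \;=\; \varnothing,
\]
so $\lam^{\fp}Y=0$ by Theorem~\ref{th:cosupp=0}; on the other hand, $\lam^{\fq}Y\cong Y$ since $(\lam^{\fq})^{2}\cong\lam^{\fq}$.

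For $\tau\circ\sigma=\Id$, fix a colocalizing subcategory $\sfS$ of $\sfT$; the inclusion $\Coloc_{\sfT}(\sfS\cap\lam^{\fp}\sfT\mid\fp\in\Spec R)\subseteq\sfS$ is immediate. For the reverse, take $X\in\sfS$: by Lemma~\ref{lem:colocal-global} applied to the local-global hypothesis, each $\lam^{\fp}X$ lies in $\sfS$, hence in $\sfS\cap\lam^{\fp}\sfT$, and then the local-global principle applied to $X$ itself yields
\[
X\in\Coloc_{\sfT}\bigl(\lam^{\fp}X\mid\fp\in\Spec R\bigr)\subseteq\Coloc_{\sfT}\bigl(\sfS\cap\lam^{\fp}\sfT\mid\fp\in\Spec R\bigr).
\]

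For $\sigma\circ\tau=\Id$, fix a family $(\sfS(\fp))_{\fp\in\Spec R}$ with $\sfS(\fp)$ colocalizing in $\lam^{\fp}\sfT$, set $\sfS=\Coloc_{\sfT}(\sfS(\fp)\mid\fp\in\Spec R)$, and fix $\fq\in\Spec R$; the inclusion $\sfS(\fq)\subseteq\sfS\cap\lam^{\fq}\sfT$ is clear. Since $\lam^{\fq}\sfT$ is closed under $\sfT$-products by Proposition~\ref{pr:product}, the subcategory $\sfS(\fq)$ is in fact colocalizing in $\sfT$, and the preimage
\[
(\lam^{\fq})^{-1}(\sfS(\fq))=\{Z\in\sfT\mid\lam^{\fq}Z\in\sfS(\fq)\}
\]
is colocalizing in $\sfT$ by exactness of $\lam^{\fq}$ together with its compatibility with products. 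The orthogonality calculation shows that every generator $Y\in\sfS(\fp)$ lies in this preimage: $\lam^{\fq}Y=0\in\sfS(\fq)$ when $\fp\ne\fq$, and $\lam^{\fq}Y\cong Y\in\sfS(\fq)$ when $\fp=\fq$. Hence $\sfS\subseteq(\lam^{\fq})^{-1}(\sfS(\fq))$, so for any $X\in\sfS\cap\lam^{\fq}\sfT$ one has $X\cong\lam^{\fq}X\in\sfS(\fq)$.

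The only obstacle I foresee is the pedantic verification that a colocalizing subcategory of $\lam^{\fq}\sfT$ is automatically colocalizing in $\sfT$; this is handled cleanly by Proposition~\ref{pr:product}. With that in hand the entire argument is a book-keeping formalization of the local-global hypothesis together with the orthogonality $\lam^{\fp}\lam^{\fq}\cong 0$ for $\fp\ne\fq$.
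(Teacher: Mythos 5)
Your proof is correct and follows essentially the same route as the paper: the same orthogonality $\lam^{\fp}\lam^{\fq}\cong 0$ for $\fp\ne\fq$, the same use of Lemma~\ref{lem:colocal-global} for $\tau\sigma=\id$, and the same preimage/idempotence argument (which the paper compresses into the observation $\sfS\cap\lam^{\fp}\sfT\subseteq\lam^{\fp}\sfS=\sfS(\fp)$) for $\sigma\tau=\id$. The only superficial difference is that you derive the orthogonality via cosupport (Proposition~\ref{pr:localization-cosupport} and Theorem~\ref{th:cosupp=0}) rather than directly from \eqref{eq:commutes}--\eqref{eq:vanishing}, and you spell out explicitly the point, left implicit in the paper, that Proposition~\ref{pr:product} makes $\sfS(\fq)$ and $(\lam^{\fq})^{-1}(\sfS(\fq))$ colocalizing in $\sfT$.
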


\begin{proof}
We use the fact that $\lam^{\fp}$ is an idempotent exact functor
preserving products.

First observe that for each colocalizing subcategory $\sfS$ of $\sfT$
there is an inclusion
\[
\sfS \cap\lam^{\fp}\sfT \subseteq \lam^{\fp}\sfS\quad\text{for each
}\fp\in\Spec R. \tag{$\ast$}
\]
We prove that ${\sigma}{\tau}$ is the identity, that is to say, that
for any family $(\sfS(\fp))_{\fp\in\Spec R}$ of colocalizing
subcategories with $S(\fp)\subseteq \lam^{\fp}\sfT$ the colocalizing
subcategory cogenerated by all the $S(\fp)$, call it $\sfS$, satisfies
\[
\sfS \cap \lam^{\fp}\sfT=\sfS(\fp)\quad\text{for each }\fp\in\Spec R.
\]
Note that $\lam^{\fp}\sfS = \sfS(\fp)$ holds, since
$\lam^{\fp}\lam^{\fq}=0$ when $\fp\ne\fq$. Therefore ($\ast$) yields
an inclusion $\sfS \cap \lam^{\fp}\sfT \subseteq \sfS(\fp)$. The
reverse inclusion is obvious.

For any localizing subcategory $\sfS$ of $\sfT$, the reformulation of
the local-global principle in Lemma~\ref{lem:colocal-global} gives
$\sfS =\Coloc_{\sfT}(\sfS\cap\lam^{\fp}\sfT\mid\fp\in\Spec R)$. Thus
$\tau\sigma=\id$.
\end{proof}

\begin{remark}
\label{rem:costratification}
In analogy with the notion of stratification for localizing
subcategories of $\sfT$ introduced in \cite[\S4]{\bik:2009a}, we say
that $\sfT$ is \emph{costratified} by $R$ if
\begin{itemize}
\item[\quad\rm(C1)] The local-global principle holds for colocalizing
  subcategories of $\sfT$;
\item[\quad\rm(C2)] For each $\fp\in\Spec R$, the colocalizing
  subcategory $\lam^{\fp}\sfT$ contains no proper non-zero colocalizing
  subcategories.
\end{itemize}
It is immediate from Proposition~\ref{prop:classification} that when
these conditions hold, the maps $\sigma$ and $\tau$ induce a bijection
\[ 
\left\{\begin{gathered} \text{Colocalizing}\\ \text{subcategories of
  $\sfT$}
\end{gathered}\;
\right\} \xymatrix@C=3pc{ \ar@<1ex>[r]^-{\cosupp_R} &
  \ar@<1ex>[l]^-{\cosupp_R^{-1}}} \left\{
\begin{gathered}
  \text{subsets of $\supp_R\sfT$}
\end{gathered}\;
\right\}.
\] 
For the main results of this work, it suffices to consider a version
of costratification for tensor triangulated categories, see
Section~\ref{se:costratification}; so we do not study the general
notion in any great detail.
\end{remark}

\section{Axioms for support and cosupport}

In this section we give an axiomatic description of cosupport,
analogous to the one for support in \cite[Theorem~5.15]{\bik:2008a};
see also Theorem~\ref{th:supp-axioms} below. This material is not used
elsewhere in this paper.

As before, we fix a compactly generated $R$-linear triangulated
category $\sfT$.  The starting point is the following cohomological
addendum to Corollary~\ref{co:cosupp2}.

\begin{lemma}
\label{le:cohomol}
Let $\mcV\subseteq\Spec R$ be a specialization closed subset. For each
object $X$ in $\sfT$, the following conditions are equivalent:
\begin{enumerate}
\item $\mcV\cap\cosupp_RX=\varnothing$.
\item[(1$'$)] $\mcV\cap\supp_RX=\varnothing$.
\item $\Hom^*_\sfT(\kos C\fp,X)=0$ for all $C\in\sfT^c$ and
  $\fp\in\mcV$.
\item $\Hom^*_\sfT(C,X)$ is either zero or not $\fp$-torsion for each
  $C\in\sfT^c$ and $\fp\in\mcV$.
\end{enumerate}
\end{lemma}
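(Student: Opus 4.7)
The plan is to deduce the four equivalences by establishing in turn (1)$\Leftrightarrow$(1'), (1')$\Leftrightarrow$(2), and (2)$\Leftrightarrow$(3). The first is immediate from Corollary~\ref{co:cosupp2}: since $\mcV$ is specialization closed, saying $\mcV\cap\cosupp_RX=\varnothing$ is the same as $\cosupp_RX\subseteq\Spec R\setminus\mcV$, and that corollary puts this on equal footing with the analogous statement for support.

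For (1')$\Leftrightarrow$(2) the natural route is through the localization functor $L_{\mcV}$. By Corollary~\ref{co:cosupp2} condition (1') is equivalent to $X\in\Im L_\mcV=\sfT_\mcV^{\perp}$, i.e.\ to $\Hom^*_\sfT(Y,X)=0$ for every $Y\in\sfT_{\mcV}$. Now Theorem~\ref{th:compactgeneration} identifies a set of compact generators of $\sfT_{\mcV}$ as precisely the objects $\kos C\fp$ with $C\in\sfT^c$ and $\fp\in\mcV$, so (2) is clearly necessary. For sufficiency one observes that $\{Y\in\sfT : \Hom^*_\sfT(Y,X)=0\}$ is closed under suspensions, triangles and coproducts (the last because $\Hom^*_\sfT(-,X)$ converts coproducts into products), hence is a localizing subcategory, so containing the listed generators forces it to contain all of $\sfT_{\mcV}$.

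The equivalence (2)$\Leftrightarrow$(3) will come out of Lemma~\ref{le:kos-hom}, together with two ancillary facts: Koszul objects on compact objects are themselves compact (they are built from compacts by finitely many cones), and $\Hom^*_\sfT(\kos C\fp, X)$ is automatically $\fp$-torsion by Lemma~\ref{le:kos-hom}(1). For (3)$\Rightarrow$(2), if $\Hom^*_\sfT(\kos C\fp, X)\ne 0$ for some $C\in\sfT^c$ and $\fp\in\mcV$, then the compact object $C':=\kos C\fp$ furnishes a nonzero, $\fp$-torsion $\Hom^*_\sfT(C',X)$, contradicting (3). For (2)$\Rightarrow$(3), suppose instead that some $\Hom^*_\sfT(C,X)$ is nonzero and $\fp$-torsion; then Lemma~\ref{le:kos-hom}(2) yields $\Hom^*_\sfT(C,\kos X\fp)\ne 0$, and Lemma~\ref{le:kos-hom}(3) converts this to $\Hom^*_\sfT(\kos C\fp, X)\ne 0$, contradicting (2). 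There is no genuine obstacle: the only subtlety is matching the dualities in Lemma~\ref{le:kos-hom} to the form needed here, which is purely formal.
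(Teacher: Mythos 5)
Your proof is correct and follows exactly the same route as the paper's (admittedly terse) argument: Corollary~\ref{co:cosupp2} for (1)$\Leftrightarrow$(1$'$), Theorem~\ref{th:compactgeneration} for (1$'$)$\Leftrightarrow$(2), and Lemma~\ref{le:kos-hom} for (2)$\Leftrightarrow$(3). You have simply supplied the details that the paper leaves to the reader — in particular the observation that $\Im L_{\mcV}=\sfT_{\mcV}^{\perp}$, the generation argument for the ``$\Leftarrow$'' direction of (1$'$)$\Leftrightarrow$(2), and the chaining of Lemma~\ref{le:kos-hom}(1),(2),(3) — and these details are accurate.
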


\begin{proof}
(1) $\Lra$ (1$'$) is part of Corollary~\ref{co:cosupp2}, while (1$'$)
  $\Lra$ (2) is a consequence of
  Theorem~\ref{th:compactgeneration}. To prove that (2) $\Lra$ (3),
  use Lemma~\ref{le:kos-hom}.
\end{proof}

\begin{remark}
As noted above, for each specialization closed subset $\mcV$ of $\Spec
R$ there is an equality of subcategories
\[
\{X\in\sfT\mid \cosupp_{R}X \cap \mcV=\varnothing\} = \{X\in\sfT\mid
\supp_{R}X \cap \mcV=\varnothing\}\,.
\]
The subcategory on the left is colocalizing, by
Proposition~\ref{pr:product}, while the one on the right is
localizing, since $\gam_{\fp}$ preserves set-indexed coproducts; see
\cite[Corollary~6.6]{\bik:2008a}.
\end{remark}

Given a specialization closed subset $\mcV\subseteq\Spec R$,
Lemma~\ref{le:cohomol} yields cohomological criteria for the
(co)support of any object $X$ to be contained in $\Spec
R\setminus\mcV$. This leads to axiomatic descriptions of cosupport and
support.

\begin{theorem}
\label{th:cosupp-axioms}
There exists a unique assignment sending each object $X$ in $\sfT$ to
a subset $\cosupp_{R} X$ of $\Spec R$ such that the following
properties hold:
\begin{enumerate}
\item \emph{Cohomology:} For each object $X$ in $\sfT$ and each $\fp$
  in $\Spec R$, one has
\[
\mcV(\fp)\cap\cosupp_RX\ne\varnothing
\] 
if and only if $\Hom_\sfT^*(C,X)$ is non-zero and $\fp$-torsion for
some $C$ in $\sfT^c$.
\item \emph{Orthogonality:} For objects $X$ and $Y$ in $\sfT$, one has
  that
\[
\cosupp_{R} X\cap\cl(\cosupp_{R} Y)=\varnothing
\quad\text{implies}\quad \Hom_\sfT(X,Y)=0.
\]
  \item \emph{Exactness:} For every exact triangle $X\to Y\to Z\to$ in
    $\sfT$, one has
\[
\cosupp_{R} Y\subseteq \cosupp_{R} X \cup \cosupp_{R} Z.
\] 
\item \emph{Separation:} For any specialization closed subset $\mcV$
  of $\Spec R$ and object $X$ in $\sfT$, there exists an exact
  triangle $X'\to X\to X''\to$ in $\sfT$ such that
\[
\cosupp_{R} X'\subseteq\Spec R\setminus\mcV \quad\text{and}
\quad\cosupp_{R}X'' \subseteq \mcV.
\]
\end{enumerate}
\end{theorem}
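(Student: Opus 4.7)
I would prove this in parallel to the axiomatic characterization of support \cite[Theorem~5.15]{\bik:2008a}, establishing existence and uniqueness separately.

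\textbf{Existence.} I verify that the cosupport defined in Section \ref{se:cosupport} satisfies the four axioms. Axioms (2) and (3) are exactly Corollary \ref{co:orthogonality} and Proposition \ref{pr:triangle}. Axiom (4) is realized by the triangle $V^{\mcV}X\to X\to\lam^{\mcV}X$ from \eqref{eq:loctriangle} together with Proposition \ref{pr:localization-cosupport}. For axiom (1), I specialize Lemma \ref{le:cohomol} to $\mcV=\mcV(\fp)$: the ``if'' direction uses the choice $\fq=\fp$ in the equivalence (1)$\Lra$(3) of the lemma, and the ``only if'' direction uses that $\fp\subseteq\fq$ implies $\fp^{n}\subseteq\fq^{n}$, so any $\fq$-torsion $R$-module is automatically $\fp$-torsion.

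\textbf{Uniqueness.} Let $\sigma$ be another assignment satisfying the four axioms. Applying axiom (1) to both $\sigma$ and $\cosupp_R$ yields, for every $X$ and every $\fp\in\Spec R$,
\[
\mcV(\fp)\cap\sigma(X)\ne\varnothing \Lra \mcV(\fp)\cap\cosupp_R X\ne\varnothing.
\]
Since every specialization closed subset of $\Spec R$ is a union of sets of the form $\mcV(\fp)$, the two subsets $\sigma(X)$ and $\cosupp_R X$ meet the same specialization closed sets; in particular they share the same generalization closure. For each specialization closed $\mcV$, axiom (4) for $\sigma$ produces a triangle $X'\to X\to X''$; the vanishing $\Hom_{\sfT}(X',X'')=0$ from axiom (2) (since $\sigma(X')\cap\cl(\sigma(X''))\subseteq\sigma(X')\cap\mcV=\varnothing$) forces this triangle to be unique up to isomorphism among those whose terms have the specified cosupport containments, by a standard diagram chase. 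The same argument applied to $\cosupp_R$ shows that $V^{\mcV}X\to X\to\lam^{\mcV}X$ is similarly characterized. I then identify the two triangles by verifying that the canonical triangle also satisfies the $\sigma$-separation conditions; once this identification is in hand, axiom (3) gives $\sigma(X)=\sigma(V^{\mcV}X)\sqcup\sigma(\lam^{\mcV}X)$, matching the analogous partition of $\cosupp_R X$. Iterating with $\mcV=\mcV(\fp)$ and $\mcV=\mcZ(\fp)$ reduces to the case where the cosupport is concentrated at a single prime, where both invariants must agree.

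\textbf{Main obstacle.} The technical heart of the argument is showing that $V^{\mcV}X\to X\to\lam^{\mcV}X$ is a $\sigma$-separation triangle, i.e., that $\sigma(V^{\mcV}X)\cap\mcV=\varnothing$ and $\sigma(\lam^{\mcV}X)\subseteq\mcV$. The first containment follows cleanly from the generalization-closure equivalence together with the specialization-closedness of $\mcV$: if $\fp$ were in $\sigma(V^{\mcV}X)\cap\mcV$ then some $\fq\supseteq\fp$ in $\cosupp_R V^{\mcV}X\subseteq\Spec R\setminus\mcV$ would force $\fq\in\mcV$, a contradiction. The second containment is subtler, since ``same generalization closure'' alone permits $\sigma(\lam^{\mcV}X)$ to include primes $\fp$ just outside $\mcV$ with a specialization in $\mcV$; ruling these out requires combining axioms (2) and (3) on the canonical triangle so that any such stray prime would contradict axiom (1) applied at a suitable $\mcV(\fp)$.
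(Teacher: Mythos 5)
Your existence argument is correct and matches the paper's. The uniqueness argument has a genuine gap, which you yourself flag as the ``main obstacle'' but do not resolve — and the route you sketch for resolving it is not the one that works.

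You attempt to prove that the canonical triangle $V^{\mcV}X\to X\to\lam^{\mcV}X$ satisfies the two $\sigma$-separation conditions and then invoke uniqueness of separation triangles. The containment $\sigma(V^{\mcV}X)\subseteq\Spec R\setminus\mcV$ does come out of the generalization-closure equivalence, as you show. But the containment $\sigma(\lam^{\mcV}X)\subseteq\mcV$ is precisely what cannot be extracted from the cohomology axiom before the identification is complete: the generalization closure of a subset of $\mcV$ typically leaks outside $\mcV$, so nothing in axiom (1) prevents $\sigma(\lam^{\mcV}X)$ from containing stray primes $\fp\notin\mcV$ whose specializations lie in $\mcV$. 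Your closing sentence, that these are ruled out by ``combining axioms (2) and (3) on the canonical triangle so that any such stray prime would contradict axiom (1),'' is not an argument; no such contradiction is forthcoming, since axiom (1) is insensitive to exactly the primes in question. In fact $\sigma(\lam^{\mcV}X)\subseteq\mcV$ is a \emph{consequence} of the identification $\sigma=\cosupp_R$, so trying to establish it first is essentially circular.

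The paper's proof sidesteps this entirely. Rather than prove that the canonical triangle meets $\sigma$'s separation conditions, it takes the $\sigma$-separation triangle $X'\to X\to X''$ and shows that its terms already land in the right coreflective and reflective subcategories. The key observation, derived from the cohomology axiom together with Corollary~\ref{co:cosupp2}, is that for any $Y$ one has $\mcV\cap\sigma(Y)=\varnothing$ if and only if $Y\in\Im V^{\mcV}$. Applied to $X'$, whose $\sigma$-cosupport avoids $\mcV$, this gives $X'\in\Im V^{\mcV}$. For $X''$, one uses the orthogonality axiom: for any $Z\in\Im V^{\mcV}$ and any $n$, the suspension $\Si^{n}Z$ also lies in $\Im V^{\mcV}$, so by the key observation $\sigma(\Si^{n}Z)\cap\cl(\sigma(X''))\subseteq(\Spec R\setminus\mcV)\cap\mcV=\varnothing$ and axiom (2) forces $\Hom_{\sfT}(\Si^{n}Z,X'')=0$; hence $X''\in(\Im V^{\mcV})^{\perp}=\Im\lam^{\mcV}$ by Lemma~\ref{le:loc-basic}. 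Once $X'\in\Im V^{\mcV}$ and $X''\in\Im\lam^{\mcV}$, the standard comparison of localization triangles produces a morphism to $V^{\mcV}X\to X\to\lam^{\mcV}X$ whose cone lies in $\Im V^{\mcV}\cap\Im\lam^{\mcV}=0$, giving the identification. I recommend you replace your attempt to verify $\sigma(\lam^{\mcV}X)\subseteq\mcV$ with this argument; the rest of your scheme, including the final reduction via $\sigma(\lam^{\fp}X)=\sigma(X)\cap\{\fp\}$ and the cohomology axiom, then goes through.
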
 
\begin{proof}
Lemma~\ref{le:cohomol} implies (1). Corollary~\ref{co:orthogonality}
is (2), and Proposition~\ref{pr:triangle} is (3).
Proposition~\ref{pr:localization-cosupport} implies (4). Here one uses
for any specialization closed subset $\mcV$ of $\Spec R$ the
localization triangle \eqref{eq:loctriangle}.

Now let $\si\col \sfT\to \Spec R$ be a map satisfying properties
(1)--(4).

Fix a specialization closed subset $\mcV\subseteq\Spec R$ and an
object $X\in \sfT$.  It suffices to verify that the following
equalities hold:
\[
\si(\lam^{\mcV}X)= \si(X)\cap \mcV \quad\text{and}\quad
\si(V^{\mcV}X)= \si(X)\cap(\Spec R\setminus \mcV).\tag{$\ast$}
\]
Indeed, for any point $\fp$ in $\Spec R$ one then obtains that
\begin{align*}
\si(\lam^{\fp}X) &=\si(V^{\mcZ(\fp)}\lam^{\mcV(\fp)}{X})
\\ &=\si(\lam^{\mcV(\fp)}{X})\cap (\Spec R\setminus \mcZ(\fp))
\\ &=\si(X)\cap \mcV(\fp) \cap (\Spec R\setminus
\mcZ(\fp))\\ &=\si(X)\cap \{\fp\}.
\end{align*}
Therefore, $\fp\in \si(X)$ if and only if $\si(\lam^{\fp}X)\ne
\varnothing$; this last condition is equivalent to $\lam^{\fp}X\ne 0$,
by the cohomology property. The upshot is that $\fp\in \si(X)$ if and
only if $\fp\in \supp_{R}X$, which is the desired conclusion.

It thus remains to prove ($\ast$).

Let $X'\to X\to X''\to$ be the triangle associated to $\mcV$, provided
by property (4). It suffices to verify the following statements:
\begin{enumerate}
\item[(i)] $\si(X'')= \si(X)\cap \mcV$ and $\si(X')= \si(X)\cap(\Spec
  R\setminus \mcV)$;
\item[(ii)] $\lam^{\mcV}X\cong X'$ and $\bloc_{\mcV}X\cong X''$.
\end{enumerate} 

The equalities in (i) are immediate from properties (3) and (4). In
verifying (ii), the crucial observation is that, by the cohomology
property, for any $Y$ in $\sfT$ one has
\[
\mcV\,\cap\,\si(Y)=\varnothing\iff Y\in\Im V^\mcV\,.
\] 
Thus $X'$ is in $\Im V^\mcV$. On the other hand, property (2) and
Lemma~\ref{le:loc-basic} imply that $X''$ is in $\Im\lam^\mcV$.  One
thus obtains the following morphism of triangles
\[
\xymatrixrowsep{2pc} \xymatrixcolsep{2pc} \xymatrix{ X'\ar@{->}[r]
  \ar@{->}[d]^{\alpha} & X \ar@{=}[d]\ar@{->}[r]
  &X''\ar@{->}[d]^{\beta}\ar@{->}[r]& \\ V^{\mcV}{X}\ar@{->}[r] & X
  \ar@{->}[r] &\lam^{\mcV}{X}\ar@{->}[r]&}
\]
where the object $\Cone(\alpha)\cong\Cone(\Si^{-1}\beta)$ belongs to
$\Im V^\mcV\cap\Im\lam^\mcV$, hence is trivial. Therefore, $\alpha$
and $\beta$ are isomorphisms, which yields (ii).
\end{proof}

The following axiomatic description of support is a slight
modification of \cite[Theorem~5.15]{\bik:2008a}. Note that conditions
(1) and (3) coincide for support and cosupport. The crucial difference
appears in conditions (2) and (4).

\begin{theorem}
\label{th:supp-axioms}
There exists a unique assignment sending each object $X$ in $\sfT$ to
a subset $\supp_{R} X$ of $\Spec R$ such that the following properties
hold:
\begin{enumerate}
\item \emph{Cohomology:} For each object $X$ in $\sfT$ and each $\fp$
  in $\Spec R$, one has
\[
\mcV(\fp)\cap\supp_RX\ne\varnothing
\] 
if and only if $\Hom_\sfT^*(C,X)$ is non-zero and $\fp$-torsion for
some $C$ in $\sfT^c$.
\item \emph{Orthogonality:} For objects $X$ and $Y$ in $\sfT$, one has
  that
\[
\cl(\supp_{R} X)\cap\supp_{R} Y=\varnothing \quad\text{implies}\quad
\Hom_\sfT(X,Y)=0.
\]
  \item \emph{Exactness:} For every exact triangle $X\to Y\to Z\to$ in
    $\sfT$, one has
\[
\supp_{R} Y\subseteq \supp_{R} X \cup \supp_{R} Z.
\] 
\item \emph{Separation:} For any specialization closed subset $\mcV$
  of $\Spec R$ and object $X$ in $\sfT$, there exists an exact
  triangle $X'\to X\to X''\to$ in $\sfT$ such that
\[
\supp_{R} X'\subseteq\mcV \quad\text{and} \quad\supp_{R}X'' \subseteq
\Spec R\setminus\mcV.
\]
\end{enumerate}
\end{theorem}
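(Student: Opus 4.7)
The proof parallels that of Theorem~\ref{th:cosupp-axioms}, with cosupport replaced by support throughout. For existence, I would verify that the standard $\supp_R X$ from Section~\ref{se:support} satisfies the four axioms: property (1) is the equivalence $(1')\Lra(2)$ of Lemma~\ref{le:cohomol}, applied with $\mcV=\mcV(\fp)$; property (2) is \cite[Corollary~5.8]{\bik:2008a}; property (3) follows from the exactness of each $\gam_\fp$; and property (4) is realized by the localization triangle $\gam_\mcV X\to X\to L_\mcV X\to$, since $\gam_\mcV X\in\sfT_\mcV$ has support contained in $\mcV$, and $L_\mcV X\in\Im L_\mcV$ has support in $\Spec R\setminus\mcV$ by Corollary~\ref{co:cosupp2}.

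For uniqueness, suppose $\si\col\sfT\to\Spec R$ satisfies (1)--(4). Following the cosupport argument, it suffices to prove that for each specialization closed $\mcV\subseteq\Spec R$ and each $X$:
\[
\si(\gam_\mcV X)=\si(X)\cap\mcV,\qquad \si(L_\mcV X)=\si(X)\cap(\Spec R\setminus\mcV).\quad(\ast)
\]
Indeed, iterating $(\ast)$ first with $\mcV=\mcV(\fp)$ and then with $\mcV=\mcZ(\fp)$, using the commutation of the functors involved, yields $\si(\gam_\fp X)=\si(X)\cap\{\fp\}$. Since $\gam_\fp X$ is $\fp$-torsion, axiom (1) applied to $Y=\gam_\fp X$ gives $\si(\gam_\fp X)=\varnothing\iff\gam_\fp X=0$, whence $\fp\in\si(X)\iff\gam_\fp X\ne 0$, i.e., $\si(X)=\supp_R X$.

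To prove $(\ast)$, fix the separation triangle $X'\to X\to X''\to$ of axiom (4) for $\mcV$. Exactness (axiom (3)) applied to this triangle and its two rotations, together with $\si(X')\subseteq\mcV$ and $\si(X'')\subseteq\Spec R\setminus\mcV$, yields $\si(X')=\si(X)\cap\mcV$ and $\si(X'')=\si(X)\cap(\Spec R\setminus\mcV)$. It remains to identify $X'\cong\gam_\mcV X$ and $X''\cong L_\mcV X$. The key observation is that, by axiom (1) combined with Lemma~\ref{le:cohomol}, one has $\si(Y)\cap\mcV=\varnothing\iff Y\in\Im L_\mcV$ for any $Y\in\sfT$; this immediately gives $X''\in\Im L_\mcV$. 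For $X'$, pick any $Y\in\Im L_\mcV$; the equivalence gives $\si(Y)\cap\mcV=\varnothing$, while $\cl(\si(X'))\subseteq\mcV$ because $\mcV$ is specialization closed. Hence $\cl(\si(X'))\cap\si(Y)=\varnothing$, and axiom (2) forces $\Hom_\sfT(X',Y)=0$, so $X'\in{}^\perp(\Im L_\mcV)=\Im\gam_\mcV$ by Lemma~\ref{le:loc-basic}. Comparing $X'\to X\to X''$ with $\gam_\mcV X\to X\to L_\mcV X$ via the universal property of $\gam_\mcV$ produces a morphism of triangles whose comparison terms lie in $\sfT_\mcV\cap\Im L_\mcV=0$, so both $X'\to\gam_\mcV X$ and $X''\to L_\mcV X$ are isomorphisms. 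The main subtle point is the orientation of axiom (2): because specialization closure lies on the first argument, the argument for $X'\in\Im\gam_\mcV$ must test against $Y\in\Im L_\mcV$ and invoke specialization closedness of $\mcV$ itself (not of its complement); the cosupport proof correspondingly tests against $Y\in\Im V^\mcV$.
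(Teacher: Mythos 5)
Your proof is correct and follows exactly the approach the paper intends: the paper's own proof of this theorem is a one-line pointer to adapt the argument for Theorem~\ref{th:cosupp-axioms} (equivalently, \cite[Theorem~5.15]{\bik:2008a}) using Lemma~\ref{le:cohomol}, and your proposal carries out precisely that adaptation, with the dual choices (existence via the $\gam_\mcV X\to X\to L_\mcV X$ triangle, uniqueness via showing $X''\in\Im L_\mcV$ from the cohomology axiom and $X'\in\Im\gam_\mcV={}^\perp(\Im L_\mcV)$ from the orthogonality axiom) matching the cosupport argument point for point. Your closing remark about the orientation of the orthogonality axiom correctly identifies the one place where the adaptation is not purely mechanical.
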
 
\begin{proof}
Adapt the proof of \cite[Theorem~5.15]{\bik:2008a}, using
Lemma~\ref{le:cohomol}.
\end{proof}

\section{Change of rings and categories}
\label{se:change of categories}

In this section we discuss how support and cosupport is affected by
the change of rings and categories.  Throughout $R$ is a
graded-commutative noetherian ring.

\subsection*{Linear functors}
Let $\sfT$ and $\sfU$ be $R$-linear triangulated categories. We say
that a functor $F\col \sfT\to \sfU$ is \emph{$R$-linear} if it is an
exact functor such that for each $X$ in $\sfT$ the following diagram
is commutative:
\[
\xymatrixrowsep{2pc} \xymatrixcolsep{1pc} \xymatrix{ &R
  \ar@{->}[dr]^{\phi_{FX}} \ar@{->}[dl]_{\phi_{X}} \\
\End_{\sfT}^*(X) \ar@{->}[rr]^-{F} & &\End_{\sfU}^*(FX)
}
\]

Let $F\col \sfT\to \sfU$ be an $R$-linear functor, and let $X$ and $Y$
be objects in $\sfT$ and $\sfU$, respectively. The structure
homomorphisms $\phi_X$ and $\phi_{FX}$ provide two $R$-module
structures on $\Hom^*_{\sfU}(FX,Y)$, and the $R$-linearity of $F$ is
equivalent to the claim that these coincide.

We are grateful to the referee for suggesting the following lemma.
\begin{lemma}
\label{lem:linearity}
Let  $F\col \sfT\to \sfU$ be an $R$-linear functor and $G$ a right adjoint. Then the following statements hold.
\begin{enumerate}
\item The adjunction isomorphism $\Hom^*_{\sfT}(X,GY)\xra{\sim}
  \Hom^*_{\sfU}(FX,GY)$ is $R$-linear.
\item The functor $G$ is $R$-linear.
\end{enumerate}
\end{lemma}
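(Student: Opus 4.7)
The plan is to work with the unit-counit description of the adjunction. Write $\eta\col\Id_{\sfT}\to GF$ and $\varepsilon\col FG\to\Id_{\sfU}$ for the unit and counit, and denote the adjunction isomorphism by
\[
\alpha_{X,Y}\col\Hom^{*}_{\sfT}(X,GY)\xra{\sim}\Hom^{*}_{\sfU}(FX,Y),\qquad \alpha_{X,Y}(f)=\varepsilon_{Y}\circ Ff.
\]
Recall that for any $X\in\sfT$ the two $R$-module structures on $\Hom^{*}_{\sfT}(X,GY)$ induced by $\phi_{X}$ and $\phi_{GY}$ coincide, and similarly on the right-hand side; so it suffices to check linearity using whichever structure is convenient. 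The key algebraic input is the defining compatibility of $F$: for each homogeneous $r\in R$ and each $Z\in\sfT$,
\[
F(\phi_{Z}(r))=\phi_{FZ}(r).
\]

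For (1), I would compute the effect of multiplication by a homogeneous $r\in R$ on $\alpha_{X,Y}(f)$ using the source-side $R$-action. On the domain, $r\cdot f = f\circ\phi_{X}(r)$; applying $F$, using its $R$-linearity, and postcomposing with $\varepsilon_{Y}$ gives
\[
\alpha_{X,Y}(r\cdot f)=\varepsilon_{Y}\circ F(f)\circ F(\phi_{X}(r))=\varepsilon_{Y}\circ F(f)\circ\phi_{FX}(r)=\alpha_{X,Y}(f)\cdot\phi_{FX}(r)=r\cdot\alpha_{X,Y}(f).
\]
This is the desired $R$-linearity of the adjunction isomorphism.

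For (2), the goal is to show that $G(\phi_{Y}(r))=\phi_{GY}(r)$ in $\End^{*}_{\sfT}(GY)$. I would apply $\alpha_{GY,Y}$ to both sides and use that it is an isomorphism. By the calculation just done (with $X=GY$ and $f=\phi_{GY}(r)$),
\[
\alpha_{GY,Y}(\phi_{GY}(r))=\varepsilon_{Y}\circ\phi_{FGY}(r)=r\cdot\varepsilon_{Y}.
\]
On the other hand, naturality of the counit $\varepsilon$ applied to the morphism $\phi_{Y}(r)\col Y\to\Sigma^{|r|}Y$ gives $\varepsilon_{Y}\circ FG(\phi_{Y}(r))=\phi_{Y}(r)\circ\varepsilon_{Y}$, and hence
\[
\alpha_{GY,Y}(G(\phi_{Y}(r)))=\varepsilon_{Y}\circ FG(\phi_{Y}(r))=\phi_{Y}(r)\circ\varepsilon_{Y}=r\cdot\varepsilon_{Y}.
\]
Since $\alpha_{GY,Y}$ is an isomorphism, equating the two yields $G(\phi_{Y}(r))=\phi_{GY}(r)$, which is precisely $R$-linearity of $G$.

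The only thing to be careful about is bookkeeping with the two possible $R$-actions (via source or target) on a graded Hom set; the signs match because the Koszul sign is $+1$ when one of the factors has degree zero, which is the case for $\varepsilon_{Y}$. Apart from this, both parts are essentially formal once (1) is established, so I do not anticipate any real obstacle; the substance of the argument is the combination of $R$-linearity of $F$ with naturality of the counit.
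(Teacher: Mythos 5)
Your proof is correct and is essentially the same as the paper's: both rest on the counit description of the adjunction, the defining identity $F\phi_{X}(r)=\phi_{FX}(r)$, and naturality of the counit. The paper phrases (1) and (2) as factorizations of the relevant maps into composites of $R$-linear maps (namely $F$ and (pre/post)composition with $\theta_{Y}$), whereas you carry out the same argument as an explicit elementwise computation and then invert $\alpha_{GY,Y}$; the content is identical.
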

\begin{proof}
For (1), observe that the adjunction isomorphism can be factored as
\[
\Hom^*_{\sfT}(X,GY)\xra{F}
  \Hom^*_{\sfU}(FX,FGY)\xra{(FX,\theta Y)}
 \Hom^*_{\sfU}(FX,Y)
 \]
where the second map is induced by the counit $\theta\colon FG\to\Id_\sfU$. For (2), note that the map $\Hom_\sfU(Y,Y)\xra{G}\Hom_\sfT(GY,GY)$ can be factored as
\[
\Hom^*_{\sfU}(Y,Y)\xra{(\theta Y,Y)}
\Hom^*_{\sfU}(FGY,Y)\xra{\sim} \Hom^*_{\sfT}(GY,GY)
\] 
where the second map is the adjunction isomorphism.
\end{proof}

\begin{proposition}
\label{prop:lchchangecat}
Let $F\col\sfT\to \sfU$ be a functor between compactly generated
$R$-linear triangulated categories which preserves set-indexed
coproducts and products. Let $E$ be a left adjoint of $F$, and suppose
that $F$ or $E$ is $R$-linear.  For any specialization closed subset
$\mcV$ of $\Spec R$ there are then natural isomorphisms
\[
F\gam_{\mcV}\cong\gam_{\mcV}F,\quad
F\bloc_{\mcV}\cong\bloc_{\mcV}F,\quad \gam_{\mcV}E\cong
E\gam_{\mcV},\quad\text{and}\quad \bloc_{\mcV}E\cong E\bloc_{\mcV}\,.
\]
\end{proposition}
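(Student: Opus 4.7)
My plan is to reduce all four commutations to showing that $F$ and $E$ each preserve both the subcategory of $\mcV$-torsion objects and the subcategory of $\mcV$-local objects; once this is known, the isomorphisms drop out of the uniqueness of the localization triangle. First I will promote the hypothesis ``$F$ or $E$ is $R$-linear'' to the assertion that both are. If $E$ is $R$-linear, Lemma~\ref{lem:linearity}(2) applied to the adjunction $E\dashv F$ yields that $F$ is $R$-linear. Conversely, if $F$ is $R$-linear, then $F^\op$ is $R$-linear on the opposite categories and has right adjoint $E^\op$, so Lemma~\ref{lem:linearity}(2) applied there gives that $E$ is $R$-linear.

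Next, I will show that $F$ sends $\sfT_\mcV$ into $\sfU_\mcV$ and $E$ sends $\sfU_\mcV$ into $\sfT_\mcV$. By Theorem~\ref{th:compactgeneration}, $\sfT_\mcV$ is generated under coproducts and triangles by the Koszul objects $\kos C\fp$ for $C$ compact and $\fp\in\mcV$. Since $F$ is $R$-linear and the Koszul object is built as a finite iterated cone on maps given by multiplication by elements of $\fp$, there is a natural isomorphism $F(\kos C\fp)\cong \kos{FC}\fp$. The latter has support in $\mcV(\fp)\subseteq\mcV$ (use Lemma~\ref{le:kos-hom}(1)), so it lies in $\sfU_\mcV$; together with coproduct and triangle preservation by $F$, this gives $F(\sfT_\mcV)\subseteq \sfU_\mcV$. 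The same argument applied to $E$ gives $E(\sfU_\mcV)\subseteq\sfT_\mcV$.

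The commutations for $F$ follow quickly. Applying $F$ to the localization triangle $\gam_\mcV X\to X\to L_\mcV X$ yields a triangle in $\sfU$ whose left term $F\gam_\mcV X$ lies in $\sfU_\mcV$ by the previous step. For the right term, given any $W\in\sfU_\mcV$, the adjunction $E\dashv F$ gives
\[
\Hom^*_\sfU(W,FL_\mcV X)\cong \Hom^*_\sfT(EW,L_\mcV X)=0,
\]
since $EW\in\sfT_\mcV$ and $L_\mcV X\in\sfT_\mcV^\perp$. Hence $FL_\mcV X\in\sfU_\mcV^\perp$, and by the uniqueness of the localization triangle (Lemma~\ref{le:loc-basic}) we obtain natural isomorphisms $F\gam_\mcV\cong\gam_\mcV F$ and $FL_\mcV\cong L_\mcV F$.

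The main obstacle is the analogous step for $E$: the adjunction-based argument used to place $FL_\mcV X$ in $\sfU_\mcV^\perp$ does not transfer, since $E$ has no available further left adjoint. My plan is to first establish the dual commutation $F\lam^\mcV\cong \lam^\mcV F$ by a parallel argument using that $F$ preserves products and that $\Im\lam^\mcV$ admits a perfect cogenerator description in the spirit of Proposition~\ref{pr:perf-cogen}, whose cogenerators are respected by $F$ by virtue of $R$-linearity. Once $F\lam^\mcV\cong \lam^\mcV F$ is in hand, a Yoneda calculation combining the adjunctions $(E,F)$ and $(\gam_\mcV,\lam^\mcV)$ yields, for each $X\in\sfT$ and $Y\in\sfU$, the natural chain
\[
\Hom_\sfT(E\gam_\mcV Y,X)\cong \Hom_\sfU(Y,\lam^\mcV FX)\cong \Hom_\sfU(Y,F\lam^\mcV X)\cong \Hom_\sfT(\gam_\mcV EY,X),
\]
which gives $E\gam_\mcV\cong\gam_\mcV E$; the remaining commutation $L_\mcV E\cong EL_\mcV$ then falls out by applying $E$ to the localization triangle.
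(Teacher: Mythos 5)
Your argument for the $F$-commutations is correct and runs closely parallel to the paper's: both verify that $F\gam_\mcV X$ lies in $\Im\gam_\mcV$ and $FL_\mcV X$ in $\Im L_\mcV$ by combining the Koszul-object description of $\sfT_\mcV$ with the $R$-linear adjunction $(E,F)$ and then invoking uniqueness of the localization triangle. Your upgrade of ``$F$ or $E$ is $R$-linear'' to ``both are'' via Lemma~\ref{lem:linearity} is also fine, and is essentially what the paper uses when it invokes the $R$-linearity of the adjunction isomorphisms.

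The genuine gap is in the $E$-commutations. You propose to establish $F\lam^\mcV\cong\lam^\mcV F$ first and then deduce $E\gam_\mcV\cong\gam_\mcV E$ from the Yoneda chain. But passing to right adjoints shows that $F\lam^\mcV\cong\lam^\mcV F$ and $E\gam_\mcV\cong\gam_\mcV E$ are \emph{equivalent} statements (the right adjoint of $E\gam_\mcV$ is $\lam^\mcV F$, and that of $\gam_\mcV E$ is $F\lam^\mcV$), so this step does not trade a hard problem for an easy one; it merely relocates the same problem. And the route you sketch for $F\lam^\mcV\cong\lam^\mcV F$ does not go through. Proposition~\ref{pr:perf-cogen} only gives perfect cogenerators for a single $\lam^\fp\sfT$, not for $\Im\lam^\mcV$; and even granting a cogenerating set such as $\{T_{\kos C\fp}(I(\fp)) : C\in\sfT^c,\ \fp\in\mcV\}$, showing that $F$ carries these into $\Im\lam^\mcV$ in $\sfU$ would require something like $\cosupp_R F(-)\subseteq\cosupp_R(-)$, which is Corollary~\ref{cor:basechange-support}(2) and lies strictly downstream of the very proposition you are proving. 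The direct membership check is circular for the same reason: to see $\Hom^*_\sfU(W,F\lam^\mcV X)\cong\Hom^*_\sfT(EW,\lam^\mcV X)=0$ for all $W\in\Im L_\mcV\subseteq\sfU$, one needs $EW\in\Im L_\mcV\subseteq\sfT$, which is exactly the commutation $EL_\mcV\cong L_\mcV E$.

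The paper avoids this circle by proving the $E$-commutations directly: for $Y\in\sfU$ it computes $\supp_R(E\gam_\mcV Y)$ and $\supp_R(EL_\mcV Y)$ via Proposition~\ref{pr:hom-supp}, the adjunction $(E,F)$, and the already-established $F$-commutations (to identify $F(Z(\fp))$ with $(FZ)(\fp)$), concluding $\supp_R E\gam_\mcV Y\subseteq\mcV$ and $\supp_R EL_\mcV Y\subseteq\Spec R\setminus\mcV$, whence $E\gam_\mcV Y\in\Im\gam_\mcV$ and $EL_\mcV Y\in\Im L_\mcV$. You should replace your step four and five with this support computation.
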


Note that the functor $F$ admits a left adjoint by Brown
representability, because $F$ preserves set-indexed products.

\begin{proof}
For each object $X$ in $\sfT$, one has an exact triangle
\[
F\gam_{\mcV}X \lto FX \lto F\bloc_{\mcV}X\lto
\]
induced by the localization triangle for $\mcV$. It thus suffices to
verify that $F\gam_{\mcV}X$ is in $\Im\gam_\mcV$ and that
$F\bloc_{\mcV}X$ is in $\Im\bloc_{\mcV}$; see
\cite[\S4]{Benson/Iyengar/Krause:2008a}.  We use the
adjunction isomorphisms
\begin{equation}\label{eq:adjoint}
\begin{split}
\Hom_{\sfU}^*(C,F\gam_{\mcV}X) &\cong
\Hom_{\sfT}^*(EC,\gam_{\mcV}X)\\ \Hom_{\sfU}^*(C,FL_{\mcV}X) &\cong
\Hom_{\sfT}^*(EC,L_{\mcV}X)
\end{split}
\end{equation}
which are $R$-linear because $F$ or $E$ is $R$-linear; see
Lemma~\ref{lem:linearity}. Note that $EC$ is compact if $C$ is
compact, since $F$ preserves set-indexed coproducts.

An object $Y$ in $\sfU$ belongs to $\Im\gam_\mcV$ if and only if
$\Hom_{\sfU}^*(C,Y)_\fp=0$ for all compact $C\in\sfU$ and all
$\fp\in\Spec R\setminus\mcV$.  Applying this characterization to
$F\gam_{\mcV}X$ and $\gam_\mcV X$, the adjunction
\eqref{eq:adjoint} implies that $F\gam_{\mcV}X$ is in $\Im\gam_\mcV$.

An object $Y$ in $\sfU$ belongs to $\Im L_\mcV$ if and only if
$\Hom_{\sfU}^*(\kos C\fp ,Y)=0$ for all compact $C\in\sfU$ and all
$\fp\in\mcV$, by Corollary~\ref{co:cosupp2} and
Lemma~\ref{le:cohomol}. Applying this characterization to $F
L_{\mcV}X$ and $L_\mcV X$, the adjunction \eqref{eq:adjoint} implies
that $F L_{\mcV}X$ is in $\Im L_\mcV$.  Here, one uses that $E(\kos
C\fp)\cong\kos{(EC)}\fp$, and this completes the proof of the first
pair of isomorphisms.

The isomorphism $F\bloc_{\mcZ(\fp)}\cong \bloc_{\mcZ(\fp)}F$ implies
$F(X(\fp))\cong (FX)(\fp)$ for each $X$ in $\sfT$ and each $\fp$ in
$\Spec R$. Given this, the proof of the isomorphisms involving $E$ is
similar: For each object $Y$ in $\sfU$, it follows from
Proposition~\ref{pr:hom-supp} and adjunction that there are inclusions
\[
\supp_R E\gam_\mcV Y\subseteq\mcV \quad\text{and}\quad \supp_R E
L_\mcV Y\subseteq \Spec R\setminus\mcV\,.
\]
Thus $E\gam_\mcV Y$ is in $\Im\gam_\mcV$ and $EL_\mcV Y$ in $\Im L_\mcV$.
\end{proof}

\begin{remark}
In the preceding proof, the assumption on $F$ or $E$ to be $R$-linear
is only used for the $R$-linearity of the adjunction isomorphisms
\eqref{eq:adjoint}.
\end{remark}

\subsection*{Change of rings}
Let $S$ be a graded-commutative noetherian ring, and $\sfU$ an
$S$-linear triangulated category.  Given a homomorphism of rings
$\alpha\col R\to S$, there is a natural $R$-linear structure on $\sfU$
induced by homomorphisms
\[
R\xra{\alpha} S\xra{\phi_{X}}\End^{*}_{\sfU}(X)\quad\text{for $X\in
  \sfU$}.
\]

As usual, $\alpha$ induces a map $\alpha^*\col \Spec S\to \Spec R$,
with $\alpha^{*}(\fq)=\alpha^{-1}(\fq)$ for each $\fq$ in $\Spec
S$. Observe that if $\mcV\subseteq\Spec R$ is specialization closed,
then so is the subset $(\alpha^*)^{-1}\mcV$ of $\Spec S$.

\begin{proposition}
\label{prop:lchchangerings}
Let $\alpha\col R\to S$ be a homomorphism of rings, and $\sfU$ an
$S$-linear triangulated category, with induced $R$-linear structure
via $\alpha$. Let $\mcV\subseteq\Spec R$ be a specialization closed
set and $\mcW=(\alpha^*)^{-1}\mcV$. Then there are isomorphisms
\[
\gam_{\mcV}\cong \gam_{\mcW}\quad\text{and}\quad \bloc_{\mcV}\cong
\bloc_{\mcW}.
\]
\end{proposition}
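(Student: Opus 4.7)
The plan is to establish both isomorphisms simultaneously by showing that the localization functors $L_\mcV$ (defined via the $R$-action on $\sfU$ induced by $\alpha$) and $L_\mcW$ (defined via the $S$-action) have the same kernel as subcategories of $\sfU$. Granting the equality $\Ker L_\mcV = \Ker L_\mcW$, Lemma~\ref{le:loc-basic} yields a natural isomorphism $L_\mcV \cong L_\mcW$, and the associated colocalizations $\gam_\mcV$ and $\gam_\mcW$ are then isomorphic via the corresponding localization triangles.

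By definition, $\Ker L_\mcV$ consists of those $X$ with $\Hom^{*}_\sfU(C,X)_\fp = 0$ for every compact $C$ and every $\fp \in \Spec R \setminus \mcV$, where $\fp$-localization is taken in graded $R$-modules; the description of $\Ker L_\mcW$ is analogous, using $S$. Since the compact objects of $\sfU$ are intrinsic to the underlying triangulated category, the problem reduces to the following statement in graded commutative algebra: for any graded $S$-module $M$, viewed as $R$-module via $\alpha$, one has $M_\fp = 0$ for all $\fp \in \Spec R \setminus \mcV$ if and only if $M_\fq = 0$ for all $\fq \in \Spec S \setminus \mcW$.

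To verify this claim, I would use that localizing $M$ at $\fp \in \Spec R$ is the same as inverting the multiplicative set $\alpha(R \setminus \fp) \subseteq S$. Consequently $M_\fp$ vanishes if and only if $M_\fq = 0$ for every $\fq \in \Spec S$ disjoint from $\alpha(R \setminus \fp)$, equivalently for every $\fq$ with $\alpha^{-1}(\fq) \subseteq \fp$. Since $\mcV$ is specialization closed, its complement is generalization closed; thus $\fp \notin \mcV$ combined with $\alpha^*(\fq) \subseteq \fp$ forces $\alpha^*(\fq) \notin \mcV$, equivalently $\fq \notin \mcW$. This observation translates the two vanishing conditions into one another. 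The step that requires the most care is the bookkeeping of multiplicative sets under a not-necessarily-injective $\alpha$; but no deep obstacle is anticipated, and once the kernels are identified the remaining assertions follow formally from the material of Section~\ref{se:localizations}.
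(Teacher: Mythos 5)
Your argument is correct and follows essentially the same route as the paper: reduce to showing $\sfU_\mcV=\sfU_\mcW$ (equivalently, that the two localization functors have the same kernel), then settle this by the commutative-algebra observation that for an $S$-module $M$ one has $M_\fp=0$ for all $\fp\notin\mcV$ iff $M_\fq=0$ for all $\fq\notin\mcW$, which the paper isolates as Lemma~\ref{lem:rings}. Your phrasing in terms of multiplicative sets is just a slightly more explicit rendering of the paper's verification.
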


\begin{proof}
It suffices to prove that any object $X\in\sfU$ is in $\sfU_{\mcV}$ if
and only if it is in $\sfU_{\mcW}$. Thus one needs to show for any
compact object $C\in\sfU$ that $\Hom^{*}_{\sfU}(C,X)_\fp=0$ for all
$\fp\in\Spec R\setminus \mcV$ if and only if
$\Hom^{*}_{\sfU}(C,X)_\fq=0$ for all $\fq\in\Spec
S\setminus\mcW$. This one finds in Lemma~\ref{lem:rings} below.
\end{proof}

\begin{lemma}
\label{lem:rings}
Let $\alpha\col R\to S$ be a homomorphism of graded-commutative
noetherian rings and $\mcV\subseteq \Spec R$ a specialization closed
subset. Given an $S$-module $M$, one has $M_\fp=0$ for all
$\fp\in\Spec R\setminus \mcV$ if and only if $M_\fq=0$ for all
$\fq\in\Spec S\setminus (\alpha^*)^{-1}\mcV$.
\end{lemma}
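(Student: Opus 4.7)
The plan is to compare the $R$-module localization $M_\fp$ with the $S$-module localization $M_\fq$ using the factorization $\alpha\col R\to S$, together with the fact that $\mcV$ being specialization closed makes $\Spec R\setminus\mcV$ closed under generalization. The key observation is that for any $\fp\in\Spec R$, if we set $S_{(\fp)}=\alpha(R\setminus\fp)^{-1}S$, then
\[
M_\fp \;\cong\; M\otimes_R R_\fp \;\cong\; M\otimes_S S_{(\fp)},
\]
and the homogeneous primes of $S_{(\fp)}$ correspond bijectively to the homogeneous primes $\fq\in\Spec S$ satisfying $\alpha^{-1}(\fq)\subseteq\fp$.

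First I would prove the forward implication. Suppose $M_\fp=0$ for every $\fp\in\Spec R\setminus\mcV$, and fix $\fq\in\Spec S\setminus(\alpha^*)^{-1}\mcV$. Setting $\fp:=\alpha^{-1}(\fq)$, the hypothesis $\fq\notin(\alpha^*)^{-1}\mcV$ just says $\fp\notin\mcV$, so $M_\fp=0$. Because $\alpha(R\setminus\fp)\subseteq S\setminus\fq$, the localization map $M\to M_\fq$ factors through $M_\fp$, hence $M_\fq$ is a further localization of $M_\fp=0$, and so $M_\fq=0$.

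Next I would prove the converse. Assume $M_\fq=0$ for every $\fq\in\Spec S\setminus(\alpha^*)^{-1}\mcV$, and fix $\fp\in\Spec R\setminus\mcV$. Using the identification $M_\fp\cong M\otimes_S S_{(\fp)}$, it suffices to show that every homogeneous localization of $M\otimes_S S_{(\fp)}$ at a homogeneous prime of $S_{(\fp)}$ vanishes. Any such prime has the form $\fq S_{(\fp)}$ for some $\fq\in\Spec S$ with $\alpha^{-1}(\fq)\subseteq\fp$, and the localization at $\fq S_{(\fp)}$ is $M_\fq$. Since $\mcV$ is specialization closed and $\alpha^{-1}(\fq)\subseteq\fp\notin\mcV$, we conclude $\alpha^{-1}(\fq)\notin\mcV$, i.e.\ $\fq\notin(\alpha^*)^{-1}\mcV$; hence $M_\fq=0$ by hypothesis.

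The main technical point, which is the only place where the hypothesis that $\mcV$ is specialization closed is used, is the implication $\alpha^{-1}(\fq)\subseteq\fp\notin\mcV\Rightarrow\alpha^{-1}(\fq)\notin\mcV$ in the converse. Everything else is routine algebra, so I do not expect any serious obstacle; one only needs to be careful that all localizations are taken in the homogeneous sense, since $R$ and $S$ are graded commutative, but the standard formalism carries over verbatim.
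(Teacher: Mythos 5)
Your proof is correct and takes essentially the same route as the paper's: in the forward direction both observe that $M_\fq$ is a further localization of $M_{\alpha^{-1}(\fq)}$, and in the converse both express $M_\fp$ as $M\otimes_S S_{(\fp)}$, detect its (non)vanishing via homogeneous primes of $S_{(\fp)}$, and invoke specialization closedness to conclude $\alpha^{-1}(\fq)\notin\mcV$. The only cosmetic difference is that the paper phrases the converse contrapositively (starting from $M_\fp\ne 0$), while you argue directly from the vanishing hypothesis.
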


\begin{proof}
Suppose first that $M_{\fp}=0$ for all $\fp\not\in\mcV$, and choose
$\fq\not\in (\alpha^*)^{-1}\mcV$. Then $M_{\fq}\cong (M_{\alpha^*(\fq)})_{\fq}=0$.

Assume now that $M_{\fp}\ne0$ for some $\fp\not\in\mcV$. We view
$M_\fp$ as an $S_{\fp}$-module and find therefore a prime ideal $\fq$
in $\Spec S_{\fp}\subseteq \Spec S$ such that $(M_{\fp})_{\fq}\cong
M_{\fq}$ is non-zero. It remains to observe that
$\alpha^*(\fq)\subseteq \fp$ and hence that $\alpha^{*}(\fq)$ is not
in $\mcV$.
\end{proof}

\subsection*{Change of rings and categories}
Henceforth, we say \emph{$(F;\alpha)\col (\sfT;R)\lto (\sfU;S)$ is an
  exact functor} to mean that $\sfT$ and $\sfU$ are compactly
generated $R$-linear and $S$-linear triangulated categories,
respectively; $\alpha\col R\to S$ is a homomorphism of graded rings;
and $F$ is an exact functor that is $R$-linear with respect to the
induced $R$-linear structure on $\sfU$; in other words, that the
diagram
\begin{equation*}
\label{eq:alpha}
\xymatrixrowsep{2pc} \xymatrixcolsep{2pc} \xymatrix{ R
  \ar@{->}[r]^{\alpha} \ar@{->}[d]_{\phi_X} & S
  \ar@{->}[d]^{\phi_{FX}} \\
\End_{\sfT}^*(X) \ar@{->}[r]^-{F} & \End_{\sfU}^*(FX)
}
\end{equation*}
is commutative for each $X\in \sfT$.

\begin{theorem}
\label{th:cochange}
Let $(F;\alpha)\col (\sfT; R) \to (\sfU;S)$ be an exact functor which
preserves set-indexed coproducts and products. Let $E$ be a left
adjoint and $G$ a right adjoint of $F$. Let $\mcV\subseteq\Spec R$ be
a specialization closed subset and set
$\mcW=(\alpha^*)^{-1}\mcV$. Then there are natural isomorphisms:
\begin{gather*}
\tag{1} F\gam_{\mcV}\cong \gam_{\mcW}F, \quad F\bloc_{\mcV}\cong
\bloc_{\mcW}F,\quad \gam_{\mcV}E\cong E \gam_{\mcW}, \quad \text{and}
\quad \bloc_{\mcV}E\cong E\bloc_{\mcW}\\ \tag{2} F\lam^{\mcV}\cong
\lam^{\mcW}F, \quad F V^{\mcV}\cong V^{\mcW}F,\quad
\lam^{\mcV}G\cong G \lam^{\mcW}, \quad \text{and}\quad V^{\mcV}G\cong
GV^{\mcW}\,.
\end{gather*}
\end{theorem}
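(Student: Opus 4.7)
The plan is to derive part~(1) by combining the change-of-categories result Proposition~\ref{prop:lchchangecat} with the change-of-rings result Proposition~\ref{prop:lchchangerings}, and then to obtain part~(2) formally by passing to right adjoints.

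For part~(1), I would view $\sfU$ as $R$-linear via $\alpha$; the hypothesis that $(F;\alpha)$ is an exact functor says precisely that $F\col \sfT\to \sfU$ is $R$-linear with respect to this induced structure. Since $F$ has a left adjoint $E$ and preserves set-indexed coproducts and products, Proposition~\ref{prop:lchchangecat} applies and produces natural isomorphisms $F\gam_{\mcV}\cong\gam_{\mcV}F$, $F\bloc_{\mcV}\cong\bloc_{\mcV}F$, $\gam_{\mcV}E\cong E\gam_{\mcV}$, and $\bloc_{\mcV}E\cong E\bloc_{\mcV}$, where the functors $\gam_\mcV$ and $\bloc_\mcV$ on $\sfU$ are computed with respect to the induced $R$-action. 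On $\sfU$, however, Proposition~\ref{prop:lchchangerings} identifies these with the corresponding $S$-linear functors $\gam_\mcW$ and $\bloc_\mcW$. Substituting these identifications yields the four isomorphisms claimed in~(1).

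For part~(2), each of the four functors $\gam_\mcV$, $\bloc_\mcV$, $\gam_\mcW$, $\bloc_\mcW$ preserves set-indexed coproducts by \cite[Corollary~6.5]{\bik:2008a} and hence admits a right adjoint by Brown representability; these right adjoints are by definition $\lam^\mcV$, $V^\mcV$, $\lam^\mcW$, and $V^\mcW$. Since $F$ has right adjoint $G$, I would simply take right adjoints of each of the four isomorphisms from~(1), invoking the uniqueness of adjoints up to natural isomorphism. For example, the right adjoints of $F\gam_\mcV$ and of $\gam_\mcW F$ are $\lam^\mcV G$ and $G\lam^\mcW$ respectively, so $F\gam_\mcV\cong\gam_\mcW F$ delivers $\lam^\mcV G\cong G\lam^\mcW$; the three remaining isomorphisms in~(2) follow by the same formal manoeuvre.

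There is no serious conceptual obstacle here; the bulk of the work is already packaged into Propositions~\ref{prop:lchchangecat} and~\ref{prop:lchchangerings}. The subtlest point is to know that the adjunction isomorphisms used in the proof of Proposition~\ref{prop:lchchangecat} really are $R$-linear, and this is precisely what Lemma~\ref{lem:linearity} guarantees. Note that I do not attempt to apply Proposition~\ref{prop:lchchangecat} directly to $G$, since a right adjoint need not preserve coproducts; routing the isomorphisms in~(2) through adjunction from~(1) is essential.
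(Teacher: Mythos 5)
Your proposal is correct and follows essentially the same route as the paper: combine Proposition~\ref{prop:lchchangecat} (giving $F\gam_{\mcV}\cong\gam_{\mcV}F$, etc., for the induced $R$-action on $\sfU$) with Proposition~\ref{prop:lchchangerings} (identifying $\gam_{\mcV}\cong\gam_{\mcW}$ on $\sfU$) for part~(1), then pass to right adjoints for part~(2). Your remarks about which functor to apply Proposition~\ref{prop:lchchangecat} to and the role of Lemma~\ref{lem:linearity} are accurate and match the paper's reasoning.
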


This result contains Propositions~\ref{prop:lchchangecat} and
\ref{prop:lchchangerings}: to recover the first, set $\alpha=\id_{R}$;
for the second set $F=\Id_{\sfT}$. On the other hand, it is proved
using the latter results.

\begin{proof}
Since $F$ is linear with respect to the induced $R$-linear structure
on $\sfU$, Propositions~\ref{prop:lchchangecat} and
\ref{prop:lchchangerings} yield the following isomorphism:
\[
F\gam_{\mcV}\cong \gam_{\mcV}F \cong \gam_{\mcW}F.
\]
The other isomorphisms in (1) can be obtained in the same way.

The isomorphisms in (2) are obtained by taking right adjoints of those
in (1).
\end{proof}

As applications, we establish results which track the change in
support along linear functors; this is one reason we have had to
introduce these notions.

\begin{corollary}
\label{cor:basechange-support}
Let $(F;\alpha)\col (\sfT;R) \to (\sfU;S)$ be an exact functor which
preserves set-indexed coproducts and products.  Let $E$ be a left
adjoint and $G$ a right adjoint of $F$.  Then for $X\in\sfT$ and
$Y\in\sfU$ there are inclusions:
\begin{gather*}
\tag{1} \alpha^*(\supp_{S}FX)\subseteq \supp_{R}X\quad\text{and}\quad
\supp_{R}EY\subseteq\alpha^*(\supp_{S}Y)\\ \tag{2}\alpha^*(\cosupp_{S}FX)\subseteq
\cosupp_{R}X\quad\text{and}\quad
\cosupp_{R}GY\subseteq\alpha^*(\cosupp_{S}Y)\,.
\end{gather*}
Each inclusion is an equality when the corresponding functor is
faithful on objects.
\end{corollary}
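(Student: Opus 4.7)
The plan is to deduce all four inclusions from Theorem~\ref{th:cochange} by expressing the local (co)homology of the transformed object as a functor applied to the original one, and then reading off (co)support via Proposition~\ref{pr:localization-cosupport} and its support analogue from~\cite{\bik:2008a}.

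First I would record the elementary observation that for $\fq\in\Spec S$ and $\fp=\alpha^*(\fq)$, the inclusions
\[
\mcV(\fq)\subseteq (\alpha^*)^{-1}\mcV(\fp)\quad\text{and}\quad (\alpha^*)^{-1}\mcZ(\fp)\subseteq \mcZ(\fq)
\]
hold, both by immediate unwinding of the definitions. Writing $\mcW_\fp=(\alpha^*)^{-1}\mcV(\fp)$ and $\mcZ_\fp=(\alpha^*)^{-1}\mcZ(\fp)$, Theorem~\ref{th:cochange}(1) then furnishes the identities
\[
F\gam_\fp X\;\cong\;\gam_{\mcW_\fp}L_{\mcZ_\fp}FX \quad\text{and}\quad \gam_\fp EY\;\cong\; E\bigl(\gam_{\mcW_\fp}L_{\mcZ_\fp}Y\bigr),
\]
and a short commutation argument using the above set inclusions and the standard commutation rules for $\gam$ and $L$ yields $\gam_\fq F\gam_\fp X\cong \gam_\fq FX$.

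For the first inclusion in~(1), the identity $\gam_\fq FX\cong \gam_\fq F\gam_\fp X$ gives the contrapositive of the claim: if $\gam_\fp X=0$ then $\gam_\fq FX=0$ for every $\fq$ with $\alpha^*(\fq)=\fp$. For the second inclusion in~(1), the support analogue of Proposition~\ref{pr:localization-cosupport} identifies the support of $\gam_{\mcW_\fp}L_{\mcZ_\fp}Y$ as $(\alpha^*)^{-1}(\fp)\cap \supp_SY$, so non-vanishing of $\gam_\fp EY\cong E(\gam_{\mcW_\fp}L_{\mcZ_\fp}Y)$ forces some $\fq\in \supp_SY$ to map to $\fp$. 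Part~(2) is handled dually via Theorem~\ref{th:cochange}(2) and Proposition~\ref{pr:localization-cosupport}; the set-theoretic combinatorics is identical.

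The equality statements, when the corresponding functor is faithful on objects, fall out of the same identities. For example, if $F$ is faithful and $\fp\in\supp_RX$, then $F\gam_\fp X\ne 0$, and its support equals $(\alpha^*)^{-1}(\fp)\cap \supp_SFX$, so some $\fq\in\supp_SFX$ satisfies $\alpha^*(\fq)=\fp$; the other three cases follow the same pattern, applied to $E$, $F$, and $G$ respectively. The only real work is the commutation argument behind $\gam_\fq F\gam_\fp X\cong \gam_\fq FX$ and its dual for $\lam^\fq$ and $V^\mcV$, and these amount to careful bookkeeping with specialization-closed subsets of $\Spec S$—this is the main, though not a deep, obstacle. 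No ingredient beyond Theorem~\ref{th:cochange} and Proposition~\ref{pr:localization-cosupport} (with its support counterpart) is needed.
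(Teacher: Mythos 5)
Your proposal is correct and follows essentially the same route as the paper's: both proofs apply Theorem~\ref{th:cochange} to write $F\gam_\fp X$ (and its variants for $E$, $G$, $\lam^\fp$) as a (co)localization on $\sfU$ applied to $FX$ (respectively $Y$), then read off the (co)support using the set-theoretic identity $(\alpha^*)^{-1}\mcV(\fp)\setminus(\alpha^*)^{-1}\mcZ(\fp)=(\alpha^*)^{-1}\{\fp\}$. The only cosmetic difference is that the paper parametrizes the singleton as $\mcV\setminus\mcW$ with $\mcW=\mcV(\fp)\setminus\{\fp\}$ while you use $\mcV(\fp)$ and $\mcZ(\fp)$, and your detour through $\gam_\fq F\gam_\fp X\cong\gam_\fq FX$ is a slightly longer (but valid) way to obtain the first inclusion, which the paper gets directly from the support computation.
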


\begin{proof}
Let $\fp$ be a point in $\Spec R$, and pick specialization closed
subsets $\mcV$ and $\mcW$ of $\Spec R$ such that
$\{\fp\}=\mcV\setminus\mcW$. For example, set $\mcV=\mcV(\fp)$ and
$\mcW=\mcV\setminus\{\fp\}$.

Setting $\wt\mcV=(\alpha^*)^{-1}\mcV$ and
$\wt\mcW=(\alpha^*)^{-1}\mcW$, one gets isomorphisms
\[
F(\gam_{\fp}X) =F\bloc_{\mcW}\gam_{\mcV}X =
\bloc_{\wt\mcW}\gam_{\wt\mcV}FX\,.
\]
Observing that $\wt\mcV\setminus\wt\mcW=(\alpha^*)^{-1}\{\fp\}$, this
yields equalities
\[
\supp_{S}F(\gam_{\fp}X) = \supp_{S}FX \cap (\wt\mcV\setminus \wt\mcW)
= \supp_{S}FX\cap (\alpha^*)^{-1}\{\fp\}\,.
\]
Thus, $\alpha^*(\supp_{S}FX)\subseteq \supp_{R}X$, and equality holds
if $F$ is faithful on objects.

The other inclusions are obtained in the same way.
\end{proof}

In the preceding result, the stronger conclusion
$\supp_{S}FX=(\alpha^*)^{-1}\supp_{R}X$ need not hold, even when $F$
is an equivalence of categories.

\begin{example}
\label{ex:prima}
Let $R$ be a field, set $S=R[a]/(a^2-a)$, and let $\sfU=\sfD(S)$
denote the derived category of $S$-modules, with canonical $S$-linear
structure. Let $\sfT=\sfU$ and view this as an $R$-linear triangulated
category via the inclusion $\alpha\col R\to S$.

Let $F\col\sfT\to \sfU$ be the identity functor; it is evidently
compatible with $\alpha$ and faithful.  Observe however that for the
module $X=S/(a)$ in $\sfT$ one has
\[
\supp_{R} X = \Spec R\quad\text{and}\quad \Spec_{S}FX = \{(a)\}.
\]
On the other hand, $(\alpha^*)^{-1}\supp_{R}X = \Spec S$.
\end{example}

\begin{corollary}
\label{cor:basechange-fibre-gen}
Let $(F;\alpha)\col (\sfT;R) \to (\sfU;S)$ be an exact functor which
preserves set-indexed coproducts and products.  Let $E$ be a left adjoint
and $G$ a right adjoint of $F$. Let $\fp\in\Spec R$ and suppose that
$\mcU=(\alpha^*)^{-1}\{\fp\}$ is a discrete subset of $\Spec S$. Then
there are isomorphisms:
\begin{gather*}
\tag{1} F\gam_\fp\cong \coprod_{\fq\in\mcU}
\gam_{\fq}F\quad\text{and}\quad \gam_\fp E\cong \coprod_{\fq\in\mcU}
E\gam_{\fq}\\ \tag{2} F\lam^\fp\cong \prod_{\fq\in\mcU}
\lam^{\fq}F\quad\text{and}\quad \lam^\fp G\cong \prod_{\fq\in\mcU}
G\lam^{\fq}\,.
\end{gather*}
\end{corollary}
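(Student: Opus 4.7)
The strategy is to combine Theorem~\ref{th:cochange} with the discrete decomposition of Proposition~\ref{prop:discrete}. I write out the argument for $F\gam_\fp\cong \coprod_{\fq\in\mcU}\gam_\fq F$; the other three isomorphisms follow the same pattern, using that $E$ preserves coproducts (being a left adjoint), that $G$ preserves products (being a right adjoint), and the cosupport analogues of the relevant statements.

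Set $\wt\mcV=(\alpha^*)^{-1}\mcV(\fp)$ and $\wt\mcZ=(\alpha^*)^{-1}\mcZ(\fp)$, and note that $\wt\mcV\setminus\wt\mcZ = \mcU$. Two applications of Theorem~\ref{th:cochange}(1), using the identification $\gam_\fp=\gam_{\mcV(\fp)}\bloc_{\mcZ(\fp)}$, give $F\gam_\fp\cong\gam_{\wt\mcV}\bloc_{\wt\mcZ}F$. The standard estimates from Section~\ref{se:support} show $\supp_S(\gam_{\wt\mcV}\bloc_{\wt\mcZ}Z)\subseteq\mcU$ for every $Z\in\sfU$, so discreteness of $\mcU$ and Proposition~\ref{prop:discrete} yield a natural decomposition
\[
F\gam_\fp X\,\cong\,\gam_{\wt\mcV}\bloc_{\wt\mcZ}FX\,\cong\,\coprod_{\fq\in\mcU}\gam_\fq\,\gam_{\wt\mcV}\bloc_{\wt\mcZ}FX.
\]
It remains to verify $\gam_\fq\gam_{\wt\mcV}\bloc_{\wt\mcZ}\cong\gam_\fq$ as functors on $\sfU$, for each $\fq\in\mcU$. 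Since $\alpha^*(\fq)=\fp$, monotonicity of $\alpha^{-1}$ yields $\mcV(\fq)\subseteq\wt\mcV$ and, by contraposition, $\wt\mcZ\subseteq\mcZ(\fq)$. The commutation rules from \cite[Proposition~6.1]{Benson/Iyengar/Krause:2008a} then collapse $\gam_{\mcV(\fq)}\gam_{\wt\mcV}\bloc_{\mcZ(\fq)}\bloc_{\wt\mcZ}$ to $\gam_{\mcV(\fq)}\bloc_{\mcZ(\fq)}=\gam_\fq$, giving the claimed identification applied to $FX$.

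For the three remaining isomorphisms the same computation applies: rewrite $\gam_\fp E\cong E\gam_{\wt\mcV}\bloc_{\wt\mcZ}$, $F\lam^\fp\cong\lam^{\wt\mcV}V^{\wt\mcZ}F$ and $\lam^\fp G\cong G\lam^{\wt\mcV}V^{\wt\mcZ}$ using Theorem~\ref{th:cochange}; apply the support or cosupport version of Proposition~\ref{prop:discrete} (again because $\mcU$ is discrete); and then move $E$ past the resulting coproduct or $G$ past the resulting product via their adjoint properties. The cosupport case uses the commutation identities in \eqref{eq:commutes} in place of those for $\gam$ and $\bloc$. No step looks like a serious obstacle; the only content beyond manipulation of already-established functorial isomorphisms is the short set-theoretic check that $\mcV(\fq)\subseteq\wt\mcV$ and $\wt\mcZ\subseteq\mcZ(\fq)$ for $\fq\in\mcU$.
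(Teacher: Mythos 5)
Your proposal is correct and follows essentially the same route as the paper's proof: decompose $\gam_\fp$ (respectively $\lam^\fp$) as a composite of $\gam_\mcV$, $L_\mcW$ (respectively $\lam^\mcV$, $V^\mcW$) for specialization closed $\mcV$, $\mcW$ with $\mcV\setminus\mcW=\{\fp\}$, move $F$ (or $E$, $G$) past them via Theorem~\ref{th:cochange}, then invoke Proposition~\ref{prop:discrete} and the commutation rules from \cite[Proposition~6.1]{Benson/Iyengar/Krause:2008a}. The only cosmetic difference is that you fix the specific choice $\mcV=\mcV(\fp)$, $\mcW=\mcZ(\fp)$ and spell out the set-theoretic inclusions $\mcV(\fq)\subseteq\wt\mcV$, $\wt\mcZ\subseteq\mcZ(\fq)$, where the paper keeps $\mcV$, $\mcW$ arbitrary and leaves those checks implicit.
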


\begin{proof}
Choose specialization closed subsets $\mcV$ and $\mcW$ of $\Spec R$
with $\mcV\setminus\mcW=\{\fp\}$. Setting
$\wt\mcV=(\alpha^*)^{-1}\mcV$ and $\wt\mcW=(\alpha^*)^{-1}\mcW$, one
gets isomorphisms
\[
F\gam_{\fp} =F\bloc_{\mcW}\gam_{\mcV}\cong
\bloc_{\wt\mcW}\gam_{\wt\mcV}F \cong \coprod_{\fq\in\mcU} \gam_{\fq}
\bloc_{\mcW}\gam_{\mcV}F \cong \coprod_{\fq\in\mcU} \gam_{\fq}F\,,
\]
where the first one follows Theorem~\ref{th:cochange}.  The second is
by Proposition~\ref{prop:discrete}, which applies since
$\wt\mcV\setminus\wt\mcW=\mcU$ and $\mcU$ is discrete. The last
isomorphism holds because one has
$\gam_\fq\bloc_{\wt\mcW}\gam_{\wt\mcV}\cong\gam_\fq$ for all
$\fq\in\mcU$; see \cite[Proposition~6.1]{Benson/Iyengar/Krause:2008a}.

The other isomorphisms can be obtained in the same way.
\end{proof}

The next result involves a notion of costratification of $R$-linear
triangulated categories. This has been introduced in
Remark~\ref{rem:costratification}, and the analogous notion of
stratification is from \cite[\S4]{Benson/Iyengar/Krause:2009a}.

\begin{theorem}
\label{th:basechange-cosupport}
Let $(F;\alpha)\col (\sfT;R) \to (\sfU;S)$ be an exact functor which
preserves set-indexed coproducts and products, and fix an object $X$
in $\sfT$.
\begin{enumerate}
\item If $\sfT$ is costratified by $R$ and the right adjoint of $F$ is
  faithful on objects, \nolinebreak then
\[
\supp_{S}FX=(\alpha^*)^{-1} (\supp_{R}X)\,\cap\,\supp_S\sfU.
\]
\item If $\sfT$ is stratified by $R$ and the left adjoint of $F$ is faithful
on objects, then
\[
\cosupp_{S}FX=(\alpha^*)^{-1} (\cosupp_{R}X)\,\cap\,\supp_S\sfU.
\]
\end{enumerate}
\end{theorem}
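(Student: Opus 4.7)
The plan is to prove each equality by establishing the two inclusions separately. The inclusion $\subseteq$ is close to automatic in both parts. In (1), Corollary~\ref{cor:basechange-support}(1) gives $\alpha^{*}(\supp_{S}FX)\subseteq\supp_{R}X$, and $\supp_{S}FX\subseteq\supp_{S}\sfU$ is tautological. In (2), Corollary~\ref{cor:basechange-support}(2) yields $\alpha^{*}(\cosupp_{S}FX)\subseteq\cosupp_{R}X$, while the containment $\cosupp_{S}FX\subseteq\supp_{S}\sfU$ follows from the Dwyer--Greenlees equivalence in Proposition~\ref{pr:loc-complete}: if $\fq\notin\supp_{S}\sfU$ then $\gam_{\fq}\sfU=0$, hence $\lam^{\fq}\sfU=0$, so $\lam^{\fq}(FX)=0$.

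For the reverse inclusion in (2), fix $\fq$ in the right-hand side and set $\fp=\alpha^{*}(\fq)\in\cosupp_{R}X$. Since $\fq\in\supp_{S}\sfU$ there is $Z\in\sfU$ with $\fq\in\supp_{S}Z$; by \eqref{eq:kos-supp} the object $Z(\fq)$ is non-zero and lies in $\gam_{\fq}\sfU$, so faithfulness of $E$ on objects gives $EZ(\fq)\ne 0$. Corollary~\ref{cor:basechange-support}(1), combined with $\supp_{S}Z(\fq)\subseteq\{\fq\}$, shows $\supp_{R}EZ(\fq)\subseteq\{\fp\}$, so $EZ(\fq)$ is a non-zero object of $\gam_{\fp}\sfT$. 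By Proposition~\ref{pr:hom-cosupp}, some compact $C\in\sfT$ satisfies $\Hom_{\sfT}(C(\fp),X)\ne 0$, so the localizing subcategory ${}^{\perp}X\cap\gam_{\fp}\sfT$ of $\gam_{\fp}\sfT$ is proper. Stratification of $\sfT$ forces it to vanish, whence $\Hom_{\sfT}^{*}(EZ(\fq),X)\ne 0$. Adjunction transports this to $\Hom_{\sfU}^{*}(Z(\fq),FX)\ne 0$, placing $\fq$ in $\cosupp_{S}FX$ by Proposition~\ref{pr:hom-cosupp}.

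Part (1) is proved by the $\Hom$-dual argument. Pick $Y\in\sfU$ with $\fq\in\supp_{S}Y$, so $Y(\fq)$ is non-zero and $GY(\fq)\ne 0$ by faithfulness of $G$; Corollary~\ref{cor:basechange-support}(2) together with $\cosupp_{S}Y(\fq)\subseteq\{\fq\}$ (Theorem~\ref{th:max}) places $GY(\fq)$ in $\lam^{\fp}\sfT$. Proposition~\ref{pr:hom-supp} produces $Z\in\sfT$ with $\Hom_{\sfT}(X,Z(\fp))\ne 0$, and $Z(\fp)\in\lam^{\fp}\sfT$ by Theorem~\ref{th:max}. Thus $X^{\perp}\cap\lam^{\fp}\sfT$ is a proper colocalizing subcategory of $\lam^{\fp}\sfT$; costratification of $\sfT$ collapses it to zero, yielding $\Hom_{\sfT}^{*}(X,GY(\fq))\ne 0$. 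Adjunction and Proposition~\ref{pr:hom-supp} then put $\fq$ in $\supp_{S}FX$.

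The main issue is the correct packaging of the hypotheses. Faithfulness of the appropriate adjoint is what allows non-vanishing to be transported from $\sfU$ to $\sfT$; the base-change estimates of Corollary~\ref{cor:basechange-support} ensure that the transported object lands in the correct local piece $\gam_{\fp}\sfT$ or $\lam^{\fp}\sfT$; and (co)stratification of $\sfT$ is then used to collapse the only possible obstruction, a proper (co)localizing subcategory of that local piece, whose non-triviality is precluded by the explicit witness $C(\fp)$ (respectively $Z(\fp)$) supplied by Proposition~\ref{pr:hom-cosupp} (respectively Proposition~\ref{pr:hom-supp}).
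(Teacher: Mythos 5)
Your proof is correct and follows essentially the same strategy as the paper's: transport non-vanishing from $\sfU$ to $\sfT$ via faithfulness of the appropriate adjoint, use Corollary~\ref{cor:basechange-support} to place the resulting object in the local piece $\gam_\fp\sfT$ or $\lam^\fp\sfT$, and invoke (co)stratification to conclude. The only cosmetic difference is that the paper proves part~(2) by contraposition (assuming $\lam^\fq FX=0$ and deducing $\lam^\fp X=0$) and declares part~(1) analogous, whereas you give the logically equivalent direct version and spell out both parts.
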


\begin{proof}
We prove the statement concerning cosupports; the argument for the one
for supports is exactly analogous.

To begin with, from Corollary~\ref{cor:basechange-support} one gets an
inclusion
\[
\cosupp_{S}FX\subseteq(\alpha^*)^{-1}(\cosupp_{R}X)\,\cap\,\cosupp_S\sfU
\]
Now fix a $\fq\in\supp_S\sfU$ with $\fq\not\in\cosupp_S FX$.  We need
to show that $\fp=\alpha^*(\fq)$ is not in $\cosupp_RX$. 
Let $E$ be a left adjoint of
$F$. Using adjunction, one has 
\[\Hom_\sfT(E\gam_\fq -,X)\cong\Hom_\sfU(-,\lam^\fq FX)=0.\]
There exists some object $U$ in $\sfU$ such that $E\gam_\fq U\ne 0$,
since $\fq\in\supp_S\sfU$ and $E$ is faithful on objects. Moreover,
$E\gam_\fq U$ belongs to $\gam_\fp\sfT$, by
Corollary~\ref{cor:basechange-support}.  Since $R$ stratifies $\sfT$,
the subcategory $\gam_{\fp}\sfT$ contains no non-trivial localizing
subcategories, and hence coincides with $\Loc_{\sfT}(E\gam_\sfq U)$.
Thus \[ 0=\Hom_{\sfT}(\gam_{\fp}-,X)\cong \Hom_{\sfT}(-,\lam^{\fp}X),
\]
and therefore $\fp\not\in\cosupp_R X$.
\end{proof}

\subsection*{Perfect generators and cogenerators}

For any subset $\mcU$ of $\Spec R$ we consider the full subcategories
\[
\sfT_\mcU=\{X\in\sfT\mid\supp_RX\subseteq\mcU\} \quad\text{and} \quad
\sfT^\mcU=\{X\in\sfT\mid\cosupp_RX\subseteq\mcU\}\,.
\]
The notion of a set of perfect cogenerators was recalled in
Section~\ref{se:plocalpcomplete}. The notion of a set of perfect
generators is analogous; see \cite{Krause:2002a}.

\begin{lemma}
\label{le:basechange-generate}
Let $(F;\alpha)\col (\sfT;R) \to (\sfU;S)$ be an exact functor which
preserves set-indexed coproducts and products. Let $\mcU$ be a subset
of $\Spec R$ and set $\widetilde\mcU=(\alpha^*)^{-1}\mcU$. Then the
following statements hold:
\begin{enumerate}
\item 
When $F$ is faithful on objects from $\sfT_\mcU$, its left adjoint
maps any set of perfect generators of $\sfU_{\widetilde\mcU}$ to a set
of perfect generators of $\sfT_\mcU$.
\item 
When $F$ is faithful on objects from $\sfT^\mcU$, its right adjoint
maps any set of perfect cogenerators of $\sfU^{\widetilde\mcU}$ to a
set of perfect cogenerators of $\sfT^\mcU$,
\end{enumerate}
\end{lemma}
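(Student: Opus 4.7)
\medskip

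The plan is to prove both halves as direct consequences of the adjunctions $\Hom_\sfT(EY,X)\cong \Hom_\sfU(Y,FX)$ and $\Hom_\sfT(X,GY)\cong \Hom_\sfU(FX,Y)$, together with the hypotheses that $F$ preserves coproducts and products. The two parts are formal duals of one another, so I would write out part~(2) in detail and indicate that part~(1) follows by the same argument with arrows reversed.

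First I would use Corollary~\ref{cor:basechange-support} to set up the relevant restrictions of the functors. For part~(2): if $Y\in\sfU^{\widetilde\mcU}$ then $\cosupp_{R}GY\subseteq \alpha^*(\cosupp_{S}Y)\subseteq \alpha^*(\widetilde\mcU)\subseteq \mcU$, so $G$ sends $\sfU^{\widetilde\mcU}$ into $\sfT^\mcU$; dually, if $X\in\sfT^\mcU$ then $\alpha^*(\cosupp_{S}FX)\subseteq \cosupp_{R}X\subseteq \mcU$, so $FX\in\sfU^{\widetilde\mcU}$. The analogous statements for $E$ and $F$ with respect to supports give the corresponding restrictions for part~(1).

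Now fix a set $\sfS$ of perfect cogenerators of $\sfU^{\widetilde\mcU}$. To verify the first cogeneration axiom for $G\sfS$ inside $\sfT^\mcU$, take $X\in\sfT^\mcU$ with $\Hom_\sfT(X,GS)=0$ for every $S\in\sfS$; adjunction turns this into $\Hom_\sfU(FX,S)=0$, and since $FX\in\sfU^{\widetilde\mcU}$ the perfect cogeneration hypothesis forces $FX=0$. The assumption that $F$ is faithful on objects of $\sfT^\mcU$ then gives $X=0$. For the second axiom, take a countable family of maps $X_i\to Y_i$ in $\sfT^\mcU$ whose induced maps $\Hom_\sfT(Y_i,GS)\to\Hom_\sfT(X_i,GS)$ are surjective; by adjunction these are the maps $\Hom_\sfU(FY_i,S)\to\Hom_\sfU(FX_i,S)$, which are thus surjective. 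Since $\sfS$ perfectly cogenerates $\sfU^{\widetilde\mcU}$, and since the products $\prod FX_i$ and $\prod FY_i$ still lie in $\sfU^{\widetilde\mcU}$ (because $\cosupp_{S}\prod FX_i=\bigcup\cosupp_{S}FX_i$ by Proposition~\ref{pr:product}, and likewise for $FY_i$), the map $\Hom_\sfU(\prod FY_i,S)\to\Hom_\sfU(\prod FX_i,S)$ is surjective. Because $F$ preserves products, this map is adjoint to $\Hom_\sfT(\prod Y_i,GS)\to\Hom_\sfT(\prod X_i,GS)$, which is therefore surjective as required.

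Part~(1) is obtained by the dual argument: replace $G$ by $E$ and the adjunction $\Hom_\sfT(X,GS)\cong\Hom_\sfU(FX,S)$ by $\Hom_\sfT(ES,X)\cong\Hom_\sfU(S,FX)$, use that $F$ preserves coproducts in place of products, and that $\supp$ distributes over coproducts to keep $\coprod FX_i$ inside $\sfU_{\widetilde\mcU}$. There is no real obstacle here; the only content beyond pure adjunction juggling is checking that the various subcategories defined by (co)support are closed under the coproducts or products appearing in the two axioms and that $F,E,G$ respect the $\mcU$/$\widetilde\mcU$ dichotomy, both of which are handed to us by Corollary~\ref{cor:basechange-support} and Proposition~\ref{pr:product}. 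The mildest subtlety is confirming that the hypothesis of faithfulness on the appropriate subcategory is exactly what converts $FX=0$ (respectively $FY=0$ in part~(1)) into $X=0$, which is why the hypothesis is formulated on $\sfT^\mcU$ rather than on all of $\sfT$.
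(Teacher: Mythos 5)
Your proof is correct and follows essentially the same route as the paper's (very terse) proof: use Corollary~\ref{cor:basechange-support} to show that $F$ and its adjoint restrict to the subcategories cut out by (co)support conditions, then transport the two axioms for perfect (co)generation across the adjunction, using that $F$ preserves products (resp.\ coproducts) and that the subcategories $\sfT^\mcU$, $\sfU^{\widetilde\mcU}$ (resp.\ $\sfT_\mcU$, $\sfU_{\widetilde\mcU}$) are closed under the products (resp.\ coproducts) appearing in the axioms. You have simply written out what the paper leaves to the reader, and your invocations of Proposition~\ref{pr:product} and of the faithfulness hypothesis are exactly where they belong.
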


\begin{proof}
Let $E$ denote a left adjoint of $F$. It follows from
Corollary~\ref{cor:basechange-support} that $F$ and $E$ restrict to
functors between $\sfT_\mcU$ and $\sfU_{\widetilde\mcU}$. Now use
adjunction to prove (1). The proof of (2) is analogous.
\end{proof}

\section{Tensor triangulated categories}
\label{se:tens}

In this section we discuss special properties of triangulated
categories which hold when they have a tensor structure.

Let $(\sfT,\otimes,\one)$ be a tensor triangulated category as defined
in \cite[\S8]{\bik:2008a}. In particular, $\sfT$ is a compactly
generated triangulated category with a symmetric monoidal structure;
$\otimes$ is its tensor product and $\one$ the unit of the tensor
product. The tensor product is exact in each variable and preserves
coproducts.

By Brown representability there are function objects $\fHom(X,Y)$
satisfying
\begin{equation}
\label{eq:adj}
\Hom_\sfT(X \otimes Z,Y) \cong \Hom_\sfT(Z,\fHom(X,Y)),
\end{equation}
and we write $X^\vee$ for the \emph{Spanier--Whitehead dual}
$\fHom(X,\one)$. Note that the adjunction extends to function objects,
in the sense that there are natural isomorphisms
\[ 
\fHom(X \otimes Z,Y) \cong \fHom(Z,\fHom(X,Y))\,.
\]
This is an easy consequence of Yoneda's lemma.

We shall assume that the tensor unit $\one$ is compact and that all
compact objects $C$ are strongly dualizable in the sense that the
canonical morphism
\[ 
C^\vee \otimes X \to \fHom(C,X)
\]
is an isomorphism for all $X$ in $\sfT$.  We also assume that
$\fHom(-,Y)$ is exact for each object $Y$ in $\sfT$.\footnote{The
  exactness of $\fHom(-,Y)$ was omitted from \cite[\S8]{\bik:2008a},
  since it was not used there, but it is important in
  Lemma~\ref{le:loc-coloc} below.}

\subsection*{Canonical actions}
The symmetric monoidal structure of $\sfT$ ensures that the
endomorphism ring $\End^{*}_{\sfT}(\one)$ is graded commutative. It
acts on $\sfT$ via homomorphisms
\[
\End^{*}_{\sfT}(\one)\xra{\ X\otimes-\ }\End^{*}_{\sfT}(X).
\]
In particular, any homomorphism $R\to \End^{*}_{\sfT}(\one)$ of rings
with $R$ graded commutative induces an action of $R$ on $\sfT$. We say
that an $R$ action on $\sfT$ is \emph{canonical} if it arises from
such a homomorphism. In that case there are for each specialization
closed subset $\mcV$ and point $\fp$ of $\Spec R$ natural isomorphisms
\begin{equation}
\label{eq:loc-tensor}
\gam_{\mcV}X\cong X\otimes \gam_{\mcV}\one,\quad \bloc_{\mcV}X\cong
X\otimes \bloc_{\mcV}\one, \quad\text{and}\quad \gam_{\fp}X\cong
X\otimes \gam_{\fp}\one.
\end{equation}
These isomorphisms are from \cite[Theorem~8.2,
  Corollary~8.3]{\bik:2008a}.\footnote{For these results to hold, the
  $R$ action should be canonical, for the $R$-linearity of the
  adjunction isomorphism \eqref{eq:adj} is used in the arguments.}

\begin{proposition}
\label{pr:natisos}
Let $\mcV\subseteq\Spec R$ be a specialization closed subset and
$\fp\in\Spec R$. Given objects $X$ and $Y$ in $\sfT$, there are
natural isomorphisms
\begin{gather*} 
\fHom(\gam_\mcV X,Y)\cong \fHom(X,\lam^\mcV Y) \cong
\lam^\mcV\fHom(X.Y), \\ \fHom(L_\mcV X,Y) \cong \fHom(X,V^\mcV Y)\cong
V^\mcV\fHom(X,Y),\\ \fHom(\gam_\fp X,Y) \cong \fHom(X,\lam^\fp Y)
\cong \lam^\fp\fHom(X,Y).
\end{gather*}
In particular, there are natural isomorphisms
\[ 
\lam^\mcV X \cong \fHom(\gam_\mcV\one,X),\quad V^\mcV X \cong
\fHom(L_\mcV\one,X),\quad \lam^\fp X \cong \fHom(\gam_\fp\one,X)\,.
\] 
\end{proposition}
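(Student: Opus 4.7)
The plan is to derive all nine isomorphisms by Yoneda's lemma, systematically reducing each claim to two key ingredients at our disposal: the internal tensor--hom adjunction \eqref{eq:adj}, together with its iterated form $\fHom(A\otimes B,C)\cong\fHom(A,\fHom(B,C))$, and the canonical-action identities \eqref{eq:loc-tensor} that let the (co)localization functors be moved across the tensor product. The adjoint pairs $(\gam_{\mcV},\lam^{\mcV})$ and $(L_{\mcV},V^{\mcV})$ supplied by Proposition~\ref{pr:coloc-fun}, together with $(\gam_{\fp},\lam^{\fp})$ introduced in Section~\ref{se:cosupport}, provide the remaining adjunction glue.

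The first row is the template. For $\fHom(\gam_{\mcV}X,Y)\cong\fHom(X,\lam^{\mcV}Y)$, I would apply $\Hom_{\sfT}(Z,-)$ and chase
\[
\Hom_{\sfT}(Z,\fHom(\gam_{\mcV}X,Y))\cong\Hom_{\sfT}(Z\otimes\gam_{\mcV}X,Y)\cong\Hom_{\sfT}(\gam_{\mcV}(Z\otimes X),Y),
\]
using \eqref{eq:adj} and then \eqref{eq:loc-tensor} in the form $Z\otimes\gam_{\mcV}X\cong Z\otimes X\otimes\gam_{\mcV}\one\cong\gam_{\mcV}(Z\otimes X)$; transferring $\gam_{\mcV}$ via the $(\gam_{\mcV},\lam^{\mcV})$ adjunction and folding back with \eqref{eq:adj} delivers $\Hom_{\sfT}(Z,\fHom(X,\lam^{\mcV}Y))$. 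For the second isomorphism $\fHom(X,\lam^{\mcV}Y)\cong\lam^{\mcV}\fHom(X,Y)$, the analogous chase begins with
\[
\Hom_{\sfT}(Z,\lam^{\mcV}\fHom(X,Y))\cong\Hom_{\sfT}(\gam_{\mcV}Z,\fHom(X,Y))\cong\Hom_{\sfT}(\gam_{\mcV}Z\otimes X,Y),
\]
after which \eqref{eq:loc-tensor} converts $\gam_{\mcV}Z\otimes X$ into $\gam_{\mcV}(Z\otimes X)$ and the previous chain runs in reverse.

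The second row is handled identically, replacing $(\gam_{\mcV},\lam^{\mcV})$ by $(L_{\mcV},V^{\mcV})$ and using the middle identity of \eqref{eq:loc-tensor}. For the third row, two routes are available: a direct Yoneda argument parallel to the above (using the last identity of \eqref{eq:loc-tensor} and the $(\gam_{\fp},\lam^{\fp})$ adjunction), or a shorter derivation that chains together the first two rows via the definitions $\gam_{\fp}=\gam_{\mcV(\fp)}L_{\mcZ(\fp)}$ and $\lam^{\fp}=\lam^{\mcV(\fp)}V^{\mcZ(\fp)}$; I would take the direct approach for uniformity. The three ``in particular'' statements then fall out by specializing $X=\one$ in the left-hand isomorphism of each row and invoking $\fHom(\one,Y)\cong Y$.

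I do not expect a genuine obstacle here; every step is a formal manipulation combining standard adjunctions. The point requiring the most care is the bookkeeping needed to verify that each isomorphism is natural in $X$ and $Y$, but this is automatic from Yoneda together with the naturality of the relevant units and counits. The essential point driving the whole proof is that a \emph{canonical} $R$-action forces $\gam_{\mcV}$ and $L_{\mcV}$ to commute with tensoring, which is precisely what lets these functors be shuttled in and out of $\fHom$.
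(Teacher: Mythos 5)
Your proof is correct and takes essentially the same route as the paper, whose own proof is just the one-line remark to combine the isomorphisms \eqref{eq:loc-tensor} with the adjunction \eqref{eq:adj} defining $\fHom$; your Yoneda-chase is exactly the unwinding of that instruction, using the adjoint pairs $(\gam_{\mcV},\lam^{\mcV})$, $(L_{\mcV},V^{\mcV})$, $(\gam_{\fp},\lam^{\fp})$ as the third ingredient. The only caveat is that the naturality bookkeeping you flag as a formality does deserve the care you give it, since it is exactly what Yoneda requires; everything else is as routine as you say.
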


\begin{proof}
Combine the isomorphisms in \eqref{eq:loc-tensor} with the adjunction
defining $\fHom$.
\end{proof}

\subsection*{Colocalizing subcategories} 
Function objects turn localizing subcategories into colocalizing
subcategories in the following sense.

\begin{lemma}
\label{le:loc-coloc}
Let $\sfC$ be a class of objects in $\sfT$ and $X,Y\in\sfT$.  If $X$
belongs to $\Loc(\sfC)$, then $\fHom(X,Y)$ belongs to
$\Coloc(\{\fHom(C,Y)\mid C\in\sfC\})$.
\end{lemma}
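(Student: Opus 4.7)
The plan is to define the full subcategory
\[
\sfD = \{X\in\sfT\mid \fHom(X,Y)\in\Coloc(\{\fHom(C,Y)\mid C\in\sfC\})\}
\]
and to verify that $\sfD$ is a localizing subcategory of $\sfT$ containing $\sfC$. Once this is done, the inclusion $\Loc(\sfC)\subseteq \sfD$ follows from the universal property of $\Loc(\sfC)$, which is exactly the desired conclusion.

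The containment $\sfC\subseteq \sfD$ is immediate, since $\fHom(C,Y)$ trivially lies in any colocalizing subcategory that contains it. For closure under suspension, I would note that $\fHom(\Si X,Y)\cong \Si^{-1}\fHom(X,Y)$, so membership in $\sfD$ is preserved since colocalizing subcategories are triangulated. To handle closure under exact triangles, given a triangle $X'\to X\to X''\to$ in $\sfT$ with two of its vertices in $\sfD$, the exactness hypothesis on $\fHom(-,Y)$ (stated in the paper, in the tensor triangulated setting) yields an exact triangle
\[
\fHom(X'',Y)\lto \fHom(X,Y)\lto \fHom(X',Y)\lto,
\]
so the third term also belongs to the colocalizing subcategory in question.

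The remaining, and main, point is closure of $\sfD$ under set-indexed coproducts. For a family $\{X_i\}_{i\in I}$ in $\sfD$, I would use the adjunction \eqref{eq:adj} together with Yoneda to obtain a natural isomorphism
\[
\fHom\Bigl(\coprod_{i\in I} X_i,Y\Bigr)\;\cong\;\prod_{i\in I}\fHom(X_i,Y).
\]
Indeed, for any test object $W$,
\[
\Hom_\sfT\Bigl(W,\fHom\bigl(\coprod_i X_i,Y\bigr)\Bigr)\cong \Hom_\sfT\Bigl(\coprod_i(W\otimes X_i),Y\Bigr)\cong \prod_i\Hom_\sfT(W,\fHom(X_i,Y)),
\]
using that $\otimes$ preserves coproducts. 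Since a colocalizing subcategory is closed under products, the right-hand side, and hence $\fHom(\coprod_i X_i,Y)$, belongs to $\Coloc(\{\fHom(C,Y)\mid C\in\sfC\})$, so $\coprod_i X_i\in\sfD$.

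The main obstacle, if any, is confirming that $\fHom(-,Y)$ is exact in the first variable and converts coproducts to products; both are noted to hold in the standing hypotheses on $\sfT$ (the former explicitly flagged in the footnote preceding this lemma). With these in hand, the argument is essentially formal dualization of the statement that $\Loc(\sfC)$ is the smallest triangulated subcategory closed under coproducts containing $\sfC$.
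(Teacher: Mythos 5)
Your proposal is correct and takes the same approach the paper uses: the paper's proof is the one-liner ``This holds as $\fHom(-,Y)$ is exact and turns coproducts into products,'' and your argument simply makes explicit the standard dévissage that underlies that statement, including the Yoneda/adjunction verification that $\fHom(-,Y)$ converts coproducts to products.
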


\begin{proof}
This holds as $\fHom(-,Y)$ is exact and turns coproducts into
products.
\end{proof}

We focus attention on colocalizing subcategories satisfying the
equivalent conditions of the following lemma.

\begin{lemma}
\label{le:Hom closed}
Let $\sfS$ be a colocalizing subcategory of $\sfT$. Then the following
conditions on $\sfS$ are equivalent:
\begin{enumerate}
\item 
For all compact objects $X$ in $\sfT$ and all $Y$ in $\sfS$, $X\otimes
Y$ is also in $\sfS$.
\item 
For all compact objects $X$ in $\sfT$ and all $Y$ in $\sfS$,
$\fHom(X,Y)$ is in $\sfS$.
\item 
For all objects $X$ in $\sfT$ and all $Y$ in $\sfS$, $\fHom(X,Y)$ is
in $\sfS$.
\end{enumerate}
\end{lemma}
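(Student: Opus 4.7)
The plan is to establish (1) $\Leftrightarrow$ (2) by a symmetric duality argument, observe that (3) $\Rightarrow$ (2) is immediate, and reduce the substantive implication (2) $\Rightarrow$ (3) to Lemma~\ref{le:loc-coloc} together with compact generation of $\sfT$.

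First I would handle (1) $\Leftrightarrow$ (2). The key preliminary fact is that the Spanier--Whitehead dual of a compact object is again compact: if $X$ is strongly dualizable, the isomorphism $\fHom(X^\vee,Z)\cong X\otimes Z$ combined with the $(\one,-)$ adjunction yields $\Hom_\sfT(X^\vee,Z)\cong\Hom_\sfT(\one,X\otimes Z)$, and the right-hand side preserves coproducts in $Z$ since $\one$ is compact and $X\otimes-$ preserves coproducts. Given a compact $X$ and $Y\in\sfS$, the strong dualizability isomorphism $\fHom(X,Y)\cong X^\vee\otimes Y$ then shows that (1) applied to the compact object $X^\vee$ forces (2) for $X$; conversely, (2) applied to $X^\vee$, together with $X\cong(X^\vee)^\vee$, gives (1) for $X$.

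The implication (3) $\Rightarrow$ (2) is immediate by restricting $X$ to compact objects. The main step is (2) $\Rightarrow$ (3). Fix $Y\in\sfS$. Since $\sfT$ is compactly generated, any $X\in\sfT$ lies in $\Loc_\sfT(\sfT^{\sfc})$. Applying Lemma~\ref{le:loc-coloc} with $\sfC=\sfT^{\sfc}$ yields
\[
\fHom(X,Y)\in\Coloc_\sfT\bigl(\{\fHom(C,Y)\mid C\in\sfT^{\sfc}\}\bigr).
\]
By hypothesis (2), each cogenerator $\fHom(C,Y)$ with $C$ compact already lies in $\sfS$, and since $\sfS$ is itself colocalizing, the entire colocalizing subcategory these objects cogenerate is contained in $\sfS$. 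Hence $\fHom(X,Y)\in\sfS$, which is (3).

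The only technical point I expect is the compactness of $X^\vee$ when $X$ is compact, which relies essentially on the standing assumption that $\one$ is compact; once that and the strong dualizability isomorphism are in hand, everything is formal, and the real content of the lemma is transported through Lemma~\ref{le:loc-coloc}.
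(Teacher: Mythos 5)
Your proposal is correct and follows essentially the same route as the paper's proof: (1) $\Leftrightarrow$ (2) via the isomorphisms $\fHom(X,Y)\cong X^\vee\otimes Y$ and $X\cong X^{\vee\vee}$, and (2) $\Leftrightarrow$ (3) via Lemma~\ref{le:loc-coloc} together with compact generation. The only difference is that you spell out the compactness of $X^\vee$ for compact $X$, a point the paper leaves implicit; this is a reasonable thing to verify and your argument for it is correct.
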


\begin{proof}
The equivalence of (1) and (2) follows from the isomorphisms
$X^\vee\otimes Y \cong \fHom(X,Y)$ and $X^{\vee\vee}\cong X$. The
equivalence of (2) and (3) follows from Lemma~\ref{le:loc-coloc} and
the fact that $\sfT$ is compactly generated.
\end{proof}

We say that a colocalizing subcategory is \emph{Hom closed} if the
equivalent conditions of the lemma hold, and write
$\Coloc^{\fHom}(\sfC)$ for the smallest Hom closed colocalizing
subcategory containing a class $\sfC$ of objects in $\sfT$.

A localizing subcategory $\sfS$ of $\sfT$ is \emph{tensor ideal} or
\emph{tensor closed} if $X\in\sfT$ and $Y\in\sfS$ imply that $X\otimes
Y$ is in $\sfS$. Given a class $\sfC$ of objects in $\sfT$, we denote
by $\Loc^\otimes(\sfC)$ the smallest tensor ideal localizing
subcategory containing $\sfC$.

The gist of the next result is that (an appropriate version of) the
local-global principle holds for tensor triangulated categories. The
first part, about localizing subcategories, is from \cite[Theorem
  3.6]{\bik:2008b}.

\begin{theorem}
\label{th:tensor-locglob}
For each $X\in\sfT$, there are equalities
\begin{align*}
\Loc_\sfT^\otimes(X)& =\Loc_\sfT^\otimes(\{\gam_\fp X \mid \fp\in\Spec
R\}) \\ \Coloc_\sfT^{\fHom}(X) & =\Coloc_\sfT^{\fHom}(\{\lam^\fp X
\mid \fp\in\Spec R\})\,.
\end{align*}
\end{theorem}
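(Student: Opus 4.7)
The plan is to deduce the colocalizing half of Theorem~\ref{th:tensor-locglob} from the localizing half by dualizing via the function object $\fHom(-,X)$. The key observation is that $\fHom(-,X)$ carries the (tensor) localizing subcategory generated by the unit to a Hom closed colocalizing subcategory, using Lemma~\ref{le:loc-coloc} together with the adjunction isomorphisms of Proposition~\ref{pr:natisos}.

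For the inclusion $\Coloc_\sfT^{\fHom}(\{\lam^\fp X\mid \fp\in\Spec R\})\subseteq \Coloc_\sfT^{\fHom}(X)$, I would simply invoke the Hom closed property: by Proposition~\ref{pr:natisos} one has $\lam^\fp X\cong \fHom(\gam_\fp\one,X)$, and Lemma~\ref{le:Hom closed}(3) then places every $\lam^\fp X$ in $\Coloc_\sfT^{\fHom}(X)$.

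For the reverse inclusion, apply the first (tensor localizing) part of the theorem to the unit object $\one$. This gives
\[
\one\in\Loc_\sfT^{\otimes}(\one)=\Loc_\sfT^{\otimes}(\{\gam_\fp\one\mid \fp\in\Spec R\})\subseteq \Loc_\sfT(\{\gam_\fp\one\mid \fp\in\Spec R\}).
\]
Now I would apply $\fHom(-,X)$: since $\fHom(-,X)$ is exact and sends coproducts to products, Lemma~\ref{le:loc-coloc} yields
\[
X\cong \fHom(\one,X)\in \Coloc_\sfT\bigl(\{\fHom(\gam_\fp\one,X)\mid \fp\in\Spec R\}\bigr)=\Coloc_\sfT(\{\lam^\fp X\mid \fp\in\Spec R\}),
\]
where the last equality is again Proposition~\ref{pr:natisos}. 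Since $\Coloc_\sfT\subseteq \Coloc_\sfT^{\fHom}$, this is the desired membership, and hence $\Coloc_\sfT^{\fHom}(X)\subseteq \Coloc_\sfT^{\fHom}(\{\lam^\fp X\mid \fp\in\Spec R\})$.

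There is no real obstacle here: the work has already been done in establishing the tensor local-global principle for localizing subcategories (the cited \cite[Theorem~3.6]{\bik:2008b}), in Lemma~\ref{le:loc-coloc}, and in the identification $\lam^\fp\cong \fHom(\gam_\fp\one,-)$ of Proposition~\ref{pr:natisos}. The only thing to notice is that one need not a priori know that $\Loc_\sfT(\{\gam_\fp\one\})$ is tensor ideal — it suffices to know $\one$ lies in the (plain) localizing subcategory generated by the $\gam_\fp\one$, which is immediate from the tensor version since $\Loc^\otimes\subseteq \Loc$.
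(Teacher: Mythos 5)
Your proof is correct and follows essentially the same route as the paper: cite the tensor localizing local-global principle to get $\one\in\Loc_\sfT(\{\gam_\fp\one\})$, then push it through $\fHom(-,X)$ via Lemma~\ref{le:loc-coloc} and the identification $\lam^\fp X\cong\fHom(\gam_\fp\one,X)$ from Proposition~\ref{pr:natisos}. The paper compresses the two inclusions into a chain of equalities, but the underlying argument is identical to yours (including the observation that one need not know $\Loc_\sfT(\{\gam_\fp\one\})$ is tensor ideal).
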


\begin{proof}
The first equality is \cite[Theorem 3.6]{\bik:2008b}.

In particular, $\Loc_\sfT^\otimes(\one)=\Loc_\sfT^\otimes(\{\gam_\fp
\one\, |\, \fp\in\Spec R\})$, which in conjunction with
Lemma~\ref{le:loc-coloc} gives the second equality below:
\begin{align*}
\Coloc_\sfT^{\fHom}(X)&=\Coloc_\sfT^{\fHom}(\fHom(\one,X))\\ &=\Coloc_\sfT^{\fHom}(\{\fHom(\gam_\fp\one,
X) \mid \fp\in\Spec R\})\\ &=\Coloc_\sfT^{\fHom}(\{\lam^\fp X \mid
\fp\in\Spec R\})
\end{align*}
The first one holds because $X=\fHom(\one,X)$ and last one is by
Proposition~\ref{pr:natisos}.
\end{proof}

\begin{lemma}
\label{le:Hom-closed}
If $\sfS$ is a colocalizing subcategory of $\sfT$ then $\fHom(X,Y)$ is
in $\sfS$ for all $X$ in $\Loc_\sfT(\one)$ and $Y$ in $\sfS$. In
particular, if $\one$ generates $\sfT$ then all its colocalizing
subcategories are Hom closed.
\end{lemma}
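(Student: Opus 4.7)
The plan is to deduce both assertions directly from Lemma~\ref{le:loc-coloc} applied with $\sfC=\{\one\}$. First I would observe that $\fHom(\one,Y)\cong Y$, which holds by the adjunction \eqref{eq:adj}, since $\one$ is a tensor unit and hence $\Hom_\sfT(Z,\fHom(\one,Y))\cong\Hom_\sfT(Z\otimes\one,Y)\cong\Hom_\sfT(Z,Y)$ naturally in $Z$; Yoneda then gives the isomorphism.

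Next, for $X\in\Loc_\sfT(\one)$, Lemma~\ref{le:loc-coloc} with $\sfC=\{\one\}$ yields that $\fHom(X,Y)$ lies in $\Coloc_\sfT(\{\fHom(\one,Y)\})=\Coloc_\sfT(Y)$. Since $Y\in\sfS$ and $\sfS$ is a colocalizing subcategory, the latter is contained in $\sfS$, so $\fHom(X,Y)\in\sfS$.

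For the second assertion, if $\one$ generates $\sfT$ then $\Loc_\sfT(\one)=\sfT$, so the first part shows that $\fHom(X,Y)\in\sfS$ for all $X\in\sfT$ and $Y\in\sfS$. This is precisely condition (3) of Lemma~\ref{le:Hom closed}, so $\sfS$ is Hom closed.

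There is no real obstacle here: the argument is a one-step application of Lemma~\ref{le:loc-coloc} combined with the identification $\fHom(\one,-)\cong\Id_\sfT$ and the defining closure of $\sfS$ under products and triangles. The only point worth writing out carefully is the natural isomorphism $\fHom(\one,Y)\cong Y$, which is where the hypothesis that $\one$ is the tensor unit enters.
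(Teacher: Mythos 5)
Your proof is correct and is exactly the argument the paper has in mind: the paper's proof is the single sentence ``This follows from Lemma~\ref{le:loc-coloc},'' and your write-up simply spells out the intended application of that lemma with $\sfC=\{\one\}$ together with the identification $\fHom(\one,Y)\cong Y$.
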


\begin{proof}
This follows from Lemma~\ref{le:loc-coloc}.
\end{proof}

\begin{remark}
\label{rem:local-global}
Let $\sfT$ be a tensor triangulated category with a canonical
$R$-action. If $\sfT$ is generated by its unit $\one$, then the local
global principle for (co)localizing subcategories holds.  This follows
from Theorem~\ref{th:tensor-locglob} and Lemma~\ref{le:Hom-closed}.
\end{remark}

\section{Costratification}
\label{se:costratification}
Let $\sfT=(\sfT,\otimes,\one)$ be a tensor triangulated category as in
Section~\ref{se:tens}, endowed with a canonical $R$-action. In this
section, we introduce a variant of the notion of costratification, see
Remark~\ref{rem:costratification}, suitable for this context and
explain some consequences, including a classification of the Hom
closed colocalizing subcategories.

Recall from Proposition~\ref{prop:classification} that there are maps
$\sigma$ and $\tau$ which yield a classification of colocalizing
subcategories. Proposition~\ref{pr:natisos} implies that each
$\lam^{\fp}\sfT$ is Hom closed, so these maps restrict to the
following maps on Hom closed subcategories:
\[ 
\left\{\begin{gathered} \text{Hom closed
  colocalizing}\\ \text{subcategories of $\sfT$}
\end{gathered}\;
\right\} \xymatrix@C=3pc{ \ar@<1ex>[r]^-{\sigma} &
  \ar@<1ex>[l]^-{\tau}} \left\{
\begin{gathered}
  \text{families $(\sfS(\fp))_{\fp\in\Spec R}$
    with}\\ \text{$\sfS(\fp)\subseteq \lam^{\fp}\sfT$ a Hom
    closed}\\ \text{colocalizing subcategory}
\end{gathered}\;
\right\}
\] 
where $\sigma(\sfS)=(\sfS\cap\lam^\fp\sfT)$ and $\tau(\sfS(\fp))$ is
the colocalizing subcategory of $\sfT$ cogenerated by all the
$\sfS(\fp)$. The following result is the analogue of
\cite[Proposition~3.6]{\bik:2009a}.

\begin{proposition}
\label{pr:lg=reduction}
The maps $\sigma$ and $\tau$ are mutually inverse bijections.
\end{proposition}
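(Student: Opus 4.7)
The plan is to adapt the proof of Proposition~\ref{prop:classification} to the Hom closed setting, replacing the general local-global principle by its tensor-triangulated analogue in Theorem~\ref{th:tensor-locglob}, and using that each $\lam^{\fp}\sfT$ is Hom closed (Proposition~\ref{pr:natisos}). The two key technical ingredients I would lean on are (a) the natural isomorphism $\lam^{\fp}\fHom(Y,X)\cong \fHom(Y,\lam^{\fp}X)$ extracted from Proposition~\ref{pr:natisos}, which lets me move $\lam^{\fp}$ through function objects, and (b) the orthogonality $\lam^{\fp}\lam^{\fq}\cong 0$ for $\fp\ne\fq$, which is immediate from \eqref{eq:vanishing} together with the definition $\lam^{\fp}=\lam^{\mcV(\fp)}V^{\mcZ(\fp)}$.

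For $\tau\sigma=\id$, fix a Hom closed colocalizing subcategory $\sfS$. The inclusion $\tau\sigma(\sfS)\subseteq\sfS$ is immediate because $\sfS$ is itself a Hom closed colocalizing subcategory containing each $\sfS\cap\lam^{\fp}\sfT$. For the reverse inclusion, take $X\in\sfS$. By the last assertion of Proposition~\ref{pr:natisos}, $\lam^{\fp}X\cong \fHom(\gam_{\fp}\one,X)$, so the Hom closedness of $\sfS$ forces $\lam^{\fp}X\in\sfS$; of course $\lam^{\fp}X\in\lam^{\fp}\sfT$. Thus $\lam^{\fp}X\in\sigma(\sfS)(\fp)$ for every $\fp$. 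Theorem~\ref{th:tensor-locglob} now yields
\[
X\in\Coloc^{\fHom}_{\sfT}\bigl(\{\lam^{\fp}X\mid \fp\in\Spec R\}\bigr)\subseteq \tau\sigma(\sfS)\,.
\]

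For $\sigma\tau=\id$, fix a family $(\sfS(\fp))_{\fp\in\Spec R}$ with each $\sfS(\fp)\subseteq\lam^{\fp}\sfT$ a Hom closed colocalizing subcategory, and let $\sfS=\tau(\sfS(\fp))$. The inclusion $\sfS(\fq)\subseteq \sfS\cap\lam^{\fq}\sfT$ is trivial. For the reverse, for each $\fq\in\Spec R$ consider
\[
\sfC_{\fq}=\{X\in\sfT\mid \lam^{\fq}X\in\sfS(\fq)\}\,.
\]
Because $\lam^{\fq}$ is exact and preserves products (as a right adjoint, cf.\ Proposition~\ref{pr:product}), $\sfC_{\fq}$ is colocalizing; it is Hom closed by the ingredient (a) above together with Hom closedness of $\sfS(\fq)$. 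For $\fp\ne \fq$ the orthogonality (b) gives $\lam^{\fq}\sfS(\fp)=0\subseteq\sfS(\fq)$, while for $\fp=\fq$ the subcategory $\sfS(\fq)$ sits inside $\lam^{\fq}\sfT$, where $\lam^{\fq}$ acts as the identity. Hence every $\sfS(\fp)\subseteq\sfC_{\fq}$, so $\sfS\subseteq\sfC_{\fq}$. Any $X\in\sfS\cap\lam^{\fq}\sfT$ therefore satisfies $X\cong\lam^{\fq}X\in\sfS(\fq)$, finishing the argument.

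The main obstacle is the direction $\tau\sigma=\id$, which rests squarely on the Hom closed local-global principle of Theorem~\ref{th:tensor-locglob}; everything else is bookkeeping using the adjunction-based commutation relations between $\lam^{\fp}$ and $\fHom$. One small point to verify is that the proposition is really only asking for a bijection, so I do not need to control anything beyond these two composites.
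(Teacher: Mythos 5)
Your proof is correct and follows essentially the same route as the paper's, which simply says the argument is analogous to Proposition~\ref{prop:classification} and uses Theorem~\ref{th:tensor-locglob}. You have spelled out the two details that the paper leaves implicit: that the auxiliary subcategory $\sfC_{\fq}=\{X\mid\lam^{\fq}X\in\sfS(\fq)\}$ is Hom closed (via the isomorphism $\lam^{\fq}\fHom(Y,X)\cong\fHom(Y,\lam^{\fq}X)$ from Proposition~\ref{pr:natisos}), and that Hom closedness of $\sfS$ directly gives $\lam^{\fp}X\cong\fHom(\gam_{\fp}\one,X)\in\sfS$ for $X\in\sfS$, which is what feeds the local-global principle in the direction $\tau\sigma=\id$.
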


\begin{proof}
The proof is exactly analogous to that of
Proposition~\ref{prop:classification}, using the local-global
principle from Theorem~\ref{th:tensor-locglob}.
\end{proof}

We say that \emph{the tensor triangulated category $\sfT$ is
  costratified by} $R$ if for each $\fp\in\Spec R$, the colocalizing
subcategory $\lam^\fp\sfT$ contains no proper non-zero Hom closed
colocalizing subcategories. Compare this definition with the one in
Remark~\ref{rem:costratification} for general triangulated
categories. One does not have to impose the analogue of the
local-global principle (C1), for it always holds; see
Theorem~\ref{th:tensor-locglob}.

If $\sfT$ is costratified by $R$ then for each $\fp\in\Spec R$ there
are only two possibilities for $\sfS(\fp)$, namely
$\sfS(\fp)=\lam^\fp\sfT$ or $\sfS(\fp)=0$. So the maps $\sigma$ and
$\tau$ reduce to
\[ 
\left\{\begin{gathered} \text{Hom closed
  colocalizing}\\ \text{subcategories of $\sfT$}
\end{gathered}\;
\right\} \xymatrix@C=3pc{ \ar@<1ex>[r]^-{\cosupp_R} &
  \ar@<1ex>[l]^-{\cosupp_R^{-1}}} \left\{
\begin{gathered}
  \text{subsets of $\supp_R\sfT$}
\end{gathered}\;
\right\}.
\] 

The next result is now immediate from the definition of
costratification.

\begin{corollary}
\label{co:coloc-classify}
\pushQED{\qed} If the tensor triangulated category $\sfT$ is
costratified by $R$ then the above maps $\cosupp_R$ and
$\cosupp_R^{-1}$ are mutually inverse bijections.\qedhere
\end{corollary}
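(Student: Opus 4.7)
The plan is to deduce the corollary from Proposition~\ref{pr:lg=reduction} by invoking the costratification hypothesis to collapse the family parametrization to a subset of $\Spec R$, and then to verify that the resulting composite bijection is indeed the map $\cosupp_R$.

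By Proposition~\ref{pr:lg=reduction}, Hom closed colocalizing subcategories of $\sfT$ correspond bijectively to families $(\sfS(\fp))_{\fp\in\Spec R}$ with each $\sfS(\fp)$ a Hom closed colocalizing subcategory of $\lam^\fp\sfT$. Costratification forces $\sfS(\fp)\in\{0,\lam^\fp\sfT\}$ for every $\fp$, so such a family is encoded by the subset $\mcU=\{\fp\mid \sfS(\fp)\ne 0\}$ of $\{\fp\mid \lam^\fp\sfT\ne 0\}$. I would then identify this latter set with $\supp_R\sfT$: if $\fp\in\supp_R\sfT$, pick $X$ with $\gam_\fp X\ne 0$; then $\lam^\fp\gam_\fp X\cong\gam_\fp X$ by Proposition~\ref{pr:loc-complete}, so $\lam^\fp\sfT\ne 0$. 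Conversely, a non-zero $Y\in\lam^\fp\sfT$ yields a non-zero $\gam_\fp Y\cong\gam_\fp\lam^\fp Y\in\gam_\fp\sfT$, so $\fp\in\supp_R\sfT$.

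It remains to check that, under this reduction, $\sigma$ is identified with $\cosupp_R$. Unwinding, the subset attached to $\sfC$ is $\mcU(\sfC)=\{\fp\mid \sfC\cap\lam^\fp\sfT\ne 0\}$, and I claim $\mcU(\sfC)=\cosupp_R\sfC$. For the inclusion $\mcU(\sfC)\subseteq\cosupp_R\sfC$, any non-zero $Y\in\sfC\cap\lam^\fp\sfT$ has $\cosupp_RY\subseteq\{\fp\}$ by Corollary~\ref{co:cosupp} and is non-empty by Theorem~\ref{th:cosupp=0}, so $\fp\in\cosupp_RY\subseteq\cosupp_R\sfC$. For the reverse inclusion, if $\fp\in\cosupp_RX$ for some $X\in\sfC$, Proposition~\ref{pr:natisos} gives an isomorphism $\lam^\fp X\cong\fHom(\gam_\fp\one,X)$, which lies in $\sfC$ because $\sfC$ is Hom closed; since $\lam^\fp X\ne 0$ by hypothesis, it is a non-zero object of $\sfC\cap\lam^\fp\sfT$, so $\fp\in\mcU(\sfC)$.

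The only step requiring any thought is this last verification that the abstract bijection obtained from Proposition~\ref{pr:lg=reduction} coincides with $\cosupp_R$; the Hom-closedness hypothesis enters essentially here via Proposition~\ref{pr:natisos}, which allows one to realize every witness $\lam^\fp X$ of $\fp\in\cosupp_RX$ as an object of $\sfC$ itself. Everything else is a direct application of Proposition~\ref{pr:lg=reduction} together with the definition of costratification.
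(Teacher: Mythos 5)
Your proposal is correct and follows essentially the same route as the paper: the paper records exactly the observation that costratification collapses each $\sfS(\fp)$ in Proposition~\ref{pr:lg=reduction} to $0$ or $\lam^\fp\sfT$, identifies the parametrizing set with $\supp_R\sfT$, and reads off that the bijection becomes $\cosupp_R$; you simply make those identifications explicit, and your use of Hom-closedness via $\lam^\fp X\cong\fHom(\gam_\fp\one,X)$ is the crux that the paper leaves implicit. One small imprecision: you write $\lam^\fp\gam_\fp X\cong\gam_\fp X$, but that isomorphism does not hold in general ($\gam_\fp X$ is $\fp$-torsion, not necessarily $\fp$-complete); what Proposition~\ref{pr:loc-complete} actually gives is that the equivalence $\lam^\fp\col\gam_\fp\sfT\to\lam^\fp\sfT$ sends the non-zero object $\gam_\fp X$ to a non-zero object, so $\lam^\fp\gam_\fp X\ne 0$ — which is all you need.
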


\subsection*{Stratification}
In analogy with the notion of costratification, \emph{the tensor
  triangulated category $\sfT$ is stratified by} $R$ if for each
$\fp\in\Spec R$ the localizing subcategory $\gam_\fp\sfT$ is zero or
minimal among tensor ideal localizing subcategories; see
\cite[\S7]{\bik:2009a}.

Next we establish a formula relating support and cosupport
when $\sfT$ is stratified.

\begin{lemma}
\label{le:cosupp-hom}
An inclusion $\cosupp_R\fHom(X,Y)\subseteq\supp_R X\cap \cosupp_R Y$
holds for all objects $X$ and $Y$ in $\sfT$.
\end{lemma}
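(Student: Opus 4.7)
The plan is to exploit the natural isomorphisms furnished by Proposition~\ref{pr:natisos}, namely
\[
\lam^{\fp}\fHom(X,Y)\;\cong\;\fHom(\gam_{\fp}X,Y)\;\cong\;\fHom(X,\lam^{\fp}Y),
\]
which directly tie the local homology of a function object to both the support of $X$ and the cosupport of $Y$.

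Fix $\fp\in\cosupp_{R}\fHom(X,Y)$, so that $\lam^{\fp}\fHom(X,Y)\neq 0$. I would first use the isomorphism $\lam^{\fp}\fHom(X,Y)\cong\fHom(X,\lam^{\fp}Y)$; since $\fHom(-,0)\cong 0$ (being right adjoint to $-\otimes(-)$ in the second variable, and exact), non-vanishing of the left-hand side forces $\lam^{\fp}Y\neq 0$, i.e.\ $\fp\in\cosupp_{R}Y$. Next I would use the isomorphism $\lam^{\fp}\fHom(X,Y)\cong\fHom(\gam_{\fp}X,Y)$; again, $\fHom(0,Y)=0$, so $\gam_{\fp}X\neq 0$, giving $\fp\in\supp_{R}X$.

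This is essentially a one-line argument once Proposition~\ref{pr:natisos} is invoked, and there is no real obstacle: the only thing to keep in mind is that $\fHom(-,-)$ vanishes as soon as either entry is zero, which is immediate from the adjunction~\eqref{eq:adj} defining $\fHom$. Combining the two containments yields the desired inclusion $\cosupp_{R}\fHom(X,Y)\subseteq\supp_{R}X\cap\cosupp_{R}Y$.
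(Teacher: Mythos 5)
Your proof is correct and takes essentially the same route as the paper: invoke the three-way isomorphism of Proposition~\ref{pr:natisos} relating $\lam^{\fp}\fHom(X,Y)$, $\fHom(\gam_{\fp}X,Y)$ and $\fHom(X,\lam^{\fp}Y)$, then read off non-vanishing of $\gam_{\fp}X$ and $\lam^{\fp}Y$ from non-vanishing of $\lam^{\fp}\fHom(X,Y)$ via $\fHom(0,-)=0=\fHom(-,0)$.
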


\begin{proof}
Fix $\fp\in\Spec R$; recall that $\fp$ is in $\cosupp_{R}X$ if and
only if $\fHom(\gam_{\fp}\one,X)\ne 0$. Using \eqref{eq:adj} and
\eqref{eq:loc-tensor}, and Proposition~\ref{pr:natisos}, one gets
isomorphisms
\begin{equation}
\label{eq:cosupp-hom}
\fHom(\gam_\fp\one,\fHom(X,Y))\cong\fHom(\gam_\fp X,Y)\cong
\fHom(X,\lam^{\fp}Y)\,.
\end{equation}
It follows that $\gam_{\fp}X\ne 0$ and $\lam^{\fp}Y\ne 0$ when $\lam^{\fp}\fHom(X,Y)\ne 0$.
\end{proof}

In \cite[Theorem~7.3]{\bik:2009a} we proved that $\supp_R(X\otimes
Y)=\supp_{R}X\cap \supp_{R}Y$ holds if $\sfT$ is stratified. An
analogue for function objects is contained in the next result.

\begin{theorem}
\label{th:cosupp-hom}
The following conditions on $\sfT$ are equivalent:
\begin{enumerate}
\item The tensor triangulated category $\sfT$ is stratified by $R$. 
\item $\cosupp_R\fHom(X,Y)=\supp_R X\cap \cosupp_R Y$ for all $X,Y\in\sfT$.
\item $\fHom(X,Y)=0$ implies $\supp_R X\cap \cosupp_R Y=\varnothing$ for all $X,Y\in\sfT$.
\end{enumerate}
\end{theorem}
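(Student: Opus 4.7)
The plan is to prove (2) $\Rightarrow$ (3) $\Rightarrow$ (1) $\Rightarrow$ (2). The implication (2) $\Rightarrow$ (3) is formal: if $\fHom(X,Y)=0$ then Theorem~\ref{th:cosupp=0} gives $\cosupp_R\fHom(X,Y)=\varnothing$, and (2) forces $\supp_R X\cap\cosupp_R Y=\varnothing$.

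For (1) $\Rightarrow$ (2), Lemma~\ref{le:cosupp-hom} already gives $\subseteq$, so the content is the reverse inclusion. Fixing $\fp\in\supp_R X\cap\cosupp_R Y$, the chain of isomorphisms in~\eqref{eq:cosupp-hom} combined with Proposition~\ref{pr:natisos} and the identity $\gam_\fp X\otimes\gam_\fp\one\cong\gam_\fp X$ (from~\eqref{eq:loc-tensor} and $\gam_\fp^2\cong\gam_\fp$) reduce the goal to verifying that $\fHom(\gam_\fp X,\lam^\fp Y)\neq 0$. The key device is the full subcategory
\[
\sfC\;=\;\{W\in\sfT\mid\fHom(W,\lam^\fp Y)=0\},
\]
which is a tensor ideal localizing subcategory: it is localizing by exactness of $\fHom(-,\lam^\fp Y)$, and tensor ideal because $\fHom(W\otimes Z,U)\cong\fHom(Z,\fHom(W,U))$. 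If $\gam_\fp X$ lay in $\sfC$, then stratification would force $\Loc^\otimes_\sfT(\gam_\fp X)=\gam_\fp\sfT\subseteq\sfC$; in particular $\gam_\fp\lam^\fp Y\in\sfC$, so $\fHom(\gam_\fp\lam^\fp Y,\lam^\fp Y)=0$. But the adjunction $(\gam_\fp,\lam^\fp)$ yields $\Hom^*_\sfT(\gam_\fp\lam^\fp Y,\lam^\fp Y)\cong\End^*_\sfT(\lam^\fp Y)$, which contains the identity and is non-zero since $\lam^\fp Y\neq 0$, a contradiction.

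For (3) $\Rightarrow$ (1), I argue contrapositively. If $\sfT$ is not stratified, there exist $\fp\in\Spec R$ and a nonzero $X\in\gam_\fp\sfT$ such that $\sfS:=\Loc^\otimes_\sfT(X)$ is properly contained in $\gam_\fp\sfT$. As a localizing subcategory, $\sfS$ is generated by the set $\{X\otimes C\mid C\in\sfT^c\}$, so an associated Bousfield localization $L_\sfS$ exists on $\sfT$. Choose $Z\in\gam_\fp\sfT\setminus\sfS$ and set $Y:=L_\sfS Z$: the localization triangle has its first two terms in $\gam_\fp\sfT$, hence $Y\in\gam_\fp\sfT$, and $Y\neq 0$ since $Z\notin\sfS$, so $\supp_R Y=\{\fp\}$ and Theorem~\ref{th:max} yields $\fp\in\cosupp_R Y$. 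Tensor-ideality of $\sfS$ together with $Y\in\sfS^\perp$ forces $\Hom^*_\sfT(W\otimes X,Y)=0$ for every $W\in\sfT$, which via the $\otimes$-$\fHom$ adjunction gives $\fHom(X,Y)=0$; yet $\fp\in\supp_R X\cap\cosupp_R Y$, contradicting (3). I expect the main obstacle to lie in the direction (1) $\Rightarrow$ (2), where one has to pinpoint the tensor ideal subcategory $\sfC$ and employ $\gam_\fp\lam^\fp Y$ as the test object that pits stratification against the non-vanishing of the identity of $\lam^\fp Y$; the other direction reduces to a standard application of Bousfield localization in compactly generated tensor triangulated categories.
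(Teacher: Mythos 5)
Your argument is correct and establishes the theorem, but it organizes both non-trivial directions a little differently from the paper, so a brief comparison is in order.

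For (1) $\Rightarrow$ (2) the paper first uses minimality of $\gam_\fp\sfT$ to conclude $\gam_\fp\one\in\Loc^\otimes_\sfT(\gam_\fp X)$, and then invokes Lemma~\ref{le:loc-coloc} to push $\fHom(-,Y)$ through, landing on $0\neq\fHom(\gam_\fp\one,Y)\in\Coloc^{\fHom}(\fHom(\gam_\fp X,Y))$. You instead introduce the tensor ideal localizing subcategory $\sfC=\Ker\fHom(-,\lam^\fp Y)$ and test against $\gam_\fp\lam^\fp Y$, using the $(\gam_\fp,\lam^\fp)$-adjunction and idempotency to show that $\gam_\fp\lam^\fp Y\in\sfC$ is impossible. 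These are two faces of the same use of minimality; your version avoids Lemma~\ref{le:loc-coloc} at the cost of one extra auxiliary subcategory, and both eventually need the isomorphism chain \eqref{eq:cosupp-hom} plus $\gam_\fp^2\cong\gam_\fp$, $(\lam^\fp)^2\cong\lam^\fp$.

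For (3) $\Rightarrow$ (1) the paper simply quotes the criterion of \cite[Lemma~3.9]{\bik:2008a} (stratification $\Leftrightarrow$ for all nonzero $X,Y\in\gam_\fp\sfT$ there is $Z$ with $\Hom^*(X\otimes Z,Y)\neq 0$), whereupon the deduction is a two-line application of Theorem~\ref{th:max} and the $\otimes$--$\fHom$ adjunction. You argue contrapositively and essentially re-derive the needed direction of that lemma inline: from a proper tensor ideal localizing subcategory $\sfS\subsetneq\gam_\fp\sfT$, build a Bousfield localization $L_\sfS$, take $Y=L_\sfS Z$ for $Z\in\gam_\fp\sfT\setminus\sfS$, and produce a counterexample to (3). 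The logic is sound; the one point you should flag explicitly is the existence of $L_\sfS$, which rests on the standard (but nontrivial) fact that a tensor ideal localizing subcategory generated by a set of objects in a compactly generated tensor triangulated category is the kernel of a localization functor---this is precisely what \cite[Lemma~3.9]{\bik:2008a} packages, and the paper's remark after Theorem~\ref{th:strat-costrat} draws attention to its role. So your route is more self-contained but imports the same technical input; the paper's is shorter by delegating to the cited lemma.
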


\begin{proof}
(1) $\Rightarrow$ (2): One inclusion follows from
  Lemma~\ref{le:cosupp-hom}.  For the other inclusion one uses that
  $\sfT$ is stratified by $R$.  Fix $\fp\in\supp_R X\cap\cosupp_R Y$.
  The minimality of the tensor ideal localizing subcategory
  $\gam_{\fp}\sfT$ implies $\gam_\fp\one\in\Loc^\otimes(\gam_{\fp}X)$,
  since $\gam_{\fp}X\ne 0$. Applying Lemma~\ref{le:loc-coloc}, one obtains
\[ 
0 \ne\fHom(\gam_\fp\one,Y)\in\Coloc^{\fHom}(\fHom(\gam_{\fp}X,Y))
\]
and therefore $\fHom(\gam_{\fp}X, Y)\ne 0$. Using the first isomorphism in
\eqref{eq:cosupp-hom}, it follows that $\fp$ is in the cosupport of
$\fHom(X,Y)$.

(2) $\Rightarrow$ (3): Clearly, $\fHom(X,Y)= 0$ implies
$\cosupp_R\fHom(X,Y)=\varnothing$.

(3) $\Rightarrow$ (1): To prove that the tensor triangulated category
$\sfT$ is stratified by $R$, it suffices to show that given non-zero
objects $X$ and $Y$ in $\gam_{\fp}T$ for some $\fp$ in $\Spec R$,
there exists a $Z$ such that $\Hom^{*}_{\sfT}(X\otimes Z,Y)\ne 0$; see
\cite[Lemma~3.9]{\bik:2008a}.

Since $\supp_{R}Y=\{\fp\}$, it follows from Theorem~\ref{th:max} that
$\fp\in\cosupp_{R}Y$ holds, and hence from our assumption that
$\fHom(X,Y)\ne 0$. In particular, there exists a $Z\in\sfT$ such that
$\Hom_{\sfT}^{*}(Z,\fHom(X,Y))\ne 0$. The adjunction isomorphism
\eqref{eq:adj} then yields $\Hom_{\sfT}^{*}(X\otimes Z,Y)\ne 0$.
\end{proof}

The preceding result has the following immediate consequence.

\begin{corollary}
\label{co:hom-vanishing}
\pushQED{\qed} Suppose the tensor triangulated category $\sfT$ is generated
by its unit. Then $\sfT$ is stratified by $R$ if and only if for all
objects $X$ and $Y$ in $\sfT$ one has
\[
\Hom_\sfT^*(X,Y)=0\quad\iff\quad\supp_RX\cap\cosupp_RY=\varnothing.\qedhere
\]
\end{corollary}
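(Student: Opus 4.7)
The plan is to deduce this corollary directly from Theorem~\ref{th:cosupp-hom}, using the hypothesis that $\one$ generates $\sfT$ as a bridge between external Hom-vanishing in $\sfT$ and internal Hom-vanishing (i.e.\ vanishing of the function object $\fHom$).

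The key auxiliary observation is that when $\one$ generates $\sfT$, for any objects $X,Y\in \sfT$ one has the equivalence
\[
\Hom_{\sfT}^{*}(X,Y)=0 \iff \fHom(X,Y)=0.
\]
Indeed, the adjunction \eqref{eq:adj} gives $\Hom^{*}_{\sfT}(X,Y)\cong \Hom^{*}_{\sfT}(\one,\fHom(X,Y))$, so the reverse implication is trivial, and the forward direction holds because the full subcategory of objects $Z\in\sfT$ with $\Hom^{*}_{\sfT}(\one,Z)=0$ is a localizing subcategory not containing $\one$, hence must be zero by the generation hypothesis.

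For the forward direction of the corollary, assume $\sfT$ is stratified by $R$. Then Theorem~\ref{th:cosupp-hom}(2) gives $\cosupp_{R}\fHom(X,Y)=\supp_{R}X\cap\cosupp_{R}Y$, and Theorem~\ref{th:cosupp=0} says $\fHom(X,Y)=0$ if and only if its cosupport is empty. Combining these with the auxiliary equivalence above yields
\[
\Hom_{\sfT}^{*}(X,Y)=0 \iff \fHom(X,Y)=0 \iff \supp_{R}X\cap\cosupp_{R}Y=\varnothing,
\]
which is the desired biconditional.

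For the converse, assume the stated biconditional holds for all $X,Y\in\sfT$. By Theorem~\ref{th:cosupp-hom} it suffices to verify condition (3) there, namely that $\fHom(X,Y)=0$ implies $\supp_{R}X\cap\cosupp_{R}Y=\varnothing$. But $\fHom(X,Y)=0$ forces $\Hom^{*}_{\sfT}(X,Y)=\Hom^{*}_{\sfT}(\one,\fHom(X,Y))=0$, and then the assumed biconditional gives the required emptiness. No serious obstacle arises; the only nontrivial ingredient is the auxiliary observation, which is where the hypothesis that $\one$ generates $\sfT$ enters.
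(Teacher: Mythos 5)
Your proof is correct and fills in exactly the argument the paper leaves implicit (the corollary is stated with a \qed and called an ``immediate consequence'' of Theorem~\ref{th:cosupp-hom}). Both directions are handled the right way: the forward direction by combining Theorem~\ref{th:cosupp-hom}(2) with Theorem~\ref{th:cosupp=0} and the auxiliary equivalence, and the converse by checking condition~(3) of Theorem~\ref{th:cosupp-hom}.

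One small imprecision in the justification of the auxiliary observation. You assert that the subcategory $\sfS=\{Z\in\sfT\mid\Hom^*_\sfT(\one,Z)=0\}$ is ``a localizing subcategory not containing $\one$, hence must be zero by the generation hypothesis.'' That inference does not go through as stated: a localizing subcategory failing to contain $\one$ need not be zero (e.g.\ $\sfT_{\mcV}$ for a proper specialization closed $\mcV$). The correct argument runs the other way: given $Z$ with $\Hom^*_\sfT(\one,Z)=0$, the left perp $^\perp\{Z\}=\{W\mid\Hom^*_\sfT(W,Z)=0\}$ is a localizing subcategory containing $\one$, hence equals $\Loc_\sfT(\one)=\sfT$; in particular $Z\in{^\perp\{Z\}}$, so $\id_Z=0$ and $Z=0$. (This also avoids any need to invoke compactness of $\one$ at this step.) With that repair, the proof is complete and matches what the paper intends.
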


There is the following connection between stratification and
costratification.

\begin{theorem}
\label{th:strat-costrat}
When the tensor triangulated category $\sfT$ is costratified by $R$,
it is also stratified by $R$, and then there is an equality
\[ 
\cosupp_R\fHom(X,Y)=\supp_R X\cap \cosupp_R Y\quad\text{for all}\quad X,Y\in\sfT.
\]
\end{theorem}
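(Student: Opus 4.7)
The plan is to reduce everything to showing that costratification implies stratification, since once stratification is in hand, the equality $\cosupp_R\fHom(X,Y)=\supp_R X\cap \cosupp_R Y$ is exactly the implication (1)$\Rightarrow$(2) of Theorem~\ref{th:cosupp-hom}. To establish stratification, I will use the criterion already invoked in the proof of (3)$\Rightarrow$(1) of Theorem~\ref{th:cosupp-hom} (ultimately due to \cite[Lemma~3.9]{\bik:2008a}): $\sfT$ is stratified by $R$ if and only if for each $\fp\in\Spec R$ and each pair of non-zero objects $X,Y\in\gam_\fp\sfT$, there exists $Z\in\sfT$ with $\Hom^{*}_{\sfT}(X\otimes Z,Y)\ne 0$.

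Fix $\fp\in\Spec R$ and non-zero objects $X,Y\in\gam_\fp\sfT$. The key construction is the full subcategory
\[
\sfS = \{W\in\lam^\fp\sfT \mid \Hom^{*}_{\sfT}(X\otimes Z,W)=0 \text{ for all }Z\in\sfT\}
\]
of $\lam^\fp\sfT$. First I would check that $\sfS$ is a Hom closed colocalizing subcategory. Closure under set-indexed products, suspensions, and triangles is immediate from the cohomological nature of $\Hom^{*}_{\sfT}(-,-)$. For Hom closedness, if $W\in\sfS$ and $V\in\sfT$, then by Proposition~\ref{pr:natisos} we have $\fHom(V,W)\in\lam^\fp\sfT$, and \eqref{eq:adj} gives $\Hom^{*}_{\sfT}(X\otimes Z,\fHom(V,W))\cong\Hom^{*}_{\sfT}(X\otimes (Z\otimes V),W)=0$.

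Next I would verify that $\sfS\ne\lam^\fp\sfT$: taking $W=\lam^\fp X$ and $Z=\one$, the $(\gam_\fp,\lam^\fp)$-adjunction and $X\in\gam_\fp\sfT$ yield $\Hom^{*}_{\sfT}(X,\lam^\fp X)\cong\Hom^{*}_{\sfT}(\gam_\fp X,X)\cong\Hom^{*}_{\sfT}(X,X)\ne 0$, so $\lam^\fp X\notin\sfS$. By costratification, $\sfS=0$. Since $Y\ne 0$, the Dwyer--Greenlees equivalence (Proposition~\ref{pr:loc-complete}) gives $\lam^\fp Y\ne 0$, hence $\lam^\fp Y\notin\sfS$, so there exists $Z\in\sfT$ with $\Hom^{*}_{\sfT}(X\otimes Z,\lam^\fp Y)\ne 0$. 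Using $\gam_\fp(X\otimes Z)\cong \gam_\fp X\otimes Z\cong X\otimes Z$ (from \eqref{eq:loc-tensor} and $X\in\gam_\fp\sfT$) together with adjunction, this translates into $\Hom^{*}_{\sfT}(X\otimes Z,Y)\ne 0$, which is precisely the stratification criterion.

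The main point where care is needed is the bookkeeping between $\lam^\fp\sfT$ and $\sfT$: one must confirm both that the cut-out subcategory $\sfS$ actually lies in $\lam^\fp\sfT$ and is Hom closed there in the sense of Lemma~\ref{le:Hom closed}, and that the final adjunction correctly identifies $\Hom^{*}_{\sfT}(X\otimes Z,\lam^\fp Y)$ with $\Hom^{*}_{\sfT}(X\otimes Z,Y)$. Both are handled uniformly by Proposition~\ref{pr:natisos} together with \eqref{eq:loc-tensor}, so I do not anticipate any substantive obstacle beyond choosing the correct definition of $\sfS$.
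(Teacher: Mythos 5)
Your proposal is correct and follows the paper's proof nearly verbatim: both construct the Hom closed colocalizing subcategory $\sfS$ cut out by the vanishing of $\Hom^{*}_{\sfT}(X\otimes Z,-)$, use costratification (minimality of $\lam^\fp\sfT$) to deduce $\lam^\fp Y\notin\sfS$, and transfer back via the $(\gam_\fp,\lam^\fp)$-adjunction. The only (minor) variation is the witness for properness of $\sfS$: you use $\lam^\fp X\notin\sfS$ directly via the adjunction $\Hom^{*}_{\sfT}(X,\lam^\fp X)\cong\Hom^{*}_{\sfT}(X,X)\ne 0$, a slight streamlining of the paper's appeal to Proposition~\ref{pr:hom-supp} and Theorem~\ref{th:max} to produce a Koszul object $C(\fp)\in\lam^\fp\sfT\setminus\sfS$.
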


\begin{proof}
It suffices to prove that given non-zero objects $X$ and $Y$ in
$\gam_{\fp}T$ for some $\fp$ in $\Spec R$, there exists a $Z$ such
that $\Hom^{*}_{\sfT}(X\otimes Z,Y)\ne 0$; see
\cite[Lemma~3.9]{\bik:2008a}.

Assume $\sfT$ is costratified by $R$. As $\gam_{\fp}X\ne 0$ there
exist an object $C\in\sfT$ such that $\Hom_{\sfT}^{*}(X,C(\fp))\ne 0$,
by Proposition~\ref{pr:hom-supp}. It is easy to verify using the
adjunction isomorphism~\eqref{eq:adj} that the subcategory
\[
\sfS=\{W\in\sfT\mid \text{$\Hom_{\sfT}^{*}(X\otimes Z,W)=0$ for all
  $Z\in\sfT$}\}
\]
of $\sfT$ is colocalizing and Hom closed. 

Now observe that $\fp\in\cosupp_{R}Y$ holds by Theorem~\ref{th:max},
since $\supp_{R}Y=\{\fp\}$. Thus
$\lam^{\fp}\sfT=\Coloc^{\fHom}(\lam^{\fp}Y)$, by the costratification
hypothesis. This implies $\lam^\fp Y\not\in\sfS$, since
$C(\fp)\in\lam^{\fp}\sfT$ by Theorem~\ref{th:max}, and
$C(\fp)\not\in\sfS$.  Thus one obtains that \[\Hom_{\sfT}^{*}(X\otimes
Z,Y)\cong\Hom_{\sfT}^{*}(\gam_\fp (X\otimes
Z),Y)\cong\Hom_{\sfT}^{*}(X\otimes Z,\lam^{\fp}Y)\ne 0\] for some $Z$
in $\sfT$, and hence that $\sfT$ is stratified.

The formula for $\cosupp_R\fHom(X,Y)$ follows from
Theorem~\ref{th:cosupp-hom}.
\end{proof}

\begin{remark}
It is an open question whether stratification implies
costratification. The proof of Theorem~\ref{th:strat-costrat} uses the
fact that every localizing subcategory generated by a set of objects
arises as the kernel of a localization functor; see the proof of
\cite[Lemma~3.9]{\bik:2008a}. It is not known whether the analogous
statement for colocalizing subcategories is true or not. This reflects
the fact that products are usually more complicated than coproducts.
\end{remark}

The following corollary combines the classification of colocalizing
subcategories, Corollary~\ref{co:coloc-classify}, with the
classification of localizing subcategories in
\cite[Theorem~3.8]{\bik:2008b}.

\begin{corollary}
\label{co:loccoloc}
If the tensor triangulated category $\sfT$ is costratified by $R$ then
the map sending a subcategory $\sfS$ to $\sfS^\perp$ induces a
bijection
\[ 
\left\{\begin{gathered} \text{tensor closed
  localizing}\\ \text{subcategories of $\sfT$}
\end{gathered}\;
\right\} \xymatrix@C=3pc{ \ar[r]^-{\sim} &} \left\{
\begin{gathered}
  \text{Hom closed colocalizing}\\ \text{subcategories of $\sfT$}
\end{gathered}\;
\right\}\,.
\]
The inverse map sends a Hom closed colocalizing subcategory $\sfU$ to $^\perp\sfU$. 
\end{corollary}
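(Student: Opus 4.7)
My plan is to use the two classification results and show that $\sfS\mapsto\sfS^\perp$ corresponds to set-complementation under them. By Theorem~\ref{th:strat-costrat} the costratification hypothesis implies that $\sfT$ is also stratified by $R$, so \cite[Theorem~3.8]{\bik:2008b} gives a bijection between tensor closed localizing subcategories of $\sfT$ and subsets of $\supp_R\sfT$ via $\sfS\mapsto\bigcup_{X\in\sfS}\supp_R X$, while Corollary~\ref{co:coloc-classify} gives the analogous bijection for Hom closed colocalizing subcategories via $\sfU\mapsto\bigcup_{Y\in\sfU}\cosupp_R Y$. Since complementation in $\supp_R\sfT$ is an involution, it is enough to show that each of the maps $\sfS\mapsto\sfS^\perp$ and $\sfU\mapsto{^\perp\sfU}$ corresponds to complementation.

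First I would verify that these maps send the appropriate classes to each other. Any right-perpendicular is closed under products, and if $\sfS$ is tensor closed then for $Y\in\sfS^\perp$ and $X\in\sfT$ one has $\Hom^*_\sfT(X',\fHom(X,Y))\cong\Hom^*_\sfT(X'\otimes X,Y)=0$ for every $X'\in\sfS$ because $X'\otimes X\in\sfS$; hence $\sfS^\perp$ is Hom closed. The dual argument shows ${^\perp\sfU}$ is tensor closed localizing when $\sfU$ is Hom closed colocalizing. The key calculation is that for tensor closed $\sfS$ and $Y\in\sfT$,
\[
Y\in\sfS^\perp \iff \fHom(X,Y)=0 \text{ for all } X\in\sfS,
\]
where the forward direction uses $Z\otimes X\in\sfS$ for every $Z\in\sfT$ together with adjunction, and the reverse is immediate on setting $Z=\one$ in $\Hom^*_\sfT(Z,\fHom(X,Y))\cong\Hom^*_\sfT(Z\otimes X,Y)$. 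Since $\sfT$ is stratified, Theorem~\ref{th:cosupp-hom} identifies the vanishing of $\fHom(X,Y)$ with $\supp_R X\cap\cosupp_R Y=\varnothing$, so
\[
Y\in\sfS^\perp \iff \cosupp_R Y\cap\mcU=\varnothing,\quad \mcU:=\bigcup_{X\in\sfS}\supp_R X.
\]

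To conclude, I would observe that $\cosupp_R Y\subseteq\supp_R\sfT$ for every $Y\in\sfT$: if $\fp\in\cosupp_R Y$ then $\lam^\fp\sfT\ne 0$, and the Dwyer--Greenlees equivalence $\lam^\fp\sfT\simeq\gam_\fp\sfT$ from Proposition~\ref{pr:loc-complete} forces $\gam_\fp\sfT\ne 0$, whence $\fp\in\supp_R\sfT$. Consequently $\sfS^\perp$ is labelled by $\supp_R\sfT\setminus\mcU$ under the cosupport classification, and by the symmetric calculation ${^\perp\sfU}$ is labelled by the complement in $\supp_R\sfT$ of the subset labelling $\sfU$. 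The two maps therefore correspond to the same involution and are mutually inverse. I do not anticipate a real obstacle; the only slightly delicate point is the containment $\cosupp_R Y\subseteq\supp_R\sfT$, without which complements in $\Spec R$ and in $\supp_R\sfT$ might fail to match, and this is handled by Proposition~\ref{pr:loc-complete}.
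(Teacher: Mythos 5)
Your argument is correct and follows the same route as the paper: invoke Theorem~\ref{th:strat-costrat} to get stratification and the identity $\cosupp_R\fHom(X,Y)=\supp_R X\cap\cosupp_R Y$, rewrite $\sfS^\perp$ as $\{Y\mid\fHom(X,Y)=0 \text{ for all }X\in\sfS\}$ using tensor-closedness, and then show under the two classification bijections both $\sfS\mapsto\sfS^\perp$ and $\sfU\mapsto{}^\perp\sfU$ correspond to complementation in $\supp_R\sfT$. The only difference is that you spell out a few points the paper dismisses as ``a routine verification'', notably the adjunction argument for the $\fHom$-reformulation of $\sfS^\perp$ and the containment $\cosupp_R Y\subseteq\supp_R\sfT$ via Proposition~\ref{pr:loc-complete}.
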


\begin{proof}
Assume $\sfT$ is costratified by $R$; it is then stratified by $R$, by
Theorem~\ref{th:strat-costrat}.  In particular, both the tensor closed
localizing subcategories and the Hom closed colocalizing subcategories
of $\sfT$ are in bijection with the subsets of $\supp_{R}\sfT$, via
the maps $\supp_{R}(-)$ and $\cosupp_{R}(-)$, respectively; see
\cite[Theorem~3.8]{\bik:2008a} and
Corollary~\ref{co:coloc-classify}. Now, for any tensor closed
localizing subcategory $\sfS$ of $\sfT$ one has equalities
\begin{align*}
\sfS^{\perp} &=\{Y\in\sfT\mid \text{$\fHom(X,Y)=0$ for all
  $X\in\sfS$}\} \\ &=\{Y\in\sfT\mid
\cosupp_{R}Y\cap\supp_{R}\sfS=\varnothing\}
\\ &=\cosupp_{R}^{-1}(\supp_{R}\sfT\setminus \supp_{R}\sfS)
\intertext{where the first one is a routine verification, and the
  second one is by Theorem~\ref{th:strat-costrat}. In the same vein,
  for any Hom closed localizing subcategory $\sfU$ one has}
          {}^{\perp}\sfU&=\supp_{R}^{-1}(\supp_{R}\sfT\setminus
          \cosupp_{R}\sfU).
\end{align*}
It thus follows that, under the identification above, both maps
$\sfS\mapsto \sfS^{\perp}$ and $\sfU\mapsto {}^{\perp}\sfU$ correspond
to the map on $\supp_{R}\sfT$ sending a subset to its complement, and
are thus inverse to each other.
\end{proof}

\subsection*{Brown--Comenetz duality}
Let $k$ be a commutative ring and suppose that the category $\sfT$ is
$k$-linear. We denote by $D=\Hom_k(-,E)$ the duality for the category
of $k$-modules with respect to a fixed injective cogenerator $E$.

The \emph{Brown--Comenetz dual} $X^*$ of an object $X$ is defined by
the isomorphism
\[
D\Hom_\sfT(\one,-\otimes X)\cong\Hom_\sfT(-,X^*).
\]
Note that there is a natural isomorphism $X^*\cong\fHom(X,\one^*)$.

\begin{proposition}
\label{pr:cosupp-one}
One has $\cosupp_R\one^*=\supp_R\sfT$ and $\cosupp_R X^*\subseteq
\supp_R X$ for any $X\in\sfT$; equality holds if $\sfT$ is
stratified by $R$ as a tensor triangulated category.
\end{proposition}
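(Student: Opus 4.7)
The plan is to deduce both statements from a single computation of $\cosupp_R\one^*$, combined with the general inequality in Lemma~\ref{le:cosupp-hom} and the tensor-product formula of Theorem~\ref{th:cosupp-hom}.

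First I would verify the auxiliary equality $\supp_R\sfT=\supp_R\one$. This is immediate from the tensor identity $\gam_\fp Y\cong Y\otimes\gam_\fp\one$ in~\eqref{eq:loc-tensor}: if $\gam_\fp\one=0$ then $\gam_\fp Y=0$ for every $Y$, so $\fp\notin\supp_R\sfT$; and trivially $\supp_R\one\subseteq\supp_R\sfT$.

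Next I would compute $\cosupp_R\one^*$. By Proposition~\ref{pr:natisos} applied to $X=\one$ and $Y=\one^*$, together with $\one^*\cong\fHom(\one,\one^*)$, one has $\lam^\fp\one^*\cong\fHom(\gam_\fp\one,\one^*)\cong(\gam_\fp\one)^*$. If $\fp\notin\supp_R\sfT=\supp_R\one$ then $\gam_\fp\one=0$, so $\lam^\fp\one^*=0$. For the reverse inclusion, fix $\fp\in\supp_R\one$. By Proposition~\ref{pr:hom-supp} there exists $Y\in\sfT$ with $\Hom_\sfT(\one,Y(\fp))\neq0$. Since $Y(\fp)$ lies in $\gam_\fp\sfT$, we have $\gam_\fp Y(\fp)\cong Y(\fp)$, and unwinding the defining isomorphism of the Brown--Comenetz dual gives
\[
\Hom_\sfT(Y(\fp),(\gam_\fp\one)^*)\cong D\Hom_\sfT(\one,Y(\fp)\otimes\gam_\fp\one)\cong D\Hom_\sfT(\one,Y(\fp))\neq 0,
\]
where the last non-vanishing uses that $E$ is an injective cogenerator of the category of $k$-modules, so that $D$ detects non-zero modules. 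Hence $(\gam_\fp\one)^*\ne0$, i.e.\ $\fp\in\cosupp_R\one^*$. This establishes $\cosupp_R\one^*=\supp_R\sfT$.

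Finally, for the statement about $X^*$, I would use $X^*\cong\fHom(X,\one^*)$. Lemma~\ref{le:cosupp-hom} then gives
\[
\cosupp_R X^*\subseteq\supp_R X\cap\cosupp_R\one^*=\supp_R X\cap\supp_R\sfT=\supp_R X,
\]
which is the desired inclusion. When $\sfT$ is stratified by $R$, Theorem~\ref{th:cosupp-hom} upgrades the first containment to an equality, yielding $\cosupp_R X^*=\supp_R X$. The only genuinely non-formal step is the second paragraph, where one has to produce a non-zero morphism into $(\gam_\fp\one)^*$; this is handled by combining the cohomological criterion of Proposition~\ref{pr:hom-supp} with the faithfulness of $D=\Hom_k(-,E)$.
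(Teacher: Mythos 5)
Your proof is correct and follows essentially the same path as the paper: both hinge on the isomorphism $\lam^\fp\one^*\cong\fHom(\gam_\fp\one,\one^*)\cong(\gam_\fp\one)^*$ and then deduce the inclusion for $X^*$ from $X^*\cong\fHom(X,\one^*)$ via Lemma~\ref{le:cosupp-hom} and Theorem~\ref{th:cosupp-hom}. The only difference is that where the paper simply cites the faithfulness of Brown--Comenetz duality ($X^*=0$ if and only if $X=0$, a consequence of compact generation together with $E$ being an injective cogenerator), you re-derive the needed non-vanishing concretely via Proposition~\ref{pr:hom-supp}; this is a correct but slightly more roundabout way to the same fact.
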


\begin{proof}
The first equality holds because $X^*\cong\fHom(X,\one^*)$ and $X^*=0$
if and only if $X=0$. The inclusion follows from
Lemma~\ref{le:cosupp-hom}, since
\[
\cosupp_R X^*=\cosupp_R \fHom(X,\one^*)\subseteq\supp_R
X\cap\cosupp_R\one^*=\supp_R X.
\]
When $\sfT$ is stratified, Theorem~\ref{th:cosupp-hom} gives equality.
\end{proof}

\begin{remark}
\label{rem:costratification-tt}
Let $\sfT$ be a tensor triangulated category generated by its unit
$\one$, and equipped with a canonical $R$ action. Given
Remark~\ref{rem:local-global}, it is clear that $\sfT$ is
(co)stratified by $R$ as a triangulated category if, and only if, it
is (co)stratified by $R$ as a tensor triangulated category. And to
verify that $\sfT$ is stratified (respectively, costratified) it
suffices to check that each $\gam_{\fp}\sfT$ is a minimal localizing
subcategory (respectively, each $\lam^{\fp}\sfT$ is a minimal
colocalizing subcategory).

The results in this section thus yield interesting information about
all localizing and colocalizing subcategories of $\sfT$.  For
instance, Corollary~\ref{co:coloc-classify} coincides with the
bijection in Remark~\ref{rem:costratification}, while
Theorem~\ref{th:strat-costrat} says that if $\sfT$ is costratified
then it is also stratified.
\end{remark}

\section{Formal dg algebras}
\label{se:formal}
In this section we prove that the derived category of dg (short for
``differential graded'') modules over a formal dg algebra is
costratified by its cohomology algebra, when that algebra is graded
commutative and noetherian.

Let $A$ be a dg algebra and $\sfD(A)$ its derived category of (left)
dg modules. It is a triangulated category, generated by the compact
object $A$; see~\cite{Keller:1994a}. A morphism $A\to B$ of dg
algebras is a \emph{quasi-isomorphism} if the induced map $\hh A\to\hh
B$ is an isomorphism. Then restriction induces an equivalence of
triangulated categories $\sfD(B)\xra{\sim}\sfD(A)$, with quasi-inverse
the functor $B\lotimes_A-$. Dg algebras $A$ and $B$ are called
\emph{quasi-isomorphic} if there is a finite chain of
quasi-isomorphisms linking them.

The multiplication on $A$ induces one on its cohomology $\hh A$. We
say $A$ is \emph{formal} if it is quasi-isomorphic to $\hh A$, viewed
as a dg algebra with zero differential.

\begin{remark}
\label{re:cdga}
Suppose that $A$ is formal and that $\hh A$ is graded commutative. Fix
a chain of quasi-isomorphisms linking $A$ and $\hh A$; it induces an
equivalence of categories $\sfD(A)\simeq \sfD(\hh A)$.

The derived tensor product of dg modules endows $\sfD(\hh A)$ with a
structure of a tensor triangulated category, with unit $\hh A$. Thus,
$\sfD(A)$ also acquires such a structure, via the equivalence
$\sfD(A)\simeq \sfD(\hh A)$; denote $\otimes$ this tensor product on
$\sfD(A)$. The object $A$ is a tensor unit, and one gets an action of
$\hh A$ on $\sfD(A)$, defined by taking for each object $X$ the
composite map
\[
\hh
A\xra{\sim}\End^*_{\sfD(A)}(A)\xra{\ X\otimes-\ }\End^{*}_{\sfD(A)}(X)\,.
\]
We refer to this as the action induced by the given chain of
quasi-isomorphisms linking $A$ and $\hh A$.
\end{remark}

The action of $\hh A$ on $\sfD(A)$ depends on the choice of
quasi-isomorphisms linking $A$ and $\hh A$. One has however the
following independence statement.

\begin{lemma}
\label{le:cdga-fun}
Let $A$ be a formal dg algebra with $\hh A$ graded commutative and
noetherian. For any specialization closed set $\mcV\subseteq\Spec\hh
A$ the functors $\gam_\mcV$, $L_\mcV$, $\lam^\mcV$ and $V^\mcV$ are
independent of a chain of quasi-isomorphisms linking $A$ to $\hh A$.
\end{lemma}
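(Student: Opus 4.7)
The plan is to show that $\sfD(A)_\mcV$, as defined in Section~\ref{se:support}, admits an intrinsic description that does not refer to the chain of quasi-isomorphisms fixed in Remark~\ref{re:cdga}. Once that is established, $L_{\mcV}$ is characterized up to natural isomorphism by its kernel $\sfD(A)_{\mcV}$, the functor $\gam_{\mcV}$ is the fibre of the adjunction morphism $\Id\to L_{\mcV}$, and $\lam^{\mcV}$ and $V^{\mcV}$ arise as their right adjoints by Brown representability; all four are therefore chain-independent.

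First, since $A$ is a compact generator of $\sfD(A)$, one has $\sfD(A)^{c}=\Thick_{\sfD(A)}(A)$. For a fixed $X\in\sfD(A)$, the class of compact $C$ satisfying $\Hom^{*}_{\sfD(A)}(C,X)_{\fp}=0$ is a thick subcategory of $\sfD(A)^{c}$ (using exactness of $(-)_{\fp}$ and the long exact sequence), so the defining condition on $\sfD(A)_{\mcV}$ reduces to the single requirement that $\Hom^{*}_{\sfD(A)}(A,X)_{\fp} = (\hh X)_{\fp}=0$ for every $\fp\notin\mcV$.

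Second, the $\hh A$-module structure on $\hh X = \Hom^{*}_{\sfD(A)}(A,X)$ used to form this localization coincides, up to sign, with the intrinsic right $\End^{*}_{\sfD(A)}(A)\cong\hh A$-module structure by precomposition. Indeed, since $A$ is the tensor unit, the map $\phi_{A}\col\hh A\to\End^{*}_{\sfD(A)}(A)$ from Remark~\ref{re:cdga} is forced to be the canonical (hence chain-independent) isomorphism, as $\phi_{A}(x)=A\otimes x = x$. The central-action compatibility
\[
\phi_{X}(x)\circ \alpha \;=\; (-1)^{|x||\alpha|}\, \alpha \circ \phi_{A}(x)
\]
for $\alpha\col A\to \Si^{n}X$ and $x\in\hh A$ then identifies the two module structures. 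Consequently $\sfD(A)_{\mcV}$ is intrinsic, and the lemma follows.

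The key subtlety is the second step: even though the tensor product on $\sfD(A)$ depends on the chain of quasi-isomorphisms, its restriction to the unit $A$ is rigidly determined, and centrality then forces the action on each $\hh X$ to be the tautological one by precomposition. All remaining issues are formal consequences of adjointness, together with the fact (used implicitly) that localization and colocalization functors on $\sfD(A)$ are determined up to natural isomorphism by their kernels, via Lemma~\ref{le:loc-basic}.
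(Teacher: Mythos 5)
Your argument is correct and follows essentially the same route as the paper: observe that $\phi_A\col\hh A\to\End^*_{\sfD(A)}(A)$ is the canonical isomorphism (chain-independent), and then reduce the determination of $\gam_\mcV$, $L_\mcV$, $\lam^\mcV$, $V^\mcV$ to the action of $\hh A$ on the compact generator $A$. Where the paper simply cites \cite[Corollary~3.3]{Benson/Iyengar/Krause:2008b} for this last reduction, you re-derive it inline via the thick-subcategory argument and the centrality identification of the pre- and post-composition module structures on $\Hom^*_{\sfD(A)}(A,X)$; this is the same content, just spelled out rather than cited.
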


\begin{proof}
For any chain of quasi-isomorphisms, it is clear from the construction
that the homomorphism $\hh A\to \End^*_{\sfD(A)}(A)$ is the canonical
one, and hence it is independent of the action. It thus remains to
note that the local cohomology functors on a compactly generated
$R$-linear triangulated category are determined by the action of $R$
on a compact generator, by \cite[Corollary~3.3]{\bik:2008b}.
\end{proof}

In the case when $A$ is a ring, which may be viewed as a dg algebra
concentrated in degree zero, the result below is contained in recent
work of Neeman~\cite{Neeman:coloc}. We note that the cosupport, and
hence the costratification, are independent of a choice of a canonical
action, by Lemma~\ref{le:cdga-fun}.

\begin{theorem}
\label{th:cdga}
Let $A$ be a formal dg algebra such that $\hh A$ is graded commutative
and noetherian.  The category $\sfD(A)$ is costratified by any $\hh
A$-action induced by a chain of quasi-isomorphisms linking $A$ and
$\hh A$.
\end{theorem}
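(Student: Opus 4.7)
By Lemma~\ref{le:cdga-fun}, the functors $\gam_\fp$ and $\lam^\fp$ on $\sfD(A)$ are invariant under the chain of quasi-isomorphisms linking $A$ to $\hh A$, so one may replace $A$ by $\hh A$ and assume henceforth that $A$ is a graded-commutative noetherian ring viewed as a dg algebra with zero differential. Since $\sfD(A)$ is then a tensor triangulated category with canonical $A$-action whose unit $A$ generates it as a triangulated category, Lemma~\ref{le:Hom-closed} ensures that every colocalizing subcategory is Hom closed, and Theorem~\ref{th:tensor-locglob} supplies the local-global principle. Proposition~\ref{pr:lg=reduction} therefore reduces costratification to showing that, for each $\fp \in \Spec A$, the colocalizing subcategory $\lam^\fp \sfD(A)$ contains no proper non-zero colocalizing subcategory.

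For this local statement, I would invoke the already-known fact that $\sfD(A)$ is stratified by $A$ as a tensor triangulated category (the graded-commutative analogue of Neeman's classification, established in~\cite{\bik:2008b}): for each $\fp$, the subcategory $\gam_\fp\sfD(A)$ is minimal among tensor ideal localizing subcategories of $\sfD(A)$. Proposition~\ref{pr:perf-cogen} identifies a perfect cogenerator of $\lam^\fp\sfD(A)$, namely $T_{\kos A\fp}(I(\fp))$, which by the defining isomorphism~\eqref{eq:TI} is a Matlis--Brown--Comenetz dual of $\kos A\fp$. Given a non-zero $X \in \lam^\fp\sfD(A)$, the Dwyer--Greenlees equivalence (Proposition~\ref{pr:loc-complete}) produces a non-zero $\gam_\fp X \in \gam_\fp\sfD(A)$, so stratification yields $\gam_\fp\one \in \Loc^\otimes_{\sfD(A)}(\gam_\fp X)$. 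Applying the exact functor $\fHom(-,I(\fp))$, and combining Lemma~\ref{le:loc-coloc}, the tensor--Hom adjunction~\eqref{eq:adj}, and the identification $X \cong \fHom(\gam_\fp\one, \gam_\fp X)$ from Proposition~\ref{pr:natisos}, this containment should transport to $T_{\kos A\fp}(I(\fp)) \in \Coloc^{\fHom}_{\sfD(A)}(X) = \Coloc_{\sfD(A)}(X)$; Proposition~\ref{pr:K} then forces $\Coloc_{\sfD(A)}(X) = \lam^\fp\sfD(A)$, as required.

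The main obstacle is precisely this last Matlis--Brown--Comenetz transfer: converting a tensor-closed generation statement on $\gam_\fp\sfD(A)$ into a Hom-closed cogeneration statement on $\lam^\fp\sfD(A)$ concerning $X$ itself, rather than $\gam_\fp X$ or some other twist. The bookkeeping hinges on the identifications in Proposition~\ref{pr:natisos} to rewrite $X$ in terms of $\gam_\fp X$, and on verifying that applying $\fHom(-, I(\fp))$ genuinely converts tensor-ideal localizing closures into Hom-closed colocalizing closures (via Lemma~\ref{le:loc-coloc} and the compactness and dualizability of $\kos A\fp$). Once that identification is established, the conclusion follows formally from the perfect cogeneration supplied by Proposition~\ref{pr:perf-cogen} and Proposition~\ref{pr:K}.
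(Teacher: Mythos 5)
Your reduction steps mirror the paper exactly: replacing $A$ by $\hh A$, invoking Lemma~\ref{le:Hom-closed} to identify colocalizing subcategories with Hom closed ones, and reducing via Remark~\ref{rem:costratification-tt} to showing each $\lam^\fp\sfD(A)$ is a minimal colocalizing subcategory. The appeal to stratification of $\sfD(A)$ by $\hh A$ is also in the paper. What differs is the final, crucial step, and there you have a genuine gap.

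You flag it yourself: the ``Matlis--Brown--Comenetz transfer'' is not carried out, and as sketched it cannot close. The formal machinery (Lemma~\ref{le:loc-coloc}, adjunction~\eqref{eq:adj}, $X\cong\fHom(\gam_\fp A,\gam_\fp X)$) runs the wrong way. Starting from $\gam_\fp A\in\Loc(\gam_\fp X)$, Lemma~\ref{le:loc-coloc} yields $\fHom(\gam_\fp A,Y)\in\Coloc(\fHom(\gam_\fp X,Y))$ for each fixed $Y$; with $Y=I(\fp)$ this produces $I(\fp)\in\Coloc(\fHom(X,I(\fp)))$, not $T_{\kos A\fp}(I(\fp))\in\Coloc(X)$. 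The variable slot is the first argument, and fixing $Y=\gam_\fp X$ gives $X\in\Coloc(\fHom(\gam_\fp X,\gam_\fp X))$, which is again unhelpful. There is no formal route from a tensor-ideal generation fact about $\gam_\fp X$ to $T_{\kos A\fp}(I(\fp))\in\Coloc(X)$, because $T_{\kos A\fp}(I(\fp))$ is never exhibited as a function object out of something that lies in $\Loc(\gam_\fp X)$ against a fixed test object $Y$ with the correct identity $\fHom(\gam_\fp X,Y)\cong X$.

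The missing idea is the graded residue field $k(\fp)$. The paper's proof fixes the second slot to be $M\in\lam^\fp\sfD$ itself and varies the first: from $\Loc(\gam_\fp A)=\gam_\fp\sfD=\Loc(k(\fp))$ (stratification) and $M\cong\fHom(\gam_\fp A,M)$ one gets $\Coloc(M)=\Coloc(\RHom_A(k(\fp),M))$ by Lemma~\ref{le:loc-coloc} applied in both directions. The decisive, non-formal input is then that $\RHom_A(k(\fp),M)$ carries an action of $A$ factoring through the graded field $k(\fp)$, hence is a (nonzero) direct sum of shifts of $k(\fp)$, so $\Coloc(\RHom_A(k(\fp),M))=\Coloc(k(\fp))$. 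That ring-theoretic step is what pins $\Coloc(M)$ to a single independent cogenerator $k(\fp)$, uniformly in $M$. Your Brown--Comenetz cogenerator $T_{\kos A\fp}(I(\fp))$ has no analogous decomposability, so the perfect cogeneration supplied by Proposition~\ref{pr:perf-cogen} never gets off the ground in your setup. In short: the residue field, absent from your argument, is the pivot that makes the proof go, and without it (or a substitute insight) the argument does not close.
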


\begin{proof}
We may replace $A$ by $\hh A$ and assume $d^{A}=0$ and $A$ is graded
commutative. Set $\sfD=\sfD(A)$. Since $A$ is a unit and a generator
of this tensor triangulated category, its colocalizing subcategories
are $\RHom$ closed; see Lemma~\ref{le:Hom-closed}.  It remains to
verify that $\lam^{\fp}\sfD$ is a minimal colocalizing subcategory for
each homogeneous prime ideal $\fp$ in $A$; see
Remark~\ref{rem:costratification-tt}.

Let $k(\fp)$ be the graded residue field at $\fp$. The object $k(\fp)$
is in $\lam^\fp\sfD$, so to verify the minimality of $\lam^{\fp}\sfD$
it suffices to note that for each non-zero $M$ in $\lam^\fp\sfD$, the
following equalities hold:
\begin{align*}
\Coloc_{\sfD}(M) &= \Coloc_{\sfD}(\RHom_A(\gam_\fp A,M))\\ &
=\Coloc_{\sfD}(\RHom_A(k(\fp),M))\\ & =\Coloc_{\sfD}(k(\fp))
\end{align*}
The first equality holds by Proposition~\ref{pr:natisos} since $M\cong
\lam^{\fp}M$. As to the second one, since $A$ is stratified by $\hh
A$, by \cite[Theorem~8.1]{\bik:2009a}, one has
\[
\Loc_\sfD(\gam_\fp A)=\gam_\fp\sfD=\Loc_\sfD(k(\fp))\,.
\] 
Now apply Lemma~\ref{le:loc-coloc}. The last equality follows from the
fact that $k(\fp)$ is a graded field and the action of $A$ on
$\RHom_A(k(\fp),M)$ factors through $k(\fp)$.
\end{proof}

\subsection*{Exterior algebras}
Let $\Lambda$ be a graded exterior algebra over a field $k$ on
indeterminates $\xi_1,\dots,\xi_c$ in negative odd degree, regarded as
a dg algebra with zero differential. We give $\Lambda$ a structure of
Hopf algebra via $\Delta(\xi_i)=\xi_i\otimes 1 + 1 \otimes \xi_i$.  In
\cite[\S4]{\bik:2008b}, we introduced the homotopy category of
graded-injective dg modules over $\Lambda$, denoted $\KInj{\Lambda}$,
and proved the following statements: $\KInj{\Lambda}$ is a compactly
generated tensor triangulated category, in the sense of
Section~\ref{se:tens}; its unit is an injective resolution of the
trivial module $k$, and this generates $\KInj{\Lambda}$; the graded
endomorphism algebra of the unit is $\Ext^*_\Lambda(k,k)$, which is
isomorphic to the graded polynomial $k$-algebra $S=k[x_1,\dots,x_c]$
with $|x_i|=-|\xi_i|+1$.

\begin{theorem}
\label{th:ext-costrat}
The category $\KInj{\Lambda}$ is costratified by the action of
$\Ext^*_\Lambda(k,k)$.
\end{theorem}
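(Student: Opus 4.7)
The plan is to adapt the proof of Theorem~\ref{th:cdga} to $\KInj{\Lambda}$, leveraging the fact (established in \cite{\bik:2008b}) that this category is stratified by $S=\Ext^*_\Lambda(k,k)$, together with an appropriate residue-field object at each prime.

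Since the tensor unit $\one$ generates $\KInj{\Lambda}$, Lemma~\ref{le:Hom-closed} implies that every colocalizing subcategory is automatically Hom closed, and Theorem~\ref{th:tensor-locglob} yields the local-global principle for colocalizing subcategories. By the definition of costratification for tensor triangulated categories, it thus suffices to show that for every $\fp\in\Spec S$ the colocalizing subcategory $\lam^\fp\KInj{\Lambda}$ contains no proper non-zero colocalizing subcategory.

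For this minimality, fix a non-zero $M\in\lam^\fp\KInj{\Lambda}$. By Proposition~\ref{pr:natisos} one has $M\cong\fHom(\gam_\fp\one,M)$, so the task reduces to identifying $\Coloc(\fHom(\gam_\fp\one,M))$ with something independent of $M$. Because $\KInj{\Lambda}$ is stratified by $S$, the localizing subcategory $\gam_\fp\KInj{\Lambda}$ is minimal and is generated by any non-zero object it contains. Extracting from \cite{\bik:2008b} a residue-field object $K_\fp\in\gam_\fp\KInj{\Lambda}$ whose graded endomorphism ring is intimately related to the graded residue field $k(\fp)=S_\fp/\fp S_\fp$, one concludes $\Loc(\gam_\fp\one)=\Loc(K_\fp)$; hence, by Lemma~\ref{le:loc-coloc},
\[
\Coloc(M)=\Coloc(\fHom(\gam_\fp\one,M))=\Coloc(\fHom(K_\fp,M)).
\]
The same graded-field argument used at the end of the proof of Theorem~\ref{th:cdga} then delivers $\Coloc(\fHom(K_\fp,M))=\Coloc(K_\fp)$, which is independent of the non-zero $M$.

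The main obstacle is pinning down the residue-field object $K_\fp\in\KInj{\Lambda}$ and verifying that the $S$-action on $\fHom(K_\fp,M)$ factors through $k(\fp)$, so that the graded-field decomposition argument of Theorem~\ref{th:cdga} transports verbatim. A cleaner alternative, to be pursued if the direct argument proves awkward, would be to establish via Koszul duality an $S$-linear tensor equivalence (or a suitably faithful $S$-linear adjunction) between $\KInj{\Lambda}$ and $\sfD(S)$ and then transfer costratification from $\sfD(S)$---which is costratified by Theorem~\ref{th:cdga} applied to the polynomial ring $S$---using the change-of-categories machinery of Section~\ref{se:change of categories}.
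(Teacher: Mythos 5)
Your ``cleaner alternative'' in the final paragraph \emph{is} the paper's proof: Theorem~6.2 of \cite{\bik:2008b} provides an equivalence $\Hom_\Lambda(J,-)\col\KInj{\Lambda}\xra{\sim}\sfD(S)$ induced by a dg $\Lambda\otimes_k S$-module $J$, and since $\sfD(S)$ is costratified by $S=\Ext^*_\Lambda(k,k)$ by Theorem~\ref{th:cdga}, the conclusion follows immediately (no change-of-categories machinery from Section~\ref{se:change of categories} is needed; an equivalence compatible with the ring action transports costratification directly). You should have led with this rather than relegating it to a fallback.

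Your primary approach, a direct adaptation of the proof of Theorem~\ref{th:cdga} inside $\KInj{\Lambda}$, has the gap you yourself flag: you need a residue-field object $K_\fp\in\gam_\fp\KInj{\Lambda}$ together with the fact that the $S$-action on $\fHom(K_\fp,M)$ factors through the graded field $k(\fp)$, so that the graded-vector-space decomposition argument applies. The paper does not construct such an object intrinsically in $\KInj{\Lambda}$; the natural candidate is the preimage of $k(\fp)$ under the BGG equivalence, but once you invoke that equivalence you are already in $\sfD(S)$ and the direct route collapses into the indirect one. So the first approach is not wrong in spirit, but as written it is incomplete, and closing the gap essentially forces you onto the path you describe second. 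The lesson: when you find yourself needing to ``extract'' an object from an equivalence, it is usually cleaner to transfer the whole problem across that equivalence at the outset.
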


\begin{proof}
It is proved in \cite[Theorem~6.2]{\bik:2008b} that a suitable dg
module $J$ over $\Lambda \otimes_k S$ yields an equivalence
$\Hom_\Lambda(J,-)\col\KInj{\Lambda}\xra{\sim} D(S)$. The theorem now
follows from Theorem~\ref{th:cdga}.
\end{proof}

\section{Finite groups}
\label{se:finitegroups}
Throughout this section, $G$ will be a finite group and $k$ a field
whose characteristic divides the order of $G$.  The associated group
algebra is denoted $kG$, and $H^*(G,k)$ denotes its cohomology
$k$-algebra, $\Ext^{*}_{kG}(k,k)$. This algebra is graded commutative,
because $kG$ is a Hopf algebra, and finitely generated and hence
noetherian, by a result of Evens and Venkov
\cite{Evens:1961a,Venkov:1959a}; see also Golod \cite{Golod:1959a}.

Let $\KInj{kG}$ be the homotopy category of complexes of injective
$kG$-modules. This is a compactly generated tensor triangulated
category, in the sense of Section~\ref{se:tens}, where the tensor
product $X\otimes _k Y$ and the function object $\Hom_k(X,Y)$ are
induced by those on $kG$-modules via the diagonal action of $G$. The
injective resolution $\sfi k$ of the trivial module $k$ is the
identity for the tensor product, and it yields a canonical action of
$H^*(G,k)$ on $\KInj{kG}$; see \cite{Benson/Krause:2008a} for
details. To simplify notation, we write $\mcV_G$ for $\Spec H^*(G,k)$,
and set for each object $X$ in $\KInj{kG}$
\[
\supp_G X =\supp_{H^{*}(G,k)}X\quad\text{and}\quad \cosupp_G X
=\cosupp_{H^{*}(G,k)}X.
\]

\begin{example}
\label{ex:cosuppkg}
We write $\fm$ for the maximal ideal $H^{\ges 1}(G,k)$.
\begin{enumerate}
\item
One has $\cosupp_{G}X = \{\fm\}$ for any non-zero compact object $X\in\KInj{kG}$.
\item
For each $\fp\in\mcV_{G}$ the object $T_{\sfi k}(I(\fp))$, from \eqref{eq:TI}, satisfies
\[
\supp_{G}T_{\sfi k}(I(\fp)) =\{\fp\} \quad\text{and}\quad \cosupp_G
T_{\sfi k}(I(\fp))=\{\fq\in\mcV_{G} \mid\fq\subseteq\fp\}\,.
\]
\end{enumerate}

Indeed, for (1) observe that the natural morphism $X\to X^{**}$ is an
isomorphism for each compact object $X$ by
\cite[Lemma~11.5]{\bik:2008b}, where $(-)^{*}$ denotes Brown--Comenetz
duality.  Proposition~\ref{pr:cosupp-one} therefore gives the first
inclusion below
\[
\cosupp_{G} X \subseteq \supp_{G}X^{*} \subseteq \{\fm\}\,.
\]
For the second inclusion  one uses that the $H^*(G,k)$-module
$\Hom^*_{\sfK(kG)}(C,X^*)$ is $\fm$-torsion for each compact
$C\in\KInj{kG}$, since it is of the form $\Hom_k(M,k)$ for some
finitely generated $H^*(G,k)$-module $M$ by the defining isomorphism
of the Brown--Comenetz dual $X^*$. It remains to observe that
$\cosupp_{G} X \neq\varnothing$ since $X\neq 0$.

To prove (2), note that $\Hom^*_{\sfK(kG)}(C,T_{\sfi k}(I(\fp)))$ is
$\fp$-torsion and $\fp$-local for each compact object $C\in\KInj{kG}$, by the
isomorphism \eqref{eq:TI} defining $T_{\sfi k}(I(\fp))$.  Thus
$\supp_{G}T_{\sfi k}(I(\fp)) =\{\fp\}$, while the cosupport is given
by Proposition~\ref{pr:perf-cogen}.
 \end{example}

\subsection*{Restriction and induction}
For each subgroup $H$ of $G$ restriction and induction yield exact
functors
\begin{align*}
(-)\da_{H}&=\Hom_{kG}(kG,-)\col \KInj{kG}\to \KInj{kH}\quad\text{and}\quad
  \\ (-)\ua^{G} &= -\otimes_{kH}kG\col \KInj{kH}\to \KInj{kH}\,.
\end{align*}
Restriction yields also a homomorphism $\res_{G,H}\col H^*(G,k) \to
H^*(H,k)$ of graded rings, and hence a map:
\[ 
\res_{G,H}^*\col \mcV_H \to \mcV_G\,.
\] 
It is easy to verify that these functors fit in the framework of
Section~\ref{se:change of categories}:

\begin{lemma}
\label{le:resin-change}
The functor $(-)\da_{H}$ is $\res_{G,H}$-linear and induces an exact
functor
\[
((-)\da_{H},\res_{G,H})\col (\KInj{kG},H^{*}(G,k))\to
(\KInj{kH},H^{*}(H,k))
\]
which preserves coproducts and products. Its right adjoint and left
adjoint is $(-)\ua^{G}$, and the latter is faithful on objects. \qed
\end{lemma}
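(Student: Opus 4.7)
\medskip

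\noindent\emph{Proposal.} The plan is to establish the two adjunctions first and deduce from them the preservation of coproducts and products, then verify $\res_{G,H}$-linearity directly from the definition of the canonical action, and finally prove faithfulness of induction by exhibiting each $kH$-complex as a summand of its induced-then-restricted version.

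The key input is that $kG$ is finitely generated and free as a left (and as a right) $kH$-module, since $H$ is a subgroup of $G$. Consequently $kG$ is a finitely generated projective bimodule on each side, so on complexes of injective modules the functors $-\otimes_{kH}kG$ and $\Hom_{kH}(kG,-)$ are canonically isomorphic, which gives both flavors of Frobenius reciprocity: for $M\in\KInj{kH}$ and $N\in\KInj{kG}$,
\[
\Hom_{\KInj{kG}}(M\ua^G,N)\cong \Hom_{\KInj{kH}}(M,N\da_H)
\quad\text{and}\quad
\Hom_{\KInj{kG}}(N,M\ua^G)\cong \Hom_{\KInj{kH}}(N\da_H,M)\,.
\]
Thus $(-)\ua^G$ is simultaneously a left and a right adjoint of $(-)\da_H$. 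Being both a left and right adjoint, $(-)\da_H$ preserves arbitrary coproducts and products.

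For $\res_{G,H}$-linearity, recall that the canonical action of $H^{*}(G,k)=\End^*_{\KInj{kG}}(\sfi k)$ on $X\in\KInj{kG}$ is defined by $\sfi k\otimes_{k}-$ composed with the unit isomorphism, and similarly for the $H^{*}(H,k)$-action on $X\da_H$ using an injective resolution of $k$ over $kH$. Since restriction is symmetric monoidal (the diagonal $G$-action on $X\otimes_k Y$ restricts to the diagonal $H$-action) and $(\sfi k)\da_H$ is an injective resolution of $k$ as $kH$-module, the cohomology class $\zeta\in H^{*}(G,k)$ acts on $X\da_H$ through its restriction $\res_{G,H}(\zeta)\in H^{*}(H,k)$; this is precisely the required commutativity of the diagram defining an exact functor of pairs.

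Finally, for faithfulness of $(-)\ua^G$: by the Mackey decomposition, or more concretely by writing $kG=\bigoplus_{g} kH\cdot g$ as a right $kH$-module on a set of coset representatives for $H\backslash G$, one obtains a natural $kH$-linear isomorphism
\[
(M\ua^G)\da_H \;\cong\; \bigoplus_{g\in H\backslash G}\,{}^gM
\]
in $\KInj{kH}$, where ${}^gM$ denotes $M$ with the $kH$-action twisted by conjugation by $g$. The summand corresponding to the trivial coset recovers $M$ itself, so $M\ua^G=0$ forces $M=0$. The main (minor) obstacle is just bookkeeping: verifying that the standard bimodule identifications used for ordinary modules pass through the homotopy category of injective complexes, which is immediate because $kG$ is a finitely generated projective $kH$-module on both sides and hence these functors are exact and preserve the subcategory of graded-injectives.
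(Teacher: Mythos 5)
Your proof is essentially correct, and it fills in details the paper declines to give: the paper simply remarks that ``it is easy to verify that these functors fit in the framework of Section~\ref{se:change of categories}'' and marks the lemma with \verb|\qed|, so there is no written argument in the paper to compare against. Your route — establish the two-sided adjunction, deduce preservation of (co)products, check linearity via monoidality of restriction, and prove faithfulness via a coset decomposition — is the standard one and the one the authors implicitly have in mind.

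One justification, however, is imprecise. You write that because $kG$ is a finitely generated projective $kH$-bimodule on each side, the functors $-\otimes_{kH}kG$ and $\Hom_{kH}(kG,-)$ are canonically isomorphic. Finite generation and projectivity alone do not yield this: for a ring map $B\to A$ with $A$ finitely generated projective over $B$, one only gets $A\otimes_B M \cong \Hom_B(\Hom_B(A,B),M)$, and this is $\Hom_B(A,-)$ precisely when $A\cong\Hom_B(A,B)$ as $(A,B)$-bimodules, i.e.\ when $B\subseteq A$ is a Frobenius extension. (A simple counterexample: take $B=k$ a field and $A$ a finite-dimensional non--self-injective $k$-algebra; then $A\otimes_k M$ and $\Hom_k(A,M)$ are not isomorphic as $A$-modules.) The conclusion is nonetheless correct, because $kG$ is a symmetric algebra and $kH\subseteq kG$ is a Frobenius extension — the bimodule isomorphism $kG\xra{\sim}\Hom_{kH}(kG,kH)$ is the one induced by the form $(x,y)\mapsto \pi_H(xy)$, where $\pi_H$ projects onto the $kH$-component — and that is the input you should cite. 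With that correction your argument is complete; the remaining steps (linearity via monoidality of restriction and the fact that $(\sfi k)\da_H$ remains an injective resolution of $k$ over $kH$, faithfulness via the summand $M$ of $(M\ua^G)\da_H$ corresponding to the trivial coset) are sound.
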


The following result is an analogue of \cite[Lemma~9.3]{\bik:2008b}.

\begin{lemma}
\label{le:homol-res-ind}
Let $H$ be a subgroup of $G$. Fix $\fp\in\mcV_G$ and set
$\mcU=(\res_{G,H}^*)^{-1}\{\fp\}$. The set $\mcU$ is finite and
discrete, and for any $X\in\KInj{kG}$ and any $Y\in\KInj{kH}$ there
are natural isomorphisms:
\begin{gather*} 
(\gam_\fp X)\da_H\cong\coprod_{\fq\in\mcU}\gam_\fq(X\da_H) \qquad
  \gam_\fp(Y\ua^G)\cong\coprod_{\fq\in\mcU}(\gam_\fq Y)\ua^G
  \\ (\lam^\fp X)\da_H\cong\prod_{\fq\in\mcU}\lam^\fq(X\da_H) \qquad
  \lam^\fp(Y\ua^G)\cong\prod_{\fq\in\mcU}(\lam^\fq Y)\ua^G
\end{gather*}
\end{lemma}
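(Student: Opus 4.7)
The plan is to deduce the statement from Corollary~\ref{cor:basechange-fibre-gen}; the only real work lies in verifying that $\mcU$ is finite and discrete.

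First I would invoke the Evens--Venkov theorem to conclude that $H^{*}(H,k)$ is finitely generated as a module over $H^{*}(G,k)$ via $\res_{G,H}$. In particular $H^{*}(H,k)$ is integral over the image $\res_{G,H}(H^{*}(G,k))$. The fibre $\mcU$ is in natural bijection with the homogeneous spectrum of $H^{*}(H,k)\otimes_{H^{*}(G,k)} k(\fp)$, where $k(\fp)$ denotes the graded residue field at $\fp$; since this tensor product is finite-dimensional over $k(\fp)$, it is graded artinian and hence has only finitely many homogeneous primes, so $\mcU$ is finite. Discreteness---incomparability of the primes in $\mcU$---is then the standard incomparability theorem for integral extensions, in its homogeneous variant, applied to the finite extension $\res_{G,H}(H^{*}(G,k)) \subseteq H^{*}(H,k)$.

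Once the fibre is known to be finite and discrete, Lemma~\ref{le:resin-change} provides precisely the hypotheses required by Corollary~\ref{cor:basechange-fibre-gen}: the pair $((-)\da_{H}, \res_{G,H})$ is an exact functor of compactly generated linear triangulated categories that preserves set-indexed coproducts and products, and $(-)\ua^{G}$ is simultaneously a left and a right adjoint of $(-)\da_{H}$. Substituting $F = (-)\da_{H}$, $E = G = (-)\ua^{G}$, and $\alpha = \res_{G,H}$ into Corollary~\ref{cor:basechange-fibre-gen} yields directly the four natural isomorphisms in the statement, with the coproduct formulas coming from part~(1) applied to $F$ and to its left adjoint $E$, and the product formulas from part~(2) applied to $F$ and to its right adjoint $G$.

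The main obstacle is the finite-discrete fibre claim, for which the combination of Evens--Venkov with incomparability in integral extensions is the natural tool. Beyond that, the proof is a formal consequence of the change-of-rings-and-categories machinery of Section~\ref{se:change of categories}; no further group-theoretic input is required. One point worth double-checking is that the $R$-linearity of $(-)\da_{H}$ (already recorded in Lemma~\ref{le:resin-change}) automatically promotes to $R$-linearity of the adjunction isomorphisms used inside Corollary~\ref{cor:basechange-fibre-gen}, but this is guaranteed by Lemma~\ref{lem:linearity}.
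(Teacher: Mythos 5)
Your proof is correct and matches the paper's approach exactly: both deduce finiteness and discreteness of the fibre $\mcU$ from the module-finiteness of $H^{*}(H,k)$ over $H^{*}(G,k)$, and then apply Corollary~\ref{cor:basechange-fibre-gen} using the hypotheses supplied by Lemma~\ref{le:resin-change}. The paper records these two points without elaboration; your appeal to the fibre-ring description and incomparability for integral extensions simply makes explicit the standard commutative-algebra facts the paper leaves implicit.
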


\begin{proof}
The set $\mcU$ is finite and discrete as $H^{*}(H,k)$ is finitely
generated as a module over $H^{*}(G,k)$.  The result now follows from
Corollary~\ref{cor:basechange-fibre-gen}, given
Lemma~\ref{le:resin-change}.
\end{proof}

The next result is an analogue of \cite[Proposition~9.4]{\bik:2008b}.

\begin{proposition}
\label{pr:V-ind}
Let $H$ be a subgroup of $G$. The following statements hold:
\begin{enumerate}
\item
For any object $Y$ in $\KInj{kH}$, there is an equality
\[
\cosupp_G(Y\ua^G)=\res_{G,H}^*(\cosupp_HY)\,.
\]
\item
Any object $X$ in $\KInj{kG}$ satisfies $X\da_H\ua^G\in\Coloc^{\fHom}
(X)$, and hence
\[
\cosupp_G(X\da_H\ua^G)\subseteq\cosupp_GX\,.
\]
\end{enumerate}
\end{proposition}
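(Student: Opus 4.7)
The plan is straightforward once the right tools are assembled.

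For part (1), I invoke Lemma~\ref{le:homol-res-ind}, which gives a natural isomorphism
\[
\lam^\fp(Y\ua^G)\cong\prod_{\fq\in\mcU}(\lam^\fq Y)\ua^G,\qquad \mcU=(\res_{G,H}^*)^{-1}\{\fp\}.
\]
A product is zero exactly when every factor is, and induction is faithful on objects by Lemma~\ref{le:resin-change}. Hence $\fp\in\cosupp_G(Y\ua^G)$ if and only if some $\fq\in\mcU$ lies in $\cosupp_H Y$, which is precisely the condition $\fp\in\res_{G,H}^*(\cosupp_H Y)$.

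For part (2), the core input is the projection formula in $\KInj{kG}$,
\[
X\da_H\ua^G\cong X\otimes_k (\sfi k)\ua^G,
\]
where $\sfi k$ denotes the unit of $\KInj{kH}$. This is a standard lift of the module-level identity $N\da_H\ua^G\cong N\otimes_k k[G/H]$, and is already invoked in the support analogue \cite[Proposition~9.4]{\bik:2008b}. The object $(\sfi k)\ua^G$ is compact in $\KInj{kG}$: its right adjoint is restriction, which preserves coproducts, so induction preserves compactness; and $\sfi k$ is compact as the tensor unit. Since $\Coloc^{\fHom}(X)$ contains $X$ and is closed under tensoring with compact objects by Lemma~\ref{le:Hom closed}, we conclude $X\da_H\ua^G\in\Coloc^{\fHom}(X)$.

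The cosupport inclusion then follows from a general observation. Consider
\[
\sfS=\{Y\in\KInj{kG}\mid\cosupp_G Y\subseteq\cosupp_G X\}.
\]
It is colocalizing by Proposition~\ref{pr:product}, and it is Hom closed: for any compact $C$ and $Y\in\sfS$, Lemma~\ref{le:cosupp-hom} gives
\[
\cosupp_G\fHom(C,Y)\subseteq\supp_G C\cap\cosupp_G Y\subseteq\cosupp_G X.
\]
Since $X\in\sfS$ trivially, one has $\Coloc^{\fHom}(X)\subseteq\sfS$, so the containment established above yields $\cosupp_G(X\da_H\ua^G)\subseteq\cosupp_G X$. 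The only genuinely delicate point in the plan is lifting the projection formula to the level of $\KInj{kG}$; the rest is formal consequence of the general machinery developed in the earlier sections.
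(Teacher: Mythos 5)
Your proof is correct and follows essentially the paper's approach, with some repackaging. A few comparative remarks.

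For part (1), you derive the equality by hand from Lemma~\ref{le:homol-res-ind} together with faithfulness of induction. The paper instead cites Corollary~\ref{cor:basechange-support} directly, which packages exactly this computation (the inclusion $\cosupp_{R}GY\subseteq\alpha^*(\cosupp_{S}Y)$ becoming equality when $G$ is faithful on objects); the two routes are equivalent, and the one you chose is slightly more explicit about where faithfulness enters.

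For part (2), your $(\sfi k)\ua^G$ is exactly the paper's $W$: an injective resolution over $kG$ of the permutation module $k(G/H)$. But be careful with the phrase ``projection formula.'' The module-level identity gives, on the nose, $X\da_H\ua^G\cong X\otimes_k k(G/H)$ as complexes; it does \emph{not} directly give $X\otimes_k (\sfi k)\ua^G$. The genuinely nontrivial content is that the quasi-isomorphism $X\otimes_k k(G/H)\to X\otimes_k(\sfi k)\ua^G$, though a map between complexes of injectives, is actually an isomorphism in the homotopy category $\KInj{kG}$ (complexes of injectives that are unbounded below need not be K-injective, so a quasi-isomorphism is not automatically a homotopy equivalence). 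The paper defers this precisely to the argument of \cite[Proposition~5.3]{Benson/Krause:2008a}. You flag that this is ``the only genuinely delicate point'' but present the whole step as a ``projection formula,'' which slightly obscures where the real work lies. Your compactness argument for $(\sfi k)\ua^G$ (induction preserves compacts because its right adjoint, restriction, preserves coproducts) is fine and is an alternative to the paper's citation of \cite[Lemma~2.1]{Krause:2005a}.

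Finally, your last paragraph spells out the implication ``$X\da_H\ua^G\in\Coloc^{\fHom}(X)$ forces $\cosupp_G(X\da_H\ua^G)\subseteq\cosupp_GX$'' by showing that $\{Y\mid\cosupp_GY\subseteq\cosupp_GX\}$ is a Hom closed colocalizing subcategory, using Proposition~\ref{pr:product} and Lemma~\ref{le:cosupp-hom}. The paper leaves this ``and hence'' implicit; your explicit argument is welcome and correct.
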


\begin{proof}
Part (1) is a special case of Corollary~\ref{cor:basechange-support},
which applies by Lemma~\ref{le:resin-change}.

(2) One has $X\da_H\ua^G\cong X \otimes_k k(G/H)$, where $k(G/H)$ is
the permutation module on the cosets of $H$ in $G$. Let $W$ be an
injective resolution over $kG$ of $k(G/H)$. Arguing as in the proof of
\cite[Proposition~5.3]{Benson/Krause:2008a}, one can prove that the
natural map $X \otimes_k k(G/H)\to X \otimes_k W$ is an isomorphism in
$\KInj{kG}$. As $W$ is compact in $\KInj{kG}$, see
\cite[Lemma~2.1]{Krause:2005a}, it remains to apply Lemma~\ref{le:Hom
  closed}.
\end{proof}

\subsection*{Elementary abelian groups}

Let $E$ be an elementary abelian $p$-group
\[ 
E = \langle g_1,\dots,g_r\rangle
\]
and $k$ a field of characteristic $p$. We set $z_i=g_i-1$, so that the
group algebra may be described as
\[ 
kE = k[z_1,\dots,z_r]/(z_1^p,\dots,z_r^p)\,.
\]
Let $A$ be the Koszul complex on $z_1,\dots,z_r$, viewed as a dg
algebra: The graded algebra underlying it is generated by $kE$ in
degree zero together with exterior generators $y_1,\dots,y_r$, each of
degree $-1$.  The differential on $A$ is given by $d(y_i)=z_i$ and
$d(z_i)=0$.  We write $\KInj{A}$ for the homotopy category of
graded-injective dg $A$-modules. It is a compactly generated tensor
triangulated category, with a canonical action of $\Ext^{*}_{A}(k,k)$;
see \cite[\S8]{\bik:2008b}.

\begin{proposition}
\label{pr:koszuldga}
The category $\KInj{A}$ is costratified by the action of
$\Ext^*_A(k,k)$.
\end{proposition}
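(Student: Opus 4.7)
The plan is to reduce to Theorem~\ref{th:ext-costrat} by showing that the dg algebra $A$ is formal, quasi-isomorphic to an exterior algebra on $r$ generators of degree $-1$. Set $\eta_i := z_i^{p-1} y_i \in A$ for $i = 1, \ldots, r$. Each $\eta_i$ has degree $-1$ and is a cocycle, since
\[
d(z_i^{p-1} y_i) = z_i^{p-1} \cdot d(y_i) = z_i^{p} = 0 \text{ in } kE.
\]
The elements $\eta_i$ anticommute pairwise (they are of odd degree while the $z_j$ are central of even degree) and satisfy $\eta_i^2 = 0$, so they generate a sub-dg-algebra $\Lambda := \Lambda_k(\eta_1, \ldots, \eta_r) \hookrightarrow A$ with zero differential. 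Using the tensor decomposition $A \cong \bigotimes_{i=1}^r (k[z_i]/(z_i^p) \otimes_k \Lambda_k(y_i))$ of dg algebras, a direct check shows that each tensor factor has two-dimensional cohomology spanned by $1$ and $\eta_i$; by the K\"unneth formula the inclusion $\Lambda \hookrightarrow A$ is a quasi-isomorphism.

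Next, this quasi-isomorphism should induce an equivalence $\KInj{\Lambda} \xra{\sim} \KInj{A}$ of tensor triangulated categories, identifying the canonical action of $\Ext^*_\Lambda(k,k)$ on $\KInj{\Lambda}$ with that of $\Ext^*_A(k,k)$ on $\KInj{A}$. Granted this, Theorem~\ref{th:ext-costrat} applied to $\Lambda$---whose generators lie in negative odd degree---yields that $\KInj{\Lambda}$ is costratified by the polynomial ring $\Ext^*_\Lambda(k,k) \cong k[x_1, \ldots, x_r]$ with $|x_i| = 2$, and transporting along the equivalence gives the desired costratification of $\KInj{A}$ by $\Ext^*_A(k,k)$.

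The main obstacle is establishing this equivalence at the level of $\KInj$ rather than merely on the derived category. The explicit construction of $\KInj{A}$ together with its tensor product and canonical $\Ext^*_A(k,k)$-action in \cite[\S8]{\bik:2008b} should provide the compatibility needed: restriction and extension of scalars along the quasi-isomorphism $\Lambda \hookrightarrow A$ furnish adjoint functors preserving all the requisite structure, and the quasi-isomorphism guarantees they are mutually inverse equivalences on $\KInj$. Once this compatibility is in place, all substance is already absorbed into Theorem~\ref{th:ext-costrat}, and the proposition follows.
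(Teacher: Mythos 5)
Your proposal follows the paper's proof exactly: the quasi-isomorphism $\xi_i\mapsto z_i^{p-1}y_i$ you construct is precisely the one in \cite[Lemma~7.1]{\bik:2008b}, and the "main obstacle" you flag is handled there by \cite[Proposition~4.6]{\bik:2008b}, which shows that a quasi-isomorphism of dg algebras induces an equivalence $\Hom_\Lambda(A,-)\col\KInj{\Lambda}\xra{\sim}\KInj{A}$ (coinduction rather than extension of scalars, since it is the functor that preserves graded-injectives). With that citation in place your argument matches the paper's reduction to Theorem~\ref{th:ext-costrat}.
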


\begin{proof}
Let $\Lambda$ be an exterior algebra over $k$ on indeterminates
$\xi_1,\dots,\xi_r$ of degree $-1$, viewed as a dg algebra with
$d^{\Lambda}=0$. Let $\KInj{\Lambda}$ be the homotopy category of
graded-injective dg $\Lambda$-modules, with tensor triangulated
structure described in the paragraph preceding
Theorem~\ref{th:ext-costrat}.

By \cite[Lemma 7.1]{\bik:2008b} there is a quasi-isomorphism of dg
$k$-algebras $\phi\col\Lambda\to A$ defined by
$\phi(\xi_i)=z_i^{p-1}y_i$, and by \cite[Proposition~4.6]{\bik:2008b}
this induces an equivalence of triangulated categories
\[ 
\Hom_\Lambda(A,-)\col\KInj{\Lambda}\xra{\sim}\KInj{A}\,.
\]
The desired result is now a consequence of
Theorem~\ref{th:ext-costrat}.
\end{proof}

The next result complements \cite[Theorem~8.1]{\bik:2008b}, concerning
stratification.

\begin{theorem}
\label{th:elab}
Let $E$ be an elementary abelian $p$-group and $k$ a field of
characteristic $p$. The category $\KInj{kE}$ is costratified by the
canonical action of $H^{*}(E,k)$.
\end{theorem}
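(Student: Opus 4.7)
The plan is to deduce costratification of $\KInj{kE}$ from the costratification of $\KInj{A}$ established in Proposition~\ref{pr:koszuldga}, using the change-of-categories machinery of Section~\ref{se:change of categories}; this mirrors the argument for the corresponding stratification result \cite[Theorem~8.1]{\bik:2008b}.

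First I would construct an exact functor $(F;\alpha)\col(\KInj{kE};H^*(E,k))\to(\KInj{A};\Ext^*_A(k,k))$ that preserves set-indexed products and coproducts, arising from the inclusion $kE\hookrightarrow A$ of dg algebras in degree zero; both adjoints then exist by Brown representability. Both cohomology rings reduce, modulo nilpotents, to the polynomial ring $S=k[x_1,\dots,x_r]$, so $\alpha$ induces a bijection $\alpha^*\col\Spec\Ext^*_A(k,k)\xra{\sim}\mcV_E$ on prime spectra. By Theorem~\ref{th:cochange}(2), the functor $F$ intertwines local homology: $F\lam^{\fp}\cong\lam^{\widetilde\fp}F$ with $\widetilde\fp=(\alpha^*)^{-1}\fp$, so $F$ restricts to a functor $\lam^\fp\KInj{kE}\to\lam^{\widetilde\fp}\KInj{A}$, and similarly on the $\gam$-side.

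To conclude, fix $\fp\in\mcV_E$ and a non-zero $X\in\lam^\fp\KInj{kE}$; by Remark~\ref{rem:costratification-tt} it suffices to show $\Coloc^{\fHom}_{\KInj{kE}}(X)=\lam^\fp\KInj{kE}$. After verifying faithfulness of the appropriate adjoint of $F$ on these subcategories, one applies Lemma~\ref{le:basechange-generate}(2) together with Proposition~\ref{pr:koszuldga}, which gives $\Coloc^{\fHom}_{\KInj{A}}(FX)=\lam^{\widetilde\fp}\KInj{A}$, to transfer a set of perfect cogenerators back; using Lemma~\ref{le:loc-coloc} and the compatibility of the two tensor structures then yields $\Coloc^{\fHom}_{\KInj{kE}}(X)=\lam^\fp\KInj{kE}$ as required. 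The principal obstacle lies in step one: setting up the functor $(F;\alpha)$ so that the tensor structures and Hom closedness behave coherently under the transfer, and verifying the requisite faithfulness of its adjoints; once this is in place, the remainder of the argument is essentially formal from the results of Sections~\ref{se:change of categories} and~\ref{se:costratification}.
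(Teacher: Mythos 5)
Your overall strategy---deduce costratification of $\KInj{kE}$ from that of $\KInj{A}$ (Proposition~\ref{pr:koszuldga}) by transfer along the Koszul complex---is exactly the one the paper takes. However, two issues need to be flagged.

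First, the direction of $(F;\alpha)$ is backwards. The inclusion $kE\hookrightarrow A$ of dg algebras induces \emph{restriction} $\KInj{A}\to\KInj{kE}$ and a ring homomorphism $\alpha\col\Ext^*_A(k,k)\to\Ext^*_{kE}(k,k)=H^*(E,k)$; both go from the $A$-side to the $kE$-side, so the exact functor must be set up as $(F;\alpha)\col(\KInj{A};\Ext^*_A(k,k))\to(\KInj{kE};H^*(E,k))$. There is no natural ring homomorphism $H^*(E,k)\to\Ext^*_A(k,k)$, so the pair you write down does not exist. With the correct direction, the relevant adjoint is the \emph{right} adjoint $G=\Hom_{kE}(A,-)$ of restriction, and it is $G$ (not an adjoint of something going $\KInj{kE}\to\KInj{A}$) that maps $\lam^\fp\KInj{kE}$ into $\lam^\fq\KInj{A}$, $\fq=\alpha^{-1}(\fp)$, via Corollary~\ref{cor:basechange-support}.

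Second, and more seriously, you correctly identify faithfulness as the ``principal obstacle'' but leave it open, and it is precisely here that the genuine mathematical content lies. The paper proves the stronger statement that $\Thick(\Hom_{kE}(A,X))=\Thick(X)$ in $\KInj{kE}$ for every $X$. The argument is specific to the Koszul complex: since $A$ is a finite free complex over $kE$ and self-dual, $\Hom_{kE}(A,X)\cong\Si^r A\otimes_{kE}X$; since the sequence $z_1,\dots,z_r$ generates a nilpotent ideal, $A$ and $kE$ generate the same thick subcategory of $\KInj{kE}$; and since $-\otimes_{kE}X$ is exact, the thick-subcategory equality follows. Once this is in hand, the paper avoids the cogenerator-transfer route altogether: given non-zero $X,Y\in\lam^\fp\KInj{kE}$, it deduces $GY\in\Coloc(GX)$ from costratification of $\KInj{A}$, and then $Y\in\Thick(GY)\subseteq\Coloc(GX)=\Coloc(X)$, giving minimality directly. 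Your route through perfect cogenerators and Lemma~\ref{le:basechange-generate}(2) is not unreasonable, but it also hinges on the same faithfulness input, so it does not avoid the hard step. Finally, the worry about ``compatibility of the two tensor structures'' is unnecessary: since $\KInj{kE}$ is generated by its tensor unit, Remark~\ref{rem:costratification-tt} reduces everything to minimality of ordinary colocalizing subcategories.
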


\begin{proof}
Write $kE\cong k[z_{1},\dots,z_{r}]/(z_{1}^{p},\dots,z_{r}^{p})$, and
let $A$ be the Koszul dg algebra described above. Note that $kE=A_{0}$
so the inclusion $kE\to A$ is a morphism of dg algebras; restriction along
it gives an exact functor $\KInj{A}\to \KInj{kE}$ which preserves
coproducts and products, and is compatible with the induced
homomorphism
$\alpha\col\Ext^*_A(k,k)\to\Ext^*_{kE}(k,k)=H^{*}(E,k)$. The functor
$\Hom_{kE}(A,-)$ is a right adjoint of restriction. We claim that each
$X\in\KInj{kE}$ satisfies an equality:
\[
\Thick(\Hom_{kE}(A,X))=\Thick(X)\quad \text{in $\KInj{kE}$},\tag{$\ast$}
\]
and, in particular, that $\Hom_{kE}(A,-)$ is faithful on objects.

Indeed, recall that $A$ is the Koszul complex over $kE$ on $\bsz =
z_{1},\dots,z_{r}$, and in particular, a finite free complex of
$kE$-modules. This yields the first isomorphism below of complexes of
$kE$-modules:
\[
\Hom_{kE}(A,X)\cong \Hom_{kE}(A,kE)\otimes_{kE}X \cong \Si^{r}
A\otimes_{kE}X
\]
The second one is by self-duality of the Koszul complex;
see~\cite[Proposition~1.6.10]{Bruns/Herzog:1998a}. The radical of the
ideal $(\bsz)$ coincides with that of $(0)$, so the complexes $A$ and
$kE$ generate the same thick subcategory in $\KInj{kE}$; see
\cite[Lemma~6.0.9]{Hovey/Palmieri/Strickland:1997a}. Since
$-\otimes_{kE}X$ is an exact functor on $\KInj{kE}$, it follows from
the isomorphism above that $\Hom_{kE}(A,X)$ and $X$ generate the same
thick subcategory, as desired.

We are thus in a position to apply
Corollary~\ref{cor:basechange-support}. Fix a prime $\fp\in\mcV_{E}$
and let $\fq=\alpha^{-1}(\fp)$. The functor $\Hom_{kE}(A,-)$ then takes
$\lam^\fp\KInj{kE}$ to $\lam^\fq\KInj{A}$ and is faithful. Hence, for
any non-zero objects $X$ and $Y$ in $\lam^\fp\KInj{kE}$, one has
\[
\Hom_{kE}(A,Y)\in\Coloc(\Hom_{kE}(A,X))\quad \text{in $\KInj{A}$},
\]
since $\KInj{A}$ is costratified by the action of $\Ext^*_{A}(k,k)$;
see Proposition~\ref{pr:koszuldga}. It implies, in view of the
equality ($\ast$) above, that in $\KInj{kE}$ one has
\[
Y \in \Thick(\Hom_{kE}(A,Y)) \subseteq \Coloc(\Hom_{kE}(A,X))=\Coloc(X).
\]
Thus, $\lam^\fp\KInj{kE}$ is a minimal colocalizing subcategory. Thus,
$\KInj{kE}$ is costratified by the action of $H^{*}(E,k)$; see
Remark~\ref{rem:costratification-tt}.
\end{proof}

Next we prepare for a version of the preceding theorem for arbitrary
finite groups.

\subsection*{Quillen's stratification}
We consider pairs $(H,\fq)$ of subgroups $H$ of $G$ and primes
$\fq\in\mcV_H$, and say that $(H,\fq)$ and $(H',\fq')$ are
\emph{$G$-conjugate} if conjugation with some element in $G$ takes $H$
to $H'$ and $\fq$ to $\fq'$.

\begin{lemma}
\label{le:conj}
Let $(H,\fq)$ and $(H',\fq')$ be $G$-conjugate pairs and $X$ in
$\KInj{kG}$. Then $\lam^\fq(X\da_H)\ne 0$ if and only if
$\lam^{\fq'}(X\da_{H'})\ne 0$.
\end{lemma}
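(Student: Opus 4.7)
The plan is to realize the conjugation identifying $(H,\fq)$ with $(H',\fq')$ as an equivalence of tensor triangulated categories and then apply the change of rings and categories machinery from Section~\ref{se:change of categories}.

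Pick $g\in G$ with $H'=gHg^{-1}$. The inner map $c_g\colon kH\xra{\sim}kH'$, $h\mapsto ghg^{-1}$, is an isomorphism of Hopf algebras, so restriction along $c_g$ induces an exact equivalence of tensor triangulated categories $\Phi_g\colon\KInj{kH'}\xra{\sim}\KInj{kH}$. Correspondingly, $c_g$ induces a ring isomorphism $c_g^*\colon H^*(H',k)\xra{\sim}H^*(H,k)$, and the $G$-conjugacy hypothesis says precisely that $c_g^*(\fq')=\fq$. Moreover, $\Phi_g$ is $c_g^*$-linear with respect to the canonical actions, so $(\Phi_g;c_g^*)$ is an exact functor in the sense of Section~\ref{se:change of categories}. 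Since $(c_g^*)^{-1}(\{\fq\})=\{\fq'\}$, Theorem~\ref{th:cochange} yields a natural isomorphism $\Phi_g\lam^{\fq'}\cong\lam^{\fq}\Phi_g$.

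The remaining point is that $\Phi_g(X\da_{H'})\cong X\da_H$ in $\KInj{kH}$ for any $X\in\KInj{kG}$. Indeed, for $x\in X$ and $h\in H$ the identity $(ghg^{-1})(gx)=g(hx)$ shows that multiplication by $g$ on $X$ furnishes a $kH$-linear isomorphism between $\Phi_g(X\da_{H'})$ and $X\da_H$. Combining the two isomorphisms, one obtains
\[
\lam^{\fq}(X\da_H)\cong\lam^{\fq}\Phi_g(X\da_{H'})\cong\Phi_g\lam^{\fq'}(X\da_{H'}),
\]
and since $\Phi_g$ is an equivalence, the left-hand side vanishes if and only if $\lam^{\fq'}(X\da_{H'})$ does. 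The main subtlety is verifying the $c_g^*$-linearity of $\Phi_g$, which amounts to the standard fact that the conjugation action of $G$ on the cohomology of a subgroup is induced by the corresponding inner automorphism of group algebras; this ensures that the canonical actions on the two homotopy categories are intertwined by $\Phi_g$ in the required way.
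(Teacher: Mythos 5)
Your proof is correct and follows essentially the same route as the paper's (very compressed) argument: conjugation by $g$ gives an equivalence $\Phi_g$ that intertwines the local homology functors, and multiplication by $g$ identifies $X\da_H$ with $\Phi_g(X\da_{H'})$. One small point: to pass from the statement of Theorem~\ref{th:cochange}, which concerns $\lam^{\mcV}$ for specialization closed $\mcV$, to the isomorphism $\Phi_g\lam^{\fq'}\cong\lam^{\fq}\Phi_g$ at the level of a single prime, you should either invoke Corollary~\ref{cor:basechange-fibre-gen} (with $\mcU=\{\fq\}$ discrete) or note that $\lam^{\fq'}=\lam^{\mcV(\fq')}V^{\mcZ(\fq')}$ and that $(\alpha^*)^{-1}$ carries $\mcV(\fq')$ and $\mcZ(\fq')$ to $\mcV(\fq)$ and $\mcZ(\fq)$ because $\alpha^*$ is a homeomorphism on $\Spec$.
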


\begin{proof}
Conjugation with any element $g\in G$ induces an automorphism of
$\KInj{kG}$ that takes an object $X$ to $X^g$. Note that
multiplication with $g$ induces an isomorphism $X\xra{\sim}X^g$. The
assertion follows, since conjugation commutes with restriction and
local homology.
\end{proof}

Quillen has proved that for each $\fp$ in $\mcV_G$ there exists a pair
$(E,\fq)$ such that $E$ is an elementary abelian subgroup of $G$ and
$\res_{G,E}^*(\fq)=\fp$; see the discussion after \cite[Proposition
  11.2]{Quillen:1971b}. We say $\fp$ \emph{originates} in such a pair
$(E,\fq)$ if there does not exist another such pair $(E',\fq')$ with
$E'$ a proper subgroup of $E$.  In this language,
\cite[Theorem~10.2]{Quillen:1971b} reads:

\begin{theorem}
\label{th:quillen}
For any $\fp\in\mcV_G$ all pairs $(E,\fq)$ where $\fp$ originates are
$G$-conjugate. This sets up a one to one correspondence between primes
$\fp$ in $\mcV_G$ and $G$-conjugacy classes of such pairs
$(E,\fq)$. \qed
\end{theorem}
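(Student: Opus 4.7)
The statement is Quillen's stratification theorem, for which I would follow the proof in \cite[\S\S 10--11]{Quillen:1971b}. The plan is to reduce the structure of $\mcV_G$ to the category of elementary abelian $p$-subgroups and then analyse the restriction map on each elementary abelian stratum.

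First, I would invoke Quillen's F-isomorphism theorem to conclude that the map $\bigsqcup_E \mcV_E \to \mcV_G$ of restriction maps, ranging over elementary abelian $p$-subgroups $E \leq G$, is surjective. This immediately gives the existence of at least one pair $(E, \fq)$ in which any given $\fp$ originates; well-orderedness of rank then produces a minimal such pair.

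Next, for each elementary abelian $E$ let $\mcV_E^+$ denote those primes of $\mcV_E$ not lying in $\res_{E,E'}^*(\mcV_{E'})$ for any proper $E' < E$; these are precisely the primes of $\mcV_E$ that can serve as origins. The key technical step, using transfer arguments together with the F-isomorphism property, is to show that $\res_{G,E}^*$ maps $\mcV_E^+$ onto a stratum of $\mcV_G$ and that its fibres are exactly the orbits of the Weyl group $W_G(E) = N_G(E)/C_G(E)$ acting on $\mcV_E^+$. Two minimal originating pairs $(E,\fq)$ and $(E',\fq')$ must then have elementary abelian components of the same rank (since both $\fq$ and $\fq'$ are minimal origins for $\fp$), and the Weyl-orbit description in $\mcV_E^+$, combined with the fact that $W_G(E)$-orbits are $G$-conjugacy classes of pairs, yields $G$-conjugacy of $(E,\fq)$ and $(E',\fq')$. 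The bijection in the second assertion follows by reading off conjugacy classes as fibres of the stratification.

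The main obstacle is the description of the fibres of $\res_{G,E}^*\colon \mcV_E^+ \to \mcV_G$ as Weyl group orbits; this is the content of Quillen's analysis via the interplay of restriction and transfer in $H^*(G,k)$, and is the heart of his argument. Everything else in the theorem is a formal consequence of this fibre description together with the surjectivity provided by the F-isomorphism theorem.
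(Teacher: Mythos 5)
The paper offers no proof of this statement: it is cited verbatim as \cite[Theorem~10.2]{Quillen:1971b} and the \texttt{\textbackslash qed} immediately follows, so the theorem is taken as input from Quillen's work rather than re-derived. Your proposal is therefore not a competitor to anything in the paper --- it is a reconstruction of Quillen's own proof, and as such it is broadly accurate: the F-isomorphism theorem for surjectivity, the sets $\mcV_E^+$ of primes not restricted from proper subgroups, and the identification of the fibres of $\res_{G,E}^*|_{\mcV_E^+}$ with $W_G(E)$-orbits are exactly the ingredients Quillen uses.

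One small logical slip: you argue that two minimal originating pairs $(E,\fq)$ and $(E',\fq')$ for the same $\fp$ have elementary abelian components of equal rank and then pass from equal rank to $G$-conjugacy. Equal rank alone does not give conjugacy. The correct step, which is the actual content of Quillen's stratification, is that the strata $\mcV_{G,E}^{+}=\res_{G,E}^*(\mcV_E^+)$ are pairwise disjoint as $[E]$ ranges over $G$-conjugacy classes of elementary abelian $p$-subgroups, so that $\fp\in\mcV_{G,E}^+\cap\mcV_{G,E'}^+$ already forces $E$ and $E'$ to be conjugate; the Weyl-orbit description of the fibres then upgrades this to conjugacy of the pairs. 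With that correction your outline matches Quillen's argument, which is what the paper implicitly relies on by citation.
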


The proof of the next result can be shortened considerably by invoking
the subgroup theorem for supports, \cite[Theorem~11.2]{\bik:2008a},
which is deduced from the stratification theorem for $\KInj{kG}$,
\cite[Theorem~9.7]{\bik:2008a}. We give a direct proof, by extracting
an argument from the proof of the latter result.

\begin{proposition}
\label{pr:Quillen}
Fix $\fp\in\mcV_G$ and suppose $\fp$ originates in $E$. Given a
non-zero object $X$ in $\lam^\fp\KInj{kG}$, one has
\[
X\da_E=\prod_{\res_{G,E}^*(\fq)=\fp}\lam^\fq(X\da_E)
\] 
and $\lam^\fq(X\da_E)\neq 0$ for each such $\fq$.
\end{proposition}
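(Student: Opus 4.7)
The plan is to first extract the product decomposition from Lemma~\ref{le:homol-res-ind}, then reduce the non-vanishing statement to a single factor using Quillen's theorem, and finally prove $X\da_E\ne 0$ by a support calculation.

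For the product decomposition, since $X$ lies in $\lam^{\fp}\KInj{kG}$ one has $X\cong \lam^{\fp}X$; setting $\mcU = (\res^{*}_{G,E})^{-1}\{\fp\}$ and applying Lemma~\ref{le:homol-res-ind} to the restriction functor $(-)\da_{E}$ yields
\[
X\da_{E}\cong (\lam^{\fp}X)\da_{E}\cong\prod_{\fq\in\mcU}\lam^{\fq}(X\da_{E})\,,
\]
which is the displayed formula in the statement.

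Next I would observe that the condition for $\fp$ to originate in a pair $(E,\fq)$ depends only on $E$ and $\fp$: it asks merely that no proper elementary abelian subgroup of $E$ admits a prime restricting to $\fp$. Consequently every $\fq\in\mcU$ produces a pair in which $\fp$ originates, so Quillen's Theorem~\ref{th:quillen} says any two such pairs are $G$-conjugate. Lemma~\ref{le:conj}, applied with $H=H'=E$, then implies that $\lam^{\fq}(X\da_{E})\ne 0$ for one $\fq\in\mcU$ if and only if it is non-zero for every $\fq\in\mcU$. Hence it suffices to prove $X\da_{E}\ne 0$.

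The main obstacle is this last step. Suppose toward a contradiction that $X\da_{E}=0$; Proposition~\ref{pr:V-ind}(2) then gives $X\otimes_{k}k(G/E)\cong X\da_{E}\ua^{G}=0$. I would invoke the stratification of $\KInj{kG}$ together with the tensor product formula for supports \cite[Theorem~7.3]{\bik:2009a} to obtain
\[
\supp_{G}(X\otimes_{k}k(G/E)) = \supp_{G}X\cap \supp_{G}k(G/E)\,,
\]
and then show this intersection is non-empty. On one hand, Theorem~\ref{th:max} combined with $\cosupp_{G}X=\{\fp\}$ places $\fp$ in $\supp_{G}X$. On the other hand, since $(-)\ua^{G}$ is a faithful left adjoint of $(-)\da_{E}$ by Lemma~\ref{le:resin-change}, Corollary~\ref{cor:basechange-support}(1) yields the equality
\[
\supp_{G}k(G/E)=\supp_{G}((\sfi k)\ua^{G})=\res^{*}_{G,E}(\supp_{E}\sfi k)=\res^{*}_{G,E}(\mcV_{E})\,,
\]
where the stratification of $\KInj{kE}$ gives $\supp_{E}\sfi k=\mcV_{E}$; since $\fp$ originates in $E$ the set $\mcU$ is non-empty, so $\fp\in\res^{*}_{G,E}(\mcV_{E})$. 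The intersection thus contains $\fp$, contradicting $X\otimes_{k}k(G/E)=0$ and forcing $X\da_{E}\ne 0$.
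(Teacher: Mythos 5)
Your proof is correct, but it deliberately takes the route that the paper explicitly declines to follow. The remark immediately preceding this proposition reads: ``The proof of the next result can be shortened considerably by invoking the subgroup theorem for supports \ldots which is deduced from the stratification theorem for $\KInj{kG}$ \ldots We give a direct proof, by extracting an argument from the proof of the latter result.'' Your argument is exactly the shortened version alluded to: you invoke the stratification of $\KInj{kG}$ (from the earlier paper on stratifying modular representations) together with the tensor product formula $\supp_R(X\otimes Y)=\supp_R X\cap\supp_R Y$, compute $\supp_G X\cap\supp_G k(G/E)\ni\fp$, and conclude $X\da_E\ue\ua^G\ne 0$, hence $X\da_E\ne 0$. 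The paper instead starts from Chouinard's theorem for $\KInj{kG}$ to get an elementary abelian $E''$ with $X\da_{E''}\ne 0$, picks a prime $\fq''\in\cosupp_{E''}(X\da_{E''})$, pushes it up via Proposition~\ref{pr:V-ind} to see $\res_{G,E''}^*(\fq'')=\fp$, descends via Quillen's theorem to a pair $(E',\fq')$ where $\fq''$ (and hence $\fp$) originates, and checks $\fq'\in\cosupp_{E'}(X\da_{E'})$ using Theorem~\ref{th:basechange-cosupport} — which in turn rests on Theorem~\ref{th:elab} (costratification for elementary abelian groups), a result proved \emph{in this paper}. Both approaches then finish with the same reduction: Quillen's Theorem~\ref{th:quillen} plus Lemma~\ref{le:conj} propagate non-vanishing from one factor to all of them.

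A few small points worth tightening if you wanted to polish this: the isomorphism $X\da_E\ua^G\cong X\otimes_k k(G/E)$ is established inside the \emph{proof} of Proposition~\ref{pr:V-ind}(2), not in its statement (one replaces $k(G/E)$ by an injective resolution $W$ and shows the comparison map is an isomorphism in $\KInj{kG}$), so you should cite that argument rather than the statement; and the equality $\supp_E\sfi k=\mcV_E$ deserves a word of justification (e.g.\ that $\sfi k$ is a compact generator and $\gam_\fp\sfi k\neq 0$ for every $\fp$, or a reference). Your observation that ``$\fp$ originates in $(E,\fq)$'' is a condition on $E$ alone once $\res_{G,E}^*(\fq)=\fp$, so that every $\fq\in\mcU$ gives an originating pair, is correct and is the same reduction the paper uses, though the paper phrases it as ``it suffices to find \emph{one} pair $(E',\fq')$.'' Overall a valid alternative argument; the trade-off is that you lean on the stratification theorem for $\KInj{kG}$ from outside, whereas the paper's proof is designed to stay inside the costratification machinery being built.
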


\begin{proof}
The decomposition of $X\da_E$ is by Lemma~\ref{le:homol-res-ind}. For
the remaining statements, it suffices to find \emph{one} pair
$(E',\fq')$ where $\fp$ originates and such that
$\lam^{\fq'}(X\da_{E'})\ne 0$. Then Theorem~\ref{th:quillen} yields
that each pair $(E,\fq)$ as in the statement is $G$-conjugate to
$(E',\fq')$, and hence $\lam^\fq (X\da_E)\ne 0$, by
Lemma~\ref{le:conj}.

Since $X\ne 0$ holds, Chouinard's theorem for $\KInj{kG}$, see
\cite[Proposition 9.6(3)]{\bik:2008a}, provides an elementary abelian
subgroup $E''$ of $G$ such that $X\da_{E''}\ne 0$. Fix a $\fq''$ in
$\cosupp_{E''}(X\da_{E''})$. Proposition~\ref{pr:V-ind} then yields
\[
\res^{*}_{G,E''}(\fq'')\in \cosupp_{G}(X\da_{E''}\ua^{G})\subseteq
\cosupp_{G}X=\{\fp\}\,.
\]
Applying Theorem~\ref{th:quillen} to $E''$ one gets a pair
$(E',\fq')$, with $E'$ a subgroup of $E''$, where $\fq''$
originates. Observe that $\fp$ then originates in $(E',\fq')$, by
functoriality of restrictions and the computation above. It remains to
note that
\[
\fq'\in (\res^{*}_{E'',E'})^{-1}(\cosupp_{E''}(X\da_{E''})) =
\cosupp_{E'}(X\da_{E'})\,,
\] 
where the inclusion holds by the choice of $\fq'$ and the equality is
by Theorem~\ref{th:basechange-cosupport}, applied to the functor
\[
((-)\da_{E'},\res_{E'',E'})\col (\KInj{kE''},H^{*}(E'',k))\to
(\KInj{kE'},H^{*}(E',k))\,;
\]
noting that the hypotheses are satisfied by
Lemma~\ref{le:resin-change} and Theorem~\ref{th:elab}.
\end{proof}

\subsection*{Finite groups}
The following theorem is an analogue of
\cite[Theorem~9.7]{\bik:2008b}, which establishes the stratification
of $\KInj{kG}$; it contains Theorem~\ref{th:elab}, but the latter
statement is used in the proof, both directly and by way of
Proposition~\ref{pr:Quillen}.

\begin{theorem}
\label{th:finite-groups}
The tensor triangulated category $\KInj{kG}$ is costratified by the
canonical action of the cohomology algebra $H^*(G,k)$.
\end{theorem}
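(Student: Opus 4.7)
The plan is to invoke Remark~\ref{rem:costratification-tt}: since $\KInj{kG}$ is generated by its unit $\sfi k$, every colocalizing subcategory is Hom closed (Lemma~\ref{le:Hom-closed}), so it suffices to show that $\lam^{\fp}\KInj{kG}$ is a minimal colocalizing subcategory for each $\fp\in\mcV_{G}$. Fix non-zero $X,Y\in\lam^{\fp}\KInj{kG}$; the aim is $Y\in\Coloc(X)$. I would reduce to costratification of $\KInj{kE}$ for each elementary abelian $E\subseteq G$ via restriction (Theorem~\ref{th:elab}), transfer back by induction, and then reassemble $Y$ using the fact that $\sfi k$ lies in the thick subcategory generated by the permutation modules $k(G/E)$.

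For each elementary abelian subgroup $E$, I would first establish $Y\da_{E}\in\Coloc(X\da_{E})$ in $\KInj{kE}$. Because $\KInj{kG}$ is stratified by $H^{*}(G,k)$ (see \cite[Theorem~9.7]{\bik:2008b}) and $\ua^{G}$ is faithful on objects (Lemma~\ref{le:resin-change}), Theorem~\ref{th:basechange-cosupport}(2) yields $\cosupp_{E}(Y\da_{E})=(\res_{G,E}^{*})^{-1}\{\fp\}$, and likewise for $X$. When $\fp\notin\res_{G,E}^{*}(\mcV_{E})$ both restrictions vanish; otherwise Lemma~\ref{le:homol-res-ind} applied to $Y=\lam^{\fp}Y$ gives $Y\da_{E}\cong\prod_{\fq}\lam^{\fq}(Y\da_{E})$ indexed by $\fq\in(\res_{G,E}^{*})^{-1}\{\fp\}$, and the cosupport computation forces each factor to be non-zero; analogously for $X$. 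Theorem~\ref{th:elab} then provides $\lam^{\fq}(Y\da_{E})\in\Coloc(\lam^{\fq}(X\da_{E}))$, and since each $\lam^{\fq}(X\da_{E})$ is a retract of the product $X\da_{E}$ it lies in $\Coloc(X\da_{E})$; taking products in turn gives $Y\da_{E}\in\Coloc(X\da_{E})$.

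Applying $\ua^{G}$, which is exact and, being right adjoint to $\da_{E}$ via Frobenius reciprocity, preserves products and hence colocalizing subcategories, one obtains $Y\da_{E}\ua^{G}\in\Coloc(X\da_{E}\ua^{G})$ in $\KInj{kG}$. By Proposition~\ref{pr:V-ind}(2) and Lemma~\ref{le:Hom-closed} one has $X\da_{E}\ua^{G}\in\Coloc^{\fHom}(X)=\Coloc(X)$, so $Y\da_{E}\ua^{G}\in\Coloc(X)$ for every elementary abelian subgroup $E$.

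To conclude I would use that $\sfi k$ lies in the thick subcategory of $\KInj{kG}^{\sfc}$ generated by $\{k(G/E):E\text{ elementary abelian}\}$: Quillen's theorem yields $\supp\sfi k=\mcV_{G}=\bigcup_{E}\res_{G,E}^{*}(\mcV_{E})=\bigcup_{E}\supp k(G/E)$, and the classification of thick subcategories of $\KInj{kG}^{\sfc}$ by specialization-closed subsets of $\mcV_{G}$ via support (of Benson--Carlson--Rickard type) then forces $\sfi k\in\Thick(\{k(G/E)\}_{E})$. Tensoring the resulting finite presentation of $\sfi k$ with $Y$, an exact operation preserving shifts, cones and retracts, yields $Y\in\Thick(\{Y\da_{E}\ua^{G}\}_{E})\subseteq\Coloc(\{Y\da_{E}\ua^{G}\}_{E})\subseteq\Coloc(X)$. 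The hard step will be this final assembly: capturing $Y$ inside the colocalizing subcategory generated by its pieces $Y\da_{E}\ua^{G}$ is what requires the tensor-triangular classification of thick subcategories on the compact side of $\KInj{kG}$, whereas every other ingredient is a direct application of the machinery developed earlier in the paper.
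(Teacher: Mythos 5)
Your reduction to showing $Y\in\Coloc^{\fHom}(X)$ for non-zero $X,Y\in\lam^\fp\KInj{kG}$ is correct, and the second paragraph — combining Theorem~\ref{th:basechange-cosupport}, Lemma~\ref{le:homol-res-ind}, Theorem~\ref{th:elab}, and Proposition~\ref{pr:V-ind} to get $Y\da_E\ua^G\in\Coloc^{\fHom}(X)$ for each elementary abelian $E$ — is essentially sound. Two factual slips should be noted though: the unit $\sfi k$ does \emph{not} generate $\KInj{kG}$ when $G$ is not a $p$-group (the compacts are all of $\sfD^b(\mod kG)$, not $\Thick(\sfi k)$), so Lemma~\ref{le:Hom-closed} does not apply here, and the identifications $\Coloc^{\fHom}(X)=\Coloc(X)$ you use are false in general. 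This is harmless for the flow, provided you consistently work with $\Coloc^{\fHom}$, which is all that costratification of the tensor triangulated category requires.

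The genuine gap is in the final assembly. You assert $\sfi k\in\Thick(\{k(G/E)\}_E)$ in $\KInj{kG}^{\sfc}$ and justify it by matching supports and invoking the thick-subcategory classification. But the Benson--Carlson--Rickard classification (and the analogous statement in \cite[Theorem 4.9]{Benson/Iyengar/Krause:2009a}) concerns \emph{tensor-ideal} thick subcategories; the support computation only gives $\sfi k\in\Thick^{\otimes}(\{k(G/E)\}_E)$. Since $\Thick(\{k(G/E)\}_E)$ need not be tensor-ideal, you cannot conclude $\sfi k\in\Thick(\{k(G/E)\}_E)$ from this. (The tempting shortcut of noting that $k(G/\{1\})=kG$ is among your permutation modules does not rescue it either, since $\Thick(kG)$ is the perfect complexes, which do not contain $\sfi k$ when $p\mid |G|$.) If you replace $\Thick$ by $\Thick^{\otimes}$, then tensoring with $Y$ produces objects of the form $(C\otimes Y)\da_E\ua^G$ for \emph{arbitrary} compact $C$, not just $Y\da_E\ua^G$; one then needs these in $\Coloc^{\fHom}(X)$ as well, which does follow by running your second-paragraph argument with $Z=C\otimes Y$ in place of $Y$ (noting $C\otimes Y\cong\fHom(C^\vee,Y)$ lies in $\lam^\fp\KInj{kG}$ by Lemma~\ref{le:cosupp-hom}), but the proposal does not address this.

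It is also worth contrasting with the paper's route, which sidesteps the whole issue. Instead of reconstructing $Y$ from the objects $Y\da_E\ua^G$, the paper produces an explicit set of perfect cogenerators $T_{\kos C\fq}(I(\fq))\ua^G$ for $\lam^\fp\KInj{kG}$ (via Proposition~\ref{pr:perf-cogen}, Lemma~\ref{le:basechange-generate} and Quillen's theorem on where $\fp$ originates, Proposition~\ref{pr:Quillen}), shows these all lie in $\Coloc^{\fHom}(X)$, and is done — no thick-subcategory decomposition of the unit is needed. This also restricts attention to a single elementary abelian subgroup $E$ where $\fp$ originates, rather than looping over all of them.
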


\begin{proof}
Fix $\fp\in\mcV_G$ and suppose it originates in $E$. For each compact
object $C$ in $\KInj{kE}$ and each $\fq\in\mcV_E$ with $\res^*_{G,E}(\fq)=\fp$,
Proposition~\ref{pr:perf-cogen} and \eqref{eq:kos-supp} imply
\[
\cosupp_{E}(T_{\kos C\fq}(I(\fq)))\subseteq \{\fq\}\,.
\]
It follows from Theorem~\ref{th:elab} and Proposition~\ref{pr:Quillen}
that each non-zero object $X$ in $\lam^\fp\KInj{kG}$ satisfies
\[ 
T_{\kos
  C\fq}(I(\fq))\in\Coloc(\lam^\fq(X\da_E))\subseteq\Coloc(X\da_E)\,.
\]
Together with Proposition~\ref{pr:V-ind}, one thus obtains:
\[ 
T_{\kos
  C\fq}(I(\fq))\ua^G\in\Coloc(X\da_E\ua^G)\subseteq\Coloc^{\fHom}(X)\,.
\]
It remains to observe that the collection of objects $T_{\kos
  C\fq}(I(\fq))\ua^G$ cogenerates the triangulated category
$\lam^\fp\KInj{kG}$. This is a consequence of
Proposition~\ref{pr:perf-cogen} and
Lemma~\ref{le:basechange-generate}, which can be applied, thanks to
Lemma~\ref{le:resin-change} and Proposition~\ref{pr:Quillen}.
\end{proof}

\subsection*{Applications}

The consequences of costratification described in
Section~\ref{se:costratification} apply to $\KInj{kG}$. In particular,
Theorem~\ref{th:strat-costrat} implies that $\KInj{kG}$ is stratified
as a tensor triangulated category by $H^{*}(G,k)$, which is
\cite[Theorem~9.7]{\bik:2008b}. A modification of
Theorem~\ref{th:basechange-cosupport} yields the following subgroup
theorem for cosupport, which is analogous to the one for support; see
\cite[Theorem~11.2]{\bik:2008b}.

\begin{theorem}
\label{th:cosupp-res}
Let $H$ be a subgroup of $G$. Each object $X$ in $\KInj{kG}$ satisfies
\[
\cosupp_H(X\da_H)=(\res_{G,H}^*)^{-1}(\cosupp_GX).
\]
\end{theorem}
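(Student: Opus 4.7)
The plan is to deduce this theorem as a direct application of Theorem~\ref{th:basechange-cosupport}(2) to the restriction functor. By Lemma~\ref{le:resin-change}, the pair
\[
((-)\da_H,\ \res_{G,H})\col (\KInj{kG},H^*(G,k))\to (\KInj{kH},H^*(H,k))
\]
is an exact functor between compactly generated tensor triangulated categories preserving coproducts and products, and its left adjoint is $(-)\ua^G$, which is faithful on objects. Thus the only non-trivial hypothesis of Theorem~\ref{th:basechange-cosupport}(2) is that $\KInj{kG}$ is stratified by $H^*(G,k)$; this however follows from Theorem~\ref{th:finite-groups}, which establishes costratification, together with Theorem~\ref{th:strat-costrat}.

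Applying Theorem~\ref{th:basechange-cosupport}(2), for each $X\in \KInj{kG}$ one obtains
\[
\cosupp_H(X\da_H)=(\res_{G,H}^{*})^{-1}(\cosupp_G X)\cap \supp_H\KInj{kH}\,.
\]
It remains to observe that $\supp_H\KInj{kH}=\mcV_H$. Indeed, for each $\fq\in \mcV_H$ the object $T_{\sfi k}(I(\fq))$ of Proposition~\ref{pr:perf-cogen} satisfies $\supp_H T_{\sfi k}(I(\fq))=\{\fq\}$, as recorded in Example~\ref{ex:cosuppkg}(2); in particular it is non-zero, and so $\fq\in\supp_H\KInj{kH}$. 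With $\supp_H\KInj{kH}=\mcV_H$ the intersection above simplifies to $(\res_{G,H}^{*})^{-1}(\cosupp_G X)$, which is the desired equality.

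The only step that needs any genuine input is the verification that $\supp_H\KInj{kH}=\mcV_H$, and even this is essentially handed to us by Example~\ref{ex:cosuppkg}(2); every other piece is a bookkeeping check against Lemma~\ref{le:resin-change} and Theorems~\ref{th:strat-costrat} and~\ref{th:finite-groups}. Thus there is no real obstacle: the subgroup theorem for cosupport is a clean corollary of the general change-of-categories machinery developed in Section~\ref{se:change of categories} once costratification of $\KInj{kG}$ is in hand.
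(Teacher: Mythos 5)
Your proposal founders on a mismatch between two notions of stratification. Theorem~\ref{th:basechange-cosupport}(2) lives in Section~\ref{se:change of categories}, where there is no tensor structure in sight; the hypothesis ``$\sfT$ is stratified by $R$'' there means the notion from \cite[\S4]{Benson/Iyengar/Krause:2009a} (as the sentence immediately preceding the theorem makes explicit): for each $\fp$ the category $\gam_\fp\sfT$ contains no proper non-zero \emph{localizing} subcategories. Inspecting the proof of Theorem~\ref{th:basechange-cosupport}(2), this is used to pass from $\Hom_\sfT(E\gam_\fq U,X)=0$ to $\Hom_\sfT(\gam_\fp-,X)=0$ via $\gam_\fp\sfT=\Loc_\sfT(E\gam_\fq U)$. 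By contrast, Theorem~\ref{th:finite-groups} together with Theorem~\ref{th:strat-costrat} only tells you that $\KInj{kG}$ is stratified \emph{as a tensor triangulated category}: $\gam_\fp\KInj{kG}$ has no proper non-zero \emph{tensor ideal} localizing subcategories. By Remark~\ref{rem:costratification-tt}, the two notions coincide only when the tensor unit generates $\sfT$, and $\sfi k$ does not generate $\KInj{kG}$ when $kG$ has more than one block (the localizing subcategory generated by $\sfi k$ sits entirely inside the principal block). So the hypothesis you need is simply not available, and the proof of Theorem~\ref{th:basechange-cosupport}(2) would break at the step $\gam_\fp\sfT=\Loc_\sfT(E\gam_\fq U)$, since you can only guarantee $\gam_\fp\KInj{kG}=\Loc^\otimes((\gam_\fq U)\ua^G)$.

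This is precisely why the paper announces the subgroup theorem as the outcome of ``a modification of Theorem~\ref{th:basechange-cosupport}'' rather than a direct application. The inclusion $\cosupp_H(X\da_H)\subseteq(\res_{G,H}^*)^{-1}(\cosupp_GX)$ does come for free from Corollary~\ref{cor:basechange-support}(2) (not the full theorem). For the reverse inclusion the proof re-runs the argument with the internal function object $\Hom_k(-,-)$ in place of $\Hom_\sfT(-,-)$, invokes the Frobenius-reciprocity isomorphism $\Hom_k(U,V\da_H)\ua^G\cong\Hom_k(U\ua^G,V)$ to transport vanishing of $\fHom$ across induction, and then uses the tensor version of stratification: once $\gam_\fp\KInj{kG}=\Loc^\otimes((\gam_\fq U)\ua^G)$, the fact that $\fHom(-,X)$ converts $\Loc^\otimes$ into a Hom closed colocalizing subcategory (Lemmas~\ref{le:loc-coloc} and \ref{le:Hom closed}) lets one conclude $\fHom(\gam_\fp-,X)=0$ and hence $\lam^\fp X=0$. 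Your observation that $\supp_H\KInj{kH}=\mcV_H$ is correct and would be needed in the format you chose, but the argument cannot get that far without first replacing the non-tensor stratification hypothesis by the internal-hom version of the argument.
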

\begin{proof}
Restriction and induction form an adjoint pair of functors satisfying the assumptions of
Theorem~\ref{th:basechange-cosupport}. Thus one gets
\[
\cosupp_H(X\da_H)\subseteq(\res_{G,H}^*)^{-1}(\cosupp_GX).
\]
For the other inclusion, one uses stratification of $\KInj{kG}$ as follows.

Fix a $\fq\in\mcV_H$ with $\fq\not\in\cosupp_H (X\da_H)$.  We need to
show that $\fp=\res_{G,H}^*(\fq)$ is not in $\cosupp_GX$.  Using the
adjunction formula for function objects from
Proposition~\ref{pr:natisos}, one has
\[\Hom_k(\gam_\fq -,X\da_H)\cong\Hom_k(-,\lam^\fq X\da_H)=0.\]
This implies $\Hom_k((\gam_\fq -)\ua^G,X)=0$, since
\[\Hom_k(U,V\da_H)\ua^G\cong\Hom_k(U\ua^G,V)\]
for all $U$ in $\KInj{kH}$ and $V$ in $\KInj{kG}$, by
\cite[Proposition~3.3]{Benson:1991a}.  There exists some  $U$ in
$\KInj{H}$ such that $(\gam_\fq U)\ua^G\ne 0$ since induction is
faithful on objects. Moreover, $(\gam_\fq U)\ua^G$ belongs to
$\gam_\fp\KInj{kG}$, by Corollary~\ref{cor:basechange-support}.  Since
$\KInj{kG}$ is stratified as a tensor triangulated category by
$H^{*}(G,k)$, the subcategory $\gam_\fp\KInj{kG}$ contains no
non-trivial tensor ideal localizing subcategories, and hence coincides
with $\Loc^\otimes((\gam_\sfq U)\ua^G)$.  Thus \[ 0=\Hom_k(\gam_{\fp}-,X)\cong
\Hom_k(-,\lam^{\fp}X),
\]
again by Proposition~\ref{pr:natisos}, and therefore
$\fp\not\in\cosupp_R X$.
\end{proof}

\subsection*{Stable module category}
Let $\Mod(kG)$ denote the (abelian) category of all $kG$-modules. The
stable module category, $\StMod(kG)$, has the same objects as the
module category $\Mod(kG)$, but the morphisms in $\StMod(kG)$ are
given by quotienting out those morphisms in $\Mod(kG)$ that factor
through a projective module. The category $\StMod(kG)$ is tensor
triangulated, with triangles induced from short exact sequences of
$kG$-modules, and product the tensor product over $k$ with diagonal
action. The trivial module $k$ is the unit of the product, and its
graded endomorphism ring in $\StMod(kG)$ is the Tate cohomology
algebra $\widehat{H}^{*}(G,k)$. There is thus an action of
$H^{*}(G,k)$ on $\StMod(kG)$, via the natural map
$H^{*}(G,k)\to\widehat{H}^{*}(G,k)$.

We regard $\StMod(kG)$ as a triangulated subcategory of $\KInj{kG}$,
as in \cite[\S6]{Benson/Krause:2008a}.

\begin{proposition}
\label{pr:stmod-kinj}
The functor $\StMod(kG)\to\KInj{kG}$ that takes a $kG$-module to its
Tate resolution identifies the tensor triangulated category $\StMod(kG)$ with the 
(co)localizing subcategory consisting of all acyclic complexes. \qed
\end{proposition}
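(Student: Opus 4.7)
The plan is to construct the functor via Tate resolutions and check both that its essential image consists exactly of the acyclic complexes in $\KInj{kG}$, and that this full subcategory is simultaneously localizing, colocalizing, and tensor-closed.

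First, since $kG$ is a Frobenius algebra, injective and projective $kG$-modules coincide, and every $M\in\Mod(kG)$ admits a Tate resolution $tM$: an acyclic complex of injective-projective $kG$-modules obtained by splicing a projective resolution with an injective coresolution, normalized so that $M\cong Z^{0}(tM)$. The standard comparison theorem for Tate resolutions shows that every morphism $M\to N$ in $\Mod(kG)$ lifts to a chain map $tM\to tN$, uniquely up to homotopy, and that such a lift is null-homotopic in $\KInj{kG}$ precisely when the original morphism factors through a projective-injective $kG$-module. This produces a well-defined, fully faithful, triangulated functor $t\colon\StMod(kG)\to\KInj{kG}$.

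For the essential image, given any acyclic complex $X\in\KInj{kG}$, the module $M=Z^{0}(X)$ has $X$ itself as a Tate resolution, so $X\cong tM$. Thus the essential image is the full subcategory of acyclic complexes in $\KInj{kG}$. This subcategory is triangulated since cones of morphisms between acyclic complexes are acyclic (by the long exact cohomology sequence), closed under arbitrary coproducts (formed componentwise, with coproducts of exact sequences of $kG$-modules being exact), and closed under arbitrary products (likewise componentwise, using that products are exact in $\Mod(kG)$ and that products of injective $kG$-modules remain injective since $kG$ is noetherian). So it is both a localizing and a colocalizing subcategory.

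Finally, to see the tensor-closed property and the compatibility of tensor structures, one uses that $X\otimes_{k}Y$ is acyclic whenever either factor is, since $\otimes_{k}$ is exact over the field $k$, and that for the Hopf algebra $kG$ the tensor product $P\otimes_{k}Q$ of two injective-projective modules with diagonal $G$-action is again injective-projective. Hence $\otimes_{k}$ restricts to the subcategory of acyclic complexes, and the natural isomorphism $Z^{0}(tM\otimes_{k}tN)\cong M\otimes_{k}N$ in $\StMod(kG)$ (valid modulo projective summands, by K\"unneth over the field $k$) identifies this with the tensor product on $\StMod(kG)$ via the fully faithful functor $t$. The main obstacle in this outline is the last compatibility step, but it is a routine verification once the Tate resolution formalism is in place.
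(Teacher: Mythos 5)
The paper gives no proof: the \qed{} follows the statement and the needed facts are imported wholesale from \cite[\S6]{Benson/Krause:2008a}. Your reconstruction is the expected one and is essentially correct. Two remarks. First, a small factual slip: you justify closure under products by saying products of injective $kG$-modules are injective ``since $kG$ is noetherian,'' but products of injectives are injective over \emph{any} ring; it is coproducts of injectives that require noetherianity (or, most directly here, the Frobenius property of $kG$, since injectives coincide with projectives and coproducts of projectives are projective). Second, the tensor-compatibility step you single out as the ``main obstacle'' is indeed the only non-formal point, and a direct K\"unneth computation with the doubly unbounded total complex $tM\otimes_k tN$ is more delicate than ``routine'' suggests. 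A cleaner route, consistent with the machinery in the rest of the paper, is to note that the acyclic complexes are exactly $\Ker\gam_{\{\fm\}}=\Im L_{\{\fm\}}$ for the closed point $\fm\in\Spec H^*(G,k)$, that this localization is smashing, and hence that $tk\cong L_{\{\fm\}}(\sfi k)$ is a tensor idempotent with $tM\cong\sfi M\otimes_k tk$ for every $M$. Then
\[
tM\otimes_k tN\;\cong\;\sfi M\otimes_k\sfi N\otimes_k tk\;\cong\;\sfi(M\otimes_k N)\otimes_k tk\;\cong\; t(M\otimes_k N)
\]
follows at once from $\sfi M\otimes_k\sfi N\cong\sfi(M\otimes_k N)$ in $\KInj{kG}$ and the idempotence of $tk$, with no unbounded K\"unneth bookkeeping.
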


The action of $H^{*}(G,k)$ on $\StMod(kG)$ is compatible with this identification.
Moreover the acyclic complexes form a localizing and
colocalizing subcategory of $\KInj{kG}$ that is Hom and tensor
closed. Note that a complex is acyclic if and only if its (co)support
does not contain the maximal ideal of $H^*(G,k)$. This follows from
\cite[Proposition~9.6]{\bik:2008b} for the support, and then from
Corollary~\ref{co:cosupp2} for the cosupport.

\begin{theorem}
\label{th:stmod}
The tensor triangulated category $\StMod(kG)$ is costratified by the
canonical action of $H^*(G,k)$.
\end{theorem}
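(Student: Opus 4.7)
The plan is to bootstrap the costratification result for $\KInj{kG}$ (Theorem~\ref{th:finite-groups}) along the embedding $J\col\StMod(kG)\hookrightarrow\KInj{kG}$ from Proposition~\ref{pr:stmod-kinj}. Writing $\fm = H^{\ges 1}(G,k)$ for the maximal ideal of $H^{*}(G,k)$, the key structural fact (recalled just before the theorem) is that $X\in\KInj{kG}$ is acyclic if and only if $\fm\notin\cosupp_GX$, equivalently $\fm\notin\supp_GX$. Combined with the compatibility of the $H^{*}(G,k)$-action, this identifies $\StMod(kG)$ with the Hom closed (co)localizing subcategory of $\KInj{kG}$ consisting of complexes whose (co)support misses $\fm$.

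First I would verify that for each $\fp\in\mcV_G$ the functors $\gam_\fp$ and $\lam^\fp$ for $\StMod(kG)$ coincide with the restrictions of the corresponding functors on $\KInj{kG}$: the inclusion $J$ is an $H^*(G,k)$-linear functor that preserves coproducts and products, so Proposition~\ref{prop:lchchangecat} applies and gives $J\gam_\mcV\cong\gam_\mcV J$ and $J\lam^\mcV\cong\lam^\mcV J$ for every specialization closed $\mcV\subseteq\mcV_G$, and hence the same for $\gam_\fp$ and $\lam^\fp$. It follows immediately that $\lam^\fm\StMod(kG)=0$ (since $\fm\notin\cosupp_G X$ for any $X\in\StMod(kG)$), and that for $\fp\ne\fm$ one has
\[
\lam^\fp\StMod(kG)\;=\;\lam^\fp\KInj{kG},
\]
because any object with cosupport contained in $\{\fp\}\subseteq\mcV_G\setminus\{\fm\}$ is automatically acyclic.

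Next I would verify that the notion of ``Hom closed colocalizing subcategory of $\lam^\fp\StMod(kG)$'' coincides with the corresponding notion inside $\KInj{kG}$. Products and triangles in $\StMod(kG)$ are those of $\KInj{kG}$, so any colocalizing subcategory $\sfS\subseteq\lam^\fp\StMod(kG)$ is a colocalizing subcategory of $\KInj{kG}$. For Hom closedness, given $X\in\KInj{kG}$ and $Y\in\sfS\subseteq\lam^\fp\KInj{kG}$, Proposition~\ref{pr:natisos} yields
\[
\fHom(X,Y)\;\cong\;\fHom(X,\lam^\fp Y)\;\cong\;\fHom(\gam_\fp X,Y),
\]
and since $\fp\ne\fm$ the object $\gam_\fp X$ has support in $\{\fp\}$ and so lies in $\StMod(kG)$. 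Hom closedness of $\sfS$ in $\StMod(kG)$ therefore forces $\fHom(X,Y)\in\sfS$, i.e.\ $\sfS$ is also Hom closed in $\KInj{kG}$.

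Putting the pieces together, a non-zero Hom closed colocalizing subcategory $\sfS$ of $\lam^\fp\StMod(kG)$ is, by the previous paragraph, a non-zero Hom closed colocalizing subcategory of $\lam^\fp\KInj{kG}$, and hence equals $\lam^\fp\KInj{kG}=\lam^\fp\StMod(kG)$ by Theorem~\ref{th:finite-groups}. This gives costratification of $\StMod(kG)$ by $H^{*}(G,k)$. The main thing to be careful about is the small discrepancy between the Hom closedness conditions in the two ambient categories, and the verification that the local cohomology and homology functors for $\StMod(kG)$ really are the restrictions of those for $\KInj{kG}$; both come out cleanly from Prop~\ref{pr:stmod-kinj}, Prop~\ref{prop:lchchangecat}, and Prop~\ref{pr:natisos}, so I do not anticipate a serious obstacle.
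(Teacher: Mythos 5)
Your proof is correct and takes essentially the same route as the paper, whose one-line argument is precisely that $\lam^\fp\StMod(kG)=\lam^\fp\KInj{kG}$ for each non-maximal $\fp$ in $\Spec H^*(G,k)$, so that the result follows from Theorem~\ref{th:finite-groups} via Proposition~\ref{pr:stmod-kinj}. You additionally spell out the verification (left implicit in the paper) that a colocalizing subcategory of $\lam^\fp\StMod(kG)$ that is Hom closed relative to $\StMod(kG)$ is automatically Hom closed relative to $\KInj{kG}$, using $\fHom(X,Y)\cong\fHom(\gam_\fp X,Y)$ and the fact that $\gam_\fp X$ is acyclic for $\fp\ne\fm$; this is exactly what makes the minimality transfer between the two ambient categories.
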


\begin{proof}
Given Proposition~\ref{pr:stmod-kinj}, the desired result follows from
Theorem~\ref{th:finite-groups}, since $\lam^\fp\StMod(kG)=\lam^\fp\KInj{kG}$ for each non-maximal
$\fp$ in $\Spec H^*(G,k)$.
\end{proof}

We can now justify the results stated in the introduction.

\begin{proof}[Proof of Theorem~\ref{ith:costratification}]
From Theorems~\ref{th:stmod} and \ref{th:strat-costrat} it follows
that the tensor triangulated category $\StMod(kG)$ is stratified and
costratified by the canonical action of $H^*(G,k)$. Thus the map
sending a subset $\mcU$ of $\mcV_G$ to the subcategory of $kG$-modules
$X$ satisfying $\supp_GX\subseteq\mcU$ yields a bijection between
subsets of $\mcV_G$ and tensor ideal localizing subcategories of
$\StMod(kG)$; see \cite[Theorem~3.8]{\bik:2008b}.  Composing this
bijection with the one between localizing and colocalizing
subcategories from Corollary~\ref{co:loccoloc} gives the desired
result.
\end{proof}

\begin{proof}[Proof of Corollary~\ref{ico:locandcoloc}]
Given Theorem~\ref{th:stmod}, this follows from Corollary~\ref{co:loccoloc}.
\end{proof}

We close this discussion on the stable module category with the
following example. There is no analogue for arbitrary tensor
triangulated categories; see Example~\ref{ex:cosuppkg} and
Proposition~\ref{pr:cosuppA}.

\begin{example}
\label{ex:cosupp-stmod}
There is an equality $\cosupp_{G}M = \supp_{G}M$ for any finite
dimensional module $M$ in $\StMod(kG)$.

Indeed, denote by $(-)^{*}$ the Brown--Comenetz duality on $\StMod(kG)$
which equals $\Omega\Hom_{k}(-,k)$ by
\cite[Proposition~11.6]{\bik:2008b}, where $\Omega N$ denotes the
kernel of a projective cover of a $kG$-module $N$. Then one gets the
first equality below because $M$ is finite dimensional:
\[
\cosupp_{G}M= \cosupp_{G}M^{**}=\supp_{G}M^{*}=\supp_{G}M.
\]
The second equality is Proposition~\ref{pr:cosupp-one}, and  the last one
is well-known; see \cite[Theorem~5.1.1]{Benson:1991b}.
\end{example}

\subsection*{Modules}

Although $\Mod(kG)$ is not a triangulated category, we define
colocalizing subcategories in an analogous way. A full subcategory of
$\Mod(kG)$ is said to be \emph{thick} if whenever two modules in a
short exact sequence are in, then so is the third.  A
\emph{colocalizing subcategory} $\sfS$ of $\Mod(kG)$ is a thick
subcategory closed under all products, that is, for any family of
modules $M_i$ ($i\in I$) in $\sfS$ the product $\prod_i M_i$ is in
$\sfS$. The next lemma describes an extra tensor condition for
colocalizing subcategories; it parallels Lemma~\ref{le:Hom closed}.

\begin{lemma}
Let $\sfS$ be a colocalizing subcategory of $\Mod(kG)$. Then the
following conditions are equivalent:
\begin{enumerate}
\item If $N$ is a simple $kG$-module and $M$ is in $\sfS$ then
  $N\otimes_k M$ is in $\sfS$.
\item If $N$ is a finitely generated $kG$-module and $M$ is in $\sfS$
  then $N\otimes_k M$ is in $\sfS$.
\item If $N$ is a $kG$-module and $M$ is in $\sfS$ then $\Hom_k(N,M)$
  is in $\sfS$.\qed
\end{enumerate}
\end{lemma}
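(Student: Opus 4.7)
The plan mirrors Lemma~\ref{le:Hom closed}, exploiting the natural isomorphism $\Hom_k(N,M)\cong N^*\otimes_k M$, valid when $N$ is finite dimensional over $k$, together with the reflexivity $N^{**}\cong N$ in that case. Here $N^*$ denotes $\Hom_k(N,k)$ with the diagonal $G$-action. For the implication (3) $\Rightarrow$ (1), every simple $kG$-module $N$ is finite dimensional, since $kG$ is, and hence $N\otimes_k M\cong \Hom_k(N^*,M)$ lies in $\sfS$ by (3). For (1) $\Rightarrow$ (2), a finitely generated $kG$-module is again finite dimensional, hence admits a composition series whose simple subquotients remain in $\sfS$ after tensoring with $M$ by (1); induction on the length of the series, together with exactness of $-\otimes_k M$ and thickness of $\sfS$, yields $N\otimes_k M\in\sfS$.

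The substantive step is (2) $\Rightarrow$ (3). Given an arbitrary $kG$-module $N$, I would express it as the directed union $N=\varinjlim_\alpha N_\alpha$ of its finite dimensional submodules, so that
$$\Hom_k(N,M)\cong\varprojlim_\alpha \Hom_k(N_\alpha,M).$$
Each $\Hom_k(N_\alpha,M)\cong N_\alpha^*\otimes_k M$ lies in $\sfS$ by (2). Since $k$ is a field, every transition map $\Hom_k(N_\beta,M)\to \Hom_k(N_\alpha,M)$ is surjective, so Mittag-Leffler holds and $\Hom_k(N,M)$ arises as the kernel of a surjection between two products of objects of the form $\Hom_k(N_\alpha,M)$. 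Closure of $\sfS$ under products places both endpoints of this surjection in $\sfS$, and thickness then forces $\Hom_k(N,M)\in\sfS$.

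The main obstacle is the final extraction of $\Hom_k(N,M)$ from the inverse limit: a thick subcategory closed under products is not, in general, closed under arbitrary kernels, so surjectivity of the connecting map between the two products (equivalently, the vanishing of $\varprojlim{}^1$) is essential. This is precisely where the hypothesis that $k$ is a field enters a second time, through the exactness of $\Hom_k(-,M)$.
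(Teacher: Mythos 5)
The paper gives no proof of this lemma (it is stated with a \texttt{\textbackslash qed}), so there is nothing to compare against; I am assessing your argument on its own terms.

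Your implications (3) $\Rightarrow$ (1) and (1) $\Rightarrow$ (2) are fine: a simple, and more generally a finitely generated, $kG$-module is finite dimensional, so the duality $N\otimes_k M\cong\Hom_k(N^*,M)$ and the composition-series induction both apply, and $-\otimes_k M$ is exact since $k$ is a field.

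The implication (2) $\Rightarrow$ (3) has a genuine gap, and it is exactly at the spot you flag in your last paragraph but then claim to have resolved. Writing $N=\varinjlim_\alpha N_\alpha$ over the directed poset of all finite-dimensional $kG$-submodules, you want to produce a \emph{short} exact sequence $0\to\Hom_k(N,M)\to A\to B\to 0$ with $A$, $B$ products of the $\Hom_k(N_\alpha,M)$ and hence in $\sfS$. The standard presentation of a filtered colimit is
\[
\bigoplus_{\alpha\leq\beta}N_\alpha\xrightarrow{\ d\ }\bigoplus_\alpha N_\alpha\lto N\lto 0,
\]
and applying the exact functor $\Hom_k(-,M)$ gives
\[
0\lto\Hom_k(N,M)\lto\prod_\alpha\Hom_k(N_\alpha,M)\xrightarrow{\ d^*\ }\prod_{\alpha\leq\beta}\Hom_k(N_\alpha,M).
\]
Since $\Hom_k(-,M)$ is exact, $d^*$ is surjective if and only if $d$ is injective; but $d$ is essentially never injective (already for a three-element chain with a constant system it has nontrivial kernel), so the cokernel of $d^*$ is $\Hom_k(\ker d,M)$, with $\ker d$ an arbitrary module of the same complexity as $N$. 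Surjectivity of the individual transition maps $\Hom_k(N_\beta,M)\to\Hom_k(N_\alpha,M)$ — which is what exactness of $\Hom_k(-,M)$ actually buys you — is not the same as surjectivity of $d^*$, and the Mittag--Leffler criterion you invoke is a statement about $\mathbb{N}$-indexed towers; it does not apply to the (typically uncountable) directed poset of all finite-dimensional submodules. Your argument does go through when $N$ is countably generated, via a cofinal chain and the usual telescope sequence $0\to\bigoplus_n\to\bigoplus_n\to N\to 0$; but for uncountably generated $N$ the inverse-limit route stalls, because a thick subcategory closed under products need not be closed under the kernel of a non-surjective map between objects in $\sfS$, and you have not shown the map is surjective. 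A correct proof of (2) $\Rightarrow$ (3) needs a different mechanism for passing from finitely generated $N$ to arbitrary $N$; the inverse-limit presentation by itself does not supply it.
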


A colocalizing subcategory of $\Mod(kG)$ is said to be \emph{Hom
  closed} if the equivalent conditions of the lemma hold. The next
result is analogous to \cite[Proposition~2.1]{\bik:2008b}.

\begin{proposition}
The canonical functor from $\Mod(kG)$ to $\StMod(kG)$ induces a one to
one correspondence between non-zero Hom closed colocalizing
subcategories of $\Mod(kG)$ and Hom closed colocalizing subcategories
of $\StMod(kG)$.\qed
\end{proposition}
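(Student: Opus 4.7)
The plan is to set up the bijection explicitly via the forward map
\[
\sfS\longmapsto\pi(\sfS):=\{X\in\StMod(kG)\mid X\cong N\text{ in }\StMod(kG)\text{ for some }N\in\sfS\}
\]
and the backward map $\sfT\mapsto\pi^{-1}(\sfT):=\{M\in\Mod(kG)\mid \pi(M)\in\sfT\}$, where $\pi\col\Mod(kG)\to\StMod(kG)$ is the canonical functor.  The verification splits into three tasks: first, that every non-zero Hom closed colocalizing subcategory of $\Mod(kG)$ automatically contains every projective $kG$-module; second, that the two maps land in the correct class of subcategories; third, that they are mutually inverse.

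The first, and pivotal, step goes as follows.  Given a non-zero $M\in\sfS$, Hom closure with $N=kG$ yields $\Hom_k(kG,M)\in\sfS$; the Hopf-algebra twist $g\otimes m\mapsto g\otimes g^{-1}m$ identifies this with $kG\otimes_k M^{\mathrm{triv}}$, a free $kG$-module of rank $\dim_kM\geq 1$.  Being Serre, $\sfS$ is closed under summands, so $kG\in\sfS$, and hence every finitely generated projective is in $\sfS$.  For an arbitrary projective $P$, Krull--Schmidt writes $P\cong\bigoplus_iP_i^{(J_i)}$ with each $P_i$ a finite-dimensional indecomposable projective.  Finite-dimensionality of $P_i$ gives a $kG$-module isomorphism $P_i^I\cong P_i\otimes_k k^I$ with trivial action on the second factor, so the product $P_i^I$ is in fact a \emph{coproduct} $P_i^{(d(I))}$ with $d(I)=\dim_kk^I$.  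Choosing $I$ so that $d(I)\geq|J_i|$ realizes $P_i^{(J_i)}$ as a summand of $P_i^I\in\sfS$, and a finite direct sum (which is a finite product) then yields $P\in\sfS$.

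The second task is mostly bookkeeping.  That $\pi(\sfS)$ is closed under products in $\StMod(kG)$ uses that products in $\Mod(kG)$ compute products in $\StMod(kG)$: a family $(f_i\col N\to M_i)$ factors through projectives if and only if each $f_i$ does, the ``if'' direction using that $\prod_iP_i$ is injective (hence projective) when each $P_i$ is, since $kG$ is self-injective.  Closure under triangles follows from Serre closure together with the containment of all projectives (used to compute $\Omega$ and $\Omega^{-1}$ via projective covers and injective envelopes), and Hom closure reduces to the module-theoretic statement.  The backward map is immediate, and $\pi^{-1}(\sfT)$ is non-zero because every projective lies in it.

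Finally, $\pi(\pi^{-1}(\sfT))=\sfT$ follows from essential surjectivity of $\pi$, while $\pi^{-1}(\pi(\sfS))=\sfS$ reduces to the assertion that $M$ stably isomorphic to some $N\in\sfS$ forces $M\in\sfS$.  The standard characterization of stable isomorphism supplies projectives $P,Q$ with $M\oplus P\cong N\oplus Q$; since $\sfS$ contains all projectives, $N\oplus Q\in\sfS$, hence $M\oplus P\in\sfS$, and finally $M\in\sfS$ by summand closure.  The main obstacle is the cardinality argument in the second paragraph: colocalizing subcategories are closed under products but not coproducts, so to capture all projective summands $P_i^{(J_i)}$ one must exploit the finite-dimensionality of the indecomposable projectives to turn large products into large coproducts.
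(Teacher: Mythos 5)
The paper does not actually give a proof of this proposition --- it is stated with a \qed, being an analogue of \cite[Proposition~2.1]{Benson/Iyengar/Krause:2008b} --- so there is no argument in the paper to compare against. Your proof is correct, and you have isolated the one genuinely non-obvious issue: a non-zero Hom closed colocalizing subcategory $\sfS$ of $\Mod(kG)$ must contain \emph{all} projective modules, not merely the finitely generated ones, while colocalizing subcategories are closed under products but not coproducts. The rewriting $P_i^I \cong P_i \otimes_k k^I \cong P_i^{(\dim_k k^I)}$ for a finite-dimensional indecomposable projective $P_i$, with trivial action on the second tensor factor, followed by letting $|I|$ grow to swamp any coproduct index set $J_i$, is exactly what is needed here.

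There is one small unjustified assertion. You write that ``Being Serre, $\sfS$ is closed under summands,'' but the paper's definition of a thick subcategory of $\Mod(kG)$ is the two-out-of-three condition for short exact sequences, not the Serre condition of closure under subobjects, quotients and extensions; two-out-of-three alone does not give closure under direct summands. The gap is easily filled using product-closure via a product form of the Eilenberg swindle: if $Z := X \oplus Y \in \sfS$, then
\[
X \oplus Z^{\mathbb{N}} \;\cong\; X \oplus \bigl(X^{\mathbb{N}} \times Y^{\mathbb{N}}\bigr) \;\cong\; \bigl(X \oplus X^{\mathbb{N}}\bigr) \times Y^{\mathbb{N}} \;\cong\; X^{\mathbb{N}} \times Y^{\mathbb{N}} \;\cong\; Z^{\mathbb{N}},
\]
so the split exact sequence $0\to X\to Z^{\mathbb{N}}\to Z^{\mathbb{N}}\to 0$ has its middle and right-hand terms in $\sfS$, and the two-out-of-three condition puts $X$ in $\sfS$. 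Since you invoke summand closure at several places --- to pass from the free module $\Hom_k(kG,M)$ to $kG$, to pull $P_i^{(J_i)}$ out of $P_i^I$, and at the end to go from $M\oplus P \cong N\oplus Q \in \sfS$ to $M\in\sfS$ --- this step deserves to be stated explicitly. With that lemma supplied, the rest of the verification is sound: products in $\Mod(kG)$ compute products in $\StMod(kG)$ because products of projectives are projective over the self-injective algebra $kG$, a stable isomorphism lifts to a module isomorphism after adding projective summands, and the forward and backward maps are mutually inverse as you argue.
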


Thus the classification of Hom closed colocalizing subcategories of
$\Mod(kG)$ is a consequence of the costratification of $\StMod(kG)$ formulated in
Theorem~\ref{th:stmod}.

\begin{appendix}

\section{Localization functors and their adjoints}

\subsection*{Localization functors}
A functor $L\col\sfC\to\sfC$ is called \emph{localization functor} if
there exists a morphism $\eta\col\Id_\sfC\to L$ such that the morphism
$L\eta\col L \to L^2$ is invertible and $L\eta=\eta L$. The morphism
$\eta$ is called \emph{adjunction}. Recall that a morphism $\eta \col
F\to F'$ between functors is \emph{invertible} if $\eta X\col FX\to
F'X$ is invertible for each object $X$.  A functor
$\gam\col\sfC\to\sfC$ is by definition a \emph{colocalization functor}
if its opposite functor $\gam^\op\col\sfC^\op\to\sfC^\op$ is a
localization functor. The corresponding morphism $\gam\to\Id_\sfC$ is
called \emph{coadjunction}.

Any localization functor $L\col \sfC\to\sfC$ can be written as the
composite of two functors that form an adjoint pair. In order to
define these functors let us denote by $\Im L$ the \emph{essential
  image} of $L$, that is, the full subcategory of $\sfC$ that is
formed by all objects isomorphic to one of the form $LX$ for some $X$
in $\sfC$.

\begin{lemma}\label{le:loc1}
Let $L\col\sfC\to\sfC$ be a localization functor and write $L$ as a
composite \[\sfC\stackrel{F}\longrightarrow\Im
L\stackrel{G}\longrightarrow\sfC\] where $G$ denotes the inclusion
functor. Then $G$ is a right adjoint of $F$.
\end{lemma}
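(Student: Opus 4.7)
The plan is to exhibit an explicit unit--counit adjunction $(F\dashv G)$: take the unit to be the given adjunction $\eta\col\Id_\sfC\to L=GF$ of the localization, and construct the counit $\epsilon\col FG\to\Id_{\Im L}$ by inverting $\eta$ on objects of $\Im L$. Once both are in hand, the triangle identities fall out formally.

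The first step is to show that $\eta Y\col Y\to LY$ is invertible for every $Y\in\Im L$. By definition of $\Im L$, there is an isomorphism $\alpha\col Y\xra{\sim} LZ$ for some $Z\in\sfC$. The defining hypothesis $L\eta=\eta L$ invertible gives, in particular, that $\eta(LZ)=L\eta Z$ is an isomorphism. Naturality of $\eta$ applied to $\alpha$ yields a commutative square in which three of the four edges are isomorphisms, forcing the fourth, $\eta Y$, to be invertible as well. One must also check this does not depend on the choice of $\alpha$, which is immediate since any two choices of inverse to an isomorphism agree.

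The second step is to set $\epsilon Y=(\eta Y)^{-1}\col LY\to Y$ for $Y\in\Im L$; naturality of $\epsilon$ follows from naturality of $\eta$ on $\Im L$ combined with the fact that each $\eta Y$ is now an isomorphism. For the triangle identities: for $X\in\sfC$ the composite
\[
FX\xra{F\eta X} FGFX=L^2X\xra{\epsilon FX} FX
\]
equals $(\eta LX)^{-1}\comp L\eta X=\id_{LX}$, using $L\eta=\eta L$; and for $Y\in\Im L$ the composite
\[
GY\xra{\eta GY} GFGY=LY\xra{G\epsilon Y} GY
\]
equals $(\eta Y)^{-1}\comp\eta Y=\id_Y$. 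These two identities give the desired adjunction $F\dashv G$, either via the standard unit--counit reformulation of adjoints or by unwinding them to produce the natural bijection $\Hom_{\Im L}(FX,Y)\cong\Hom_\sfC(X,GY)$ explicitly.

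The only subtle point, and hence the main thing to be careful about, is step one: verifying that $\eta$ really is invertible on $\Im L$ (rather than merely on objects in the strict image $\{LX\mid X\in\sfC\}$). Beyond this small naturality argument everything is formal, so I do not expect a serious obstacle; the lemma should be a short, purely diagrammatic proof.
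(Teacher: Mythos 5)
Your proof is correct and is the standard argument; the paper itself does not spell out a proof of this lemma but simply cites \cite[Lemma~3.1]{\bik:2008a}, and what you have written is essentially the unit--counit construction given there. One tiny remark: there is nothing to check about ``dependence on the choice of $\alpha$'' --- being invertible is a property of the single morphism $\eta Y$, not a construction from $\alpha$, so once one such $\alpha$ exhibits $\eta Y$ as an isomorphism you are done.
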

\begin{proof}
See \cite[Lemma~3.1]{\bik:2008a}
\end{proof}

Next we characterize the adjoint pairs of functors $(F,G)$ such that
its composite $L=GF$ is a localization functor.  Given any category
$\sfC$ and a class $\Si$ of morphisms in $\sfC$, we denote by $Q\col
\sfC\to\sfC[\Si^{-1}]$ the universal functor that inverts all
morphisms in $\Si$; see \cite[\S{I.1}]{GZ}. Thus $Q\phi$ is invertible
for all $\phi$ in $\Si$, and any functor $F\col\sfC\to\sfD$ such that
$F\phi$ is invertible for all $\phi$ in $\Si$ factors uniquely through
$Q$.

\begin{lemma}\label{le:loc2}
Let $F\col\sfC\to\sfD$ and $G\col\sfD\to\sfC$ be a pair of functors
such that $G$ is a right adjoint of $F$. Let $L=GF$ and $\eta\col
\Id_\sfC\to L$ be the adjunction morphism. Then the following are
equivalent:
\begin{enumerate}
\item The morphism $L\eta\col L \to L^2$ is invertible and $L\eta =
  \eta L$ (that is, $L$ is a localization functor).
\item The functor $F$ induces an equivalence
  $\sfC[\Si^{-1}]\xra{\sim}\sfD$, where $\Si$ denotes the class of
  morphisms $\phi$ in $\sfC$ such that $F\phi$ is invertible.
\item The functor $G$ is fully faithful.
\end{enumerate}
\end{lemma}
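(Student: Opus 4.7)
The plan is to prove the cycle $(1)\Rightarrow(3)\Rightarrow(2)\Rightarrow(1)$, working throughout with the counit $\varepsilon\colon FG\to\Id_\sfD$ of the adjunction and the triangle identities $G\varepsilon\circ\eta G=\id_G$ and $\varepsilon F\circ F\eta=\id_F$. The key reformulation I would rely on is the standard criterion that $G$ is fully faithful if and only if $\varepsilon$ is an isomorphism.

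For $(1)\Rightarrow(3)$, I aim to show that $\varepsilon Y$ is invertible for each $Y\in\sfD$. The first triangle identity, evaluated at $Y$, gives $G\varepsilon Y\circ\eta(GY)=\id_{GY}$, exhibiting $\eta(GY)$ as a split monomorphism with retraction $G\varepsilon Y$. Using the invertibility of $L\eta$ at the object $GY$, together with the equality $L\eta=\eta L$ and a short diagram chase, I would deduce that $\eta(GY)$ is in fact invertible, whence $G\varepsilon Y$ is invertible. Pushing this invertibility through $F$ via the second triangle identity $\varepsilon F\circ F\eta=\id_F$ then forces $\varepsilon Y$ itself to be invertible.

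For $(3)\Rightarrow(2)$, invertibility of $\varepsilon$ combined with the second triangle identity yields $F\eta X=(\varepsilon FX)^{-1}$, which is invertible, so every adjunction morphism $\eta X$ lies in $\Sigma$. Hence $Q\eta\colon Q\xra{\sim}QL$ is a natural isomorphism, where $Q\colon\sfC\to\sfC[\Sigma^{-1}]$ is the canonical functor. The universal property gives the unique factorization $F=\bar F\circ Q$ for a functor $\bar F\colon\sfC[\Sigma^{-1}]\to\sfD$, and I would check that $QG$ is a quasi-inverse to $\bar F$: the composite $\bar F\circ QG=FG\cong\Id_\sfD$ via $\varepsilon$, while $QG\circ\bar F$ agrees, after precomposition with the essentially surjective $Q$, with $QGF=QL\cong Q$ via $Q\eta^{-1}$. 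For $(2)\Rightarrow(1)$, the equivalence $\bar F$ implies $F$ inverts exactly $\Sigma$; and since the endofunctor $QGF$ on $\sfC[\Sigma^{-1}]$ is isomorphic to the identity via $\bar F$, one sees that $Q\eta X$ is invertible, hence $\eta X\in\Sigma$, so $L\eta X=GF\eta X$ is invertible. The equality $L\eta=\eta L$ is then the formal statement that an adjunction whose unit is inverted by the left adjoint produces an idempotent monad, in which the two natural transformations $L\to L^2$ necessarily coincide.

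The principal technical obstacle is the direction $(1)\Rightarrow(3)$: one must extract the rather strong conclusion that $\varepsilon$ is globally invertible from the comparatively formal hypothesis on $L\eta$, and the diagram chase promoting $\eta(GY)$ from a split mono to an isomorphism has to be arranged so that the equality $L\eta=\eta L$ is actually used, rather than merely assumed. Every other implication is bookkeeping with the triangle identities and the universal property of $Q$.
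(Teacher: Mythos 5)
Your directions $(3)\Rightarrow(2)$ and $(2)\Rightarrow(1)$ are sound, and so is the first half of your $(1)\Rightarrow(3)$: using $\eta L=L\eta$ and naturality of $\eta$ along $G\varepsilon Y$, one gets $\eta(GY)\circ G\varepsilon Y=L(G\varepsilon Y)\circ L(\eta GY)=L(\id_{GY})=\id_{LGY}$, which together with the triangle identity shows $\eta G$, and hence $G\varepsilon$, is invertible. The gap is the final step of $(1)\Rightarrow(3)$. The identity $\varepsilon F\circ F\eta=\id_F$ combined with $\eta G$ invertible only yields that $\varepsilon_{FX}$ is invertible for $X\in\sfC$, i.e.\ that $\varepsilon_Y$ is invertible when $Y$ lies in the essential image of $F$; it says nothing about a general $Y\in\sfD$. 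Knowing that $G\varepsilon$ is invertible does not entitle you to conclude that $\varepsilon$ is, so you have not shown that $G$ is fully faithful.

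This gap cannot be closed without extra hypotheses: the implication $(1)\Rightarrow(3)$ fails as stated. Take $\sfC$ to be the terminal category with single object $*$ and $\sfD$ the arrow category with objects $0,1$ and one nonidentity morphism $0\to 1$; let $F(*)=0$ and let $G$ be the unique functor to $\sfC$. Since $0$ is initial in $\sfD$, one has $F\dashv G$, and $L=GF=\Id_\sfC$ with $\eta=\id$, so condition (1) holds trivially; yet $G$ is not full (it sends $\Hom_\sfD(1,0)=\varnothing$ to the singleton $\Hom_\sfC(*,*)$), and $\sfC[\Sigma^{-1}]=\sfC$ is not equivalent to $\sfD$. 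The paper's own proof is a pair of citations: \cite[Proposition~I.1.3]{GZ} does establish $(2)\Leftrightarrow(3)$, but \cite[Lemma~3.1]{Benson/Iyengar/Krause:2008a} proves only that a localization functor admits \emph{some} factorization through a fully faithful right adjoint, which is strictly weaker than the assertion that the \emph{given} $G$ is fully faithful. To run your cycle you would need to assume in addition that $F$ is essentially surjective (in the paper's actual uses $\sfD=\Im L$ and this holds automatically); otherwise restrict to the implications $(3)\Rightarrow(1)$ and $(2)\Leftrightarrow(3)$.
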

\begin{proof}
For (1) $\Lra$ (3) see \cite[Lemma~3.1]{\bik:2008a}; for (2) $\Lra$
(3) see \cite[Proposition~I.1.3]{GZ}.
\end{proof}

Let $F\col\sfC\to\sfD$ be an exact functor between triangulated
categories. Then the full subcategory $\Ker F$ consisting of all
objects that are annihilated by $F$ is a thick subcategory of $\sfC$.
Denote by $\Si$ the class of morphisms $\phi$ in $\sfC$ such that
$F\phi$ is invertible. Then a morphism in $\sfC$ belongs to $\Si$ if
and only if its cone belongs to $\Ker F$. Thus
$\sfC[\Si^{-1}]=\sfC/\Ker F$; see \cite{Verdier:1996a}.

\subsection*{The right adjoint of a localization functor}

We discuss the existence of a right adjoint of a localization functor.

\begin{lemma}\label{le:loc-adjoint}
Let $L\col\sfC\to\sfC$ be a localization functor with a right adjoint
$\gam$.  Then $\gam$ is a colocalization functor satisfying
$\Im\gam=\Im L$. If $\eta\col \Id_\sfC\to L$ denotes the adjunction of
$L$, then the coadjunction $\theta \col \gam\to \Id_\sfC$ of $\gam$ is
given by the composite
\[
\Hom_\sfC(X,\gam Y)\xra{\sim}\Hom_\sfC(LX,Y)\xra{(\eta
  X,Y)}\Hom_\sfC(X,Y),\quad X,Y\in\sfC\,.
\]
\end{lemma}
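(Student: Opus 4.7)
My plan is to define $\theta \col \gam \to \Id_\sfC$ by $\theta Y := \beta Y \circ \eta(\gam Y)$, where $\beta \col L\gam \to \Id_\sfC$ is the counit of the adjunction $L \dashv \gam$, and then verify the three assertions of the lemma: the explicit formula for $\theta$, the equality $\Im \gam = \Im L$, and that $\gam$ together with $\theta$ is a colocalization functor.

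For the formula, I would trace an element $f \in \Hom_\sfC(X, \gam Y)$ through the composite in the statement: the adjunction sends $f$ to $\beta Y \circ Lf \in \Hom_\sfC(LX, Y)$, and precomposition with $\eta X$ yields $\beta Y \circ Lf \circ \eta X$, which by naturality of $\eta$ at $f$ equals $\beta Y \circ \eta(\gam Y) \circ f = \theta Y \circ f$. Specializing to $f = \id_{\gam Y}$ shows $\theta Y$ is the image of the identity, verifying the explicit description.

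To prove $\Im L \subseteq \Im \gam$, I would show that $\theta Z$ is invertible for every $Z \in \Im L$. Since $\eta Z$ is then invertible, the universal property of $\eta$ yields a natural bijection $(\eta X)^* \col \Hom_\sfC(LX, Z) \xra{\sim} \Hom_\sfC(X, Z)$; combined with the adjunction isomorphism $\Hom_\sfC(LX, Z) \cong \Hom_\sfC(X, \gam Z)$, this gives a natural bijection $\Hom_\sfC(X, \gam Z) \xra{\sim} \Hom_\sfC(X, Z)$, which by the first step is postcomposition with $\theta Z$. Yoneda's lemma then forces $\theta Z$ to be invertible, so $Z \cong \gam Z$ lies in $\Im \gam$. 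The reverse inclusion requires that $\eta(\gam Y)$ be invertible for every $Y$. Using naturality of $\eta$ at the unit $\alpha Y \col Y \to \gam L Y$ of $L \dashv \gam$ together with the triangle identity $\beta L \circ L\alpha = \id_L$, one derives the identity $\theta L \circ \alpha = \eta$ of natural transformations, which applied at $\gam Y$ gives the factorization $\eta(\gam Y) = \theta(L\gam Y) \circ \alpha(\gam Y)$. The first factor is invertible by the previous step since $L\gam Y \in \Im L$. The hard part, which I expect to be the main obstacle, is showing that $\alpha(\gam Y)$ is invertible: the triangle identity $\gam\beta \circ \alpha\gam = \id_\gam$ exhibits it as a split monomorphism, and the idempotency built into $L$ (i.e.\ $L\eta = \eta L$ invertible) must be leveraged to show that the induced monad $\gam L$ is idempotent, so that $\alpha$ becomes invertible on objects of the form $\gam Y$.

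Once $\Im \gam = \Im L$ is in hand, verifying that $\gam$ with $\theta$ is a colocalization functor is routine: for every $Y$ the object $\gam Y$ lies in $\Im L$, so $\theta(\gam Y)$ is invertible by the previous step, and the equality $\gam\theta = \theta\gam$ together with its invertibility follows from naturality of $\theta$ applied to $\theta Y$ and the observation that $\theta$ becomes invertible after applying $\gam$.
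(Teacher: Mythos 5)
Your plan reverses the order of the paper's argument: you aim to establish $\Im\gam=\Im L$ first, and only then deduce the colocalization property of $(\gam,\theta)$, whereas the paper first proves (by two explicit commutative diagrams, using only that $L\eta=\eta L$ is invertible) that $\gam(\theta Y)$ is an isomorphism and that $\gam(\theta Y)=\theta(\gam Y)$, and then derives $\Im L=\Im\gam$ from this. Your verification of the formula for $\theta$ is correct, and your argument that $\theta Z$ is invertible for $Z\in\Im L$ (via the universal property of $\eta$ combined with the adjunction isomorphism and Yoneda) is also correct and is in fact close in spirit to the paper's ``$\Hom_\sfC(X,\theta Y)$ is bijective'' step. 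The factorization $\eta(\gam Y)=\theta(L\gam Y)\circ\alpha(\gam Y)$, isolating $\alpha(\gam Y)$ as the obstruction, is a nice reformulation.

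However, the inclusion $\Im\gam\subseteq\Im L$ has a genuine gap, which you yourself flag: you need $\alpha(\gam Y)$ to be invertible, and your proposed route is to show that the monad $T=\gam L$ is idempotent (so that the $T$-algebra structure map $\gam\beta Y$, which is a one-sided inverse of $\alpha\gam Y$ by the triangle identity, becomes a two-sided inverse). The reduction to ``$T$ is idempotent'' is sound, but proving that the multiplication $\gam\beta L$ is invertible from the hypothesis $L\eta=\eta L$ invertible is precisely the nontrivial content of the lemma, and it is not substantiated in your proposal. This is not a routine bookkeeping step: without something like the paper's diagram chase (which establishes $\gam\theta$ invertible and $\gam\theta=\theta\gam$ directly by sandwiching between adjunction bijections and the bijection given by precomposition with the invertible $\eta(LX)=L(\eta X)$), there is no obvious way to see that the split monomorphism $\alpha\gam Y$ is epi, and the ``must be leveraged'' phrasing concedes as much. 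Note also that once that inclusion is secured, your sketch of the final step (deducing $\gam\theta=\theta\gam$ from naturality of $\theta$ at $\theta Y$) needs the extra observation that $\Hom_\sfC(\gam^2 Y,\theta Y)$ is injective because $\gam^2 Y\in\Im\gam=\Im L$; as written it only yields $\theta Y\circ\gam\theta Y=\theta Y\circ\theta\gam Y$, which does not by itself give the desired equality.
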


\begin{proof}
Fix an object $Y$ in $\sfC$. We need to show that $\gam(\theta Y)$ is
an isomorphism and that $\gam(\theta Y)=\theta (\gam Y)$.  Denote by
\[
\phi_{X,Y}\col\Hom_\sfC(X,\gam Y)\xra{\sim}\Hom_\sfC(LX,Y)
\] 
the natural bijection given by the adjunction between $L$ and $\gam$.
We use that $L(\eta X)$ is an isomorphism and that $L(\eta X)=\eta
(LX)$. The following commutative diagram shows that $\gam(\theta Y)$
is an isomorphism since all horizontal maps are bijections.
\[
\xymatrix{ \Hom_\sfC(X,\gam^2Y)\ar[r]^-{\phi_{X,\gam
      Y}}\ar[d]_-{(X,\gam\theta Y)} &\Hom_\sfC(LX,\gam
  Y)\ar[r]^-{\phi_{LX,Y}} \ar[d]^-{(LX,\theta Y)}
  &\Hom_\sfC(L^2X,Y)\ar[d]^-{(\eta LX,Y)}\\ \Hom_\sfC(X,\gam
  Y)\ar[r]^-{\phi_{X,Y}} &\Hom_\sfC(LX,Y)\ar[r]^-\id&\Hom_\sfC(LX,Y)}
\]
Combining the previous diagram with the next one shows that
$\gam(\theta Y)=\theta (\gam Y)$.
\[
\xymatrix{ \Hom_\sfC(X,\gam^2Y)\ar[r]^-{\phi_{X,\gam
      Y}}\ar[d]_-{(X,\theta \gam Y)} &\Hom_\sfC(LX,\gam
  Y)\ar[r]^-{\phi_{LX,Y}} \ar[d]^-{(\eta X,\gam Y)}
  &\Hom_\sfC(L^2X,Y)\ar[d]^-{(L\eta X,Y)}\\ \Hom_\sfC(X,\gam
  Y)\ar[r]^-\id &\Hom_\sfC(X,\gam
  Y)\ar[r]^-{\phi_{X,Y}}&\Hom_\sfC(LX,Y)}
\]
It remains to show that $\Im L=\Im\gam$. Suppose that $X$ belongs to
$\Im L$. Then $\eta X$ is invertible, and therefore the map
$\Hom_\sfC(X,\theta Y)$ is bijective for all $Y$. Using this fact for
$Y=X$ and $Y=\gam X$ shows that $\theta X$ is invertible. Thus $\Im
L\subseteq\Im\gam$. The proof of the other inclusion is similar.
\end{proof}

\begin{proposition}
\label{pr:rightadjoint}
For a localization functor $L\col\sfC\to\sfC$ are equivalent:
\begin{enumerate}
\item The functor $L$ admits a right adjoint.
\item The inclusion functor $G\col\Im L\to\sfC$ admits a right adjoint
  $G_\rho$.
\item There exists a colocalization functor $\gam\col\sfC\to\sfC$ such
  that $\Im \gam=\Im L$.
\end{enumerate}
In that case $\gam\cong GG_\rho$ and $\gam$ is a right adjoint of $L$.
\end{proposition}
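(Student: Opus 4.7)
The plan is to prove $(2)\Rightarrow(1)$, $(1)\Rightarrow(2)$, $(1)\Rightarrow(3)$, and $(3)\Rightarrow(2)$, which together give the equivalence and the identification $\gam\cong GG_\rho$. Throughout, write $L=GF$ as in Lemma~\ref{le:loc1}, so that $F\dashv G$. The structural point is that $F$ already has a right adjoint, so the existence of a right adjoint for $L$ is equivalent to the existence of a right adjoint for the fully faithful inclusion $G$; the whole proof will be organized around this reduction.

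For $(2)\Rightarrow(1)$, composition of adjunctions gives
\[
\Hom_\sfC(GFX,Y)\cong\Hom_{\Im L}(FX,G_\rho Y)\cong\Hom_\sfC(X,GG_\rho Y),
\]
so $L\dashv GG_\rho$, and $\gam:=GG_\rho$ is a right adjoint of $L$. The implication $(1)\Rightarrow(3)$ is precisely Lemma~\ref{le:loc-adjoint}.

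For $(1)\Rightarrow(2)$, suppose $L\dashv\gam$. By Lemma~\ref{le:loc-adjoint}, $\Im\gam=\Im L$, so $\gam Y\in\Im L$ for every $Y\in\sfC$. Since $G$ is fully faithful, there exists for each such $Y$ a unique (up to canonical isomorphism) object $G_\rho Y\in\Im L$ with $G(G_\rho Y)\cong\gam Y$; functoriality in $Y$ is automatic. The adjunction $G\dashv G_\rho$ then follows from the chain
\[
\Hom_{\Im L}(A,G_\rho Y)\cong\Hom_\sfC(GA,\gam Y)\cong\Hom_\sfC(L(GA),Y)\cong\Hom_\sfC(GA,Y),
\]
where the last step uses that $\eta(GA)$ is invertible because $GA\in\Im L$. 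By construction $\gam\cong GG_\rho$, which is the final identification asserted in the proposition.

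For $(3)\Rightarrow(2)$, apply the dual of Lemma~\ref{le:loc1} to the colocalization $\gam$: there is a factorization $\gam=G'F'$ with $G'\colon\Im\gam\to\sfC$ the inclusion and $F'\colon\sfC\to\Im\gam$ sending $Y$ to $\gam Y$, and the coadjunction $\gam\to\Id_\sfC$ is the counit of an adjunction $G'\dashv F'$. Since $\Im\gam=\Im L$, one has $G'=G$, and $F'$ is the required right adjoint of $G$. The main subtlety is $(1)\Rightarrow(2)$: the construction of $G_\rho$ uses crucially that $\Im\gam$ coincides with $\Im L$, together with the fully faithfulness of $G$ and the invertibility of $\eta$ on $\Im L$, which are exactly what allow one to ``divide'' $\gam$ by $G$ in a well-defined way.
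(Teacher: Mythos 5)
Your proof is correct and, at its core, follows the same cycle as the paper: $(2)\Rightarrow(1)$ by composing adjunctions through the factorization $L=GF$ of Lemma~\ref{le:loc1}, $(1)\Rightarrow(3)$ by citing Lemma~\ref{le:loc-adjoint}, and $(3)\Rightarrow(2)$ by the dual of Lemma~\ref{le:loc1}. The main difference is that you also supply a direct argument for $(1)\Rightarrow(2)$, which is logically redundant once $(1)\Rightarrow(3)\Rightarrow(2)$ is in place. That direct argument is nonetheless sound: since $\Im\gam=\Im L$ (Lemma~\ref{le:loc-adjoint}), the object $\gam Y$ already lies in the full subcategory $\Im L$, so one may simply take $G_\rho Y:=\gam Y$; the chain $\Hom_{\Im L}(A,G_\rho Y)\cong\Hom_\sfC(GA,\gam Y)\cong\Hom_\sfC(LGA,Y)\cong\Hom_\sfC(GA,Y)$ then verifies $G\dashv G_\rho$, with the last isomorphism using that $\eta(GA)$ is invertible. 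Two small stylistic remarks: first, there is no need to invoke uniqueness ``up to canonical isomorphism'' — $G$ is an inclusion of a full subcategory, so $G_\rho Y$ can be taken to equal $\gam Y$ on the nose, and functoriality is then inherited directly from $\gam$ rather than being ``automatic'' by a general argument. Second, you helpfully make explicit the dualization step in $(3)\Rightarrow(2)$ that the paper leaves implicit in its terse ``See Lemma~\ref{le:loc1}'': one must apply the lemma to $\gam^\op$ on $\sfC^\op$, yielding $G\dashv F'$ where $F'$ sends $Y$ to $\gam Y$. What your extra $(1)\Rightarrow(2)$ argument buys is a self-contained construction of $G_\rho$ that doesn't route through the dual lemma, but since the cycle already closes, it adds length rather than content.
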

\begin{proof}
(1) $\Rightarrow$ (3): See Lemma~\ref{le:loc-adjoint}.

(3) $\Rightarrow$ (2): See Lemma~\ref{le:loc1}.

(2) $\Rightarrow$ (1): Write $L$ as composite $L=GF$ as in
  Lemma~\ref{le:loc1}. Then the composite $GG_\rho$ is a right adjoint
  of $L$, since $G$ is a right adjoint of $F$.
\end{proof}

\subsection*{The left adjoint of a localization functor}

We discuss the existence of a left adjoint of a localization functor.

\begin{proposition}
\label{pr:leftadjoint}
For a localization functor $L\col\sfC\to\sfC$ are equivalent:
\begin{enumerate}
\item The functor $L$ admits a left adjoint.
\item The functor $F\col\sfC\to\Im L$ sending $X$ in $\sfC$ to $LX$
  admits a left adjoint $F_\lambda$.
\item There exists a colocalization functor $\gam\col\sfC\to\sfC$ such
  that $\gam\phi$ is invertible if and only $L\phi$ is invertible for
  each morphism $\phi$ in $\sfC$.
\end{enumerate}
In that case $\gam\cong F_\lambda F$ and $\gam$ is a left adjoint of
$L$.
\end{proposition}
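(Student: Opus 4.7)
The plan is to mirror Proposition~\ref{pr:rightadjoint}, dualizing wherever a localization is replaced by a colocalization. Throughout, factor $L = GF$ as in Lemma~\ref{le:loc1}, with $G\col\Im L\to\sfC$ the fully faithful inclusion and right adjoint to $F\col\sfC\to\Im L$.

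For (1) $\Leftrightarrow$ (2) I would argue by adjoint calculus. If $L_\lambda$ is a left adjoint of $L$, set $F_\lambda = L_\lambda G$; then
\[
\Hom_\sfC(F_\lambda X, Y) \cong \Hom_\sfC(GX, LY) = \Hom_\sfC(GX, GFY) \cong \Hom_{\Im L}(X, FY),
\]
using full faithfulness of $G$ at the last step. Conversely, if $F_\lambda\dashv F$, then $F_\lambda F$ is left adjoint to $L = GF$ by composing the adjunctions $F_\lambda\dashv F\dashv G$; this also yields the identification $L_\lambda\cong F_\lambda F$ asserted in the last sentence of the proposition.

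For (1) $\Rightarrow$ (3) I would take $\gam = L_\lambda$. That $\gam$ is a colocalization functor is Lemma~\ref{le:loc-adjoint} applied in the opposite category $\sfC^\op$: there $L^\op$ is a colocalization with right adjoint $L_\lambda^\op$, so the lemma forces $L_\lambda^\op$ to be a localization on $\sfC^\op$, i.e.\ $L_\lambda$ is a colocalization on $\sfC$. The invertibility condition hinges on the subsidiary fact that $F_\lambda$ is itself fully faithful, which I would secure by a Yoneda argument on $\Im L$: for $X, Y\in\Im L$,
\[
\Hom_{\Im L}(FF_\lambda X, Y) \cong \Hom_\sfC(F_\lambda X, GY) \cong \Hom_{\Im L}(X, FGY) \cong \Hom_{\Im L}(X, Y),
\]
where the last step uses $FG\cong\Id_{\Im L}$ (equivalent to $G$ being fully faithful, by Lemma~\ref{le:loc2}). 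Since $G$ and $F_\lambda$ then both reflect isomorphisms, $L\phi = GF\phi$ is invertible iff $F\phi$ is, iff $\gam\phi = F_\lambda F\phi$ is.

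For (3) $\Rightarrow$ (1) I would factor $\gam$ as $G'F'$ via the dual of Lemma~\ref{le:loc1}, with $G'\col\Im\gam\to\sfC$ the fully faithful inclusion and $G'\dashv F'$. Invoking the dual of Lemma~\ref{le:loc2} together with the hypothesis that $L$ and $\gam$ invert the same morphisms, one obtains an equivalence $E\col\Im L\xra{\sim}\Im\gam$ with $EF\cong F'$. Then the chain
\[
\Hom_\sfC(\gam X, Y) = \Hom_\sfC(G'F'X, Y) \cong \Hom_{\Im\gam}(F'X, F'Y) \cong \Hom_{\Im L}(FX, FY) \cong \Hom_\sfC(X, LY)
\]
exhibits $\gam$ itself as a left adjoint of $L$. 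The main obstacle I expect is the automatic full faithfulness of $F_\lambda$ in (2)$\Rightarrow$(3): without it, $\gam = F_\lambda F$ need not invert exactly the morphisms inverted by $L$, and the Yoneda trick exploiting the full three-term adjunction $F_\lambda\dashv F\dashv G$ together with the full faithfulness of $G$ is the shortest route to securing it.
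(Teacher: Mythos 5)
Your proof is essentially correct and follows the same broad strategy as the paper: the equivalence of (1) and (2) matches the paper's argument exactly; for (2) $\Rightarrow$ (3) the paper cites the dual of Lemma~\ref{le:loc2} where you use an equivalent Yoneda computation to establish full faithfulness of $F_\lambda$; and for (3) $\Rightarrow$ (1) the paper works directly with $Q\col\sfC\to\sfC[\Si^{-1}]$ and the induced functors $\bar L$, $\bar\gam$, whereas you route through the equivalence $\Im L\simeq\sfC[\Si^{-1}]\simeq\Im\gam$ --- the same idea, packaged differently.

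The one flaw is the justification that $\gam=L_\lambda$ is a colocalization. You claim this is ``Lemma~\ref{le:loc-adjoint} applied in the opposite category $\sfC^\op$: there $L^\op$ is a colocalization with right adjoint $L_\lambda^\op$, so the lemma forces $L_\lambda^\op$ to be a localization.'' This does not follow. Lemma~\ref{le:loc-adjoint} says that a localization's right adjoint is a colocalization; reading it in $\sfC^\op$ and translating back gives the statement that a colocalization's \emph{left} adjoint is a localization. Neither version says that a colocalization's \emph{right} adjoint is a localization --- which is what your sentence invokes, and which is precisely the content of (1)$\Rightarrow$(3). So the citation is circular. The correct route, which you essentially already have, is to first establish that $F_\lambda$ is fully faithful (your Yoneda chain does exactly this, since it shows the unit $\Id_{\Im L}\to FF_\lambda$ is invertible), and then apply the dual of Lemma~\ref{le:loc2} (equivalence of (1) and (3) there) to the adjoint pair $F_\lambda\dashv F$ to conclude that $\gam=F_\lambda F$ is a colocalization. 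That is what the paper does; just reorder your argument to put the full faithfulness first and cite Lemma~\ref{le:loc2} rather than Lemma~\ref{le:loc-adjoint}.
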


\begin{proof}
Denote by $\Si$ be the class of morphisms in $\sfC$ that are inverted
by $L$.

(1) $\Rightarrow$ (2): Let $L_\lambda$ be a left adjoint of $L$ and
write $L=GF$ as in Lemma~\ref{le:loc1}. Then we have for $X,Y$ in
$\sfC$
\[
\Hom_\sfC(X,FY)\cong \Hom_\sfC(GX,GFY) \cong \Hom_\sfC(L_\lambda
GX,Y)\,.
\] 
Thus $L_\lambda G$ is a left adjoint of $F$.

(2) $\Rightarrow$ (3): It follows from Lemma~\ref{le:loc2} that $F$
induces an equivalence $\sfC[\Si^{-1}]\xra{\sim}\Im L$. Applying the
dual assertion of this lemma shows that $F_\lambda$ is fully faithful
and that $\gam=F_\lambda F$ is a colocalization functor. Moreover, the
class of morphisms that are inverted by $\gam$ coincides with the
corresponding class for $F$, which equals  $\Si$.

(3) $\Rightarrow$ (1): Denote by $Q\col\sfC\to\sfC[\Si^{-1}]$ the
universal functor inverting $\Si$ and let $\bar
L,\bar\gam\col\sfC[\Si^{-1}]\to\sfC$ be the induced functors
satisfying $L=\bar LQ$ and $\gam=\bar\gam Q$.  It follows from
Lemma~\ref{le:loc2} that $\bar L$ is a right adjoint of $Q$ and that
$\bar \gam$ is a left adjoint of $Q$. Thus for $X,Y$ in $\sfC$ we have
\[
\Hom_\sfC(\bar\gam QX,Y)\cong \Hom_\sfC(QX,QY) \cong \Hom_\sfC(X,\bar
LQY)\,.
\] 
It follows that $\gam$ is a left adjoint of $L$.
\end{proof}

\begin{remark}
\label{re:leftadjoint}
Let $\sfC$ be a triangulated category and $L\col\sfC\to\sfC$ an exact
localization functor. Then statements (2) and (3) in
Proposition~\ref{pr:leftadjoint} admits the following equivalent
reformulations:
\begin{enumerate}
\item[(2$'$)] The quotient functor $\sfC\to\sfC/\Ker L$ admits a left
  adjoint.
\item[(3$'$)] There exists an exact colocalization functor
  $\gam\col\sfC\to\sfC$ such that $\Ker \gam=\Ker L$.
\end{enumerate}
The equivalence $\sfC/\Ker L\xra{\sim}\Im L$ has already been
mentioned. The reformulation of (3) relies on the same argument: an
exact functor inverts a morphism $\phi$ in $\sfC$ if and only if it
annihilates the cone of $\phi$.
\end{remark}

\end{appendix}

\bibliographystyle{amsplain}

\end{document}